\journal{Stochastic Processes and Their Applications}
\newtheorem{theorem}{Theorem}[section]
\newtheorem*{theorem*}{Theorem}
\newtheorem{lemma}[theorem]{Lemma}
\newtheorem{proposition}[theorem]{Proposition}
\newtheorem{corollary}[theorem]{Corollary}
\newtheorem{remark}[theorem]{Remark}
\newtheorem{examples}[theorem]{Examples}
\newtheoremstyle{named}{}{}{\itshape}{}{\sc}{.}{.8em}{\indent \thmnote{#3}}
\newtheorem{definition}[theorem]{Definition}
\theoremstyle{named}
\newtheorem*{namedtheorem}{Theorem}
\def\P{\mathbb{P}}
\DeclareMathOperator\supp{supp}
\DeclareMathOperator*{\argmin}{arg\,min}
\newcommand{\E}{\mathbb{E}}
\newcommand{\Cov}{\mathbb{C}\mathrm{ov}}
\newcommand{\V}{\mathbb{V}}
\newcommand{\deriv}[1]{\frac{\partial}{\partial #1}}
\newcommand{\mb}[1]{\boldsymbol{#1}}
\newcommand{\dprime}{{\prime\prime}}
\newcounter{suppcounter}[suppcounter]
\def\ps@pprintTitle{%
 \let\@oddhead\@empty
 \let\@evenhead\@empty
 \def\@oddfoot{\hfill \small\emph{30 May, 2025}}%
 \let\@evenfoot\@oddfoot}
\begin{document}

\begin{frontmatter}

\title{Smoothness Estimation for Whittle-Mat\'ern Processes on Closed Riemannian Manifolds}

\author[mks]{Moritz Korte-Stapff\corref{cor1}}
\cortext[cor1]{Corresponding author}
\ead{kortest@umich.edu}

\affiliation[mks]{organization={Department of Statistics, University of Michigan},
            city={Ann Arbor},
            postcode={48104}, 
            state={Michigan},
            country={USA}}

\author[tk]{Toni Karvonen}
\ead{toni.karvonen@lut.fi}
\affiliation[tk]{organization={School of Engineering Sciences, Lappeenranta--Lahti University of Technology LUT},
            city={Lappeenranta},
            postcode={53850}, 
            country={Finland}}

\author[em]{\'Eric Moulines}
\ead{eric.moulines@polytechnique.edu}
\affiliation[em]{organization={CMAP, Ecole Polytechnique},
            city={Palaiseau},
            postcode={91120}, 
            country={France}}
            
\begin{abstract}
The family of Mat\'ern kernels are often used in spatial statistics, function approximation and Gaussian process methods in machine learning.
One reason for their popularity is the presence of a smoothness parameter that controls, for example, optimal error bounds for kriging and posterior contraction rates in Gaussian process regression.
On closed Riemannian manifolds, we show that the smoothness parameter can be consistently estimated from the maximizer(s) of the Gaussian likelihood when the underlying data are from point evaluations of a Gaussian process and, perhaps surprisingly, even when the data comprise evaluations of a non-Gaussian process.
The points at which the process is observed need not have any particular spatial structure beyond quasi-uniformity.
Our methods are based on results from approximation theory for the Sobolev scale of Hilbert spaces.
Moreover, we generalize a well-known equivalence of measures phenomenon related to Mat\'ern kernels to the non-Gaussian case by using Kakutani's theorem.
\end{abstract}

\begin{keyword}
Whittle-Mat\'ern kernel, Parameter estimation, Equivalence of measures, Maximum likelihood
\end{keyword}

\end{frontmatter}


\section{Introduction}
When modelling the covariance structure of spatial or spatio-temporal observations, one of the most common choices is to use parametric models based on Mat\'ern kernels.
In the simplest isotropic form, they give the covariance between the observations $u(x_1)$ and $u(x_2)$, indexed by sampling sites $x_1, x_2 \in \mathbb{R}^d$, as
\begin{align}
    \Cov (u(x_1), u(x_2)) = \sigma^2 \frac{2^{1-(s-d/2)}}{\Gamma(s-d/2)}\sqrt{\tau}^{s-d/2}\lVert x_1 - x_2 \rVert^{s-d/2} \mathcal{K}_{s-d/2}\left( \sqrt{\tau} \lVert x_1 - x_2 \rVert\right),  \label{eq:matern_cov}
\end{align}
where $s > d/2$ and $\tau, \sigma^2 > 0$ are covariance parameters, $\lVert \cdot \rVert$ denotes the Euclidean norm on $\mathbb{R}^d$ and $\mathcal{K}_s$ is the modified Bessel function of the second kind.
The Matérn covariance functions are attributed to \citet{matern_1960} and are ubiquitously used in spatial statistics.
In addition, they are popular as priors for Bayesian Gaussian process regression and as radial basis functions for scattered data interpolation.
For a review of Mat\'ern kernels and its application in a varitey of fields such as machine learning and function approximation, we refer to the recent review paper by \citet{porcu_2023}.
An important characterization was proved by \citet{whittle_1963}.
Let $\mathcal{W}$ be spatial Gaussian white noise.
Whittle showed that the stationary solution $u$ of the stochastic partial differential equation (SPDE)
\begin{align}
 (\tau - \Delta)^{s/2}u = \sigma \frac{2^{d/2} \pi^{d/4} \sqrt{\Gamma(s)}}{\sqrt{\Gamma(s-d/2)}}  \sqrt{\tau}^{s - d/2}\, \mathcal{W} , 
 \label{eq:spde}
\end{align}
on $\mathbb{R}^d$ is a Gaussian process with covariance~\eqref{eq:matern_cov}.
Here $\Delta$ is the Laplacian and $\mathcal{W}$ is Gaussian white noise.
This characterization was used in \citet{Lindgren_2011}, among others, to define Whittle-Mat\'ern processes on Riemannian manifolds by replacing the Euclidean Laplace operator in \eqref{eq:spde} by the Laplace--Beltrami operator of the respective Riemannian manifold.
Whittle-Mat\'ern Gaussian processes on Riemannian manifolds were further investigated, for example, in \citet{lang_2015, borovitsky_2020, li_2023}.

Stein strongly advocates the use of Mat\'ern covariance functions~\citep[p.\@~14: ``Use the Mat\'ern.'']{stein_1999}, mainly because the parameter $s$ controls the local behaviour of corresponding Gaussian processes.
More precisely, a Gaussian process with the covariance structure~\eqref{eq:matern_cov} is $\lceil s - d/2\rceil -1$ times mean square differentiable~\citep[p.\@~31]{stein_1999}.
This is related to the fact that Mat\'ern covariance functions are reproducing kernels for the scale of Sobolev spaces $H^s(\mathbb{R}^d)$.
In fact, for many tasks for which Mat\'ern Kernels are used, the smoothness parameter $s$ is the \emph{only} parameter that matters asymptotically. 
For example, it is shown in \citet{VaartZanten2011} that for Bayesian Gaussian process regression optimal contraction rates can only be attained if the smoothness parameter of the Gaussian process prior matches the smoothness of the unknown regression function.
This is similar to results in, for example, \citet{wang_2020} and \citet{narcowich_2006}, where optimal approximation and kriging error rates depend only on the smoothness of the kernel and the function to be approximated.
Moreover, estimation of smoothness parameters is necessary to guarantee the reliability of uncertainty quantification provided by non-parametric methods, such as Gaussian process regression~\citep{GineNickl2010, Bull2012, Szabo2015, HadjiSzabo2021}.

Despite the ubiquity of (Whittle-)Mat\'ern kernels in spatial statistics, Gaussian process regression and function approximation, and the importance of the smoothness parameter $s$, the consistency of the maximum likelihood estimator of $s$ has been an open problem in the \textit{infill} (or \emph{fixed-domain}) asymptotic framework \citep{zhang_2005}, i.e., when the sampling sites become increasingly dense in a bounded domain over which the process is observed.
This is the case even when the observations do stem from a correctly specified Gaussian process and is in contrast to the case of \textit{increasing-domain} asymptotics, where it has long been known that maximum likelihood estimators of all parameters are consistent and asymptotically normal in the well-specified case~\citep{mardia_1984}.
In addition to the comparative lack of positive results, it is known for Gaussian processes that consistent estimation of the parameters $\tau$ and $\sigma^2$ in \eqref{eq:matern_cov} or \eqref{eq:spde} is \textit{not} possible if $d < 4$.
In this case only the so-called microergodic parameter $\tau^{s - d/2}\sigma^2$ can be consistently estimated~\citep{zhang_2004, bolin_2023, li_2023}.
This is due to equivalence of the involved Gaussian measures.
In dimension $5$ or higher, consistent estimators are given in \citet{anderes_2010}.
Due to the difficulty of establishing theoretic guarantees for estimation of the smoothness parameter, there is a large amount of literature on estimation of the microergodic parameter under the seldomly satisfied assumption that the correct smoothness is known~\citep[e.g.,][]{stein_1999, zhang_2004, du_2009, wang_2011, Kaufman2013, bachoc_2014, Li2022}.
Similar results on related compactly support covariance functions can be found in \citet{bevilacqua_2019}, see also \citet{stein_hp}.

Recently, however, progress has been made in estimating the smoothness parameter $s$ in the context of infill asymptotics.
A consistent estimator for $s$ based on observations of a Gaussian process with covariance function \eqref{eq:matern_cov} on $[0,1)^d$ for $d = 1,2,3$ was presented in \citet{loh_2021}.
This work was extended in \citet{LohSun2023} to take into account, for example, noisy observations of the process.
On manifolds, consistency of the maximum likelihood estimator for the smoothness parameter on the torus and the circle was investigated by \citet{Chen2021} and \citet{Petit2023} under somewhat restrictive assumptions.
The estimation of Sobolev regularity of deterministic functions observed over bounded Euclidean domains using Gaussian likelihoods was studied in~\cite{karvonen2023asymptotic}.

We focus on the maximum likelihood estimation for stochastic processes with Whittle--Mat\'ern covariance structure observed on closed Riemannian manifolds.
Such manifolds are necessarily compact and therefore we work in the infill asymptotic framework.
Our main contributions are as follows:
\begin{enumerate}    
    \item \textsc{Theorem}~\ref{thm:consistency} and \textsc{Corollary}~\ref{cor:consistency}: We show that maximizers of the Gaussian likelihood for the smoothness parameter $s$ are consistent estimators under the assumption that the points at which the process is observed are \textit{quasi-uniform}.
      This continues to hold for observations from a variety of non-Gaussian processes and thus justifies the use of Gaussian maximum likelihoods for smoothness estimation even when there is no reason to assume that the underlying processes is Gaussian.
    \item \textsc{theorem} \ref{thm:equivalent_measures}: We show that the phenomenon of equivalence of measures and consequently the \emph{inconsistent} separation of range scale $(\tau)$ and variance ($\sigma^2$) parameters is not restricted to Gaussian processes. Specifically, we show that for a variety of other non-Gaussian stochastic processes the same phenomenon of equivalence of measures occurs as in the Gaussian case.
\end{enumerate}

Let us describe Theorem~\ref{thm:consistency} and Corollary~\ref{cor:consistency} in more detail.
Let $\{ e_i \}$ be eigenfunctions of the negative Laplacian $-\Delta$ on a closed Riemannian manifold and $\{\lambda_i\}$ the corresponding eigenvalues.
We observe a stochastic process $u$ at pairwise distinct points $(x_1, \dots, x_n) = \mb{x}$ on the manifold.
The process, which has fixed positive magnitude and range scale parameters $\sigma_0$ and $\tau_0$ and smoothness $s_0 > d/2$, is defined by
\begin{align*}
    u(x) = \sigma_0 \sqrt{\smash[b]{v(s_0, \tau_0)}} \sum_{i = 1}^\infty (\tau_0 + \lambda_i)^{-s_0/2}\xi_i e_i(x) ,
\end{align*}
where $\{\xi_i\}$ are i.i.d.\ zero-mean random variables with unit variance and $v(s_0, \tau_0)$ is a suitable normalization constant.
Note that the random variables $\{\xi_i\}$ need not be Gaussian.
We allow the likelihood to be misspecified in two ways.
First, we do not assume the family of candidate covariance functions contains the true covariance of $u$.
We consider any suitable family of kernels $\{K_s\}_{s > d/2}$ such that the reproducing kernel Hilbert space of $K_s$ is norm-equivalent to a Sobolev space of order $s$.
Second, in the place of the true likelihood, which may be non-Gaussian, we use the Gaussian log-likelihood 
\begin{align*}
  \ell(s; u(\mb{x})) = -\frac{n}{2}\log(2\pi) - \frac{1}{2}\log\left(\det( K_s(\mb{x})\right) - \frac{1}{2}u(\mb{x})^{\top} K_s(\mb{x})^{-1} u(\mb{x}), 
\end{align*}
where $K_s(\mb{x})$ is the $n \times n$ covariance matrix with entries $(K_s(\mb{x}))_{ij} = K_s(x_i, x_j)$ and $u(\mb{x}) = (u(x_1), \ldots, u(x_n)) \in \mathbb{R}^n$.
Theorem~\ref{thm:consistency} and Corollary~\ref{cor:consistency} can then be formulated somewhat informally as follows.

\begin{theorem*}
  Assume that the sequence of points $\{x_i\}$ is quasi-uniform and that the family of kernels $\{K_s\}_{s > d/2}$ has a certain norm monotonicity property.
  Let $\{\hat{s}_n\}$ be a sequence of maximizers of the Gaussian log-likelihood $\ell(s; u(\mb{x}))$ on a bounded interval containing $s_0$.
  \begin{enumerate}
  \item If $s_0 > d$, then $\hat{s}_n \to s_0$ in probability.
  \item If $s_0 > d/2$ and $\{\xi_i\}$ are Gaussian, then $\hat{s}_n \to s_0$ almost surely.
  \end{enumerate}
\end{theorem*}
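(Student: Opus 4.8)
The plan is to analyze the (random) Gaussian log-likelihood $\ell(s; u(\mb{x}))$ by splitting it into the log-determinant term and the quadratic form term and controlling each via Sobolev-space approximation estimates tied to quasi-uniformity. The key quantitative input should be that, for a quasi-uniform point set of size $n$, the fill distance and separation radius are both comparable to $n^{-1/d}$, and hence the norm-equivalence between the RKHS of $K_s$ and $H^s$ yields two-sided bounds: the smallest eigenvalue of $K_s(\mb{x})$ is $\asymp n^{-2s/d}$ (from the separation radius and the standard lower bound for Sobolev kernels), while $\tr(K_s(\mb{x})) \asymp n$. From these, $\log\det(K_s(\mb{x})) = -\frac{2s}{d} n \log n \,(1+o(1))$ up to lower-order terms that are, crucially, \emph{monotone} in $s$ in the right direction. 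This is where the ``norm monotonicity property'' of the kernel family enters: it guarantees that the sub-leading corrections to the log-determinant do not fight the leading $-\frac{2s}{d} n\log n$ term when we compare two values of $s$.

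Next I would handle the quadratic form $Q_n(s) := u(\mb{x})^\top K_s(\mb{x})^{-1} u(\mb{x})$. The interpolation-theoretic heart of the argument is: if $u$ is (almost surely, or in probability, depending on the tail assumptions on $\{\xi_i\}$) in $H^{s_0-d/2-\epsilon}$ but \emph{not} in $H^{s_0-d/2+\epsilon}$ for any $\epsilon>0$ — which follows from the eigenexpansion coefficients decaying like $(\tau_0+\lambda_i)^{-s_0/2}\xi_i$ together with Weyl's law $\lambda_i \asymp i^{2/d}$ — then $Q_n(s)$ behaves like the squared $H^s$-norm of the (quasi-uniform) interpolant of $u$. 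Concretely, $Q_n(s)$ stays bounded (indeed converges) when $s < s_0$ and diverges, at a rate polynomial in $n$, when $s > s_0$. The matching of the divergence rate of $Q_n(s)$ against the growth rate $\frac{2(s-s_0)}{d} n \log n$ contributed by the log-determinant difference is what forces any maximizer $\hat s_n$ to concentrate at $s_0$: for $s$ bounded away from $s_0$ on either side, $\ell(s; u(\mb x)) - \ell(s_0; u(\mb x)) \to -\infty$.

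To make the two regimes precise I would argue as follows. For $s < s_0$: write $\ell(s_0) - \ell(s) = \frac12[\log\det K_s(\mb x) - \log\det K_{s_0}(\mb x)] + \frac12[Q_n(s_0) - Q_n(s)]$; the first bracket is $\asymp \frac{2(s_0-s)}{d} n\log n \to +\infty$, while the second bracket is $o(n\log n)$ because both quadratic forms are controlled by the (finite) $H^{s_0-d/2-\epsilon}$-mass of $u$ — here I need a uniform-in-$s$ lower bound on $K_s(\mb x)$'s spectrum and the sampling inequality for Sobolev functions to bound $Q_n(s)$ from above by $C n^{2(s-s_0+d/2+\epsilon)/d}$ or similar. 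For $s > s_0$: the log-determinant difference is now a \emph{negative} multiple of $n\log n$, so we need $Q_n(s) - Q_n(s_0)$ to be genuinely \emph{larger} in order than $n\log n$; this is where I expect the main obstacle. One must show that the interpolant of $u$ has $H^s$-norm blowing up at least as fast as $n^{2(s-s_0)/d}\log n$ (up to lower order), which requires a lower bound on $\|u\|$-type quantities that holds with high probability — and in the non-Gaussian case this demands a second-moment or anti-concentration estimate on the coefficients $\xi_i$ rather than the clean Gaussian computation. I would prove the Gaussian almost-sure statement first, using a Borel–Cantelli argument on the event that $u \notin H^{s_0-d/2+\epsilon}$ together with the exact $H^{s}$-trace growth, and then weaken to convergence in probability for general $\{\xi_i\}$ by replacing the almost-sure control of the tail of $\sum (\tau_0+\lambda_i)^{-s_0}\xi_i^2$ with a Markov/Chebyshev bound, which is why the non-Gaussian case only gives the threshold $s_0 > d$ (the extra slack $s_0 > d$ versus $s_0 > d/2$ buys enough room for the weaker concentration).
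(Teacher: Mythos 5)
Your high-level plan — split the likelihood ratio into a log-determinant term and a quadratic-form term, relate the quadratic form to RKHS interpolation, and show the ratio diverges to $-\infty$ uniformly away from $s_0$ — is the same skeleton the paper uses. But there are several genuine gaps and misattributions.

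First, the technical heart of the proof is the \emph{lower} bound on $Q_n(s) = u(\mb{x})^{\top}K_s(\mb{x})^{-1}u(\mb{x}) = \lVert m_{s,n}\rVert_s^2$ for $s > s_0 + \delta$, and you only say ``this is where I expect the main obstacle.'' The paper establishes this via two separate pieces of machinery: an \emph{escape} estimate showing the interpolant approximates $u$ in $L_2$ at rate $n^{-s_0/d + 1/2 + \epsilon}$ (Proposition~\ref{prop:escape_manifold}, using bandlimited approximations of Narcowich–Ward–Wendland type transported to the manifold via the extension/restriction operators of Theorem~\ref{thm:manifold_extension_theorem}), and a converse estimate (Proposition~\ref{prop:lower_bound_norm_interpolant}) that turns a fast $L_2$-approximation rate into a lower bound on the $\mathcal{H}^s$-norm of the interpolant. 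This converse estimate itself needs a probabilistic lower bound on the tail Fourier mass of $u$, $\sum_{i\ge R}(\tau_0+\lambda_i)^{-s_0}\xi_i^2 \gtrsim R^{-2s_0/d+1}$, which the paper proves via truncation plus a sub-Gaussian tail bound and Borel--Cantelli (Proposition~\ref{prop:smoothness_limit}). None of this structure appears in your sketch.

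Second, you misdiagnose where $s_0 > d$ is used. You attribute it to weaker concentration for non-Gaussian $\{\xi_i\}$, but it has nothing to do with concentration: it is the threshold at which the escape estimate of Proposition~\ref{prop:escape_manifold} applies, because that result is stated for $f \in H^{s_0}_L$ with $s_0 > d$ (so the sample paths are continuous and the $L_\infty$-to-$L_2$ sampling machinery is available). In the Gaussian case, the paper recovers $s_0 > d/2$ not by tightening concentration bounds, but by invoking Hölder sample-path regularity (Proposition~\ref{prop:path_property}) and an approximation bound of Hangelbroek–Narcowich–Rieger–Ward in terms of Hölder norms; the in-probability vs.\ almost-sure distinction then comes from replacing a Markov bound by the strong law of large numbers applied to $\lVert m_{s_0,n}\rVert_{s_0}^2 = \sigma_0^2\sum_i \tilde\xi_{n,i}^2$.

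Third, some of your asserted estimates are wrong or unsupported. The claim that $Q_n(s)$ ``stays bounded (indeed converges)'' for $s < s_0$ is false for $s_0 - d/2 < s < s_0$, where $u \notin \mathcal{H}^s$ and $Q_n(s)$ diverges; the paper simply drops $-Q_n(s) \le 0$ and controls $Q_n(s_0)$. The smallest-eigenvalue-plus-trace argument you propose for $\log\det K_s(\mb{x})$ does not yield the asymptotics; the paper instead uses the Sherman--Morrison factorization $\det K_s(\mb{x}) = \prod_i \V_s(x_i \mid \mb{x}^{i-1})$ and two-sided bounds on the conditional variances (power function bounds), giving the leading term $-\frac{2s-d}{d}n\log n$, not $-\frac{2s}{d}n\log n$. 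Finally, the norm monotonicity assumption is not about ``sub-leading corrections to the log-determinant''; it is used to get a lower bound $\V_s(x_i\mid\mb{x}_i^n) \gtrsim q_{\mb{x}}^{2S_{\max}-d}$ uniformly in $s\le S_{\max}$ and to chain minimum-norm inequalities $\lVert m_{s_0+\delta,n}\rVert_{s_0+\delta} \le \lVert m_{s,n}\rVert_{s_0+\delta} \le C\lVert m_{s,n}\rVert_s$ for $s \ge s_0+\delta$, so that the quadratic-form lower bound holds uniformly over $s$.
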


These results are of practical importance, as we show that the smoothness parameter can be consistently estimated by using \textit{any} family of kernels whose reproducing kernel Hilbert spaces are the Sobolev scale of Hilbert spaces.
This enables the estimation of smoothness by using, for example, the generalized Wendland class of compactly supported kernels that are computationally cheaper than Matérn kernels.
It also shows that the parameters $\sigma^2$ and $\tau$ do not need to be known to estimate the smoothness.
Moreover, in Section~\ref{sec:magnitude-estimation} we establish an interesting connection between the parameters $\sigma^2$ and $s$, namely that the estimation of $\sigma^2$ compensates to some extent for misspecification of smoothness.
Again, these results do not require any Gaussianity assumptions.
This is similar to the results in~\cite{Karvonen2020, Karvonen2021}; and \citet{Naslidnyk2023}.
Although our main focus is on stochastic processes on Riemannian manifolds, we provide essentially the same results for processes observed on bounded Euclidean domains based on Dirichlet Laplacian operators \citep{Lindgren_2011, bolin_2023}.

All of the above mentioned work on estimating smoothness assumes that the sampling sites are sufficiently uniform in some sense.
We assume that the sampling sites are quasi-uniform, an assumption that is frequently used in the function approximation literature and therefore allows us to exploit the wealth of results obtained there.
Although it is a rather strict assumption, quasi-uniformity is more general than the uniformity assumptions used in~\cite{Chen2021} and \citet{Petit2023}.
As far as we know, no consistency results have yet been obtained under this assumption in the context of estimating covariance parameters for stochastic processes.
However, since the samples of a stochastic process with a Whittle--Matérn covariance kernel are homogeneous in the sense that their smoothness does not vary spatially, \emph{any} sequence of sampling sites should suffice.
Indeed, many results for estimating the microergodic parameter do not include assumptions on the sampling sites~\citep[e.g.,][]{Ying1991, zhang_2004, Kaufman2013, Li2022}.
Refined techniques are needed to avoid uniformity assumptions when estimating smoothness.

The rest of the work is structured as follows.
In Section~\ref{sec:background} we introduce the manifold setting in which we work and give some functional analytical background information.
In Section~\ref{sec:matern_processes_manifold} we introduce our model and provide some regularity results.
Our main results are presented in Section~\ref{sec:MLE}.
Our techniques are similar to those used in \citet{karvonen2023asymptotic} and build on known results from the literature on function approximation.
Furthermore, we provide additional results for the Gaussian case, discuss the estimation with other kernels and briefly the estimation of the microergodic parameters.
In Section~\ref{sec:equivalent_measures} we state our results concerning the equivalence of measures.
The main tool is a theorem of Kakutani~\citep{kakutani_1948} on the equivalence of infinite product measures.
In Section \ref{sec:dirichlet_laplacian} we present a simple translation of our results for processes on bounded Euclidean domains based on Dirichlet Laplacian operators.
Finally, in Section \ref{sec:simulation_study} we present a simulation study assessing our theoretical results regarding the stability of smoothness estimation against model misspecification and evaluating finite sample properties.

We briefly introduce some standard notation.
We equip the Euclidean space $\mathbb{R}^k$ with the Euclidean norm $\lVert \cdot \rVert$.
All other norms are specifically defined.
For two sequences $a_n$ and $b_n$ we write $a_n \lesssim b_n$ if $a_n \leq C b_n$ for a positive constant $C$.
If $a_n \lesssim b_n$ and $b_n \lesssim a_n$, then $a_n \asymp b_n$.
Finally, we generally use $c$ and $C$ for generic positive constants that occur in lower and upper bounds, respectively.
These constants may differ from one line to the next.

\section{Setting and Background}\label{sec:background}

This work focuses on inference for stochastic processes $\{Y(x) ~ \vert~ x \in \mathcal{M}\}$ indexed by a connected, closed, and embedded $d$-dimensional Riemannian submanifold $\mathcal{M}$ of $\mathbb{R}^k$ for some $k \in \mathbb{N}$~\citep[cf. chapter 8;][]{lee_riemannian}.  
A canonical example of such a manifold which is relevant in many applications is the sphere $\mathbb{S}^{d-1}$. 
Very briefly, closed means $\mathcal{M}$ is compact and has no boundary.
Riemannian means that it is equipped with a Riemannian metric $g$ that allows the definition of a metric on $\mathcal{M}$.
Embedded Riemannian submanifold means that $\mathcal{M} \subset \mathbb{R}^k$ and the Riemannian metric $g$ is induced by the Riemannian metric of the ambient space $\mathbb{R}^k$.
We provide a more detailed description of our manifold setting in Appendix \ref{sec:appendix_manifold} and additional background on smooth manifolds can be found in the books ~\citet{lee_smooth, lee_riemannian}.
While assuming that $\mathcal{M}$ is an embedded submanifold of $\mathbb{R}^k$ may appear restrictive, the Nash embedding theorem states that any smooth Riemannian manifold can be isometrically embedded into $\mathbb{R}^k$ for $k$ large enough.

Let $d_{\mathcal{M}}$ be the distance function on~$\mathcal{M}$ based on the Riemannian metric $g$.
Then $(\mathcal{M}, d_{\mathcal{M}})$ is a complete metric space.
We let $\mathrm{d}\lambda_{\mathcal{M}}$ denote the Riemannian volume form, the canonical measure on $\mathcal{M}$ and define $L_2(\mathcal{M})$ as the Hilbert space of equivalence classes of square-integrable functions equipped with the usual inner product $\langle \cdot, \cdot \rangle_0$ and the norm $\lVert \cdot \rVert_0$.
We denote the space of continuous functions by $C(\mathcal{M})$ and the infinitely often differentiable functions by $C^\infty (\mathcal{M})$.

Of some importance to this work is the manifold analog of the Laplacian operator, called the Laplace--Beltrami operator, which we denote by $\Delta$.
The standard definition can be found in Appendix \ref{sec:appendix_manifold}.
By Theorem~1.3 in \citep[Chapter~3]{Craioveanu_2001} there is an orthonormal basis $\{e_i\}$ of $L_2(\mathcal{M})$ consisting of infinitely often differentiable eigenfunctions of $-\Delta$ such that the corresponding eigenvalues $\{\lambda_i\}$ satisfy Weyl's law.
That is, $\lambda_1 = 0$ and there are $C,c > 0$ such that
\begin{align} \label{eq:weyls-law}
    ci^{2/d} \leq \lambda_i \leq Ci^{2/d} \quad \text{ for all } \quad i \geq 2.
\end{align}

\subsection{Function Spaces} \label{sec:function-spaces}

We now present two families of function spaces that are useful for studying the sample paths of stochastic processes.

We define the Sobolev spaces on $\mathbb{R}^d$ as Bessel potential spaces and refer to \citep[Chapters 1 and 2]{triebel_1991} for other equivalent definitions.
Define the Fourier transform $\hat{f}$ of $f \in L_2(\mathbb{R}^d)$ as $\hat{f}(\xi) = \int_{\mathbb{R}^d} f(x)e^{-i x^{\top}\xi} \, \mathrm{d}x$.
The Sobolev space of order $s \geq 0$ is 
\begin{align*}
    H^s(\mathbb{R}^d) = \left\{ f\in L_2(\mathbb{R}^d) ~ \middle \vert ~ \lVert f \rVert_{H^s(\mathbb{R}^d)}^2 \coloneqq  \int_{\mathbb{R}^d} (1 + \lVert \xi \rVert^2)^s \lvert \hat{f}(\xi)\rvert^2 \, \mathrm{d}\xi < \infty \right\}. 
\end{align*}

On closed manifolds we define Sobolev spaces via the eigenpairs $\{(e_i, \lambda_i)\}$ of $-\Delta$.
For other definitions and equivalence results, see Chapter~7 in~\citet{triebel_1991}.
For a real $a$, we write $a - \Delta$ for $aI - \Delta$, where $I$ is the identity.
The operator $1 - \Delta$ must have positive eigenvalues $\{1 + \lambda_i\}$ and from Weyl's law~\eqref{eq:weyls-law} we have $ci^{2/d} \leq 1 + \lambda_i \leq C i^{2/d}$ for all $i \geq 1$.
For $s \in \mathbb{R}$ we define the fractional powers $(1-\Delta)^s$ via the functional calculus 
\begin{align*}
    (1 - \Delta)^s f = \sum_{i = 1}^\infty (1 + \lambda_i)^{s}\langle f, e_i \rangle_0 e_i .
\end{align*}
We may now define the Sobolev space of order $s \geq 0$ on $\mathcal{M}$ as 
\begin{align*}
    H_L^{s} &= \bigg\{f \in L_2(\mathcal{M}) ~ \biggl\vert ~ \lVert f \rVert_{H^s_L}^2 \coloneqq \lVert (1 - \Delta)^{s/2} f \rVert_0^2 =  \sum_{i = 1}^\infty (1 + \lambda_i)^{s} \langle f, e_i \rangle^2_0 < \infty \bigg\} \\
    &= \big\{f \in L_2(\mathcal{M}) ~\bigl\vert ~ (1- \Delta)^{s/2}f \in L_2(\mathcal{M}) \big\}.
\end{align*}
It is well known and easy to show that these spaces are Hilbert spaces with the inner product 
\begin{align*}
    \langle f, g \rangle_{H^s_L} = \langle (1 - \Delta)^{s/2}f, (1 - \Delta)^{s/2}g \rangle_0,
\end{align*}
see \citet{strichartz_83}.
The following result nicely illustrates the connection between Sobolev spaces on manifolds and their ambient Euclidean spaces.
See~\citet{grosse_2013} for a result for more general submanifolds.
\begin{theorem}[Theorem 1 in \citealp{skrzypczak_1990}] \label{thm:manifold_extension_theorem}
    Let $s \geq 0$. The restriction operator $T_{\mathcal{M}} f = f\vert_{\mathcal{M}}$ is a bounded linear operator $T_{\mathcal{M}}: H^{s + (k-d)/2}(\mathbb{R}^k) \to H^s_L$.
    Moreover, $T_{\mathcal{M}}$ has a bounded inverse operator $E_{\mathcal{M}}: H^s_L \to H^{s + (k-d)/2}(\mathbb{R}^k)$.
\end{theorem}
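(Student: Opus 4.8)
This is a trace-and-extension theorem, so the plan is to \emph{localize} it and reduce to the classical trace theorem for a $d$-dimensional coordinate subspace of $\mathbb{R}^k$; throughout write $t = s + (k-d)/2$. First I would replace the spectrally defined norm: by Theorem~7.4.5 in \citet{triebel_1991} the norm $\lVert f \rVert_{H^s_L} = \lVert (1-\Delta)^{s/2} f \rVert_0$ is equivalent to the chart-localized quantity
\begin{align*}
  N_s(f)^2 \coloneqq \sum_i \big\lVert (\psi_i f)\circ \phi_i^{-1} \big\rVert_{H^s(\mathbb{R}^d)}^2 ,
\end{align*}
with $\{(\Omega_i, \phi_i)\}$ the normal-coordinate atlas and $\{\psi_i\}$ the subordinate partition of unity fixed in Section~\ref{sec:manifold+laplacian}, so it suffices to prove $N_s(f|_{\mathcal{M}}) \lesssim \lVert f \rVert_{H^t(\mathbb{R}^k)}$ together with the existence of a right inverse bounded in $N_s$. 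Next, since $\mathcal{M} \subset \mathbb{R}^k$ is a closed embedded submanifold it has a tubular neighbourhood; composing the tubular-neighbourhood diffeomorphism with the $\phi_i$ yields, on a neighbourhood $V_i \subset \mathbb{R}^k$ of each $\Omega_i$, a diffeomorphism $\Phi_i$ with all derivatives bounded (uniformly over the finitely many $i$) such that $\Phi_i(\mathcal{M}\cap V_i) = \phi_i(\Omega_i)\times\{0\} \subset \mathbb{R}^d\times\mathbb{R}^{k-d}$ and $\Phi_i|_{\mathcal{M}\cap V_i} = \phi_i$. Since Sobolev spaces on $\mathbb{R}^k$ and $\mathbb{R}^d$ are invariant, with two-sided norm bounds, under composition with such diffeomorphisms and under multiplication by smooth compactly supported cutoffs, both halves reduce to the flat model: the restriction $u \mapsto u(\cdot,0)$ from $H^t(\mathbb{R}^k)$ into $H^s(\mathbb{R}^d)$, and a right inverse for it.

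For the flat model, writing $\xi = (\xi',\xi'') \in \mathbb{R}^d\times\mathbb{R}^{k-d}$, the Fourier transform of $u(\cdot,0)$ is $\xi' \mapsto (2\pi)^{-(k-d)}\int_{\mathbb{R}^{k-d}} \hat u(\xi',\xi'')\,\mathrm{d}\xi''$; Cauchy--Schwarz and the identity $\int_{\mathbb{R}^{k-d}}(1+\lvert\xi'\rvert^2+\lvert\xi''\rvert^2)^{-t}\,\mathrm{d}\xi'' \asymp (1+\lvert\xi'\rvert^2)^{-s}$, valid because $t > (k-d)/2$, give $\lVert u(\cdot,0)\rVert_{H^s(\mathbb{R}^d)} \lesssim \lVert u \rVert_{H^t(\mathbb{R}^k)}$. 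For the extension I would use the Fourier-multiplier right inverse $\widehat{\mathrm{ext}\,g}(\xi',\xi'') = \langle\xi'\rangle^{-(k-d)}\hat g(\xi')\,\chi(\xi''/\langle\xi'\rangle)$ with $\langle\xi'\rangle = (1+\lvert\xi'\rvert^2)^{1/2}$, $\chi \in C_c^\infty(\mathbb{R}^{k-d})$ and $\int\chi = (2\pi)^{k-d}$; a direct computation gives $\mathrm{ext}\,g(\cdot,0) = g$ and $\lVert\mathrm{ext}\,g\rVert_{H^t(\mathbb{R}^k)} \lesssim \lVert g\rVert_{H^s(\mathbb{R}^d)}$. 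One then transports each $\mathrm{ext}$ back through $\Phi_i^{-1}$ and $\phi_i$ and glues the local extensions with a partition of unity $\{\chi_i\}$ on a neighbourhood of $\mathcal{M}$ in $\mathbb{R}^k$ chosen so that $\chi_i|_{\mathcal{M}} = \psi_i$. Summing the local estimates and invoking the norm equivalence yields bounded $T_{\mathcal{M}}$ and $E_{\mathcal{M}}$, and the compatibilities $\chi_i|_{\mathcal{M}} = \psi_i$, $\Phi_i|_{\mathcal{M}\cap V_i} = \phi_i$ force $T_{\mathcal{M}}E_{\mathcal{M}} = \mathrm{id}_{H^s_L}$; since $T_{\mathcal{M}}$ annihilates functions vanishing on $\mathcal{M}$, the ``bounded inverse'' in the statement is necessarily this bounded right inverse (section).

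The Fourier estimates and the diffeomorphism invariance are routine; the real work is organizational. The main obstacle is setting up the straightening step and the gluing so that the diffeomorphisms $\Phi_i$ remain compatible with the \emph{prescribed} normal-coordinate atlas used to define $\lVert\cdot\rVert_{H^s_L}$, and so that the glued extension genuinely inverts restriction --- this last requirement is exactly what forces the $\mathbb{R}^k$-partition $\{\chi_i\}$ to restrict to $\{\psi_i\}$ on $\mathcal{M}$, and making all the cutoffs and overlaps fit together cleanly is the bulk of a careful proof. A secondary delicacy is the endpoint $s = 0$: since $k > d$ for every closed manifold, $t = (k-d)/2$ is then the critical exponent for the final trace step and the integral above diverges, so there one must invoke the trace theorem in its sharp (Besov-scale) form; for the present paper this is moot, since the application only ever needs $s > d/2 > 0$, for which the argument runs verbatim.
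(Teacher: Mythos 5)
The paper does not prove this statement: it is imported verbatim from the literature (Theorem~1 of Skrzypczak, 1990), so there is no in-house proof to compare your argument against. What you have written is a self-contained sketch of the standard route to such a trace--extension theorem — localize via the Triebel chart-based description of $H^s_L$, straighten each chart with a tubular-neighbourhood diffeomorphism, reduce to the flat trace $H^{s+(k-d)/2}(\mathbb{R}^k) \to H^s(\mathbb{R}^d)$, and build the right inverse with the Fourier-multiplier extension $\widehat{\mathrm{ext}\,g}(\xi',\xi'') = \langle\xi'\rangle^{-(k-d)}\hat g(\xi')\chi(\xi''/\langle\xi'\rangle)$. The flat-model computations you give are correct: Cauchy--Schwarz in $\xi''$ with $\int(1+|\xi'|^2+|\xi''|^2)^{-t}\,\mathrm{d}\xi'' \asymp \langle\xi'\rangle^{-2s}$ for $t > (k-d)/2$ yields the trace bound, and the multiplier extension restricts correctly and lands in $H^t$ with the right norm. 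You are also right that ``inverse'' in the statement can only mean a bounded \emph{right} inverse (a section), since $T_{\mathcal{M}}$ has an enormous kernel.

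Two small cautions. First, the gluing step as written is not quite consistent: if $w_i$ is your local extension and you set $E_{\mathcal{M}}g = \sum_i \chi_i w_i$ with $\chi_i|_{\mathcal{M}} = \psi_i$, then you need each $w_i$ to restrict to \emph{all of} $g$ on a neighbourhood of $\supp\psi_i$ in $\mathcal{M}$, not just to $\psi_i g$; in practice one introduces an auxiliary family of cutoffs $\eta_i$ with $\eta_i \equiv 1$ on $\supp\psi_i$ and $\supp\eta_i \subset \Omega_i$, extends $\eta_i g$, and then the identity $\sum_i \psi_i\eta_i g = g$ makes $T_{\mathcal{M}}E_{\mathcal{M}} = \mathrm{id}$ come out. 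This is exactly the kind of ``organizational'' issue you flag, but as written your description of the cutoff system would give $\sum_i \psi_i^2 g$ rather than $g$. Second, you correctly identify that the argument degenerates at $s = 0$ (where $t = (k-d)/2$ is the critical exponent and the $\xi''$-integral diverges); since the theorem is stated for $s \ge 0$ this is a genuine endpoint gap in your proof, but, as you also note, every use of the theorem in the paper has $s > d/2 > 0$, so for the paper's purposes nothing is lost.
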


In the definition of $H^s_L$, we could have used the operator $\tau - \Delta$ instead for any $\tau > 0$.
Clearly, the resulting spaces are norm equivalent.
In the statistical context considered in the remainder of the paper, $\tau$ has a concrete interpretation as a correlation range scale parameter.
To account for $\tau$, we introduce a modified inner product and norm on $H_L^s$.
Let $\theta = (s, \tau)$ and 
\begin{align*}
    \langle f, g \rangle_{\theta} \coloneqq \frac{1}{v(\theta)} \big\langle (\tau - \Delta)^{s/2}f, (\tau - \Delta)^{s/2}g \big\rangle_0 \quad \text{ and } \quad \lVert f \rVert_\theta = \sqrt{\langle f, f \rangle_\theta},
\end{align*}
where $v(\theta)$ is a positive ``normalizing'' function.
It is easy to show that the norms $\lVert \cdot \rVert_{H^s_L}$ and $\lVert \cdot \rVert_{\theta}$ are equivalent.
The function $v(\theta)$ will be discussed more extensively in Section~\ref{sec:kernel_properties}.

We also need the following closely related sequences spaces.
For $s \in \mathbb{R}$, define the scale of Hilbert spaces \smash{$\hat{H}_L^s$} and their inner product as
\begin{align*}
    \hat{H}^{s}_L = \bigg\{ a = \{a_i\}_{i = 1}^\infty \subset \mathbb{R} ~ \biggl\vert ~ \sum_{i = 1}^\infty a_i^2 (1 + \lambda_i)^s < \infty \bigg\} \:\: \text{ and } \:\: \langle a, b \rangle_{\hat{\theta}} = \frac{1}{v(\theta)}\sum_{i = 1}^\infty (\tau + \lambda_i)^s a_ib_i.
\end{align*}
For $s \geq 0$ we have the obvious isomorphism \smash{$\iota : H^{s}_L \to \hat{H}^{s}_L$} given by $(\iota(f))_i = \langle f, e_i \rangle_0$.
Observe that \smash{$\hat{H}^0_L$} equals the Hilbert space of square summable sequences.

Finally for $l \in \mathbb{N}, 0 \le \alpha < 1$, we define $C^{l, \alpha}(\mathcal{M})$ as $l$-times continuously differentiable functions such that $l$-th derivative is $\alpha$-Hölder continuous (a precise definition can be found in Appendix \ref{sec:appendix_manifold}). 
The Hölder spaces will appear in connection with sample path properties of Gaussian processes.
Perhaps more importantly for our purposes, the Sobolev embedding theorem states that the Sobolev space $H^{s}_L$ for $s > d/2$ can be continuously embedded in $C^{l, \alpha}(\mathcal{M})$ for all $ l + \alpha < s - d/2$; see \citet[Theorem~2.20]{aubin_1998} or \citet[Section~3.3]{hangelbroek_2010}.
This implies that the point evaluation functionals on $H^{s}_L$ are bounded and thus the spaces $H^s_L$ are reproducing kernel Hilbert spaces (RKHS) for $s > d/2$.
We identify the reproducing kernels, which are precisely the Whittle-Mat\'ern Kernels, and discuss their properties in the next section.

\subsection{Kernels}\label{sec:kernel_properties}
Note that the operator $\sqrt{\smash[b]{v(\theta)}}(\tau - \Delta)^{-s/2} : L_2(\mathcal{M}) \to H^{s}_L \subset L_2(\mathcal{M})$ is Hilbert--Schmidt for $s > d/2$ as 
\begin{align*}
    \sum_{i = 1}^\infty \big\lVert \sqrt{\smash[b]{v(\theta)}}(\tau - \Delta)^{-s/2} e_i \big\rVert_0^2 = v(\theta)\sum_{i = 1}^\infty(\tau + \lambda_i)^{-s} \leq Cv(\theta) \sum_{i = 1}^\infty  i^{- 2s/d } < \infty.
\end{align*}
It thus follows \citep[Theorem VI.23]{reed_1980} that $\sqrt{\smash[b]{v(\theta)}}(\tau - \Delta)^{-s/2}$ is an integral operator with the corresponding kernel $K_\theta \in L_2(\mathcal{M} \times \mathcal{M})$ given by 
\begin{align}
    K_{\theta}(x, y) = v(\theta)\sum_{i = 1}^\infty (\tau + \lambda_i)^{-s}e_i(x) e_i(y) \label{eq:kernel}.
\end{align}
The kernel $K_\theta$ is the reproducing kernel of $H^{s}_L$ endowed with the $\langle \cdot, \cdot \rangle_{\theta}$ inner product.
The pointwise definition makes sense as $e_i$ can be identified with $C^{\infty}(\mathcal{M})$ functions.
We write $K_{\theta}(x)$ for $K_{\theta}(x,x)$.
The following proposition collects some facts about $K_\theta$.
Additional regularity properties of the kernel can be established via the results obtained in \citet{Kerkyacharian_2018}.

\begin{proposition}\label{prop:kernel_properties}
    Let $K_{\theta}$ be the kernel defined in \eqref{eq:kernel} with $s > d/2$.
    Then 
    \begin{enumerate}[label=(\roman*)]
        \item $K_{\theta}$ is continuous and the series in \eqref{eq:kernel} converges uniformly and absolutely. \label{enum:kernel_prop_cont}
        \item $K_{\theta}$ is strictly positive definite. \label{enum:kernel_prop_pd}
    \end{enumerate}
\end{proposition}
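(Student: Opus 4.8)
The plan is to handle the two assertions in turn, with essentially all of the effort going into part~\ref{enum:kernel_prop_cont}.

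For part~\ref{enum:kernel_prop_cont}, write $\mu_i \coloneqq \tau + \lambda_i$, so that $\mu_i \asymp i^{2/d} \to \infty$ by Weyl's law~\eqref{eq:weyls-law}, and set $K_{\theta,N}(x,y) \coloneqq v(\theta)\sum_{i=1}^N \mu_i^{-s} e_i(x) e_i(y)$. Each $K_{\theta,N}$ is continuous (indeed smooth), so it suffices to show that $\{K_{\theta,N}\}$ is Cauchy in $C(\mathcal{M}\times\mathcal{M})$ for the supremum norm; the uniform limit is then continuous and is the function $K_\theta$ of~\eqref{eq:kernel}. Applying the Cauchy--Schwarz inequality to the sum over $i$ gives, for $M < N$,
\begin{equation*}
  \sup_{x,y\in\mathcal{M}} \bigg| \sum_{i=M+1}^N \mu_i^{-s} e_i(x) e_i(y) \bigg| \;\le\; \sup_{z\in\mathcal{M}} \sum_{i=M+1}^{\infty} \mu_i^{-s}\, e_i(z)^2 \;=:\; R_M ,
\end{equation*}
and the same quantity bounds $\sup_{x,y}\sum_{i>M} \mu_i^{-s} \lvert e_i(x) e_i(y)\rvert$, so it is enough to prove $R_M \to 0$. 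The crude eigenfunction bound $\lVert e_i \rVert_\infty \lesssim \lambda_i^{(d-1)/4}$ is too weak here (it only yields $R_M \to 0$ when $s > d - \tfrac12$), so I would instead fix an auxiliary exponent $s'$ with $d/2 < s' < s$ and argue in two steps. Step one: $B \coloneqq \sup_{z}\sum_{i=1}^\infty \mu_i^{-s'} e_i(z)^2 < \infty$. For any $N$ and any $f = \sum_{i\le N} a_i e_i$, Cauchy--Schwarz gives $f(z)^2 \le \big(\sum_{i\le N} a_i^2 \mu_i^{s'}\big)\big(\sum_{i\le N} \mu_i^{-s'} e_i(z)^2\big)$, with equality for $a_i = \mu_i^{-s'} e_i(z)$, so
\begin{equation*}
  \sum_{i=1}^N \mu_i^{-s'} e_i(z)^2 = \sup\Big\{\, f(z)^2 \;:\; f \in \mathrm{span}\{e_1,\dots,e_N\},\ \lVert (\tau-\Delta)^{s'/2} f \rVert_0 \le 1 \,\Big\} .
\end{equation*}
Since $\lVert (\tau-\Delta)^{s'/2} f \rVert_0 \asymp \lVert f \rVert_{H^{s'}_L}$ with constants independent of $N$ and $z$, and $H^{s'}_L \hookrightarrow C(\mathcal{M})$ continuously because $s' > d/2$ (the Sobolev imbedding theorem, cf.\ \citet[Theorem~2.20]{aubin_1998}), the right-hand side is bounded by a constant independent of $N$ and $z$; letting $N\to\infty$ gives $B < \infty$. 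Step two: since $s-s' > 0$ and $\mu_i$ is nondecreasing,
\begin{equation*}
  \sum_{i>M} \mu_i^{-s} e_i(z)^2 = \sum_{i>M} \mu_i^{-(s-s')} \mu_i^{-s'} e_i(z)^2 \le \mu_{M+1}^{-(s-s')}\sum_{i>M} \mu_i^{-s'} e_i(z)^2 \le B\, \mu_{M+1}^{-(s-s')} ,
\end{equation*}
whence $R_M \le B\,\mu_{M+1}^{-(s-s')} \to 0$ by Weyl's law. This establishes the uniform and absolute convergence of~\eqref{eq:kernel} and the continuity of $K_\theta$. (An alternative route to $R_M \to 0$ is the local Weyl law $\sum_{\lambda_i \le t} e_i(z)^2 \asymp t^{d/2}$ uniformly in $z$ together with Abel summation, or the pointwise kernel estimates of \citet{Kerkyacharian_2018}.)

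For part~\ref{enum:kernel_prop_pd}, let $x_1,\dots,x_n \in \mathcal{M}$ be pairwise distinct and $c \in \mathbb{R}^n \setminus \{0\}$. Using the uniform and absolute convergence just proved to exchange the finite and infinite sums,
\begin{equation*}
  \sum_{j,l=1}^n c_j c_l K_\theta(x_j, x_l) = v(\theta) \sum_{i=1}^\infty \mu_i^{-s} \bigg( \sum_{j=1}^n c_j e_i(x_j) \bigg)^{\!2} \ge 0 ,
\end{equation*}
and since every summand is nonnegative and $v(\theta)\mu_i^{-s} > 0$, equality would force $\sum_{j=1}^n c_j e_i(x_j) = 0$ for all $i$. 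Pick $j_0$ with $c_{j_0} \neq 0$ and a smooth bump function $\chi \in C^\infty(\mathcal{M})$ equal to $1$ at $x_{j_0}$ and vanishing at the other $x_j$ (such a function exists because $\mathcal{M}$ is a smooth manifold and the points are distinct); then $\chi \in H^s_L$ and its eigenfunction expansion $\chi = \sum_i \langle \chi, e_i \rangle_0 e_i$ converges uniformly (Sobolev imbedding again), so
\begin{equation*}
  c_{j_0} = \sum_{j=1}^n c_j \chi(x_j) = \sum_{i=1}^\infty \langle \chi, e_i \rangle_0 \sum_{j=1}^n c_j e_i(x_j) = 0 ,
\end{equation*}
a contradiction. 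Hence the Gram matrix is positive definite. (Equivalently, $K_\theta$ being the reproducing kernel of $H^s_L$, the quadratic form equals $\lVert \sum_j c_j K_\theta(\cdot, x_j) \rVert_\theta^2$, which vanishes only if all point evaluations at the $x_j$ coincide on $H^s_L \supseteq C^\infty(\mathcal{M})$, impossible for distinct points.)

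The only genuine obstacle is the uniform convergence in part~\ref{enum:kernel_prop_cont} under the sharp hypothesis $s > d/2$: the elementary sup-norm bound on eigenfunctions does not suffice, so one must extract the uniform bound $\sup_z \sum_i \mu_i^{-s'} e_i(z)^2 < \infty$ from finiteness of the imbedding constant $H^{s'}_L \hookrightarrow C(\mathcal{M})$ for each $s' > d/2$, and then spend the surplus smoothness $s - s' > 0$ against the eigenvalue growth to drive the tails to zero uniformly. Part~\ref{enum:kernel_prop_pd} is then routine.
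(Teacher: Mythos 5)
Your proof is correct, and for part~\ref{enum:kernel_prop_cont} it takes a genuinely different route from the paper. The paper argues \emph{softly}: it invokes the Sobolev imbedding $H^s_L \hookrightarrow C^{0,\alpha}(\mathcal{M})$ together with Arz\`ela--Ascoli to conclude that $H^s_L \hookrightarrow C(\mathcal{M})$ is \emph{compact}, deduces continuity of $K_\theta$ from a general result on integral operators (Lemma~2.2 of \citealp{ferreira_2013}), and then gets uniform and absolute convergence of~\eqref{eq:kernel} from continuity, compactness of $\mathcal{M}$, and Dini's theorem --- in effect Mercer's theorem. You argue \emph{hard}: you extract a quantitative uniform bound $B = \sup_z \sum_i (\tau+\lambda_i)^{-s'} e_i(z)^2 < \infty$ for any $d/2 < s' < s$ directly from the operator norm of the Sobolev imbedding $H^{s'}_L \hookrightarrow C(\mathcal{M})$ (via the extremal characterization of the Christoffel-type sum), and then spend the surplus exponent $s - s' > 0$ against Weyl's law to drive the tails $R_M \le B\,\mu_{M+1}^{-(s-s')}$ to zero uniformly. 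Your route is more self-contained and yields explicit tail rates, at the cost of a slightly more intricate calculation; the paper's route is shorter but leans on the Ferreira lemma and a Dini/Mercer argument as black boxes. Both observe correctly that the crude sup-norm eigenfunction bound $\lVert e_i\rVert_\infty \lesssim \lambda_i^{(d-1)/4}$ is too weak to close the argument under the sharp hypothesis $s > d/2$. For part~\ref{enum:kernel_prop_pd} the two proofs are essentially the same idea in different clothing: the paper writes the Gram form as $\lVert \sum_j c_j K_\theta(\cdot, x_j)\rVert_\theta^2$ and uses linear independence of the $K_\theta(\cdot, x_j)$ plus bump functions in $H^s_L$; you write it spectrally as $v(\theta)\sum_i \mu_i^{-s}(\sum_j c_j e_i(x_j))^2$ and derive the contradiction by expanding a smooth bump $\chi$ in eigenfunctions --- your parenthetical alternative \emph{is} the paper's argument.
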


\begin{proof}
    Let $\alpha \in (0,1)$ be such that $\alpha < s-d/2$.
    We first prove \ref{enum:kernel_prop_cont}.
    According to the Sobolev embedding theorem, $H^{s}_L \subset C^{0, \alpha}(\mathcal{M})$ and the inclusion operator is continuous.
    It is a well-known consequence of the Arz\`ela--Ascoli theorem that the embedding of $C^{0, \alpha}(\mathcal{M})$ in $C(\mathcal{M})$ is compact, and thus also the embedding of $H^{s}_L$ in $C(\mathcal{M})$.
    The continuity of $K_\theta$ then follows from Lemma~2.2 in~\citet{ferreira_2013}.
    The remaining assertions in \ref{enum:kernel_prop_cont} follow from the continuity of the kernel and the compactness of $\mathcal{M}$ in combination with Dini's theorem; see for example Lemma 4.6.6 in \citet{hsing_2015}.
    To prove \ref{enum:kernel_prop_pd}, we repeat an argument from \citet{hangelbroek_2010}.
    Positive semi-definiteness is obvious.
    For $\mb{x} = (x_1, \dots, x_n)$, define $(K_{\theta}(\mb{x}))_{ij} = K_{\theta}(x_i, x_j)$.
    A straightforward calculation yields
    \begin{align*}
        \mb{c}^{\top}K_{\theta}(\mb{x}) \mb{c} = \sum_{i, j = 1}^n \langle c_i K_{\theta}(\cdot, x_i), c_jK_{\theta}(\cdot, x_j) \rangle_{\theta}^2 = \left\lVert \sum_{i = 1}^n c_i K_\theta(\cdot, x_i) \right\rVert_{\theta}^2.
    \end{align*}
    Assume that $x_i \neq x_j$ for $i \neq j$.
    If the right hand side above is 0 for a non-zero $\mb{c}$, then $\{K_{\theta}(\cdot, x_i)\}$ are \textit{not} linearly independent. 
    This means that, for every $f \in H^{s}_L$,
    \begin{align*}
        f(x_i) = \langle f, K_{\theta}(\cdot, x_i) \rangle_{\theta} = \sum_{j \neq i}\langle f, a_jK_{\theta}(\cdot, x_j) \rangle_{\theta} = \sum_{j \neq i} a_j f(x_j)
    \end{align*}
    for some coefficients $a_j$.
    Because $H^s_L$ contains bump functions such that $f(x_i) \neq 0$ and $f(x_j) = 0$ for $j \neq i$, this yields a contradiction and thus $K_\theta$ is strictly positive definite.
\end{proof}

In addition to $s$ and $\tau$, we consider a magnitude parameter $\sigma^2$, which is included in the model via the scaled kernel function $(x, y) \mapsto \sigma^2 K_{\theta}(x,y)$.

At this point, we want to briefly discuss the normalizing function $v(\theta)$.
Different choices for $v(\theta)$ have been advocated for in the literature.
The function $v(\theta) = \tau^{-s+d/2}$ is used in~\citet{sanz-alonso_2022}, who provide a compelling justification in their Remark~2.1: This choice balances the average marginal variance of a process with covariance function $K_{\theta}(x, y)$ and thus the parameter $\sigma^2$ approximately controls the marginal variance.
The function $v(\theta) = \tau^{-s+d/2}$ also appears in the spectral density of the Euclidean Mat\'ern covariance function given in~\eqref{eq:matern_cov}.
Similarly motivated is the use in \citet{borovitsky_2020} and \citet{li_2023} of the normalizing function
\begin{align} \label{eq:norm-func-narc-boro}
    v(\theta) = \bigg( \frac{1}{\mathrm{vol}(\mathcal{M})}\int_{\mathcal{M}} \sum_{i = 1}^\infty (\tau + \lambda_i)^{-s}e_i(x)^2 \, \mathrm{d} \lambda_{\mathcal{M}} \bigg)^{-1}.
\end{align}
On certain manifolds (e.g., the sphere) $K_{\theta}(x)$ does not depend on $x$~\citep{narcowich_2002, borovitsky_2020}.
With $v(\theta)$ in~\eqref{eq:norm-func-narc-boro}, $\sigma^2$ is exactly the marginal variance.
However, explicit computation of this normalization is difficult.
We do not choose a specific form so that $v(\theta)$ is allowed to depend on the underlying manifold or on specific modelling choices.
Nonetheless, for our results we will require that the Sobolev norms are monotone in the sense that there exists a constant $C$ such that for any $\theta = (s, \tau), \theta^\prime=(s^\prime, \tau^\prime)$, we have 
\begin{align}
    \| f \|_\theta \le C \| f \|_{\theta^\prime} \label{eq:monotone_norms}
\end{align}
whenever $s \le s^\prime$.
If $\theta$ varies only in a compact set $\Theta$ uniformly bounded away from the axis $\tau = 0$ and $s = d/2$ and $v(\theta)$ is positive and continuous, then it is easy to show that \eqref{eq:monotone_norms} holds.

\section{Whittle-Mat\'ern Processes on Closed Manifolds}\label{sec:matern_processes_manifold}
We will consider the following random elements in $L_2(\mathcal{M})$.
Let $(\Omega, \mathcal{F}, \P)$ be a probability space and let $\{\xi_i\}$ be a sequence of i.i.d. random variables defined on $\Omega$ such that $\E(\xi_i) = 0$ and $\E(\xi_i^2) =1$.
Set
\begin{align}
    u =  \sigma \sqrt{\smash[b]{v(\theta)}} \sum_{i = 1}^\infty (\tau + \lambda_i)^{-s/2} \xi_i e_i \label{eq:karhunen_loeve}. 
\end{align}
For $s > d/2$ this is indeed an element of $L_2(\mathcal{M})$.

\begin{proposition}\label{prop:smoothness_class}
  The random element $u$ defined in~\eqref{eq:karhunen_loeve} is almost surely in $H^{s-d/2-\epsilon}_L$ for all $s-d/2 > \epsilon > 0$.
    In particular, $u \in L_2(\mathcal{M})$ if $s > d/2$.
\end{proposition}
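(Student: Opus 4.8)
The plan is to reduce everything to controlling the expected squared Sobolev norm $\E \lVert u \rVert_{H^t_L}^2$ and then to invoke the elementary fact that a nonnegative random variable with finite mean is finite almost surely.

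First I would record that $u$ is a genuine element of $L_2(\mathcal{M})$ and identify its coefficients. Writing $a_i = \sigma \sqrt{v(\theta)}\,(\tau+\lambda_i)^{-s/2}\xi_i$, Tonelli's theorem together with $\E \xi_i^2 = 1$ gives $\E \sum_{i=1}^\infty a_i^2 = \sigma^2 v(\theta)\sum_{i=1}^\infty (\tau+\lambda_i)^{-s}$, which is finite for $s > d/2$ by Weyl's law~\eqref{eq:weyls-law} (the series is comparable to $\sum_i i^{-2s/d}$). Hence $\{a_i\} \in \hat{H}^0_L$ almost surely, and since $\{e_i\}$ is an orthonormal basis of $L_2(\mathcal{M})$ the partial sums of $\sigma\sqrt{v(\theta)}\sum_i (\tau+\lambda_i)^{-s/2}\xi_i e_i$ are almost surely Cauchy in $L_2(\mathcal{M})$; their limit is $u$, with generalized Fourier coefficients $\langle u, e_i\rangle_0 = a_i$ by continuity of the inner product. (Equivalently, the partial sums form a Cauchy, hence convergent, sequence in $L_2(\Omega; L_2(\mathcal{M}))$, and almost sure convergence of the series then follows from the It\^o--Nisio theorem.) This already proves the ``in particular'' claim.

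Next, fix $t \in (0, s - d/2)$. By the definition of the $H^t_L$ norm,
\begin{align*}
  \lVert u \rVert_{H^t_L}^2 = \sum_{i=1}^\infty (1+\lambda_i)^t \langle u, e_i \rangle_0^2 = \sigma^2 v(\theta) \sum_{i=1}^\infty (1+\lambda_i)^t (\tau+\lambda_i)^{-s} \xi_i^2 .
\end{align*}
All summands are nonnegative, so Tonelli's theorem and $\E \xi_i^2 = 1$ yield $\E \lVert u \rVert_{H^t_L}^2 = \sigma^2 v(\theta)\sum_{i=1}^\infty (1+\lambda_i)^t(\tau+\lambda_i)^{-s}$. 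By Weyl's law, $1 + \lambda_i \asymp i^{2/d}$ and (with $\tau$ fixed) $\tau + \lambda_i \asymp i^{2/d}$ for $i \geq 2$, the single term $i=1$ being harmless, so $(1+\lambda_i)^t(\tau+\lambda_i)^{-s} \asymp i^{2(t-s)/d}$ and the series converges exactly when $2(t-s)/d < -1$, i.e.\ when $t < s - d/2$. Thus $\E \lVert u \rVert_{H^t_L}^2 < \infty$, hence $u \in H^t_L$ almost surely. Applying this with $t = s - d/2 - 1/n$ for all sufficiently large $n$, intersecting the countably many almost sure events, and using the inclusion $H^{s'}_L \subset H^{s''}_L$ for $s' \geq s'' \geq 0$ gives $u \in H^{s-d/2-\epsilon}_L$ almost surely for every $\epsilon \in (0, s-d/2)$.

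The computation is routine; the only step demanding any care is the bookkeeping in the second paragraph — making precise in which topology the defining series converges and that its coefficients are the $a_i$ — since everything else is just Tonelli plus Weyl's law, and the constants appearing there cause no trouble because $\tau$ is held fixed throughout.
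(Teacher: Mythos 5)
Your proof is correct and follows essentially the same route as the paper's: compute $\E\lVert u\rVert^2$ in a Sobolev norm of order $s-d/2-\epsilon$ via $\E\xi_i^2 = 1$, bound the resulting series by Weyl's law, and conclude almost sure finiteness (the paper uses monotone convergence where you invoke Tonelli, which amounts to the same thing). The extra bookkeeping you include — first establishing $u\in L_2(\mathcal{M})$ and identifying $\langle u,e_i\rangle_0 = a_i$, and then intersecting over $t = s-d/2-1/n$ to upgrade to a single almost sure event covering all $\epsilon$ — is sound and a bit more careful than the paper's terse treatment, but does not change the nature of the argument.
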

\begin{proof}
    Let $s,\epsilon > 0$ be such that $s - d/2 - \epsilon > 0$, $\tau > 0$ and $\theta = (s-d/2 -\epsilon, \tau)$.
    Let $u_n$ denote the partial sums up to term $n$ and let $A_n = \| u_n \|_{\theta}^2.$
    By monotonicity, $A := \sup_n A_n$ exists and the monotone convergence theorem allows us to compute
    \begin{align*}
        \E (A) = \E (\sup_{n} \| u_n \|^2_\theta) &=\lim_{n \to \infty}\E \bigg\lVert \sigma \sqrt{v(\theta)} \sum_{i = 1}^n (\tau + \lambda_i)^{-s/2} \xi_i e_i \bigg\rVert_{\theta}^2\\
        &= \lim_{n \to \infty} \sigma^2 \sum_{i = 1}^n (\tau + \lambda_i)^{s-d/2-\epsilon - s}\E\left( \xi_i^2\right) \\
        &\leq \lim_{n \to \infty} C \sum_{i = 1}^n  (i^{2/d} )^{-d/2 - \epsilon} \\
        &= \lim_{n \to \infty}C \sum_{i = 1}^n  i^{-1-2\epsilon/d} < \infty,
    \end{align*}
    where we used Weyl's law \eqref{eq:weyls-law} in Line 3.
    In particular, $A$ is almost surely finite so that $A - A_n 
    \to 0$ almost surely. 
    But then for any $N \le n \le m$, 
    \begin{align*}
        \| u_n - u_m \|^2_{\theta} \le A- A_N \to 0, 
    \end{align*}
    showing that $\{u_n\}$ is almost surely a Cauchy sequence in $H_L^{s-d/2 - \epsilon}$ with respect to the equivalent norm $\| \cdot \|_\theta$ and by completeness has thus limit $u \in H_L^{s-d/2 - \epsilon}$.
\end{proof}
Note that $\E(u) = 0$.
Furthermore, the covariance operator of $u$ is by construction
\begin{align*}
 \Cov(u) = \E (u \otimes u) = \sigma^2 v(\theta)(\tau - \Delta)^{-s},
\end{align*}
where for two elements $v_1, v_2 \in L_2(\mathcal{M})$ $(v_1 \otimes v_2)(f) =\langle v_1, f \rangle_0 v_2$. 
If $s > d$, the Sobolev embedding theorem guarantees that $u$ almost surely has continuous sample paths so that the measurability of the pointwise evaluations is not a problem.
Normally, however, the smoothness parameter is considered over the range $s > d/2$, which only guarantees that $u$ is an element in $L_2(\mathcal{M})$.
Fortunately, it follows from the results of Section \ref{sec:kernel_properties} that the assumptions of \citet[Theorem~3.3]{steinwart_2019} are fulfilled for any $s > d/2$.
This theorem ensures the existence of a measurable version $\tilde{u}$ of $u$ such that $\Cov(\tilde{u}(x), \tilde{u}(y)) = \sigma^2 K_{\theta}(x, y)$
and, moreover, $\tilde{u}$ admits the Karhunen--Lo\`eve expansion 
\begin{align*}
    \tilde{u}_{\omega}(x) = \sum_{i = 1}^\infty \langle \tilde{u}_{\omega}, e_i \rangle_0 e_i(x) = \sigma \sqrt{\smash[b]{v(\theta)}}\sum_{i = 1}^\infty  (\tau + \lambda_i)^{-s/2} \xi_i(\omega) e_i(x)
\end{align*}
for $\P$-almost all $\omega$.
Here measurability of $\tilde{u}$ means that the map $(x, \omega) \mapsto \tilde{u}_{\omega}(x)$
is measurable with respect to the product $\sigma$-algebra $\mathcal{F} \otimes \mathcal{B}(\mathcal{M})$, where $\mathcal{B}(\mathcal{M})$ is the $\sigma$-algebra generated by the open subsets of $\mathcal{M}$.
We may therefore assume that point evaluations of $u$ are measurable.

An alternative way to define $u$ is to define it as the (unique) solution to the Whittle--Mat\'ern stochastic partial differential equation
\begin{align}
    (\tau - \Delta)^{s/2}u = \sigma \sqrt{\smash[b]{v(\theta)}} \mathcal{W}, \label{eq:matern_spde}
\end{align}
where $\mathcal{W}$ is white noise.
This approach is due to Peter Whittle \citep{whittle_1963} and is the approach taken in \cite{Lindgren_2011, bolin_2014, bolin_2020}.
In both works more general differential operators than $(\tau - \Delta)$ are considered.
To make sense of \eqref{eq:matern_spde} and see that this yields the same model as \eqref{eq:karhunen_loeve}, define white noise to be any sequence of i.i.d. centered random variables $\mathcal{W} = \{\xi_i\}$ with finite second moments and then note that (i) for $\theta = (s, \tau)$ where $s < -d/2$, the norm $\lVert \mathcal{W} \rVert_{\hat{\theta}}$ is finite almost surely by the argument used in the proof of Proposition~\ref{prop:smoothness_class}, and (ii) the operator $\iota^{-1}\circ (\tau - \hat{\Delta})^s$ defined through
\begin{align*}
    ( \iota^{-1} \circ (\tau - \hat{\Delta})^{-s}) (a) = \sum_{i = 1}^\infty (\tau + \lambda_i)^{-s}a_i e_i
\end{align*}
is an isometric isomorphism from $\hat{H}^{\beta}_L$ to $H_L^{\beta + 2s}$ for all $\beta \geq -2s$.
A more standard construction can be found in~\cite{Lindgren_2011} or~\cite{bolin_2014}.

We emphasize that we do \emph{not} assume that the $\xi_i$ are Gaussian random variables.
Unless otherwise specified, we only assume that $\xi_i$ have finite second moments.
However, we do assume that the random variables are independent.
In the Gaussian case, this is not a restriction.
Any mean zero Gaussian process with covariance function $K_{\theta}$ admits a Karhunen--Lo\`eve decomposition \eqref{eq:karhunen_loeve} with independent $\{\xi_i\}$.
For non-Gaussian processes, $\{\xi_i\}$ are only guaranteed to be uncorrelated.

\section{Gaussian Maximum Likelihood Estimation}\label{sec:MLE}

We want to recover the smoothness parameter $s$ via maximum likelihood estimation.
Fix therefore the true parameter vector $\theta_0 = (s_0, \tau_0)$ with $s_0 > d/2$ and let $u$ be defined by 
\begin{align}
    u(x) = \sigma_0 \sqrt{\smash[b]{v(\theta_0)}} \sum_{i = 1}^\infty (\tau_0 + \lambda_i)^{-s_0/2}\xi_i e_i(x), \label{eq:true_model}
\end{align}
where $\{\xi_i\}$ are i.i.d.\ random variables with $\E(\xi_i) = 0$ and $\E(\xi_i^2) = 1$.

Assume that we observe the process $u$ at pairwise distinct points $(x_1, \dots, x_n) = \mb{x}$.
Using this notation, we write $u(\mb{x})^{\top} = (u(x_1), \dots, u(x_n))$.
To estimate the smoothness parameter $s_0$ of the process $u$, we will use the maximum likelihood estimators.
However, we allow the likelihood to be misspecified in two different ways.
First, instead of requiring that the true family of covariance kernels $\sigma^2 K_{\theta}$ is used, we only require the use of a family of kernels $\{K_s\}_{s > d/2}$ such that for each $s$ their RKHS $\mathcal{H}^s$ with the norm $\lVert \cdot \rVert_s$ is norm equivalent to $H^s_L$.
This includes the correctly specified kernels $\sigma^2K_{\theta}$ with $\sigma \neq \sigma_0$ or $\tau \neq \tau_0$.
Moreover, it turns out that there are other well-known and useful kernels whose RKHS are norm equivalent to $H^s_L$.
We discuss some of these kernels in Section~\ref{sec:other_kernels}.
Second, we always use the Gaussian log-likelihood 
\begin{align}
    \ell(s; u(\mb{x})) = -\frac{n}{2}\log(2\pi) - \frac{1}{2}\log\left(\det( K_s(\mb{x})\right) - \frac{1}{2}u(\mb{x})^{\top} K_s(\mb{x})^{-1} u(\mb{x}). \label{eq:gaussian_ll}
\end{align}
Here, we write $K_s(\mb{x})$ for the matrix with entries $(K_s(\mb{x}))_{ij} = K_s(x_i, x_j)$.
Of course, this can be a correctly specified likelihood only if $u$ is a Gaussian process. 

It is well known that the spatial sampling of the points $x_1, \dots, x_n$ has significant impact on the quality of maximum likelihood estimators~\citep[e.g.,][]{bachoc_2014}.
To establish consistency of the maximum likelihood estimator, we will make a fairly restrictive assumption on the uniformity of sampling.
However, this assumption is standard in the function approximation literature~\citep[e.g.,][]{wendland_2004, narcowich_2006} and thus allows us to use the wealth of results established there.
Moreover, all existing results on smoothness estimation for Gaussian processes require some type of uniformity assumption~\citep{Chen2021, loh_2021, LohSun2023, Petit2023, karvonen2023asymptotic}.
Define the \textit{fill-distance} $h_{\mb{x}}$ and the \textit{separation radius} $q_{\mb{x}}$ as 
\begin{align} \label{eq:fill-distance}
    h_{\mb{x}} = \sup_{x \in \mathcal{M}} \min_{1 \leq i \leq n} d_{\mathcal{M}}(x, x_i) \quad \text{ and } \quad q_{\mb{x}} = \frac{1}{2}\min_{i \neq j} d_{\mathcal{M}} (x_i, x_j).
\end{align}
Also define the \emph{mesh norm} (or \emph{mesh ratio}) $\rho_{\mb{x}} = h_{\mb{x}} / q_{\mb{x}}$.
Note that by definition $q_{\mb{x}} \leq h_{\mb{x}}$ and therefore $\rho_{\mb{x}} \geq 1$.
Our consistency results assume that the point sequence is quasi-uniform.

\begin{definition}
    A sequence of points $\{x_i\} \subset \mathcal{M}$ is \textit{quasi-uniform} if
    \begin{align*}
        h_{\mb{x}} \asymp q_{\mb{x}} \asymp n^{-1/d}.
    \end{align*}
    This implies $\rho_{\mb{x}}$ is uniformly bounded from above.
\end{definition}

Note that uniformly sampled random points are \emph{not} quasi-uniform.
In \citet[Section~4]{loh_2021}, an estimator for the smoothness of Gaussian random fields on bounded Euclidean domains with the Mat\'ern covariance function is introduced, which is consistent when $x_i$ are sampled from a density that is uniformly lower bounded on the domain.
However, the estimator there effectively uses only a subset of the sampling sites that are quasi-uniform up to factors of $\log(n)$, but whose cardinality is of order $n$.
A modification of the construction in \citet[Section~4]{loh_2021} for the manifold $\mathcal{M}$ could be used to establish variants of our consistency results for random points.

Our proofs are based on connecting the quadratic form $u(\mb{x})^{\top}K_s(\mb{x})^{-1} u(\mb{x})$ and the log-determinant $\log(\det(K_s(\mb{x}))$ in the Gaussian likelihood~\eqref{eq:gaussian_ll} to quantities related to optimal interpolation in the RKHS $\mathcal{H}^s$.
Namely, the quadratic form is the squared RKHS norm of the \textit{minimum norm interpolant} and the log-determinant is related to \textit{worst-case approximation error}.
Let $K$ be a positive definite reproducing kernel of an RKHS $\mathcal{H}$ and $u$ a function that has been observed at the points $x_1, \dots, x_n$.
The unique minimum norm interpolant $m_{\mathcal{H},n}$ of $u$ is
\begin{align*}
    m_{\mathcal{H},n} = \argmin_{f \in \mathcal{H}} \big\{ \lVert f \rVert_{\mathcal{H}} ~ \vert~ f(x_i) = u(x_i), ~ 1 \leq i \leq n \big\}.
\end{align*}
We write $m_{s,n}$ for the minimum norm interpolant based on the kernel $K_s$.
It is well known and easy to verify that the quadratic form in~\eqref{eq:gaussian_ll} equals the squared norm of this interpolant:
\begin{align}
    \lVert m_{s,n} \rVert_{s}^2 = u(\mb{x})^{\top}K_{s}(\mb{x})^{-1}u(\mb{x}). \label{eq:minimum_norm_interpolant_quad_form}
\end{align}
For a proof, see for example Theorem 3.5 in \citet{KanHenSejSri18}.

For the connection to worst-case approximation error, define the ``conditional variances''
\begin{align} \label{eq:gp-conditional-variance}
     \V_s(y \vert \mb{x}) = K_s(y) - K_s(y, \mb{x})K_s(\mb{x})^{-1}K_s(\mb{x}, y).
\end{align}
Here, $K_s(y, \mb{x})$ is the $1 \times n$ vector with entries $K_s(y, \mb{x})_j = K_s(y, x_j)$.
If $u$ is a Gaussian process with covariance function given by $K_s$, this is indeed the conditional variance of $u(y)$ given $\{u(x_j) \: \vert \: 1 \leq j \leq n\}$.
The variance $\V_s(\cdot \vert \mb{x})$ is the worst-case approximation error in that
\begin{align*}
    \sup_{\lVert f \rVert_{\mathcal{H}^s} \leq 1} \lvert f(y) - m_{s, n}(y) \rvert = \sqrt{\smash[b]{\V_s(y \vert \mb{x})}}.
\end{align*}
See, for example, \citet[Corollary~3.11]{KanHenSejSri18}.
Now, let $\mb{x}^m$ denote the set consisting of $x_1, \dots, x_m$.
Using the Shermann--Morrison--Woodbury identity and~\eqref{eq:gp-conditional-variance} repeatedly, we write
\begin{equation}
\begin{aligned}
    \det(K_s(\mb{x})) &= \left(K_s(x_n) - K_s(x_n, \mb{x}^{n-1})K_s(\mb{x}^{n-1})K_s(\mb{x}^{n-1}, x_n) \right) \det(K_s(\mb{x}^{n-1})) \\
    &= \V_s(x_n \vert \mb{x}^{n-1}) \det(K_s(\mb{x}^{n-1}))\\
    &= \prod_{i = 1}^n \V_s(x_i \vert \mb{x}^{i-1}). \label{eq:worst_case_approx_error}
\end{aligned}    
\end{equation}
The identities \eqref{eq:minimum_norm_interpolant_quad_form} and \eqref{eq:worst_case_approx_error} show that precise control of the $\V_s(y \vert \mb{x})$ and $\lVert m_{s,n} \rVert_{s}$ allows for precise control over the log-likelihood \eqref{eq:gaussian_ll}.
We next establish bounds for these quantities.
From here on, we fix a family of kernels $K_s$, their reproducing kernel Hilbert spaces $(\mathcal{H}^s, \lVert \cdot \rVert_s)$ required to be equal to $H^s_L$ up to equivalent norms and the corresponding minimum norm interpolants $m_{s,n}$ of $u$.

\subsection{Norm and Determinant Bounds}\label{sec:norm_det_bounds}

In this section we present a number of auxiliary results through which we obtain precise control over the log-determinant and the quadratic from appearing the Gaussian log-likelihood.
The proofs are moved to \ref{sec:tech_proofs} as they are either technical in nature or modifications of well-known results.
We assume throughout that $u$ is defined by \eqref{eq:true_model}.
To simplify the notation, we use $u$ for both the random element and its sample paths.

First, we set up bounds for $\lVert m_{s, n}\rVert_{s}$.
The size of the norm depends on the smoothness of $u$.
If $u$ is very smooth, then $\lVert m_{s, n} \rVert_s$ is small because $m_{s,n}$ is ``simple''; if $u$ is very coarse, then the norm is large because $m_{s,n}$ must be ``complicated''.
The following proposition restricts how smooth $u$ can be by providing a lower bound for the decay rate of its Fourier coefficients.
The proofs are given in the Appendix.

\begin{proposition}\label{prop:smoothness_limit}
  Let $s_0 > d/2$ and let $\{\xi_i\}$ be the sequence of i.i.d. random variables used in the definition of $u$ in \eqref{eq:true_model}.
    Then almost surely, there is a positive constant $c$ such that 
    \begin{align*}
        \sum_{i = R}^\infty (\tau_0 + \lambda_i)^{-s_0} \xi_i^2 \geq c R^{-2s_0/d + 1} \quad \text{ for all } \quad R \in \mathbb{N}.
    \end{align*} 
\end{proposition}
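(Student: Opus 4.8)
The plan is to work with the tail sum $T_R = \sum_{i \geq R} a_i \xi_i^2$, where $a_i = (\tau_0 + \lambda_i)^{-s_0}$, and to prove that almost surely $T_R \geq c\,R^{1 - 2s_0/d}$ for all $R \in \mathbb{N}$ with a random constant $c > 0$; recall that the exponent $1 - 2s_0/d$ is negative because $s_0 > d/2$. The first step is a reduction to dyadic radii $R = 2^k$. Since $R \mapsto T_R$ is nonincreasing, for $2^k \le R < 2^{k+1}$ we have $T_R \ge T_{2^{k+1}}$, and since the exponent is negative, $(2^k)^{1-2s_0/d} \ge R^{1-2s_0/d}$; hence a bound $T_{2^k} \ge c_1 (2^k)^{1-2s_0/d}$ valid for all large $k$ upgrades directly to $T_R \ge c_2 R^{1-2s_0/d}$ for all large $R$. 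The finitely many remaining small $R$ are then handled separately, using that $T_R > 0$ almost surely for each fixed $R$ (since $\E \xi_1^2 = 1$ forces $\P(\xi_1 \neq 0) > 0$, so almost surely not all of $\{\xi_i : i \ge R\}$ vanish), and that a finite collection of strictly positive quantities has a strictly positive minimum.

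Next I would control the size of $T_R$. By Weyl's law \eqref{eq:weyls-law}, $\E T_R = \sum_{i \ge R} a_i \asymp \sum_{i \ge R} i^{-2s_0/d} \asymp R^{1 - 2s_0/d}$ (the series converges precisely because $2 s_0 / d > 1$), so the target bound is of the order of the mean and what is needed is a lower-deviation estimate. Here lies the main obstacle: only $\E \xi_i^2 = 1$ is assumed, so $\xi_i^2$ may have infinite variance and $\Var T_R$ need not be finite. The fix is truncation. Choose $M$ large enough that $\mu_M := \E(\xi_1^2 \wedge M) \ge \tfrac34$, which is possible by monotone convergence, set $Y_i = \xi_i^2 \wedge M$ and $\widetilde T_R = \sum_{i \ge R} a_i Y_i$. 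Then $\widetilde T_R \le T_R$, and $\E \widetilde T_R = \mu_M \sum_{i \ge R} a_i \gtrsim R^{1 - 2s_0/d}$, while $0 \le Y_i \le M$ gives $\Var Y_i \le M$ and therefore, again by Weyl's law, $\Var \widetilde T_R = \sum_{i \ge R} a_i^2 \Var Y_i \le M \sum_{i \ge R} a_i^2 \asymp M \sum_{i \ge R} i^{-4s_0/d} \asymp M R^{1 - 4s_0/d}$.

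With these estimates, the one-sided Chebyshev (Cantelli) inequality gives
\begin{align*}
    \P\!\left( T_R \le \tfrac12 \E \widetilde T_R \right) \;\le\; \P\!\left( \widetilde T_R \le \tfrac12 \E \widetilde T_R \right) \;\le\; \frac{4 \Var \widetilde T_R}{\big(\E \widetilde T_R\big)^2} \;\lesssim\; \frac{M R^{1 - 4s_0/d}}{R^{2 - 4s_0/d}} \;=\; M R^{-1}.
\end{align*}
At $R = 2^k$ the right-hand side is of order $M 2^{-k}$, which is summable in $k$; this summability is exactly why the reduction to a dyadic subsequence was needed, since applying this bound directly over all $R$ would only produce the non-summable $\sum_R R^{-1}$. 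By the Borel--Cantelli lemma, almost surely $T_{2^k} > \tfrac12 \E \widetilde T_{2^k} \gtrsim (2^k)^{1 - 2s_0/d}$ for all $k$ past some random index, and combining this with the dyadic reduction and the treatment of small $R$ from the first paragraph yields the proposition. The two ideas that do the real work are the truncation, which copes with the weak moment hypothesis, and the passage to dyadic radii, which makes the union bound summable; the rest is bookkeeping with Weyl's law.
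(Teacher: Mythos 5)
Your proof is correct, and it shares the paper's two structural pillars---truncating $\xi_i^2$ to cope with the weak moment hypothesis, and an appeal to Borel--Cantelli---but diverges in the concentration tool. The paper's proof truncates to $\tilde{\xi}_i^2 = \min\{\xi_i^2, b\}$, observes that boundedness makes $\tilde{\xi}_i^2$ sub-Gaussian, and then applies a sub-Gaussian tail bound for weighted sums (Lemma~\ref{l:tail_bound}); together with Weyl's law and the estimates \eqref{eq:R_equiv-1}--\eqref{eq:R_equiv-2} this produces a failure probability of order $\exp(-CR)$, which is summable over \emph{all} $R \in \mathbb{N}$, so Borel--Cantelli is applied directly without any subsequence trick. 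You instead truncate to $Y_i = \xi_i^2 \wedge M$ and use only the elementary Chebyshev/Cantelli bound, giving a failure probability $O(M/R)$; this is not summable over all $R$, and you compensate with a dyadic reduction to $R = 2^k$, which restores summability and is transferred back to general $R$ using monotonicity of the tail sum and the negativity of the exponent $1 - 2s_0/d$. The handling of the finitely many initial $R$ (strict positivity of each tail, positivity of a finite minimum, and absorption into the random constant $c$) matches the paper's in substance. Your route trades the sub-Gaussian machinery for plain second-moment bounds plus dyadic bookkeeping; both are valid, and yours is arguably more self-contained, though it yields a weaker quantitative deviation estimate along the way.
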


Next, we introduce a lower bound for $\lVert m_{s, n} \rVert_{s}$ under an approximation condition.
That this condition holds for some useful kernels, in particular for $\sigma^2 K_{\theta}$, will be confirmed later.

\begin{proposition}\label{prop:lower_bound_norm_interpolant}
  Let $\theta = (s, \tau)$ with $s > d/2$.
  Suppose that $f \in \mathcal{H}^s$ and, almost surely,
    \begin{align}
        \lVert u - f \rVert_0 \leq C_1 n^{- s_0/d + 1/2 +\epsilon/d} + C_2 n^{-s/d} \lVert f \rVert_s \label{eq:assumption}
    \end{align}
    for every $\epsilon > 0$, where $C_1,C_2 > 0$ can depend on $s, s_0, \epsilon,\mathcal{M}$ and the sample path.
    Then almost surely
    \begin{align}
        \lVert f \rVert_{s}^2 \geq Cn^{1 + 2(s-s_0)/d - \epsilon^\prime}
    \end{align} 
    for every $\epsilon^\prime > 0$, where $C > 0$ depends on $s, s_0,\epsilon^\prime, \mathcal{M}$ and the sample path.
\end{proposition}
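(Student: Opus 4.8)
The plan is to pair the assumed two-term $L_2$ bound with a complementary \emph{lower} bound on $\lVert u - f\rVert_0$ that reflects, via Proposition~\ref{prop:smoothness_limit}, the fact that the Fourier coefficients of $u$ cannot decay faster than the critical rate. One works throughout on the almost sure event on which the conclusion of Proposition~\ref{prop:smoothness_limit} holds, so all constants below may depend on the sample path. Writing $\hat{u}_i = \langle u, e_i\rangle_0$ and $\hat{f}_i = \langle f, e_i\rangle_0$, for any cutoff $R \in \mathbb{N}$ the elementary inequality $(a-b)^2 \geq \tfrac12 a^2 - b^2$ gives
\[
  \lVert u - f\rVert_0^2 \;\geq\; \sum_{i \geq R}(\hat{u}_i - \hat{f}_i)^2 \;\geq\; \frac12 \sum_{i\geq R}\hat{u}_i^2 \;-\; \sum_{i\geq R}\hat{f}_i^2 .
\]
Here $\sum_{i \geq R}\hat{u}_i^2 = \sigma_0^2 v(\theta_0)\sum_{i\geq R}(\tau_0+\lambda_i)^{-s_0}\xi_i^2 \geq c_1 R^{1 - 2s_0/d}$ by Proposition~\ref{prop:smoothness_limit}, while $\sum_{i\geq R}\hat{f}_i^2 \leq (1+\lambda_R)^{-s}\lVert f\rVert_{H^s_L}^2 \leq c_2 R^{-2s/d}\lVert f\rVert_s^2$ by Weyl's law~\eqref{eq:weyls-law} and the norm equivalence $\lVert\cdot\rVert_{H^s_L}\asymp\lVert\cdot\rVert_s$. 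This yields, almost surely,
\[
  \lVert u - f\rVert_0^2 \;\geq\; c_1 R^{1-2s_0/d} - c_2 R^{-2s/d}\lVert f\rVert_s^2 .
\]

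Next I would square the hypothesis~\eqref{eq:assumption} and use $(a+b)^2 \leq 2a^2 + 2b^2$ to get, for every $\epsilon > 0$,
\[
  \lVert u - f\rVert_0^2 \;\leq\; 2C_1^2\, n^{1 - 2s_0/d + 2\epsilon/d} + 2C_2^2\, n^{-2s/d}\lVert f\rVert_s^2 ,
\]
and combine this with the previous display. Restricting attention to cutoffs $R \leq n$, so that $n^{-2s/d} \leq R^{-2s/d}$, one obtains
\[
  c_1 R^{1-2s_0/d} - 2C_1^2\, n^{1-2s_0/d+2\epsilon/d} \;\leq\; (c_2 + 2C_2^2)\, R^{-2s/d}\lVert f\rVert_s^2 .
\]

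The decisive choice will be $R = \lfloor n^{1-\delta_0}\rfloor$ for a small $\delta_0 > 0$. Because $s_0 > d/2$ forces $1 - 2s_0/d < 0$, one has $R^{1-2s_0/d} \geq n^{(1-\delta_0)(1-2s_0/d)} = n^{\,1-2s_0/d + \delta_0(2s_0/d - 1)}$, and this dominates $n^{1-2s_0/d+2\epsilon/d}$ as soon as $\delta_0(2s_0/d - 1) > 2\epsilon/d$; here one exploits that $\epsilon$ is an arbitrary positive number, taking it small enough to leave room for such a $\delta_0$. For such $\delta_0$ and all large $n$, the left-hand side of the last display then exceeds $\tfrac{c_1}{2}R^{1-2s_0/d} \geq \tfrac{c_1}{2}n^{(1-\delta_0)(1-2s_0/d)}$, while $R^{-2s/d} \leq \bigl(\tfrac12 n^{1-\delta_0}\bigr)^{-2s/d}$, and rearranging gives
\[
  \lVert f\rVert_s^2 \;\geq\; c\, n^{(1-\delta_0)\left(1 + 2(s-s_0)/d\right)} .
\]
Since $(1-\delta_0)\bigl(1+2(s-s_0)/d\bigr) \geq 1 + 2(s-s_0)/d - \delta_0\lvert 1+2(s-s_0)/d\rvert$, shrinking $\delta_0$ so that $\delta_0\lvert 1+2(s-s_0)/d\rvert < \epsilon'$ — while keeping $\delta_0 > 2\epsilon/(2s_0 - d)$, which is compatible provided $\epsilon$ was chosen small relative to $\epsilon'$ — yields $\lVert f\rVert_s^2 \geq C\, n^{1 + 2(s-s_0)/d - \epsilon'}$ for all sufficiently large $n$, with $C$ depending on $s, s_0, \epsilon', \mathcal{M}$ and the sample path. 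This is the claimed bound.

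The hard part is the nuisance term $C_1\, n^{1-2s_0/d+2\epsilon/d}$, which carries exactly the base exponent $1 - 2s_0/d$ of the useful tail-energy bound $R^{1-2s_0/d}$; the naive choice $R \asymp n$ therefore leaves the useful term merely comparable to the nuisance term rather than strictly larger. Undershooting with $R \asymp n^{1-\delta_0}$ fixes this precisely because $1 - 2s_0/d < 0$ makes $R^{1-2s_0/d}$ grow as $R$ shrinks, and the cost is only an $n^{-\delta_0(\cdots)}$ factor in the final exponent, which is absorbed into the arbitrary $\epsilon'$. The remaining care is purely bookkeeping: keeping the two small parameters consistently ordered as $2\epsilon/(2s_0 - d) < \delta_0$ and $\delta_0\,\lvert 1+2(s-s_0)/d\rvert < \epsilon'$.
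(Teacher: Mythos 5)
Your proof is correct and follows essentially the same route as the paper: truncate at a level $R \asymp n^{1-\delta_0}$, bound the tail energy of $u$ from below via Proposition~\ref{prop:smoothness_limit}, bound the tail of $f$ via Weyl's law and the $H^s_L$ norm, compare with the squared hypothesis~\eqref{eq:assumption}, and absorb the $\delta_0$- and $\epsilon$-dependent exponent losses into $\epsilon'$. The only cosmetic difference is that you work with the squared inequality $(a-b)^2 \geq \tfrac12 a^2 - b^2$ where the paper uses the reverse triangle inequality $\lVert \hat{f}\mathbf{1}_R\rVert_{\hat{0}} \geq \lVert \hat{u}\mathbf{1}_R\rVert_{\hat{0}} - \lVert u - f\rVert_0$, and you present $\delta_0$ and $\epsilon$ as two free small parameters where the paper fixes $R$ so that $\delta_0 = 2\epsilon/(2s_0-d)$ exactly.
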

\begin{remark}
    We would like to discuss the assumption in~\eqref{eq:assumption}, which may seem unusual.
    We are mainly interested in applying Proposition~\ref{prop:lower_bound_norm_interpolant} with $f = m_{s, n}$. In this scenario one commonly has the error estimate
    \begin{align*}
        \lVert u - m_{s,n} \rVert_0 \leq C n^{- s_0/d + 1/2 +\epsilon/d} \lVert u \rVert_{s_0-d/2-\epsilon},
    \end{align*}
    which clearly implies \eqref{eq:assumption}, and we will set up a similar bound in Proposition \ref{prop:escape_manifold} below.
 Note, however, that we do not assume that $f = m_{s,n}$.
 The proposition holds for any $f$ that approximates $u$ sufficiently well.
 This additional flexibility will be important later.
\end{remark}

The following proposition shows that the assumption for Proposition \ref{prop:lower_bound_norm_interpolant} is generally satisfied if $s \geq s_0$.
Since they apply to functions outside the RKHS of $\mathcal{H}^s$, results of this type are often referred to as ``escape'' results and are based on the fundamental work by \citet{narcowich_2002} and \citet{narcowich_2006}.
The statement and the proof are essentially the same as in~\citet{fuselier_2012}.
For completeness, we provide the proof in the Appendix.
The proof combines known results for the Euclidean case from \citet{narcowich_2006} with Theorem~\ref{thm:manifold_extension_theorem}.

\begin{proposition}\label{prop:escape_manifold}
    Let $f \in H^{s_0}_L$ with $s \geq s_0 > d$ and let $m_{s,n}^f$ be its minimum norm interpolant from the RKHS $\mathcal{H}^s$.
    Then there exists a positive constant $C$ that depends on $s_0$ and $\mathcal{M}$ such that we have almost surely
    \begin{align}
        \lVert f - m_{s,n}^f \rVert_{0} \leq C h_{\mb{x}}^{s_0} \rho_{\mb{x}}^{s-s_0} \lVert f \rVert_{s_0} \label{eq:escape_inequality}
    \end{align}
    when $h_{\mb{x}}$ is small enough.
    Thus, if $h_{\mb{x}} \to 0 $ and $C$ is allowed to depend on $f$, then \eqref{eq:escape_inequality} holds for all $n$.
\end{proposition}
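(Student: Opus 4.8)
The plan is to follow \citet{fuselier_2012}, transferring the Euclidean scattered-data estimates of \citet{narcowich_2006} to $\mathcal{M}$ through the trace and extension operators of Theorem~\ref{thm:manifold_extension_theorem}. Two preliminary reductions. The statement is deterministic, so ``almost surely'' is vacuous as written; it is phrased this way because the proposition will later be applied with $f$ a random sample path, which lies in $H^{s_0}_L$ almost surely. Also, we may assume $s \geq s_0$, so that $\rho_{\mb{x}}^{s-s_0} \geq 1$: if $s \leq s_0$ then $f \in \mathcal{H}^s$ and the bound is just the classical native-space interpolation error estimate.

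The first step is a sampling inequality (a ``zeros lemma'') on $\mathcal{M}$: for $\sigma > d/2$ and $g \in H^\sigma_L$ with $g(x_i) = 0$ for all $i$, one has $\lVert g \rVert_0 \leq C h_{\mb{x}}^{\sigma} \lVert g \rVert_{H^\sigma_L}$ once $h_{\mb{x}}$ is small enough, and likewise $\lVert g\rVert_{H^\mu_L} \leq C h_{\mb{x}}^{\sigma-\mu}\lVert g\rVert_{H^\sigma_L}$ for $0 \leq \mu \leq \sigma$. I would prove this by localization: transport the pieces $\psi_i g$ to $\mathbb{R}^d$ via the normal charts $\phi_i$ of Section~\ref{sec:manifold+laplacian}, apply the Euclidean zeros lemma of \citet{narcowich_2006} on the chart images --- using that the $\phi_i$ are bi-Lipschitz, so Euclidean and geodesic fill distances agree up to constants --- and reassemble, invoking the norm equivalence between $\lVert \cdot \rVert_{H^\sigma_L}$ and the chart-wise Sobolev norms (Theorem~7.4.5 in \citet{triebel_1991}). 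Applying this to $g = f - m_{s,n}^f \in H^{s_0}_L$, which vanishes on $\mb{x}$, yields
\begin{equation*}
  \lVert f - m_{s,n}^f \rVert_0 \leq C h_{\mb{x}}^{s_0}\big( \lVert f \rVert_{H^{s_0}_L} + \lVert m_{s,n}^f \rVert_{H^{s_0}_L} \big),
\end{equation*}
so it remains to show the inverse (stability) estimate $\lVert m_{s,n}^f \rVert_{H^{s_0}_L} \leq C \rho_{\mb{x}}^{s-s_0}\lVert f \rVert_{H^{s_0}_L}$.

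This inverse estimate is the heart of the argument and the main obstacle, since $m_{s,n}^f$ lies in $\mathcal{H}^s$ but $f$ need not. I would split $f = P_\Lambda f + Q_\Lambda f$ into low- and high-frequency parts relative to a bandwidth $\Lambda$ ($P_\Lambda$ the spectral projection onto $\{\lambda_i \leq \Lambda\}$, $Q_\Lambda = I - P_\Lambda$). The band-limited part $P_\Lambda f$ lies in $\mathcal{H}^s$, so by minimality of the interpolant and a Bernstein-type inequality $\lVert m_{s,n}^{P_\Lambda f}\rVert_{s} \leq \lVert P_\Lambda f\rVert_{s} \lesssim (1+\Lambda)^{(s-s_0)/2}\lVert f\rVert_{H^{s_0}_L}$, while the $H^{s_0}_L$-distance of $m_{s,n}^{P_\Lambda f}$ from $P_\Lambda f$ is again controlled by the Sobolev form of the zeros lemma. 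For the high-frequency part I would use the explicit form $m_{s,n}^{Q_\Lambda f} = \sum_i c_i K_s(\cdot, x_i)$ with $c = K_s(\mb{x})^{-1}(Q_\Lambda f)(\mb{x})$, the lower bound $\lambda_{\min}(K_s(\mb{x})) \gtrsim q_{\mb{x}}^{2s-d}$ for the interpolation matrix (a Narcowich--Ward-type bound, available on $\mathcal{M}$ via Theorem~\ref{thm:manifold_extension_theorem}), and a Marcinkiewicz--Zygmund-type estimate for $\sum_i \lvert (Q_\Lambda f)(x_i)\rvert^2$ in terms of $\lVert Q_\Lambda f\rVert_0$ and $\lVert Q_\Lambda f\rVert_{H^{s_0}_L}$. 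Choosing $\Lambda$ of order $q_{\mb{x}}^{-2}$ balances the low- and high-frequency contributions, and the ratio $h_{\mb{x}}/q_{\mb{x}} = \rho_{\mb{x}}$ produced by this balancing is what yields the exponent $\rho_{\mb{x}}^{s-s_0}$; the hypothesis $s_0 > d$ guarantees enough smoothness for the chart-wise estimates. The final clause of the proposition is routine: for a fixed $f$ only finitely many $n$ fail the ``$h_{\mb{x}}$ small enough'' requirement, and for each of them $\lVert f - m_{s,n}^f\rVert_0$ is a finite number, so enlarging $C$ (now allowed to depend on $f$) accommodates them. The genuinely delicate point throughout is the bookkeeping in the inverse estimate --- controlling a too-smooth interpolant of a function outside its RKHS --- which is precisely the technical core of \citet{narcowich_2006} and \citet{fuselier_2012}.
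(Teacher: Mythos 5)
Your first reduction and your overall architecture are sound: pull out the $h_{\mb{x}}^{s_0}$ factor via a zeros lemma applied to $f - m_{s,n}^f$ in $H^{s_0}_L$, then prove the inverse estimate $\lVert m_{s,n}^f \rVert_{H^{s_0}_L} \lesssim \rho_{\mb{x}}^{s-s_0}\lVert f \rVert_{H^{s_0}_L}$. That inverse estimate is indeed what the paper obtains (it is a byproduct of its display~\eqref{eq:sobolev_error_calc2}), and you correctly identify it as the technical core. But the route you sketch for the inverse estimate has a genuine gap in the high-frequency piece.

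Here is the difficulty. The mechanism that produces the ratio $\rho_{\mb{x}}^{s-s_0} = h_{\mb{x}}^{s-s_0}\, q_{\mb{x}}^{-(s-s_0)}$ is: apply the zeros lemma at the $H^s_L$ level, $\lVert g \rVert_{H^{s_0}_L} \leq C h_{\mb{x}}^{s-s_0} \lVert g \rVert_{H^s_L}$, to a function $g$ that \emph{both} vanishes at the nodes \emph{and} lies in $H^s_L$, and then produce $q_{\mb{x}}^{-(s-s_0)}$ from a Bernstein inequality. Your low-frequency piece enjoys this: $m_{s,n}^{P_\Lambda f} - P_\Lambda f$ vanishes on $\mb{x}$ and lies in $H^s_L$ since $P_\Lambda f$ is band-limited, so the zeros lemma applies and everything balances. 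But $m_{s,n}^{Q_\Lambda f} - Q_\Lambda f$ only lies in $H^{s_0}_L$, not in $H^s_L$, because $Q_\Lambda f$ is the high-frequency tail. The zeros lemma at the $H^s_L$ level is therefore unavailable, and the tools you cite — the Narcowich--Ward lower bound $\lambda_{\min}(K_s(\mb{x})) \gtrsim q_{\mb{x}}^{2s-d}$ together with a Marcinkiewicz--Zygmund-type control of $\sum_i |(Q_\Lambda f)(x_i)|^2$, and the monotone comparison $\lVert \cdot \rVert_{H^{s_0}_L} \leq \lVert \cdot \rVert_{H^s_L}$ — yield, after choosing $\Lambda \asymp q_{\mb{x}}^{-2}$, only
\begin{equation*}
  \lVert m_{s,n}^{Q_\Lambda f} \rVert_{H^{s_0}_L} \leq C \lVert m_{s,n}^{Q_\Lambda f} \rVert_{H^s_L} \lesssim q_{\mb{x}}^{-(s-s_0)} \lVert f \rVert_{H^{s_0}_L},
\end{equation*}
which is of order $\rho_{\mb{x}}^{s-s_0}\,h_{\mb{x}}^{-(s-s_0)}$ — strictly worse than $\rho_{\mb{x}}^{s-s_0}$ as $h_{\mb{x}} \to 0$. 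There is no ``Bernstein in reverse'' for $m_{s,n}^{Q_\Lambda f}$ to recover the missing $h_{\mb{x}}^{s-s_0}$. (One \emph{can} salvage the computation by abandoning the inverse estimate and arguing directly at the $L_2$ level — combine a sampling inequality $\lVert g\rVert_0 \lesssim h_{\mb{x}}^s \lVert g\rVert_{H^s_L} + h_{\mb{x}}^{d/2}(\sum_i |g(x_i)|^2)^{1/2}$ with your MZ bound — but that is not the structure you set up.)

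The paper sidesteps the issue by replacing the spectral projection $P_\Lambda f$ with the ambient band-limited quasi-interpolant $f_\eta$ of \citet{narcowich_2006} (Theorem~3.4 and Corollary~3.5 there), pulled back to $\mathcal{M}$ via the extension/restriction operators of Theorem~\ref{thm:manifold_extension_theorem}. The crucial difference is that $f_\eta$ is chosen so that $T_{\mathcal{M}} f_\eta$ \emph{interpolates} $f$ on $\mb{x}$ in addition to being band-limited with norm comparable to $\lVert f\rVert_{s_0}$ and bandwidth $\eta \asymp q_{\mb{x}}^{-1}$. Thus $T_{\mathcal{M}} f_\eta - m_{s,n}^f$ both vanishes at the nodes and lies in $H^s_L$, the zeros lemma at the $H^s_L$ level produces $h_{\mb{x}}^{s-s_0}$, the minimum-norm property replaces this difference by $\lVert T_{\mathcal{M}} f_\eta\rVert_s$, and the Bernstein inequality produces $q_{\mb{x}}^{-(s-s_0)}$ — hence $\rho_{\mb{x}}^{s-s_0}$. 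If you want to keep the spirit of your argument, the fix is to replace the spectral truncation $P_\Lambda f$ by a band-limited object that actually interpolates $f$; that is exactly what NWW's $f_\eta$ gives you, and it is the one ingredient your sketch is missing.
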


See Lemma~3.8 in \citet{hangelbroek_2010} for a precise statement on how small $h_{\mb{x}}$ needs to be.
Under quasi-uniformity, $h_{\mb{x}} \leq C n^{-1/d}$ and $\rho_{\mb{x}}$ is uniformly upper bounded and for $f = u$ we thus obtain
\begin{equation}
\begin{aligned}
    \lVert u - m_{s,n} \rVert_{0} &\leq C h_{\mb{x}}^{s_0-d/2-\epsilon} \rho_{\mb{x}}^{s-(s_0 - d/2-\epsilon)}  \lVert u \rVert_{s_0 - d/2-\epsilon} \\
    &\leq C n^{-s_0/d + 1/2 + \epsilon /d}  \lVert u \rVert_{s_0 - d/2 - \epsilon} \label{eq:concrete_escape}
\end{aligned}
\end{equation}
for all $\epsilon > 0$, where $C$ can depend on the sample path.
Together, Propositions \ref{prop:lower_bound_norm_interpolant} and \ref{prop:escape_manifold} therefore yield the almost sure lower bound
\begin{equation} \label{eq:mn-lower-bound-auxiliary}
    \lVert m_{s,n} \rVert_{s}^2 \geq C n^{1 + 2(s-s_0)/d - \epsilon}
\end{equation}
for any $\varepsilon > 0$ when $s \geq s_0$ and the sequence of points is quasi-uniform.
Next we provide an upper bound.

\begin{proposition}\label{prop:upper_bound_minimum_norm_interpolant}
    Let $u$ be as in \eqref{eq:true_model}.
    Suppose that $s_0 > d$ and $s > d/2$ satisfy $s \geq s_0 - d/2$.
    Let $m_{s,n}$ be the minimum norm interpolant for $u$.
    Then almost surely
    \begin{align}
        \lVert m_{s,n} \rVert_s^2 \leq C q_{\mb{x}}^{s_0-s-d/2 - \epsilon} \label{eq:norm_upper_bound_interpolant-q}
    \end{align}
    for all $\epsilon > 0$, where $C$ depends on $s, \theta_0, \epsilon$ and the sample path.
    If $\xi_i$ are Gaussian, then it is only necessary to assume $s_0 > d/2$ to obtain
    \begin{align}
        c n \leq \lVert m_{s_0, n} \rVert_{s_0}^2 \leq C n \label{eq:norm_upper_bound_interpolant_gaussian}
    \end{align}
    almost surely for positive constants $c$ and $C$ that depend only on $\theta_0$ and the sample path.
\end{proposition}

See the Appendix for a proof.
The bound in~\eqref{eq:norm_upper_bound_interpolant-q} is well known in the Euclidean setting and its proof only needs to be slightly adapted.
Note that under quasi-uniformity this bound becomes
\begin{align}
    \lVert m_{s,n} \rVert_s^2 \leq C n^{1 + 2(s-s_0) + \epsilon} \label{eq:norm_upper_bound_interpolant}
\end{align}
The bound in~\eqref{eq:norm_upper_bound_interpolant_gaussian} follows from a simple argument based on the law of large numbers.
In the Gaussian case, the bound~\eqref{eq:norm_upper_bound_interpolant_gaussian} is strictly stronger than \eqref{eq:norm_upper_bound_interpolant-q} if $s = s_0$.

Finally, we obtain bounds for the worst-case approximation error.
These results are standard and we merely adapt known proofs for manifolds.
The proofs are included in the Appendix.

\begin{proposition}\label{prop:cond_var_bounds}
    Let $s > d/2$.
    \begin{enumerate}
        \item  Suppose the family of norms $\{\lVert \cdot \rVert_s\}_{s > d/2}$ is such that $\lVert f \rVert_{s} \leq C \lVert f \rVert_{s^\prime}$ for every $f$ whenever $s \leq s^\prime$.
          Let $\mb{x}^n_i$ be all points in $\mb{x}$ except $x_i$.
          Then, for $s \leq S$, there is a constant $c > 0$, that only depends on $S$, such that
        \begin{align*}
            \min_{1 \leq i \leq n}\V_{s} (x_i \vert \mb{x}^n_i) \geq c q_{\mb{x}}^{2S-d}.
        \end{align*}
        \item There exists a positive constant $C$, that only depends on $s$ and $\mathcal{M}$, such that
        \begin{align*}
            \sup_{y \in \mathcal{M}}\V_{s}(y \vert \mb{x}) \leq C h_{\mb{x}}^{2s-d}.
        \end{align*}
    \end{enumerate}
\end{proposition}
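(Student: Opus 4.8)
The plan is to reduce both estimates to standard results in Euclidean space, transferred to $\mathcal{M}$ through the normal-coordinate atlas and the local-chart description of $H^s_L$ from Chapter~7 of \citet{triebel_1991}, using throughout the assumed norm equivalence $\lVert \cdot \rVert_s \asymp \lVert \cdot \rVert_{H^s_L}$ and the worst-case-error identity $\sqrt{\smash[b]{\V_s(y \vert \mb{x})}} = \sup_{\lVert f \rVert_s \leq 1}\lvert f(y) - m_{s,n}(y)\rvert$ recorded above (and its analogue with $\mb{x}$ replaced by $\mb{x}^n_i$). Note first that $K_s$ is strictly positive definite: the argument used for Proposition~\ref{prop:kernel_properties} applies verbatim since $\mathcal{H}^s = H^s_L$ contains bump functions, so all the conditional variances below are well defined.

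For part~(1), fix an index $i$ and observe that the minimum-norm interpolant of the zero data on $\mb{x}^n_i$ is the zero function, so $\V_s(x_i \vert \mb{x}^n_i) \geq \lvert g(x_i)\rvert^2$ for every $g \in \mathcal{H}^s$ with $\lVert g\rVert_s \leq 1$ that vanishes at every $x_j$ with $j \neq i$. To exhibit such a $g$ with $\lvert g(x_i)\rvert$ as large as possible, fix a $C^\infty$ bump $\phi$ on $\mathbb{R}^d$ with $\phi(\mb{0}) = 1$ and $\supp \phi \subset B_1(\mb{0})$, and set $f_i = \phi\bigl((\exp^{\mathcal{M}}_{x_i})^{-1}(\cdot)/q_{\mb{x}}\bigr)$ on the normal chart at $x_i$ and $f_i = 0$ elsewhere. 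For $n$ large enough $q_{\mb{x}}$ lies below the injectivity radius, $f_i \in C^\infty(\mathcal{M})$, and, since the $x_j$ are $2q_{\mb{x}}$-separated, $f_i(x_j) = 0$ for every $j \neq i$ while $f_i(x_i) = 1$. The scaling behaviour of Sobolev norms, combined with the local-chart formula for $\lVert \cdot \rVert_{H^S_L}$, gives $\lVert f_i\rVert_{H^S_L} \leq C q_{\mb{x}}^{-(S - d/2)}$ with $C$ independent of $i$ and $n$; the hypothesised monotonicity $\lVert \cdot \rVert_s \leq C\lVert \cdot \rVert_S$ for $s \leq S$ together with the norm equivalence then yields $\lVert f_i\rVert_s \leq C q_{\mb{x}}^{-(S-d/2)}$ with a constant depending only on $S$ and $\mathcal{M}$. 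Hence $g_i = f_i/\lVert f_i\rVert_s$ satisfies $\lVert g_i\rVert_s = 1$, vanishes on $\mb{x}^n_i$, and $g_i(x_i) = \lVert f_i\rVert_s^{-1} \geq c\, q_{\mb{x}}^{S - d/2}$, whence $\V_s(x_i\vert \mb{x}^n_i) \geq c^2 q_{\mb{x}}^{2S-d}$; taking the minimum over $i$ completes this part. The monotonicity hypothesis is used precisely to make $c$ independent of $s$ over the range $s \leq S$.

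For part~(2), given $f \in \mathcal{H}^s$ the function $h = f - m_{s,n}$ vanishes on $\mb{x}$ and satisfies $\lVert h\rVert_s \leq \lVert f\rVert_s$ by the Pythagorean identity for minimum-norm interpolation. The manifold sampling inequality --- the ``zeros lemma'' obtained in the manner of \citet{hangelbroek_2010} by pulling back to Euclidean charts and invoking the estimates of \citet{narcowich_2006} --- states that any $h \in H^s_L$ with $s > d/2$ vanishing on a point set of fill distance $h_{\mb{x}}$ obeys $\lVert h\rVert_{C(\mathcal{M})} \leq C h_{\mb{x}}^{s - d/2}\lVert h\rVert_{H^s_L}$ once $h_{\mb{x}}$ is sufficiently small. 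Applying this and the norm equivalence, $\lvert f(y) - m_{s,n}(y)\rvert \leq C h_{\mb{x}}^{s-d/2}\lVert f\rVert_s \leq C h_{\mb{x}}^{s-d/2}$ uniformly over $\lVert f\rVert_s \leq 1$ and $y \in \mathcal{M}$; squaring and taking the supremum over $f$ and $y$ gives $\sup_{y\in\mathcal{M}}\V_s(y\vert\mb{x}) \leq C^2 h_{\mb{x}}^{2s-d}$. Point sets with $h_{\mb{x}}$ not small are dealt with using $\V_s(y\vert\mb{x}) \leq K_s(y) \leq \sup_{\mathcal{M}}K_s < \infty$ and the fact that $h_{\mb{x}}$ is then bounded above and below.

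The main obstacle is the bookkeeping in transferring the Euclidean scaling and sampling estimates to $\mathcal{M}$: one must verify that the constants produced by the finite normal-coordinate atlas, the subordinate partition of unity, and the $q_{\mb{x}}$-independent coordinate changes between overlapping charts are uniform in the point index $i$ and --- crucially for part~(1) --- uniform over all $s \leq S$, the latter being exactly where the monotonicity of $\{\lVert \cdot \rVert_s\}$ enters. With the relevant results of \citet{triebel_1991}, \citet{narcowich_2006} and \citet{hangelbroek_2010} in hand, the remaining manipulations are routine.
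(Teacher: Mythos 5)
Your proof is correct and follows essentially the same strategy as the paper: for part (1) you exhibit a bump function supported at scale $q_{\mb{x}}$ around $x_i$, bound its $H^S_L$ norm by rescaling, and invoke the monotonicity hypothesis to get a uniform lower bound on $\V_s(x_i\vert \mb{x}^n_i)$, while for part (2) you apply the manifold sampling inequality (Lemma~10 of Fuselier and Wright) to $f - m_{s,n}^f$. The only cosmetic difference is in part (1), where you center normal coordinates at $x_i$ itself, whereas the paper's proof works in the fixed chart centers $p_k$ and constructs the auxiliary atlases $\mathcal{A}_k$ with adapted partitions of unity to control the chart-dependence of the constants --- the uniformity bookkeeping you flag at the end is precisely what that construction handles.
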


\subsection{Estimation of the Smoothness Parameter}
Based on the previous auxiliary results, we may now prove the main result of this paper.

\begin{theorem}\label{thm:consistency}
  Let $u$ be defined as in \eqref{eq:true_model} with $s_0 > d$.
  Let the family of kernels $\{K_{s}\}_{s> d/2}$ be such that the associated norms $\{\lVert \cdot \rVert_s\}_{s > d/2}$ satisfy the monotonicity condition that, whenever $s \leq s^\prime$,
    \begin{align}
        \lVert f \rVert_s \leq C\lVert f \rVert_{s^\prime} \label{eq:norm_monotonicity}
    \end{align}
    for some positive $C$ that does not depend on $s$ and $s^\prime$.    
    Assume that the sequence of points $\{x_i\}$ is quasi-uniform.
    If $\{\hat{s}_n\}$ is a sequence of maximizers of the Gaussian log-likelihood
    \begin{align}
        \ell(s; u(\mb{x})) = -\frac{n}{2}\log(2\pi) - \frac{1}{2}\log(\det (K_{s}(\mb{x}))) - \frac{1}{2}u(\mb{x})^{\top}K_{s}(\mb{x})^{-1}u(\mb{x}) \label{eq:log_lik_thm}
    \end{align}
    in $(d/2, S_{\max}]$ with $s_0 \leq S_{\max}$ for arbitrary but fixed $S_{\max}$, then
    \begin{align*}
        \hat{s}_n \to s_0 \quad \text{ in probability}.
    \end{align*}
\end{theorem}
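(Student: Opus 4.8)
The plan is to show that for every fixed $\delta>0$ the probability that the maximizer lies outside $(s_0-\delta,s_0+\delta)$ tends to zero. Since $s_0\in(d/2,S_{\max}]$ we always have $\ell(\hat s_n;u(\mb x))\ge\ell(s_0;u(\mb x))$, so it suffices to prove that, with probability tending to one, $\ell(s;u(\mb x))<\ell(s_0;u(\mb x))$ for all $s\in(d/2,s_0-\delta]\cup[s_0+\delta,S_{\max}]$. Using \eqref{eq:minimum_norm_interpolant_quad_form} and \eqref{eq:worst_case_approx_error} I would first rewrite $2\ell(s;u(\mb x))+n\log(2\pi)=-\sum_{i=1}^n\log\V_s(x_i\mid\mb x^{i-1})-\lVert m_{s,n}\rVert_s^2$, so the whole argument reduces to comparing the log-determinant term and the minimum-norm-interpolant term at $s$ and at $s_0$.

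Next I would pin down the size of the log-determinant. Quasi-uniformity of the sequence means every prefix $\mb x^m=(x_1,\dots,x_m)$ is again quasi-uniform, so $h_{\mb x^m}\asymp q_{\mb x^m}\asymp m^{-1/d}$. Combining this with Proposition~\ref{prop:cond_var_bounds} applied to the prefixes (its lower bound with $S=s$, its upper bound on $\sup_y\V_s(y\mid\mb x^{i-1})$, and Stirling's formula) yields
\begin{equation*}
  \log\det K_s(\mb x)=-\tfrac{2s-d}{d}\,n\log n+O_s(n),
\end{equation*}
with the $O_s(n)$ term uniform for $s$ in compact subsets of $(d/2,\infty)$; the degenerate regime $s\downarrow d/2$ is harmless since there one may bound below by replacing $2s-d$ with a fixed small $2\delta_0>0$. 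The key consequence is $\log\det K_{s_0}(\mb x)-\log\det K_s(\mb x)=\tfrac{2(s-s_0)}{d}\,n\log n+O(n)$: a difference of order $n\log n$ whose sign \emph{penalizes} $s<s_0$ and \emph{rewards} $s>s_0$.

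Now I would control $\lVert m_{s,n}\rVert_s^2$. For $s\ge s_0+\delta$ the escape estimate \eqref{eq:concrete_escape} verifies the hypothesis of Proposition~\ref{prop:lower_bound_norm_interpolant} with $f=m_{s,n}$, giving almost surely $\lVert m_{s,n}\rVert_s^2\ge c\,n^{1+2(s-s_0)/d-\epsilon'}$; taking $\epsilon'<2\delta/d$ this grows strictly faster than $n\log n$. For $s=s_0$ I would \emph{not} use the almost sure bound $\lVert m_{s_0,n}\rVert_{s_0}^2\lesssim n^{1+\epsilon}$ from Proposition~\ref{prop:upper_bound_minimum_norm_interpolant}, since $n^{1+\epsilon}$ does not beat an $n\log n$ penalty; instead I use $\lVert m_{s_0,n}\rVert_{s_0}^2=u(\mb x)^\top K_{s_0}(\mb x)^{-1}u(\mb x)$ and $\E\,u(\mb x)u(\mb x)^\top=\sigma_0^2K_{\theta_0}(\mb x)$ to get $\E\lVert m_{s_0,n}\rVert_{s_0}^2=\sigma_0^2\tr\!\big(K_{s_0}(\mb x)^{-1}K_{\theta_0}(\mb x)\big)\le Cn$, the last step because norm equivalence of $\mathcal H^{s_0}$ and $H^{s_0}_L$ forces the Gram matrices $K_{s_0}(\mb x)$ and $\sigma_0^2K_{\theta_0}(\mb x)$ to be comparable in the Loewner order uniformly in $\mb x$ and $n$. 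By Markov's inequality $\lVert m_{s_0,n}\rVert_{s_0}^2=O_{\P}(n)$: for any $\eta>0$ there is $M$ with $\P(\lVert m_{s_0,n}\rVert_{s_0}^2>Mn)<\eta$ for all $n$. On the event $\{\lVert m_{s_0,n}\rVert_{s_0}^2\le Mn\}$ intersected with the probability-one set on which the sample-path constants above are finite, the two comparisons combine: for $s\ge s_0+\delta$, $2(\ell(s)-\ell(s_0))\le Cn\log n+Mn-c\,n^{1+2\delta/d-\epsilon'}<0$ for $n$ large; for $s\in(d/2,s_0-\delta]$, dropping the non-negative term $\lVert m_{s,n}\rVert_s^2$ gives $2(\ell(s_0)-\ell(s))\ge c^*n\log n+O(n)-Mn>0$ for $n$ large, with $c^*>0$ depending only on $\delta$ and $\mathcal M$. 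Letting $\eta\to0$ yields $\P(\lvert\hat s_n-s_0\rvert\ge\delta)\to0$, hence $\hat s_n\to s_0$ in probability.

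The main obstacle — and the reason the conclusion is only convergence in probability in the non-Gaussian case — is exactly the balancing on the side $s<s_0$: the determinant penalty against small $s$ is only of order $n\log n$, which the available \emph{almost sure} bound $n^{1+\epsilon}$ for $\lVert m_{s_0,n}\rVert_{s_0}^2$ cannot be traded against, forcing the use of the $O_{\P}(n)$ moment bound instead. When the $\xi_i$ are Gaussian the stronger almost sure bound $\lVert m_{s_0,n}\rVert_{s_0}^2\asymp n$ of Proposition~\ref{prop:upper_bound_minimum_norm_interpolant} is available, which upgrades the argument to almost sure convergence; this is Corollary~\ref{cor:consistency}. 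A secondary, routine technical issue is keeping all $s$-dependent constants in the determinant estimates uniform over $s$, including near $s=d/2$, which is handled by splitting the range of $s$.
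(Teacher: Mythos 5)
Your proof is correct and follows the paper's argument in essence: it uses the same translation of the log-likelihood into the minimum-norm interpolant norm and the chain of conditional variances, the same escape/lower-bound pairing from Propositions~\ref{prop:escape_manifold} and~\ref{prop:lower_bound_norm_interpolant} on the side $s\geq s_0+\delta$, and Markov's inequality applied to the quadratic form at $s_0$ to control the side $s\leq s_0-\delta$. Your reason for the ``in probability'' conclusion — that the $n\log n$ log-determinant penalty on the under-smooth side cannot absorb the almost-sure $n^{1+\epsilon}$ bound, forcing the $O_{\P}(n)$ moment bound instead — is exactly the mechanism in the paper, and your observation that the Gaussian law of large numbers replaces it with an a.s.\ bound of order $n$ is precisely how Corollary~\ref{cor:consistency} works.

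Two small stylistic deviations are worth flagging. First, you package the log-determinant as a sharp two-sided asymptotic $\log\det K_s(\mb x)=-\tfrac{2s-d}{d}n\log n+O_s(n)$ with $O_s(n)$ uniform on compacts, taking $S=s$ in the lower bound of Proposition~\ref{prop:cond_var_bounds}; this needs the bump-function constant $c(S)$ to be bounded on compacts, which is true but an extra step. The paper sidesteps this by taking the fixed endpoint $S=S_{\max}$ (resp.\ $S=s_0-\delta$) on each side, producing one-sided bounds with constants that manifestly do not depend on $s$. Second, you derive $\E\lVert m_{s_0,n}\rVert_{s_0}^2\leq Cn$ via the trace and Loewner-order comparison, while the paper writes $u(\mb x)^\top K_{\theta_0}(\mb x)^{-1}u(\mb x)$ as a sum of $n$ uncorrelated unit-variance terms; these are the same computation. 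You also apply the $O_\P(n)$ bound on the over-smooth side $s\geq s_0+\delta$ as well as the under-smooth side; the paper uses the almost-sure $n^{1+\epsilon'}$ bound there and gets the slightly stronger conclusion $\P(\limsup_n\hat s_n\leq s_0)=1$, but for the stated ``in probability'' result either choice is fine. None of this affects correctness.
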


The proof is based on using the results from Section \ref{sec:norm_det_bounds} to balance the log-determinant and the quadratic form appearing in the Gaussian log-likelihood~\eqref{eq:log_lik_thm}.
By~\eqref{eq:mn-lower-bound-auxiliary},~\eqref{eq:norm_upper_bound_interpolant} and~\eqref{eq:norm_monotonicity}, the quadratic form behaves approximately as $n^{2(s-s_0) + 1}$ if $s \geq s_0$ and has an upper bound of order $n$ if $s \leq s_0$.
The log-determinant is of order $\sum_{i=1}^n \log( i^{-2s/d} ) \approx -(2s/d) n \log (n)$ by~\eqref{eq:worst_case_approx_error} and Proposition~\ref{prop:cond_var_bounds}.
For large $n$ we thus have
\begin{equation*}
    \ell(s; u(\mb{x})) \approx \frac{2s}{d} n \log (n) - n^{2\max\{(s-s_0), 0\} + 1},
\end{equation*}
which is maximized by $s \approx s_0$ since, roughly speaking, this ensures simultaneously that (a) the first term has a large constant coefficient and (b) the expression does not tend to negative infinity.
The proof casts this argument in rigorous language.

\begin{proof}[Proof of Theorem~\ref{thm:consistency}]
    Any maximizer of $\ell(s; u(\mb{x}))$ also maximizes the logarithm of the likelihood ratio
    \begin{align*}
        \mathcal{L}(s) &\coloneqq 2(\ell(s; u(\mb{x})) - \ell(s_0; u(\mb{x}))) \\
        &= \log\left(\frac{\det(K_{s_0}(\mb{x}))}{\det(K_{s}(\mb{x}))}\right) + u(\mb{x})^{\top}K_{s_0}(\mb{x})^{-1}u(\mb{x}) -  u(\mb{x})^{\top}K_{s}(\mb{x})^{-1}u(\mb{x}).
    \end{align*}
    Note that 
    \begin{align*}
        \left\lbrace \lvert \hat{s}_n - s_0 \rvert > \delta \right\rbrace \subset \left\lbrace \sup_{d/2 < s \leq s_0 - \delta} \mathcal{L}(s) \geq 0 \right\rbrace \cup \left\lbrace \sup_{ s_0 - \delta \leq s \leq S_{\max}} \mathcal{L}(s)  \geq 0 \right\rbrace
    \end{align*}
    for every $\delta > 0$ for which the suprema are over non-empty sets.
    Thus it suffices to show that
    \begin{align}
        \P\left( \sup_{d/2 < s \leq s_0 - \delta} \mathcal{L}(s) \geq 0 \right) + \P \left( \sup_{ s_0 - \delta \leq s \leq S_{\max}} \mathcal{L}(s)  \geq 0 \right) \to 0 \quad \text{ as } n \to \infty\label{eq:ell-difference-limit}
    \end{align}
    for every such $\delta > 0$.
    
    Consider first the case $s \geq s_0 + \delta$.
    Choose $0<\epsilon < \min\{2\delta/d, s_0 - d\}$.
    It follows from combining Propositions \ref{prop:lower_bound_norm_interpolant} and \ref{prop:escape_manifold} and quasi-uniformity [see~\eqref{eq:mn-lower-bound-auxiliary}] that almost surely
    \begin{align*}
        u(\mb{x})^{\top}K_{s_0 + \delta}(\mb{x})^{-1}u(\mb{x}) = \lVert m_{s_0 + \delta, n}\rVert_{s_0 + \delta}^2 \geq C n^{1 + 2\delta /d - \epsilon}. 
    \end{align*}
    Moreover, for any $s > s_0 + \delta$, we have by \eqref{eq:norm_monotonicity} and the minimum norm property that
    \begin{align*}
        C n^{1 + 2\delta /d - \epsilon} \leq \lVert m_{s_0 + \delta,n} \rVert_{s_0 + \delta}^2 \leq \lVert m_{s,n} \rVert_{s_0 + \delta}^2 \leq C_2\lVert m_{s,n} \rVert_s^2. \label{eq:rkhs-norm-lower-bound-for-ell}
    \end{align*}
    Note that $C$ in the above display depends on $\delta$ and $s_0$ but not on $s$ and $C_2$ also doesn't depend on $s$.
    Next, we have by equation \eqref{eq:norm_upper_bound_interpolant} that almost surely
    \begin{align*}
        u(\mb{x})^{\top} K_{s_0}(\mb{x})^{-1}u(\mb{x}) = \lVert m_{s_0, n} \rVert_{s_0} \leq C n^{1 + \epsilon^\prime}
    \end{align*}
    for some $\epsilon^\prime < 2\delta/d - \epsilon$, where $C$ only depends on $\theta_0, \epsilon^\prime$ and the sample path.
    Clearly, we can find such an $\epsilon^\prime$ for every $\delta$.
    This gives
    \begin{equation}
    \begin{aligned}
        \sup_{s \geq s_0 + \delta} u(\mb{x})^{\top}K_{s_0}(\mb{x})^{-1}u(\mb{x}) -  u(\mb{x})^{\top}K_{s}(\mb{x})^{-1}u(\mb{x})        &\lesssim n^{1 + \epsilon^\prime} - n^{1 + 2\delta - \epsilon}\\
        &\lesssim - n^{1 + 2\delta - \epsilon}, \label{eq:rkhs-norm-lower-bound-for-ell}
    \end{aligned}
    \end{equation}
    almost surely, where the hidden constants only depend on $\delta, \epsilon, \theta_0$ and the sample path.
    For the terms involving the log-determinants we have, as in \eqref{eq:worst_case_approx_error},
    \begin{align*}
        -\log(\det(K_s(\mb{x})) +  \log(\det(K_{s_0}(\mb{x})) = \sum_{i = 1}^n \log\left(\frac{\V_{s_0}(x_i \vert \mb{x}^{i-1})}{\V_{s}(x_i \vert \mb{x}^{i-1})}\right).
    \end{align*}
    By quasi-uniformity, the bounds in Proposition~\ref{prop:cond_var_bounds} apply to each $\V_{s}(x_i \vert \mb{x}^{i-1})$.
    Therefore for all $s > d/2$
    \begin{align*}
        \V_s(x_i \vert \mb{x}^{i-1}) \geq c i^{-(2S_{\max} - d) / d} \quad  \text{ and } \quad \V_{s_0}(x_i \vert \mb{x}^{i-1}) \leq C i^{-(2s_0 - d)/d}.
    \end{align*}
    Thus, for $s \geq s_0 + \delta$,
    \begin{equation}
    \begin{aligned}
        \sum_{i = 1}^n \log\left(\frac{\V_{s_0}(x_i \vert \mb{x}^{i-1})}{\V_{s}(x_i \vert \mb{x}^{i-1})}\right) &\leq n \log(C / c)  + \sum_{i = 1}^n \log\left(i^{(2S_{\max}  - 2s_0)/d}\right) \\
        &= n\log(C/c)  + \frac{2(S_{\max} - s_0)}{d} \sum_{i = 1}^n \log(i) \asymp n \log(n), \label{eq:log_dets}
    \end{aligned}
    \end{equation}
    where we have used Stirling's approximation $\sum_{i=1}^n \log(i) = \log(n!) = n \log (n) - O(n)$ and the hidden constants depend on $s_0$ and $\delta$ but not on $s$.
    It follows from the estimates~\eqref{eq:rkhs-norm-lower-bound-for-ell} and \eqref{eq:log_dets} that almost surely there exist positive constants $C_1$ and $C_2$ such that
    \begin{align*}
        \lim_{n \to \infty}\sup_{s_0 + \delta \leq s < S_{\max}} \mathcal{L}(s) \leq  \lim_{n \to \infty} C_1 n\log(n) - C_2 n^{1 + 2\delta/d - \epsilon} = -\infty .
    \end{align*}
    From this it follows that $\limsup_{n \to \infty} \hat{s}_n \leq s_0$ almost surely, which is stronger than what is required for \eqref{eq:ell-difference-limit}.
    
    Consider then the case $d/2 < s \leq s_0 - \delta$.
    By repeating the steps in \eqref{eq:log_dets}, we obtain
    \begin{align}
        \sum_{i = 1}^n \log\left(\frac{\V_{s_0}(x_i \vert \mb{x}^{i-1})}{\V_{s}(x_i \vert \mb{x}^{i-1})}\right) \lesssim (s_0 - \delta - s_0)n\log(n) = -\delta n \log(n)\label{eq:liminf_determinant_bound}.
    \end{align}
    Let $m_{\theta_0,n}$ denote the minimum norm interpolant based on the correct kernel $\sigma_0^2 K_{\theta_0}$.
    By norm equivalence and the minimum norm property, 
    $\lVert m_{s_0, n} \rVert_{s_0} \leq \lVert m_{\theta_0, n} \rVert_{s_0} \leq C \lVert m_{\theta_0, n} \rVert_{\theta_0}$.
    Thus
    \begin{equation}
    \begin{aligned}
        u(\mb{x})^{\top}K_{s_0}(\mb{x})^{-1}u(\mb{x}) &\leq \sigma_0^{-2}C u(\mb{x})^{\top}K_{\theta_0}(\mb{x})^{-1}u(\mb{x}) \\
        & = C \tilde{\mb{\xi}}_n^{\top} K_{\theta_0}(\mb{x})^{1/2}K_{\theta_0}(\mb{x})^{-1}K_{\theta_0}(\mb{x})^{1/2} \tilde{\mb{\xi}}_n \leq C \sum_{i = 1}^n \tilde{\xi}_{n,i}^2, \label{eq:liminf_quad_form_bound}
    \end{aligned}
    \end{equation}
    where $\{\tilde{\mb{\xi}}_n = (\xi_{n,1}, \dots, \tilde{\xi}_{n,n})\}$ forms a triangular array of pairwise uncorrelated random variables with mean 0 and variance 1.
    The estimates \eqref{eq:liminf_determinant_bound} and \eqref{eq:liminf_quad_form_bound} and Markov's inequality yield
    \begin{align*}
        \P \bigg(\sup_{d/2 < s \leq s_0 - \delta }\mathcal{L}(s) \geq 0\bigg) &\leq \P \bigg( - C_1 \delta n \log(n) + C_2 \sum_{i = 1}^n \tilde{\xi}_{n,i}^2 -u(\mb{x})^{\top}K_s(\mb{x})^{-1} u(\mb{x}) \geq 0  \bigg) \\
        &\leq \P \bigg(\sum_{i = 1}^n \tilde{\xi}_{n,i}^2 \geq  c n\log(n) \bigg) \leq \frac{1}{c\log(n)} \to 0.
    \end{align*}    
    This concludes the proof of~\eqref{eq:ell-difference-limit}.
\end{proof}

The assumption $s_0 > d$ was only used to prove $\P\left(\limsup_{n \to \infty} \hat{s}_n \geq s_0\right) = 1$.
Thus if we relax the assumption to $s_0 > d/2$, we still obtain $\P(\hat{s}_n \leq s_0) \to 0$.
However, if we impose additional assumptions on the $\{\xi_i\}$ we can strengthen the statement of Theorem \ref{thm:consistency}.

\subsection{The Gaussian Case}\label{sec:gaussian_case}

In this section, we will discuss the particular case where the $\{\xi_i\}$ are standard Gaussian random variables, for concreteness and because of the importance of Gaussian processes in spatial statistics and other related fields.
We first establish a path regularity result, similar to~\citet[Theorem 4.6]{lang_2015}, which gives the result for the sphere $\mathcal{M} = \mathbb{S}^{d-1}$.
A stronger result for the sphere is given in~\citet{lan_2018}.
For various other path regularity results, see~\cite{Scheuerer2010, meershaert_2013, steinwart_2019, Henderson2022}.

\begin{proposition}\label{prop:path_property}
    Let $\{\xi_i\}$ be an i.i.d. sequence of standard Gaussian random variables.
    Let $l + \alpha  < s_0 - d/2$ with $l \in \mathbb{N}$ and $0 < \alpha < 1$ and let $u$ be defined as in \eqref{eq:true_model}.
    Then there is a version of $u$ whose sample paths are almost surely elements of $C^{l, \alpha}(\mathcal{M})$.
\end{proposition}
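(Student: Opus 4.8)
The plan is to use the Kolmogorov--Chentsov continuity theorem applied to the $l$th covariant derivative of $u$ on each normal coordinate chart, together with the Sobolev embedding and a moment bound derived from Gaussianity. First I would fix $\epsilon > 0$ small enough that $l + \alpha + \epsilon < s_0 - d/2$ and observe that, by Proposition~\ref{prop:smoothness_class}, $u$ is almost surely an element of $H^{s_0 - d/2 - \epsilon}_L$; since this Sobolev order exceeds $l + \alpha$, the Sobolev embedding theorem quoted in Section~\ref{sec:holder-spaces} says $H^{s_0-d/2-\epsilon}_L \hookrightarrow C^{l,\alpha}(\mathcal{M})$ continuously. Thus the desired version is simply a continuous-path version whose paths happen to lie in $H^{s_0-d/2-\epsilon}_L$, and the measurable version $\tilde u$ constructed after Proposition~\ref{prop:smoothness_class} (via \citet[Theorem~3.3]{steinwart_2019}) already provides paths that coincide almost surely with a fixed element of this Sobolev space. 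So in fact the bulk of the work is already done: one needs only to note that the Karhunen--Lo\`eve series converges in $H^{s_0-d/2-\epsilon}_L$ almost surely, and that $H^{s_0-d/2-\epsilon}_L \subset C^{l,\alpha}(\mathcal{M})$.

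To make this rigorous I would argue as follows. Write $u_N = \sigma_0\sqrt{v(\theta_0)}\sum_{i=1}^N (\tau_0+\lambda_i)^{-s_0/2}\xi_i e_i$. By the computation in the proof of Proposition~\ref{prop:smoothness_class}, $\E\lVert u_N - u_M\rVert_{H^{s_0-d/2-\epsilon}_L}^2 \to 0$ as $N,M\to\infty$, so $u_N$ is Cauchy in $L_2(\Omega; H^{s_0-d/2-\epsilon}_L)$ and converges to some $u_\infty \in H^{s_0-d/2-\epsilon}_L$ almost surely along a subsequence; a standard maximal-inequality argument (or the monotone convergence bound already used) upgrades this to full almost-sure convergence in the Hilbert space norm. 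Since $u_N \to u$ in $L_2(\mathcal{M})$ as well, $u_\infty = u$ in $L_2(\mathcal{M})$ almost surely. Now apply the continuous embedding $\iota_{l,\alpha}\colon H^{s_0-d/2-\epsilon}_L \hookrightarrow C^{l,\alpha}(\mathcal{M})$: the element $\tilde u := \iota_{l,\alpha}(u_\infty) \in C^{l,\alpha}(\mathcal{M})$ agrees with $u$ as an element of $L_2(\mathcal{M})$, hence is a version of $u$, and its paths lie in $C^{l,\alpha}(\mathcal{M})$ by construction.

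The main obstacle — really the only subtlety — is handling the almost-sure (rather than merely $L_2(\Omega)$) convergence of the Karhunen--Lo\`eve series in the Sobolev norm, and the interchange of ``version'' between the $L_2(\mathcal{M})$-valued random element and the $C^{l,\alpha}(\mathcal{M})$-valued one; i.e.\ checking that point evaluations of the Sobolev-space limit coincide almost surely with point evaluations of the measurable version $\tilde u$ from Section~\ref{sec:matern_processes_manifold}. This is dispatched by noting that both are continuous modifications of the same $L_2(\mathcal{M})$ process, and two continuous modifications of a process are indistinguishable once one checks agreement on a countable dense set, which follows from $L_2(\mathcal{M})$-equality plus continuity of the embedding. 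An alternative, more self-contained route avoiding this bookkeeping is the direct Kolmogorov--Chentsov approach: transfer to each normal chart $(\Omega_i,\phi_i)$, bound $\E\lvert \partial^\beta (u\circ\phi_i^{-1})(x) - \partial^\beta(u\circ\phi_i^{-1})(y)\rvert^{2p}$ for $\lvert\beta\rvert = l$ by Gaussian hypercontractivity (the $2p$th moment is $C_p$ times the $p$th power of the second moment) and a spectral estimate $\sum_i \lambda_i^{l}(\tau_0+\lambda_i)^{-s_0}\lvert e_i(x)-e_i(y)\rvert^2 \lesssim d_{\mathcal{M}}(x,y)^{2\alpha}$ using Weyl's law and derivative bounds on eigenfunctions, then take $p$ large enough that $2\alpha p > d$ and apply Kolmogorov--Chentsov; but the first route using Proposition~\ref{prop:smoothness_class} and the Sobolev embedding is shorter and I would present that one.
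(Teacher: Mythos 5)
Your main route contains a fatal quantitative error in the application of the Sobolev embedding. The embedding quoted in Section~\ref{sec:holder-spaces} gives $H^{s}_L \hookrightarrow C^{l,\alpha}(\mathcal{M})$ only for $l + \alpha < s - d/2$. Since Proposition~\ref{prop:smoothness_class} places $u$ in $H^{s_0-d/2-\epsilon}_L$, Sobolev embedding of the sample paths only yields $u \in C^{l,\alpha}(\mathcal{M})$ for $l + \alpha < (s_0 - d/2 - \epsilon) - d/2 = s_0 - d - \epsilon$, whereas the proposition asserts the much stronger range $l + \alpha < s_0 - d/2$. The discrepancy of $d/2$ is precisely the point: the content of Proposition~\ref{prop:path_property} is that, for Gaussian $\{\xi_i\}$, the sample paths have H\"{o}lder regularity of the \emph{same} order as their Sobolev regularity, rather than losing the $d/2$ that the deterministic embedding would cost. (The paper remarks on exactly this right after the proposition.) This improvement cannot be obtained by applying the embedding pathwise — it relies essentially on Gaussianity and some form of Kolmogorov--Chentsov or entropy argument. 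Your phrase ``since this Sobolev order exceeds $l+\alpha$'' is where the error enters; you have dropped the extra $d/2$ the embedding requires.

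The alternative sketch you mention at the end — localising to normal charts, bounding $2p$-th moments of increments of the $l$-th derivative by Gaussian hypercontractivity, establishing a spectral increment bound, and invoking Kolmogorov--Chentsov — is the correct type of argument and is essentially what the cited results of Kerkyacharian et al.\ encapsulate. But you dismiss it in favour of the Sobolev route, which is the one that fails. The paper instead first reduces to $d/2 < s_0 \leq d/2+1$ by factoring out an integer power $(\tau_0 - \Delta)^{-l/2}$, which is a $C^{0,\alpha} \to C^{l,\alpha}$ isomorphism, and then invokes the H\"{o}lder-regularity machinery of Kerkyacharian et al.\ for the base case; the latter is where the $d/2$ gain over Sobolev embedding is actually proved. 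To repair your proposal you would need to carry out, in full, the Kolmogorov--Chentsov route you only sketched, including the nontrivial spectral estimate $\sum_i \lambda_i^{l}(\tau_0+\lambda_i)^{-s_0}\lvert e_i(x) - e_i(y)\rvert^2 \lesssim d_{\mathcal{M}}(x,y)^{2\alpha}$ or an equivalent local bound.
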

\begin{proof}
    As in the proof of Theorem 4.6 in \citet{lang_2015}, it suffices to establish the claim for $d/2 < s_0 \leq d/2+1$ because for any $u$  and $1 < l \in \mathbb{N}$ such that $s_0 - l > d/2$ we have 
    \begin{align*}
        \frac{1}{\sigma_0}u = \frac{v(\theta_0)}{v(\tilde{\theta}_0)} (\tau_0 - \Delta)^{-l/2} \sum_{i = 1}^\infty v(\tilde{\theta}_0) (\tau_0 - \lambda_i)^{-s_0/2 + l/2}\xi_i e_i \eqqcolon \frac{v(\theta_0)}{v(\tilde{\theta}_0)} (\tau_0 - \Delta)^{-l/2} \tilde{u}.
    \end{align*}
    and for integer $l$ the operator $(\tau_0 - \Delta)^{-l/2}$ is an isomorphism from $C^{0, \alpha}$ to $C^{l, \alpha}$ for $0 < \alpha  < 1$ by Theorem XI.2.5 in \citet{taylor_1981}.
    Moreover, $\tilde{u}$ is a mean-zero Gaussian process with covariance function $K_{\tilde{\theta}_0}$ and $d/2 < \tilde{s}_0 \leq d/2 + 1$ where $\tilde{s}_0$ is the smoothness parameter in $\tilde{\theta}_0 = (\tilde{s}_0, \tau_0)$.
    The claim is now an immediate consequence of the results in \citet{Kerkyacharian_2018}. 
    More concretely, note that the results there apply to the Laplace--Beltrami operator on compact manifolds as mentioned in the Section 3 and the Examples on their page 283.
    Then apply first their Corollary 5.2, then their Theorem 4.6 and finally their Proposition 3.2.
\end{proof}

To simplify the notation in what follows we assume that $s_0 - d/2 = l + \alpha$ with $0 < \alpha \leq 1$.
It follows from Proposition~\ref{prop:path_property} that in the Gaussian case we have in fact 
\begin{align*}
    u \in H_L^{s_0 - d/2 - \epsilon}(\mathcal{M}) \cap C^{l, \alpha - \epsilon}(\mathcal{M}) = H_L^{l + \alpha - \epsilon}(\mathcal{M}) \cap C^{l, \alpha - \epsilon}(\mathcal{M})
\end{align*}
for all $\epsilon > 0$ small enough.
That is, $u$ possesses Sobolev and Hölder regularity of the same order.
Thus one might hope to have the following analog of Proposition \ref{prop:escape_manifold}:
\begin{align}
    \lVert u - m_{s, n} \rVert_{0} \leq C n^{-s_0/d  + 1/2 + \epsilon/d} \lVert u \rVert_{C^{l, \alpha - \epsilon}(\mathcal{M})} \label{eq:holder_approximation}
\end{align}
for all $s_0 > d/2$ when the sequence of points is quasi-uniform.
That this is indeed the case on closed manifolds has recently been shown in Corollary 1 of \citet{hangelbroek_2018}, which is based on results in \citet{hangelbroek_2010}.
However, their result applies only to even $s \in \mathbb{N}$.
While it appears likely that results like \eqref{eq:holder_approximation} should hold for general $s > d/2$, the result in \citet{hangelbroek_2018} suffices for our purposes.

\begin{corollary}\label{cor:consistency}
    Let the family of kernels $\{K_s\}_{s > d/2}$ and the sequence of points $\{x_i\}$ be as in Theorem \ref{thm:consistency}.
    Assume that the sequence $\{\xi_i\}$ used to define $u$ in \eqref{eq:true_model} is a sequence of i.i.d. standard Gaussian random variables and that $s_0 > d/2$.
    Let $\{\hat{s}_n\}$ be a sequence of maximizers of the Gaussian log-likelihood \eqref{eq:log_lik_thm} in $(d/2, S_{\max})$.
    Then
    \begin{align*}
        \hat{s}_n \to s_0 \quad \text{ almost surely}.
    \end{align*}
\end{corollary}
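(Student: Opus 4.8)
The plan is to mimic the structure of the proof of Theorem~\ref{thm:consistency}, but to upgrade the ``in probability'' conclusion to ``almost surely'' by exploiting the extra tools available in the Gaussian case. As in that proof, any maximizer $\hat{s}_n$ also maximizes the log-likelihood-ratio $\mathcal{L}(s) = 2(\ell(s; u(\mb{x})) - \ell(s_0; u(\mb{x})))$, and the event $\{\lvert \hat{s}_n - s_0\rvert > \delta\}$ is contained in the union of $\{\sup_{d/2 < s \le s_0 - \delta}\mathcal{L}(s) \ge 0\}$ and $\{\sup_{s_0 + \delta \le s \le S_{\max}}\mathcal{L}(s) \ge 0\}$. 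It therefore suffices to show that almost surely both suprema are eventually negative, i.e.\ $\limsup_{n\to\infty}\sup_{s \ge s_0+\delta}\mathcal{L}(s) = -\infty$ and likewise for $s \le s_0 - \delta$. The first of these, the ``$s$ too large'' direction, was already established almost surely inside the proof of Theorem~\ref{thm:consistency} (it only required $s_0 > d$, which now follows from the hypothesis $s_0 > d/2$ only after noting that the argument there used $\limsup \hat s_n \le s_0$; here I would re-derive it using the Gaussian escape bound~\eqref{eq:holder_approximation} in place of Proposition~\ref{prop:escape_manifold}, so that it works for all $s_0 > d/2$). Concretely, combining~\eqref{eq:holder_approximation} with Proposition~\ref{prop:lower_bound_norm_interpolant} gives $\lVert m_{s_0+\delta,n}\rVert_{s_0+\delta}^2 \ge C n^{1 + 2\delta/d - \epsilon}$ almost surely, and then the norm-monotonicity~\eqref{eq:norm_monotonicity} and the minimum-norm property propagate this lower bound to all $s \ge s_0 + \delta$; meanwhile the sharp two-sided bound~\eqref{eq:norm_upper_bound_interpolant_gaussian} gives $\lVert m_{s_0,n}\rVert_{s_0}^2 \le Cn$, and the determinant terms contribute at most $C n\log n$ exactly as in~\eqref{eq:log_dets}. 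Hence $\sup_{s \ge s_0+\delta}\mathcal{L}(s) \le C_1 n\log n - C_2 n^{1+2\delta/d-\epsilon} \to -\infty$ almost surely, choosing $\epsilon < 2\delta/d$.

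For the ``$s$ too small'' direction, this is exactly where the Gaussian hypothesis is needed to go beyond ``in probability''. In the proof of Theorem~\ref{thm:consistency}, the determinant terms give $-\delta n\log n$ (up to constants) and the quadratic form $u(\mb{x})^\top K_{s_0}(\mb{x})^{-1}u(\mb{x})$ is bounded above, via norm equivalence and the minimum-norm property, by $C\sum_{i=1}^n \tilde\xi_{n,i}^2$ where $\{\tilde\xi_{n,i}\}$ is a triangular array of mean-zero, variance-one random variables; in the non-Gaussian case one only gets control of $\sum \tilde\xi_{n,i}^2$ in probability via Markov's inequality. In the Gaussian case I would instead use the clean bound $u(\mb{x})^\top K_{s_0}(\mb{x})^{-1}u(\mb{x}) = \lVert m_{s_0,n}\rVert_{s_0}^2 \le Cn$ almost surely from~\eqref{eq:norm_upper_bound_interpolant_gaussian}. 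Since the (nonnegative) term $-u(\mb{x})^\top K_s(\mb{x})^{-1}u(\mb{x})$ only helps, we get, almost surely for all $s \le s_0 - \delta$,
\begin{align*}
    \mathcal{L}(s) \le -c\,\delta\, n\log n + Cn \longrightarrow -\infty.
\end{align*}
Thus $\limsup_n \sup_{d/2 < s \le s_0-\delta}\mathcal{L}(s) = -\infty$ almost surely.

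Combining the two directions, for every $\delta > 0$ we have $\P(\limsup_n \{\lvert \hat s_n - s_0\rvert > \delta\}) = 0$, and since this holds for a countable sequence $\delta \downarrow 0$ (e.g.\ $\delta = 1/k$), we conclude $\hat s_n \to s_0$ almost surely. I expect the main obstacle, or at least the one point requiring care, to be the ``$s$ too large'' direction under the weakened hypothesis $s_0 > d/2$: Proposition~\ref{prop:escape_manifold} and Proposition~\ref{prop:upper_bound_minimum_norm_interpolant}'s general bound~\eqref{eq:norm_upper_bound_interpolant} both assume $s_0 > d$, so I must genuinely replace them by the Gaussian-specific inputs~\eqref{eq:holder_approximation} and~\eqref{eq:norm_upper_bound_interpolant_gaussian}, and verify that the constant $C$ in~\eqref{eq:holder_approximation} (which depends on $\lVert u\rVert_{C^{l,\alpha-\epsilon}(\mathcal{M})}$, finite almost surely by Proposition~\ref{prop:path_property}) can be taken sample-path dependent but $n$-independent, exactly as Proposition~\ref{prop:escape_manifold} allows when $h_{\mb{x}}\to 0$. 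The restriction in~\cite{hangelbroek_2018} to even integer $s$ is not a problem here since we only need the escape bound at the single value of the interpolating kernel smoothness, and in any case the family $\{K_s\}$ may be chosen freely; alternatively one invokes~\eqref{eq:holder_approximation} only for $m_{s_0,n}$ and for $m_{s_0+\delta,n}$ with $s_0, s_0+\delta$ arranged to be even integers, or simply notes that the qualitative conclusion $\limsup \hat s_n \le s_0$ a.s.\ already follows from the proof of Theorem~\ref{thm:consistency} whenever $s_0 > d$ and handle $d/2 < s_0 \le d$ separately using~\eqref{eq:holder_approximation}. Once these inputs are in place the rest is a routine repetition of the estimates in the proof of Theorem~\ref{thm:consistency}.
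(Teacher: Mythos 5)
Your overall architecture is correct and matches the paper's: you show $\limsup_n \hat s_n \le s_0$ a.s.\ by verifying the hypothesis of Proposition~\ref{prop:lower_bound_norm_interpolant} in the Gaussian regime $s_0 > d/2$, and you show $\liminf_n \hat s_n \ge s_0$ a.s.\ by replacing the weak law/Markov step of Theorem~\ref{thm:consistency} with the almost-sure bound $u(\mb{x})^\top K_{s_0}(\mb{x})^{-1}u(\mb{x}) \le Cn$ from~\eqref{eq:norm_upper_bound_interpolant_gaussian}. The second direction is handled exactly as in the paper.

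There is, however, a genuine gap in your treatment of the first direction. You want to replace Proposition~\ref{prop:escape_manifold} with the H\"older escape bound~\eqref{eq:holder_approximation}, but the Hangelbroek--Narcowich--Ward bound underlying~\eqref{eq:holder_approximation} is only available when the interpolating kernel has \emph{even integer} smoothness. In verifying the hypothesis of Proposition~\ref{prop:lower_bound_norm_interpolant} for $f = m_{s,n}$, the bound must hold uniformly over all $s \ge s_0 + \delta$, and none of the workarounds you sketch actually close this: you cannot ``arrange $s_0$, $s_0+\delta$ to be even integers'' (both are given), and the claim that you ``only need the escape bound at a single value of the interpolating kernel smoothness'' is not true since the bound $\lVert m_{s,n}\rVert_s^2 \ge Cn^{1+2(s-s_0)/d-\epsilon}$ is needed for a continuum of $s$ values (which is then propagated upward via norm monotonicity; but the propagation starts from $s = s_0+\delta$, which you do not control because $s_0 + \delta$ need not be an even integer). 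The paper's fix is precise and worth noting: fix a single large even integer $2b > \max\{S_{\max},s_0\}$, let $\tilde m_{2b,n}$ be the interpolant from $\mathcal{H}^{2b}$ (which is simultaneously the minimum-norm interpolant for $u$ and for $m_{s,n}$ since they agree at the $x_i$), and split
\begin{align*}
\lVert u - m_{s,n}\rVert_0 \le \lVert u - \tilde m_{2b,n}\rVert_0 + \lVert \tilde m_{2b,n} - m_{s,n}\rVert_0 ,
\end{align*}
where the first term is bounded by the H\"older escape estimate (applicable since $2b$ is even) and the second by Proposition~\ref{prop:escape_manifold} applied to $m_{s,n}\in\mathcal{H}^s$, producing precisely the two terms $C_1 n^{-s_0/d+1/2+\epsilon/d} + C_2 n^{-s/d}\lVert m_{s,n}\rVert_s$ required by~\eqref{eq:assumption}. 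Without this intermediary, the verification of the hypothesis of Proposition~\ref{prop:lower_bound_norm_interpolant} does not go through for $d/2 < s_0 \le d$, so the proposal as written does not prove the corollary.
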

\begin{proof}
    The conclusion is stronger than that of Theorem~\ref{thm:consistency} in two ways: (i) $s_0 > d/2$ instead of $s_0 > d$ and (ii) the convergence is almost sure rather than in probability.
    The assumption $s_0 > d$ was used in the proof of Theorem~\ref{thm:consistency} only to show that almost surely
    \begin{align*}
        \lVert m_{s, n} \rVert_{s}^2 \geq n^{1 + 2(s - s_0)/d  - \epsilon}
    \end{align*}
    via Proposition \ref{prop:lower_bound_norm_interpolant}, which is only needed to show that $\P(\limsup_{n \to \infty}\hat{s}_n \leq s_0 ) = 1$.
    Thus it suffices to show that in the Gaussian case the assumption for Proposition \ref{prop:lower_bound_norm_interpolant} is fulfilled for $s_0 > d/2$.
    For this purpose, let $b$ be a natural number such that $2b > \max \{ S_{\max}, s_0 \}$ and let $\tilde{m}_{2b, n}$ denote the minimum norm interpolant from $\mathcal{H}^{2b}$.
    Note that because $u(x_i) = m_{s, n}(x_i)$ for all $1 \leq i \leq n$, it follows that $\tilde{m}_{2b, n}$ is the minimum norm interpolant for both $u$ and $m_{s, n}$.
    Then Proposition \ref{prop:escape_manifold} and Corollary 1 in \citet{hangelbroek_2018} give
    \begin{align*}
        \lVert u - m_{s, n} \rVert_0 &\leq \left\lVert u - \tilde{m}_{2b, n} \right\rVert_0 + \left\lVert \tilde{m}_{2b, n} - m_{s, n} \right\rVert_0 \\
        & \leq C_1 n^{-s_0 / d + 1/2 + \epsilon/d} \lVert u \rVert_{C^{k, \alpha - \epsilon}(\mathcal{M})} + C_2 n^{-s/d} \lVert m_{s, n} \rVert_{s}
    \end{align*}
    for constants $C_1, C_2 > 0$ that do not depend on $n$.
    Thus the assumptions of Proposition \ref{prop:lower_bound_norm_interpolant} are fulfilled.
    Repeating the proof of Theorem \ref{thm:consistency} yields $\P(\limsup_{n \to \infty}\hat{s}_n \leq s_0 ) = 1$.

    It remains to prove $\P(\liminf_{n \to \infty}\hat{s}_n \geq s_0 ) = 1$.
    For this, note that the estimate~\eqref{eq:norm_upper_bound_interpolant_gaussian} in Proposition~\ref{prop:upper_bound_minimum_norm_interpolant} shows that almost surely there exists a positive constant $C$ such that 
    \begin{align*}
        u(\mb{x})^{\top} K_{s_0}(\mb{x})^{-1}u(\mb{x}) \leq C n.
    \end{align*}
    Repeating the same steps as in the proof of Theorem \ref{thm:consistency} yields that, almost surely,
    \begin{align*}
        \sup_{d/2 < s \leq s_0 - \delta} \mathcal{L}(s) &\leq -C_1 \delta n\log(n) + C_2 n - u(\mb{x})^{\top} K_{s}(\mb{x})^{-1}u(\mb{x}) \lesssim -n \log(n) \to - \infty,
    \end{align*}
    which implies $\P(\liminf_{n \to \infty}\hat{s}_n \geq s_0 ) = 1$ and thus completes the proof.
\end{proof}

\subsection{Estimation Using Other Kernels} \label{sec:other_kernels}

In this section we discuss some kernels to which Theorem~\ref{thm:consistency} applies.
Of course, the theorem applies to the correctly specified kernels $\sigma^2 K_{\theta}$ from Section \ref{sec:background}, as long as the corresponding norms fulfill the norm monotonicity property.
Although these kernels have many nice theoretical properties, their evaluation requires the calculation of an infinite sum.
Furthermore, the eigenvalue decomposition of the Laplace--Beltrami operator is only known in certain special cases, e.g. on the sphere or the torus.
Therefore, even obtaining an approximate evaluation of $K_{\theta}$ can be hopeless.
Another aspect of practical importance is that the matrices $K_{\theta}(\mb{x})$ are dense, so that the evaluation of the Gaussian likelihood for large $n$ is computationally intensive.
Therefore, it would be desirable to find other classes $\{K_s\}_{s > d/2}$ of kernels that have good practical properties and allow a consistent estimation of $s_0$.
The crucial property that these kernels have to satisfy is that their RKHS are equal to $H^s_L$ up to equivalent norms.

One class of kernels we consider here are kernels from the ambient space $\mathbb{R}^k$.
Such kernels were applied to function approximation on closed manifolds in \citet{fuselier_2012} and some of the results in this section are essentially from their work.
For manifolds $\mathcal{M} \subset \mathbb{R}^k$ it is quite easy to arrive at positive definite kernels by using kernels from the ambient space $\mathbb{R}^k$.
Specifically, if $\tilde{K}$ is a positive definite kernel on $\mathbb{R}^k \times \mathbb{R}^k$, then its restriction is 
\begin{align*}
    K(\cdot, \cdot) = \tilde{K}(\cdot, \cdot)\vert_{\mathcal{M} \times \mathcal{M}}
\end{align*}
is of course positive definite as well.
The RKHS of a particular class of kernels $\tilde{K}$ were identified in \citet{fuselier_2012}.

\begin{theorem}[Theorem 2 in \citealp{fuselier_2012}]\label{thm:fusilier_theorem}
    Let $\tilde{K}_s(x, y) = \Phi_s(\lVert x- y \rVert)$ be a positive definite kernel on $\mathbb{R}^k$ such that the Fourier transform of $\Phi_s$ satisfies 
    \begin{align}
        \hat{\Phi}_s(\xi) \asymp (1 + \lVert \xi \rVert^2)^{-(s +(k-d)/2)}. \label{eq:kernel_fourier_decay}
    \end{align}
    Then the RKHS $\mathcal{H}^s$ with norm $\lVert \cdot \rVert_s$ of the restriction $K_s(\cdot, \cdot) = \tilde{K}_s(\cdot, \cdot) \vert_{\mathcal{M} \times \mathcal{M}}$ is $H^{s}_L$ up to equivalent norms.
\end{theorem}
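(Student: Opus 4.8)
The plan is to assemble the statement from three standard ingredients: the spectral description of the reproducing kernel Hilbert space of a translation-invariant kernel on $\mathbb{R}^k$, the restriction theorem for reproducing kernels, and the trace--extension theorem recorded as Theorem~\ref{thm:manifold_extension_theorem}. First I would identify the RKHS of the ambient kernel $\tilde{K}_s$ on $\mathbb{R}^k$. Since $s > d/2$ forces the decay exponent $s + (k-d)/2$ to exceed $k/2$, the spectral density $\hat{\Phi}_s$ is integrable, so $\Phi_s$ is continuous, and the classical characterization of the RKHS of a continuous positive definite stationary kernel (see, e.g., \citealp{wendland_2004}) gives
\[
  \mathcal{N}^s = \Big\{\, f \in L_2(\mathbb{R}^k) : \textstyle\int_{\mathbb{R}^k} \lvert \hat{f}(\xi)\rvert^2 / \hat{\Phi}_s(\xi)\,\mathrm{d}\xi < \infty \,\Big\}, \qquad \lVert f \rVert_{\mathcal{N}^s}^2 \asymp \int_{\mathbb{R}^k} \frac{\lvert \hat{f}(\xi)\rvert^2}{\hat{\Phi}_s(\xi)}\,\mathrm{d}\xi .
\]
The assumption $\hat{\Phi}_s(\xi) \asymp (1 + \lVert \xi \rVert^2)^{-(s+(k-d)/2)}$ says exactly that $\lVert f \rVert_{\mathcal{N}^s}^2 \asymp \lVert f \rVert_{H^{s+(k-d)/2}(\mathbb{R}^k)}^2$, so $\mathcal{N}^s = H^{s+(k-d)/2}(\mathbb{R}^k)$ with equivalent norms.

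Next, because every $f \in \mathcal{N}^s$ has a continuous representative, restricting it to $\mathcal{M}$ is ordinary restriction of continuous functions and is consistent with $K_s = \tilde{K}_s\vert_{\mathcal{M}\times\mathcal{M}}$. The restriction theorem for reproducing kernels then yields $\mathcal{H}^s = \{\, f\vert_{\mathcal{M}} : f \in \mathcal{N}^s \,\}$ with quotient norm $\lVert g \rVert_s = \min\{\, \lVert f \rVert_{\mathcal{N}^s} : f \in \mathcal{N}^s,\ f\vert_{\mathcal{M}} = g \,\}$. Now I would feed in Theorem~\ref{thm:manifold_extension_theorem}: the restriction $T_{\mathcal{M}} : H^{s+(k-d)/2}(\mathbb{R}^k) \to H^s_L$ is bounded and has a bounded right inverse $E_{\mathcal{M}}$. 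Boundedness of $T_{\mathcal{M}}$ gives $\lVert g \rVert_{H^s_L} = \lVert T_{\mathcal{M}} f \rVert_{H^s_L} \lesssim \lVert f \rVert_{\mathcal{N}^s}$ for every admissible extension $f$ of $g$, so taking the infimum over such $f$ gives $\mathcal{H}^s \subseteq H^s_L$ and $\lVert g \rVert_{H^s_L} \lesssim \lVert g \rVert_s$. Conversely, for $g \in H^s_L$ the function $E_{\mathcal{M}} g$ is an admissible extension, so $\lVert g \rVert_s \le \lVert E_{\mathcal{M}} g \rVert_{\mathcal{N}^s} \lesssim \lVert g \rVert_{H^s_L}$, which also shows $H^s_L \subseteq \mathcal{H}^s$. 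Hence $\mathcal{H}^s = H^s_L$ with equivalent norms.

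I do not expect a serious obstacle once Theorem~\ref{thm:manifold_extension_theorem} is granted: the argument is an assembly of known facts. The points needing care are (i) checking that the restriction-of-RKHS theorem applies in the quotient-norm form used above, which relies on continuity of $\tilde{K}_s$ and hence on $s > d/2$; and (ii) tracking the effect of replacing the standard ambient Sobolev norm by the equivalent RKHS norm $\lVert \cdot \rVert_{\mathcal{N}^s}$, since equivalent renormings of $\mathcal{N}^s$ induce equivalent quotient norms on $\mathcal{H}^s$ but this bookkeeping must be made explicit. The genuine analytic content is concentrated in the cited trace theorem for compact embedded submanifolds, and the proof here follows the lines of \citet{fuselier_2012}.
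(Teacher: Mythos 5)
The paper does not prove this theorem itself but cites it verbatim from Fuselier and Wright (Theorem~2 in \citealp{fuselier_2012}); your reconstruction — identifying the ambient RKHS of $\tilde{K}_s$ with $H^{s+(k-d)/2}(\mathbb{R}^k)$ via the Fourier characterization, invoking Aronszajn's restriction theorem to realize $\mathcal{H}^s$ as the quotient space with infimum norm, and then using the bounded trace operator $T_{\mathcal{M}}$ and its bounded right inverse $E_{\mathcal{M}}$ from Theorem~\ref{thm:manifold_extension_theorem} to convert the quotient norm into an equivalent $H^s_L$ norm — is exactly the argument used in that reference. The reasoning is correct and complete, with the one point you already flag (continuity of $\tilde{K}_s$, hence pointwise-defined representatives, guaranteed by $s+(k-d)/2 > k/2$) being the only hypothesis that makes the restriction theorem applicable.
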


It is also known that the kernel $\tilde{K}_s$ is the kernel for an RKHS equal to $H^{s + (k-d)/2}(\mathbb{R}^{k})$ up to equivalent norms \citep[Chapter 10]{wendland_2004}.
In Theorem~\ref{thm:consistency} it is also assumed that the norms fulfil the monotonicity property~\eqref{eq:norm_monotonicity}.
Fortunately, this is inherited from the norm monotonicity property of the original kernel.
The following result is a direct consequence of \citet[Lemma~4]{fuselier_2012}, which in turn is a reformulation of the results in \citet[Section~9]{schaback_1999}, which can be found also in \citet[Theorem~10.46 and Corollary~10.47]{wendland_2004}.
See the Appendix, Proposition \ref{prop:generalized_wendland_monotone} for a proof. 

\begin{proposition}\label{prop:inherited_norm_monotonicity}
  Let $\tilde{K}_s(\cdot, \cdot)$ be as in Theorem \ref{thm:fusilier_theorem}.
  Let $\mathcal{H}^s(\mathbb{R}^k)$ be the RKHS of $\tilde{K}_s$ and $\mathcal{H}^s$ the RKHS of $K_s(\cdot, \cdot) = \tilde{K}_s(\cdot, \cdot)\vert_{\mathcal{M} \times \mathcal{M}}$.
    Let $s \leq s^\prime$.
    If $\lVert f \rVert_{\mathcal{H}^s(\mathbb{R}^k)} \leq C \lVert f \rVert_{\mathcal{H}^{s^\prime}(\mathbb{R}^k)}$ for all $f \in \mathcal{H}^{s^\prime}(\mathbb{R}^k)$ and some $C$ independent of $s$, then $\lVert f \rVert_{s} \leq C \lVert f \rVert_{s^\prime}$ for all $f \in \mathcal{H}^s$.  
\end{proposition}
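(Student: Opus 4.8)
The plan is to reduce the claim to the classical description of the reproducing kernel Hilbert space of a restricted kernel. Since $K_s = \tilde K_s|_{\mathcal{M}\times\mathcal{M}}$, Aronszajn's restriction theorem --- which is exactly the content of \citealp[Lemma~4]{fuselier_2012}, and also follows from \citealp[Theorem~10.46]{wendland_2004} --- identifies $\mathcal{H}^s$ with the space of restrictions $\{\, f|_{\mathcal{M}} : f \in \mathcal{H}^s(\mathbb{R}^k) \,\}$, normed by
\begin{align*}
  \lVert g \rVert_{s} = \inf\bigl\{\, \lVert f \rVert_{\mathcal{H}^s(\mathbb{R}^k)} : f \in \mathcal{H}^s(\mathbb{R}^k),\ f|_{\mathcal{M}} = g \,\bigr\},
\end{align*}
and likewise with $s$ replaced by $s^\prime$. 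Beyond this, I would only need two further ingredients: the inclusion $\mathcal{H}^{s^\prime}(\mathbb{R}^k) \subseteq \mathcal{H}^s(\mathbb{R}^k)$ for $s \le s^\prime$ (immediate from the Fourier decay \eqref{eq:kernel_fourier_decay}, since $(1+\lVert\xi\rVert^2)^{s+(k-d)/2} \le (1+\lVert\xi\rVert^2)^{s^\prime+(k-d)/2}$, or equivalently from the norm equivalence of these native spaces with ambient Sobolev spaces), and the hypothesis $\lVert f \rVert_{\mathcal{H}^s(\mathbb{R}^k)} \le C \lVert f \rVert_{\mathcal{H}^{s^\prime}(\mathbb{R}^k)}$ with $C$ independent of $s,s^\prime$.

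The argument then takes three lines. Fix $s \le s^\prime$ and $g \in \mathcal{H}^s$; if $\lVert g\rVert_{s^\prime} = \infty$ the inequality is vacuous, so assume $g \in \mathcal{H}^{s^\prime}$, which by the restriction theorem at level $s^\prime$ means $g$ has at least one extension in $\mathcal{H}^{s^\prime}(\mathbb{R}^k)$. Writing $A_t$ for the (affine) set of $f \in \mathcal{H}^t(\mathbb{R}^k)$ with $f|_{\mathcal{M}} = g$, the set $A_{s^\prime}$ is nonempty and, by the inclusion above, $A_{s^\prime} \subseteq A_s$, so that
\begin{align*}
  \lVert g \rVert_{s} = \inf_{f \in A_s} \lVert f \rVert_{\mathcal{H}^s(\mathbb{R}^k)} \le \inf_{f \in A_{s^\prime}} \lVert f \rVert_{\mathcal{H}^s(\mathbb{R}^k)} \le C \inf_{f \in A_{s^\prime}} \lVert f \rVert_{\mathcal{H}^{s^\prime}(\mathbb{R}^k)} = C \lVert g \rVert_{s^\prime},
\end{align*}
where the middle inequality is the hypothesis applied to each $f \in A_{s^\prime}$ and the last equality is the extension characterisation of $\lVert\cdot\rVert_{s^\prime}$. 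Since $C$ is the constant from the hypothesis, it does not depend on $s$ or $s^\prime$, which is what is claimed.

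I do not expect any genuine obstacle here; the proof is essentially bookkeeping. The one place to be careful is to invoke the restriction theorem in its quotient-norm (infimum) form rather than as a mere set equality, and to record that the infimum defining $\lVert g\rVert_{s^\prime}$ is taken over a nonempty set so the chain of inequalities is not vacuous. If one prefers, the infima can be realised as genuine minima via the orthogonal projection onto the closed subspace of extensions, but this refinement is not needed for the bound. Finally, combined with Theorem~\ref{thm:fusilier_theorem} this shows that kernels such as the generalized Wendland family --- whose ambient native-space norms are monotone by \citealp[Section~9]{schaback_1999} --- satisfy the monotonicity requirement \eqref{eq:norm_monotonicity} of Theorem~\ref{thm:consistency}.
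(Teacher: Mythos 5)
Your proof is correct and is essentially the paper's own argument in quotient-norm clothing: the paper invokes Lemma~4 of \citet{fuselier_2012} to pick the norm-attaining extension operator $E$ with $\lVert f \rVert_{s^\prime} = \lVert Ef \rVert_{\mathcal{H}^{s^\prime}(\mathbb{R}^k)}$ and then bounds $\lVert f \rVert_s = \lVert T_{\mathcal{M}}Ef \rVert_s \le \lVert Ef\rVert_{\mathcal{H}^s(\mathbb{R}^k)} \le C\lVert Ef\rVert_{\mathcal{H}^{s^\prime}(\mathbb{R}^k)}$, which is the same three inequalities you write using the infimum over $A_{s^\prime}$ and the inclusion $A_{s^\prime}\subseteq A_s$. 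Your closing remark that the infima are attained by projection is precisely the minimizer $E$ the paper uses, so the two proofs coincide.
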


The prime examples of such kernels are the isotropic Mat\'ern kernels on $\mathbb{R}^k$ given in \eqref{eq:matern_cov}.
The fact that their RKHS have monotone norms is shown for example in~\citet[Lemma~3.9]{karvonen2023asymptotic}.

Another class of kernels satisfying \eqref{eq:kernel_fourier_decay} are the so-called generalized Wendland functions
\begin{align*}
  \Phi_{\kappa}(\lVert x- y \rVert) = \Phi_{\kappa}(r)
  =
  \begin{dcases} 
        \frac{\sigma^2 2^{1-\kappa}}{\Gamma(\kappa)} \int_{r/\beta}^1 u\left(u^2 - \frac{r^2}{\beta^2}\right)^{\kappa - 1}(1 - u)^\mu \, \mathrm{d}u & \text{ if } \quad 0 < \frac{r}{\beta} < 1, \\
        0 & \text{ if } \quad \frac{r}{\beta} \geq 1;
  \end{dcases}
\end{align*}
see \cite{gneiting_2002, bevilacqua_2019, hubbert_2023}.
Here $\beta$ and $\sigma^2$ are positive and $\mu \geq (d+1)/2 + \kappa$, while $\Gamma(\cdot)$ is the gamma function.
For $s = \kappa + (d+1)/2$ the RKHS of $\Phi_{\kappa}$ is equal to $H^s(\mathcal{M})$~\citep[Theorem 1]{bevilacqua_2019}.
Generalized Wendland functions, in a slightly different form, are studied as covariance functions for Gaussian processes in~\citet{bevilacqua_2019}.
The difference is irrelevant for our purposes.
The advantage of the generalized Wendland functions is that they are compactly supported and thus yield sparse kernel matrices.
The monotonicity of the corresponding norms is proved in Proposition~\ref{prop:generalized_wendland_monotone}.

If $\mathcal{M} = \mathbb{S}^{d-1}$ is the sphere, the restriction of the generalized Wendland functions to $\mathcal{M}$ was investigated by \citet{hubbert_2023}.
Further covariance functions on the sphere with explicit smoothness parameters are presented in \citet{gneiting_2013} and \citet{guinnes_2016}.

\subsection{Estimation of the Magnitude Parameter} \label{sec:magnitude-estimation}
The smoothness parameter $s$ is fixed to a largely arbitrary value in many applications.
Here we consider a setting in which the model smoothness $s \geq s_0$ is fixed but the \emph{magnitude parameter} $\sigma$ is estimated via maximum likelihood.
Define therefore $K_{s,\sigma} = \sigma^2 K_s$.
Straightforward differentiation of the Gaussian log-likelihood~\eqref{eq:gaussian_ll} with respect to $\sigma$ then yields the maximum likelihood estimator
\begin{align*}
  \hat{\sigma}_n^2 = \frac{u(\mb{x})^{\top} K_s(\mb{x})^{-1} u(\mb{x})}{n} = \frac{\lVert m_{s, n} \rVert_{s}^2 }{n}.
\end{align*}
It is now an immediate consequence of Propositions~\ref{prop:lower_bound_norm_interpolant} to~\ref{prop:upper_bound_minimum_norm_interpolant} that $\hat{\sigma}_n^2$ blows up with a polynomial rate that depends on the extent, $s - s_0$, to which the model oversmooths the truth.

\begin{corollary} \label{corollary:magnitude-estimation}
  Let $K_s$ be a kernel whose RKHS $\mathcal{H}^s$ is equal to $H^s_L$ up to equivalent norms and let $u$ be defined as in \eqref{eq:true_model} with $s \geq s_0 > d$.
  Assume that the sequence of points $\{ x_i \}$ is quasi-uniform.
  Then almost surely 
  \begin{align*}
    c \, n^{2(s-s_0)/d - \epsilon} \leq \hat{\sigma}_n^2 \leq C n^{2(s-s_0)/d + \epsilon}
  \end{align*}
  for any $\epsilon > 0$, where the constants $c, C > 0$ depend only on $s$, $\theta_0$, $\mathcal{M}$, $\epsilon$ and the sample path.
\end{corollary}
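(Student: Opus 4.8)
The plan is to read everything off the identity $\hat{\sigma}_n^2 = \lVert m_{s,n}\rVert_s^2/n$ recorded above: since $s \ge s_0 > d$ and the points are quasi-uniform, Propositions~\ref{prop:lower_bound_norm_interpolant}--\ref{prop:upper_bound_minimum_norm_interpolant} all apply, and together they sandwich $\lVert m_{s,n}\rVert_s^2$ between two polynomial rates in $n$ whose exponents differ from $1$ by $2(s-s_0)/d$, up to an arbitrarily small $\epsilon$.

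For the upper bound I would simply invoke the estimate of Proposition~\ref{prop:upper_bound_minimum_norm_interpolant} for the squared interpolant norm, namely $\lVert m_{s,n}\rVert_s^2 \le C n^{1 + 2(s-s_0)/d + \epsilon}$ almost surely for every $\epsilon > 0$, and divide by $n$. For the lower bound I would first note that quasi-uniformity turns the escape estimate of Proposition~\ref{prop:escape_manifold} into the bound $\lVert u - m_{s,n}\rVert_0 \le C n^{-s_0/d + 1/2 + \epsilon/d}\lVert u\rVert_{s_0 - d/2 - \epsilon}$ exactly as in~\eqref{eq:concrete_escape}; this is precisely the hypothesis~\eqref{eq:assumption} of Proposition~\ref{prop:lower_bound_norm_interpolant} with $f = m_{s,n}$ (and $C_2 = 0$), so that proposition gives $\lVert m_{s,n}\rVert_s^2 \ge c\, n^{1 + 2(s-s_0)/d - \epsilon'}$ almost surely for every $\epsilon' > 0$. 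Dividing by $n$ and keeping track of which quantities the constants depend on ($s$, $\theta_0$, $\mathcal{M}$, $\epsilon$, and the sample path) completes the proof.

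There is no real obstacle here: the corollary is an immediate consequence of the auxiliary bounds of Section~\ref{sec:norm_det_bounds}, which were already assembled in the proof of Theorem~\ref{thm:consistency}. The only points requiring a little care are the bookkeeping of the $\epsilon$'s — the $+\epsilon$ in the upper rate comes from Proposition~\ref{prop:upper_bound_minimum_norm_interpolant} and the $-\epsilon$ in the lower rate from Proposition~\ref{prop:lower_bound_norm_interpolant} — and verifying that, under quasi-uniformity, the escape bound is genuinely of the form~\eqref{eq:assumption}, so that Proposition~\ref{prop:lower_bound_norm_interpolant} may legitimately be applied with $f$ taken to be the interpolant $m_{s,n}$ itself rather than a generic approximant.
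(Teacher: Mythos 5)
Your proposal is correct and is exactly the argument the paper intends when it calls the corollary an immediate consequence of Propositions~\ref{prop:lower_bound_norm_interpolant}--\ref{prop:upper_bound_minimum_norm_interpolant}: write $\hat{\sigma}_n^2 = \lVert m_{s,n}\rVert_s^2/n$, get the lower bound by feeding the quasi-uniform escape estimate \eqref{eq:concrete_escape} into Proposition~\ref{prop:lower_bound_norm_interpolant} with $f = m_{s,n}$ and $C_2 = 0$, and get the upper bound from Proposition~\ref{prop:upper_bound_minimum_norm_interpolant}. You have also (correctly) read the exponent in \eqref{eq:norm_upper_bound_interpolant} as $2(s-s_0)/d + 1 + \epsilon$ rather than the $2(s-s_0) + 1 + \epsilon$ displayed there, which is a typographical slip in the paper: the Appendix proof bounds $\lVert m_{s,n}\rVert_s \lesssim q_{\mb{x}}^{s_0 - s - d/2 - \epsilon}$, and under quasi-uniformity this squares to $n^{2(s-s_0)/d + 1 + 2\epsilon/d}$.
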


If $u$ is a Gaussian process, an improved version of this corollary can be obtained by proceeding as in Section~\ref{sec:gaussian_case}.
According to Proposition~\ref{prop:escape_manifold}, the minimum-norm interpolant $m_{s,n}$ with misspecified smoothness $s \geq s_0$ tends to $u$ at essentially the same rate $n^{-s_0/d + 1/2}$ as $m_{s_0, n}$.
Note that the estimation of the magnitude does not affect the minimum norm of the interpolant.
Corollary~\ref{corollary:magnitude-estimation} is useful because it shows that a computationally simple magnitude estimate correctly compensates for the misspecification of smoothness and ensures that the conditional variance is a reliable indicator of predictive uncertainty.
Proposition~\ref{prop:cond_var_bounds} and Corollary~\ref{corollary:magnitude-estimation} imply that for quasi-uniform points the correct variance $\mathbb{V}_{s_0}$ and the variance $\mathbb{V}_{s,n} = \hat{\sigma}_n^2 \mathbb{V}_s$ for the kernel $K_{s,\sigma}$ with the plug-in maximum likelihood estimator $\sigma = \hat{\sigma}_n$ fulfill
\begin{align*}
 \sup_{ x \in \mathcal{M}} \mathbb{V}_{s_0}(x|\mb{x}) \asymp n^{-2s_0/d + 1} \quad \text{ and } \quad n^{-2s_0/d + 1 - \epsilon} \lesssim \sup_{x \in \mathcal{M}} \mathbb{V}_{s,n}(x|\mb{x}) \lesssim n^{-2s_0/d + 1 + \epsilon}.
\end{align*}
This means that $\mathbb{V}_{s,n}$ decays at the essentially ``correct'' rate.

When smoothness is well specified (i.e. $s = s_0$), a number of consistency and asymptotic normality results for magnitude estimation in Matérn and related models on Euclidean domains have been obtained in the last 30 years~\cite[e.g.,][]{Ying1991, zhang_2004, Kaufman2013}.
\citet{li_2023} considered essentially the same setting as in this work and established consistency of the microergodic parameter in the Gaussian case with correctly specified kernel.
A corresponding asymptotic normality result of the form (assuming $d \leq 3$)
\begin{align}
    \sqrt{n} (\hat{\sigma}^2_n v(\hat{\theta}_n) - \sigma_0^2 v(\theta_0)) \to \mathcal{N}(0, 2 \sigma_0^4 v(\theta_0)^2) \label{eq:microergodic_clt}
\end{align}
can be derived with known arguments, which are used for example in \cite{du_2009, wang_2011, Kaufman2013, bolin_graph_2023}.
A formal statement and proof can be found in the Appendix.
The proof is essentially from \citet[Proposition 3]{bolin_graph_2023}, where a different scenario is considered.

The results for misspecified smoothness are recent.
Versions of Corollary~\ref{corollary:magnitude-estimation} for the expected value of $\hat{\sigma}_n^2$ have been proved in~\citet{Karvonen2021} and \citet{Naslidnyk2023}.
\citet{Karvonen2020} proved the corollary for a class of self-similar functions on Euclidean domains, while \citet{XuStein2017} obtained rates for the magnitude estimator when the model has Gaussian covariance $K(x, y) = \exp(-(x-y)^2/(2 \tau^2))$ for $\tau > 0$ and the observations are generated by a low-order monomial on the interval $[0, 1]$.
When the observations are contaminated by noise, posterior contraction rates and the coverage of credible sets for a Brownian motion model with an estimated magnitude parameter were investigated in~\cite{SniekersVaart2015a, SniekersVaart2015b, SniekersVaart2020} and \citet{TravisRay2023}.

\section{Equivalence of Measures}\label{sec:equivalent_measures}
In the previous section, we discussed briefly the estimation of the parameters $\tau$ and $\sigma^2$. 
Note that the central limit theorem in \eqref{eq:microergodic_clt} is stated only for the product $\sigma^2 v(\theta)$, the so-called microergodic parameter.
This is due to the well-known fact that for Gaussian processes, the parameters $\sigma^2$ and $\tau$ cannot be consistently estimated if the dimension $d \leq 3$.
For bounded Euclidean domains, see for example \citet{zhang_2004} and \citet{bolin_2023}; for closed manifolds, see~\citet{li_2023}.
This is due to equivalence of the Gaussian measures describing the distribution of the processes. 
More concretely, in the context considered here the cited results state that if $d \leq 3$ and if $\theta_1$ and $\theta_2$ are such that $s_1 = s_2$ and 
\begin{align*}
    \sigma^2_1 v(\theta_1) = \sigma^2_2 v(\theta_2),
\end{align*}
then the Gaussian measures $\mathcal{N}(0, \sigma_1^2 v(\theta_1)(\tau_1 - \Delta)^{-s_1})$ and $\mathcal{N}(0, \sigma_2^2 v(\theta_2)(\tau_2 - \Delta)^{-s_2})$ are equivalent.
Otherwise they are orthogonal.
The results in \citet{bolin_2023} and \citet{li_2023} apply only to Gaussian measures.

However, as far as we know, there are no results for the non-Gaussian case.
In this section, we will show that this continues to hold for non-Gaussian processes.
To address this scenario, we will use a theorem of \citet{kakutani_1948}, for which the assumption that $\{\xi_i\}$ are independent is important.
Here we briefly establish the necessary measure-theoretic framework with more details in~\ref{sec:measure_kakutani}.

A measure $m_1$ is absolutely continuous with respect to a measure $m_2$, denoted by $m_1 \ll m_2$, if
$m_2(B) = 0$ implies that $m_1(B) = 0$.
Two measures $m_1$ and $m_2$ are equivalent if $m_1 \ll m_2$ and $m_2 \ll m_1$.
Two probability measures are orthogonal if there is a set $B$ for which $m_1(B) = 1$ and $m_2(B) = 0$.
We start with the complete probability space $(\Omega, \mathcal{F}, \P)$ on which the sequence $\{\xi_i\}$ is defined.
To simplify the notation in this section, let
\begin{align*}
 \lambda_i(\theta) = \sqrt{\smash[b]{v(\theta)}}(\tau + \lambda_i)^{-s/2}.
\end{align*}
These are the eigenvalues of the operator $\sqrt{\smash[b]{v(\theta)}}(\tau -\Delta)^{-s/2}$.

Let $\{(\mathbb{R}, \mathcal{B}(\mathbb{R}), \P_{\sigma\lambda_i(\theta)})\}$ be a sequence of probability spaces with $\mathcal{B}(\mathbb{R})$ the Borel $\sigma$-algebra generated by the open sets, and $\P_{\sigma\lambda_i(\theta)}$ the distribution of
\begin{align*}
    \sigma\sqrt{\smash[b]{v(\theta)}}(\tau + \lambda_i)^{-s/2} \xi_i = \sigma \lambda_i(\theta) \xi_i.
\end{align*}
Consider the infinite direct product measure space $(\mathbb{R}^{\infty}, \mathcal{B}^{\infty}, \P_{\sigma, \theta})$, where $\mathbb{R}^\infty$ is the infinite Cartesian product, $\mathcal{B}^\infty$ is the smallest $\sigma$-algebra making all coordinate projections measurable and $\P_{\sigma, \theta}$ is the infinite product measure \smash{$\P_{\sigma, \theta} \coloneqq \bigotimes \P_{\sigma \lambda_i(\theta)}$}.

Recall that \smash{$\hat{H}^0_L$} is the Hilbert space of square-summable sequences. 
One can show \smash{$\hat{H}^0_L$} is $\mathcal{B}^{\infty}$-measurable and that the $\sigma$-algebra \smash{$\mathcal{B}(\hat{H}^0_L)$} generated by the open sets in \smash{$\hat{H}^0_L$} in fact equals \smash{$\{A \cap \hat{H}^0_L ~ \vert ~ A \in \mathcal{B}^{\infty}\}$}, see Proposition \ref{prop:sigma_algebra_coincide}.
Note that $\P_{\sigma, \theta}(\hat{H}^0_L) = 1$ for $s > d/2$ by Proposition \ref{prop:smoothness_class}.
Thus we may understand each measure $\P_{\sigma, \theta}$ as a measure on the space \smash{$(\hat{H}^0_L, \mathcal{B}(\hat{H}^0_L))$}.
They induce measures on $(L_2(\mathcal{M}), \mathcal{B}(L_2(\mathcal{M})))$ via $P_{\sigma,\theta} \circ \iota^{-1}$ as $\iota^{-1}$ is an isomorphism.
Of course, the measures $\P_{\sigma, \theta} \circ \iota^{-1}$ are the distributions of the random elements $u$ defined in \eqref{eq:karhunen_loeve}, see Proposition \ref{prop:correct_measures}.

Given two probability measures $m_1$ and $m_2$, let $\lambda$ be a measure such that $m_1 \ll \lambda$ and $m_2 \ll \lambda$. 
Such a measure always exists (e.g., $\lambda = m_1 + m_2$).
The Hellinger distance between $m_1$ and $m_2$ is
\begin{align*}
    \rho(m_1, m_2) = \int \sqrt{\frac{\mathrm{d} m_1}{ \mathrm{d} \lambda}} \sqrt{\frac{\mathrm{d} m_2}{\mathrm{d} \lambda}} \mathrm{d}\lambda,
\end{align*}
where $\mathrm{d} m_1 / \mathrm{d} \lambda$ and $\mathrm{d} m_2 / \mathrm{d}\lambda$ are Radon--Nikodym derivatives.
It is known that $\rho$ does not depend on the choice of $\lambda$ \cite[Section~4]{kakutani_1948}.
Moreover, $\rho$ has the following properties: (i) $0 < \rho(m_1, m_2) \leq 1$, (ii) $\rho(m_1,m_2) = 1 \iff m_1 = m_2$, and (iii) $\rho(m_1, m_2) = \rho(m_2, m_1)$ \citep[p.\@~215]{kakutani_1948}.
Kakutani's theorem \citep[p.\@~218]{kakutani_1948} links equivalence or orthogonality of the infinite product measures $\P_{\sigma,\theta}$ and $\P_{\sigma^\prime,\theta^\prime}$ to the behaviour of $\rho(\P_{\sigma\lambda_i(\theta)}, \P_{\sigma^\prime \lambda_i(\theta^\prime)})$ as $i \to \infty$.
These Hellinger distances have a particularly convenient form in the scenario under consideration.
Concretely, let $\mathbb{P}_{1}$ denote the distribution of the $\xi_i$ and assume it admits the density $f$ with respect to the Lebesgue measure.
Then a simple change of variables yields
\begin{align*}
    \rho\left(\P_{\sigma\lambda_i(\theta)}, \P_{\sigma^\prime \lambda_i(\theta^\prime)}\right) &= \int_{\mathbb{R}}  \frac{1}{\sqrt{\smash[b]{\sigma\lambda_i(\theta)\sigma^\prime\lambda_i(\theta^\prime)}}} \sqrt{f \left(\frac{x}{\sigma\lambda_i(\theta)}\right)f \left(\frac{x}{\sigma^\prime\lambda_i(\theta^\prime)}\right)}\, \mathrm{d}x \\
    &=  \int_{\mathbb{R}} \frac{\sqrt{\smash[b]{\sigma^\prime \lambda_i(\theta^\prime)}}}{\sqrt{\smash[b]{\sigma\lambda_i(\theta)}}}\sqrt{f(x) f\left(x\frac{\sigma^\prime\lambda_i(\theta^\prime)}{\sigma\lambda_i(\theta)}\right)} \, \mathrm{d}x \\
    &= \rho \big( \P_{\sigma\lambda_i(\theta) / (\sigma^\prime \lambda_i(\theta^\prime))}, \P_{1} \big).
\end{align*}
Thus, the Hellinger distance only depends on the ratio $\sigma\lambda_i(\theta) / (\sigma^\prime \lambda_i(\theta^\prime))$.
This motivates the function $\varphi(a) = \rho (\P_{a}, \P_{1})$, which has a global maximum $\varphi(1) = 1$.

Sufficient regularity of the function $\varphi$ implies $\varphi^\prime(1) = 0$ and $\varphi^{\dprime}(1) < 0$ and so a Taylor expansion shows that 
\begin{align*}
    0 \leq -\log(\varphi(a)) &= -\frac{\varphi^\prime(1)}{\varphi(1)}(a-1) - \bigg[\frac{\varphi^{\dprime}(a^*)}{\varphi(a^*)} - \left(\frac{\varphi^\prime(a^*)}{\varphi(a^*)} \right)^2 \bigg] (a-1)^2 \\
    &= \bigg[-\frac{\varphi^{\dprime}(a^*)}{\varphi(a^*)} + \bigg(\frac{\varphi^\prime(a^*)}{\varphi(a^*)} \bigg)^2 \bigg] (a-1)^2
\end{align*}
for some $a^*$ such that $\lvert a^* -1 \rvert \leq \lvert a- 1\rvert$.
Hence $-\log(\varphi(a))$ behaves like $(a-1)^2$ in a neighborhood of 1 if $\varphi$ is sufficiently regular.
Under this condition, we can provide a criterion for equivalence and orthogonality for two infinite product measures $\P_{\sigma_1^2, \theta_1}$ and $\P_{\sigma^2_2, \theta_2}$.

\begin{theorem}\label{thm:equivalent_measures}
    Let $\{\xi_i\}$ be a sequence of i.i.d. random variables such that $\E(\xi_1) = 0$ and $E(\xi_1^2) < \infty$.
    Assume that, for all $a, b > 0$ and each $i$, the distributions of $a\xi_i$ and $b\xi_i$ are equivalent measures and the distribution of $\xi_i$ admits a density with respect to the Lebesgue measure.
    Moreover, assume that the function $\varphi(a) = \rho (\P_{a}, \P_{1})$ satisfies
    \begin{align}
        c(a-1)^2 \leq -\log(\varphi(a)) \leq C (a-1)^2 \label{eq:taylor2}
    \end{align}
    in a neighborhood of 1.
    If the parameters $s_1, \tau_1, \sigma_1^2$ and $s_2, \tau_2, \sigma_2^2$ are such that
    \begin{align}
        s_1 = s_2 \quad \text{ and } \quad \sigma_1^2 v(\theta_1) = \sigma_2^2 v(\theta_2), \label{eq:equiv_measures}
    \end{align}
    then the measures $\P_{\sigma_1^2, \theta_1}$ and $\P_{\sigma_2^2, \theta_2}$ are equivalent if $d \leq 3$ and orthogonal if $d > 3$.
    If \eqref{eq:equiv_measures} fails and if $\varphi$ is uniformly bounded away from 1 outside of $(1-\epsilon, 1 + \epsilon)$ for every $\epsilon > 0$, the measures $\P_{\sigma_1^2, \theta_1}$ and $\P_{\sigma_2^2, \theta_2}$ are orthogonal.
\end{theorem}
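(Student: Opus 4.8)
The plan is to read off the dichotomy directly from Kakutani's theorem \citep[p.~218]{kakutani_1948}, applied to the two infinite product measures $\P_{\sigma_1^2,\theta_1}=\bigotimes_i\P_{\sigma_1\lambda_i(\theta_1)}$ and $\P_{\sigma_2^2,\theta_2}=\bigotimes_i\P_{\sigma_2\lambda_i(\theta_2)}$. First I would check that the theorem applies at all: since the law of $a\xi_i$ and the law of $b\xi_i$ are equivalent for all $a,b>0$ by hypothesis, and each factor is the law of a strictly positive multiple of $\xi_i$, the factors $\P_{\sigma_1\lambda_i(\theta_1)}$ and $\P_{\sigma_2\lambda_i(\theta_2)}$ are mutually equivalent for every $i$. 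Kakutani's dichotomy then states that the product measures are equivalent when $\prod_i\rho_i>0$ and mutually orthogonal when $\prod_i\rho_i=0$, where $\rho_i\coloneqq\rho(\P_{\sigma_1\lambda_i(\theta_1)},\P_{\sigma_2\lambda_i(\theta_2)})$; since $\rho_i\in(0,1]$ this is the same as saying they are equivalent iff $\sum_i-\log\rho_i<\infty$. By the change of variables carried out just before the theorem, $\rho_i=\varphi(a_i)$ with $a_i\coloneqq\sigma_1\lambda_i(\theta_1)/\bigl(\sigma_2\lambda_i(\theta_2)\bigr)$, and $\varphi$ is symmetric under $a\mapsto1/a$ (the same change of variables with the two measures swapped), so the normalisation of the ratio is immaterial. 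Thus the whole problem reduces to deciding convergence of $\sum_i-\log\varphi(a_i)$.

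Next I would analyse the $a_i$. Unwinding the definitions,
\[
  a_i=\frac{\sigma_1\sqrt{\smash[b]{v(\theta_1)}}}{\sigma_2\sqrt{\smash[b]{v(\theta_2)}}}\,\frac{(\tau_1+\lambda_i)^{-s_1/2}}{(\tau_2+\lambda_i)^{-s_2/2}}.
\]
Assume first that \eqref{eq:equiv_measures} holds and, excluding the trivial case, that the parameter vectors genuinely differ; since $v$ depends only on $\theta=(s,\tau)$, this forces $\tau_1\neq\tau_2$ (if $\tau_1=\tau_2$ the two measures coincide and there is nothing to prove). Then the prefactor equals $1$ and $a_i=\bigl(1+(\tau_2-\tau_1)/(\tau_1+\lambda_i)\bigr)^{s/2}=1+\tfrac{s}{2}(\tau_2-\tau_1)/(\tau_1+\lambda_i)+O(\lambda_i^{-2})$; because $\tau_1\neq\tau_2$ and $s>0$, this gives $\lvert a_i-1\rvert\asymp\lambda_i^{-1}\asymp i^{-2/d}$ by Weyl's law \eqref{eq:weyls-law}, and in particular $a_i\to1$. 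For all sufficiently large $i$ the assumed two-sided bound \eqref{eq:taylor2} then applies and yields $-\log\varphi(a_i)\asymp(a_i-1)^2\asymp i^{-4/d}$, while the finitely many remaining terms are finite since $\rho_i\in(0,1]$. Hence $\sum_i-\log\varphi(a_i)\asymp\sum_i i^{-4/d}$, which converges precisely when $4/d>1$, i.e.\ $d\le3$; Kakutani's theorem then gives equivalence for $d\le3$ and orthogonality for $d>3$.

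It remains to treat the case where \eqref{eq:equiv_measures} fails. If $s_1\neq s_2$, then $(\tau_1+\lambda_i)^{-s_1/2}/(\tau_2+\lambda_i)^{-s_2/2}\asymp\lambda_i^{(s_2-s_1)/2}$ tends to $0$ or $\infty$, so $a_i$ (or $1/a_i$) leaves every fixed neighbourhood of $1$; if instead $s_1=s_2$ but $\sigma_1^2v(\theta_1)\neq\sigma_2^2v(\theta_2)$, the ratio of $\lambda_i$-powers tends to $1$ and $a_i\to\sigma_1\sqrt{\smash[b]{v(\theta_1)}}/\bigl(\sigma_2\sqrt{\smash[b]{v(\theta_2)}}\bigr)\neq1$. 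Either way there is $\epsilon>0$ with $a_i\notin(1-\epsilon,1+\epsilon)$ for all large $i$, and the hypothesis that $\varphi$ is uniformly bounded away from $1$ outside $(1-\epsilon,1+\epsilon)$ gives $\varphi(a_i)\le1-\delta$ for some $\delta>0$ and all large $i$; thus $\sum_i-\log\varphi(a_i)=\infty$ (equivalently $\prod_i\rho_i=0$), and Kakutani's theorem gives orthogonality.

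I expect the main obstacle to be the first case: one has to extract the \emph{exact} order $\lvert a_i-1\rvert\asymp i^{-2/d}$, with matching upper and lower bounds, because only then do Weyl's law and the two-sided estimate \eqref{eq:taylor2} combine to pin down the sharp threshold $\sum_i i^{-4/d}<\infty\iff d\le3$; everything else is routine bookkeeping of Kakutani's dichotomy once the factor measures have been verified to be mutually equivalent. A secondary point not to overlook is the degenerate sub-case $\tau_1=\tau_2$ under \eqref{eq:equiv_measures}, in which the two measures are literally identical.
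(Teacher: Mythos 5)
Your proof is correct and follows essentially the same route as the paper: verify mutual equivalence of the factor measures, reduce each Hellinger affinity to $\varphi(a_i)$ by change of variables, show $\lvert a_i-1\rvert\asymp i^{-2/d}$ under \eqref{eq:equiv_measures} via Weyl's law, and read off the $d\le 3$ threshold from convergence of $\sum_i i^{-4/d}$ together with Kakutani's dichotomy, with the ``$\varphi$ bounded away from $1$'' hypothesis handling the case where \eqref{eq:equiv_measures} fails. The only cosmetic differences are that you Taylor-expand $a_i$ directly rather than going through the paper's auxiliary estimate $\log(a)^2\asymp(a-1)^2$, and that you explicitly note the symmetry $\varphi(a)=\varphi(1/a)$ and the degenerate sub-case $\tau_1=\tau_2$, which the paper leaves implicit.
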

\begin{proof}
    We start with a simple observation that, for $a$ in a neighborhood of 1,
    \begin{align}
        c(a-1)^2 \leq \log(a)^2 \leq C(a-1)^2.\label{eq:taylor1}
    \end{align}
    Kakutani's theorem states that $\mathbb{P}_{\sigma_1,\theta_1}$ and $\mathbb{P}_{\sigma_2,\theta_2}$ are equivalent if
    \begin{align} \label{eq:kakutani-eq}
        \sum_{i = 1}^\infty -\log\left[\rho(\P_{\sigma_1\lambda_i(\theta_1)}, \P_{\sigma_2\lambda_i(\theta_2)})\right] < \infty
    \end{align}
    and orthogonal otherwise.
    Let 
    \begin{align*}
        a_i = \frac{\sigma_1\sqrt{v(\theta_1)}(\tau_1 + \lambda_i)^{-s_1/2}}{\sigma_2\sqrt{v(\theta_2)}(\tau_2 + \lambda_i)^{-s_2/2}}.
    \end{align*}
    Under the assumptions in~\eqref{eq:equiv_measures},
    \begin{align}
        a_i = \left(\frac{\tau_1 + \lambda_i}{\tau_2 + \lambda_i}\right)^{-s_1/2} \: \overset{i \to \infty}{\longrightarrow} \: 1. \label{eq:log_quadratic}
    \end{align}
    Thus, combining \eqref{eq:taylor2}, \eqref{eq:taylor1} and \eqref{eq:log_quadratic}, we obtain
    \begin{align*}
        -\log\left[ \rho(\P_{\sigma_1\lambda_i(\theta_1)}, \P_{\sigma_2\lambda_i(\theta_2)})\right] = -\log(\varphi(a_i)) \asymp \log \left(\frac{\tau_2 + \lambda_i}{\tau_1 + \lambda_i}\right)^2 \asymp \left(\frac{\tau_2 - \tau_1}{\tau_1 + \lambda_i}\right)^2 \asymp i^{-4/d},
    \end{align*}
    where the last $\asymp$ follows from Weyl's law~\eqref{eq:weyls-law}.
    Thus~\eqref{eq:kakutani-eq} holds if and only if $\sum_{i = 1}^\infty i^{-4/d} < \infty$.
    This sum is finite if and only if $d < 4$.
    To show that $\varphi$ being uniformly bounded away from $1$ outside of every neighborhood of $1$ implies orthogonality of the resulting measures, note that 
    \begin{align*}
      a_i \to 0 \:\: \text{ if $s_1 > s_2$}, \quad a_i \to \infty \:\: \text{ if $s_1 < s_2$} \quad \text{ and } \quad s_i \to \sigma_1^2 v(\theta_1) / \sigma_2^2 v(\theta_2) \:\: \text{ else.}
    \end{align*}
    Thus, if \eqref{eq:equiv_measures} does not hold, there exist $\delta$ and $N$ such that $\sum_{i = N}^\infty-\log(\varphi(a_i)) \geq \sum_{i = N}^\infty \delta$.
\end{proof}

We have assumed that the distributions of $a\xi_i$ and $b\xi_i$ are equivalent measures for each positive $a$ and $b$.
If this is not the case, one can easily see that the resulting infinite product measures cannot be equivalent, see \citet[p.\@~218]{kakutani_1948}.
Note that this restricts the distributions that can lead to equivalent measures.
For example, if the distribution $\xi_i$ has a Lebesgue density that vanishes on a set with positive Lebesgue measure, then the distributions $a\xi_i$ and $b\xi_i$ need not be equivalent measures for all $a$ and $b$.
However, this does not imply that the resulting infinite product measures are necessarily orthogonal.
This means that a consistent estimate may still not be possible.
For example, let $\P_{\sigma_1, \theta_1}$ and $\P_{\sigma_2, \theta_2}$ be non-orthogonal measures, even though $\sigma_1 \neq \sigma_2$.
Argue as in \citet{zhang_2004} and assume that $\{\hat{\sigma}_n\}$ is a sequence of estimators that converge to $\sigma_1$ in probability under $\P_{\sigma_1, \theta_1}$.
Then there exists a subsequence $\{\hat{\sigma}_{k_n}\}$ such that 
\begin{align*}
    \P_{\sigma_1, \theta_1} \Big(\lim_{k_n \to \infty} \hat{\sigma}_{k_n} = \sigma_1 \Big) = 1.
\end{align*}
Because $\P_{\sigma_2, \theta_2}$ is not orthogonal to $\P_{\sigma_1, \theta_1}$, we have 
\begin{align*}
    \P_{\sigma_2, \theta_2} \Big(\lim_{k_n \to \infty} \hat{\sigma}_{k_n} = \sigma_1 \Big) > 0
\end{align*}
and so under $\P_{\sigma_2, \theta_2}$ consistent estimation of $\sigma_2$ may still not occur.

To show that two measures $m_1$ and $m_2$ are not orthogonal, it suffices to show that either $m_1 \ll m_2$ or $m_1 \ll m_2$.
A simple deduction from the proofs in \citet{kakutani_1948} gives a condition for $\P_{\sigma_1, \theta_1} \ll \P_{\sigma_2, \theta_2}$.
This is essentially part of Kakutani's theorem.

\begin{corollary}\label{cor:abs_cont}
  Consider the setup of Theorem \ref{thm:equivalent_measures} but assume only that the distribution of $a \xi_i$ is absolutely continuous with respect to $b \xi_i$ if $a \leq b$.
    Assume \eqref{eq:equiv_measures} holds and $d \leq 3$.
    Moreover, let $\theta_1$ and $\theta_2$ be such that $1 \leq \tau_1 \leq \tau_2$. 
    Then $\P_{\sigma_1, \theta_1} \ll \P_{\sigma_2, \theta_2}$.
\end{corollary}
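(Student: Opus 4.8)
The plan is to view $\P_{\sigma_1,\theta_1}$ and $\P_{\sigma_2,\theta_2}$ as the infinite product measures $\bigotimes_i \P_{\sigma_1\lambda_i(\theta_1)}$ and $\bigotimes_i \P_{\sigma_2\lambda_i(\theta_2)}$ and to invoke the one-sided half of Kakutani's dichotomy \citep{kakutani_1948}: if $\P_{\sigma_1\lambda_i(\theta_1)} \ll \P_{\sigma_2\lambda_i(\theta_2)}$ for every $i$ and the Hellinger product $\prod_i \rho(\P_{\sigma_1\lambda_i(\theta_1)},\P_{\sigma_2\lambda_i(\theta_2)})$ is strictly positive, then $\P_{\sigma_1,\theta_1} \ll \P_{\sigma_2,\theta_2}$. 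There are therefore two things to check: factor-wise absolute continuity, and convergence of $\sum_i -\log\rho(\P_{\sigma_1\lambda_i(\theta_1)},\P_{\sigma_2\lambda_i(\theta_2)})$.

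For the first, observe that under~\eqref{eq:equiv_measures} we have $\sigma_1\sqrt{v(\theta_1)} = \sigma_2\sqrt{v(\theta_2)}$, so that, exactly as in~\eqref{eq:log_quadratic}, the scale ratio of the $i$-th factors is $a_i = \sigma_1\lambda_i(\theta_1)/(\sigma_2\lambda_i(\theta_2)) = \big((\tau_2+\lambda_i)/(\tau_1+\lambda_i)\big)^{s_1/2}$. By the ordering of $\tau_1$ and $\tau_2$ assumed in the statement and the monotonicity of $t\mapsto t^{-s_1/2}$ we get $a_i \le 1$ for every $i$, i.e. $\sigma_1\lambda_i(\theta_1) \le \sigma_2\lambda_i(\theta_2)$; hence the assumed one-sided absolute continuity (the distribution of $a\xi_i$ is absolutely continuous with respect to that of $b\xi_i$ whenever $a \le b$) immediately yields $\P_{\sigma_1\lambda_i(\theta_1)} \ll \P_{\sigma_2\lambda_i(\theta_2)}$ for all $i$. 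Since these are probability measures, one-sided absolute continuity already forces $\rho_i \coloneqq \rho(\P_{\sigma_1\lambda_i(\theta_1)},\P_{\sigma_2\lambda_i(\theta_2)}) > 0$ for each $i$.

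For the second, I would simply reuse the computation from the proof of Theorem~\ref{thm:equivalent_measures}. Because $a_i \to 1$, the bound~\eqref{eq:taylor2} applies for all large $i$ and gives $-\log\rho_i = -\log\varphi(a_i) \asymp (a_i-1)^2 \asymp \big((\tau_2-\tau_1)/(\tau_1+\lambda_i)\big)^2 \asymp i^{-4/d}$ by Weyl's law~\eqref{eq:weyls-law}; the finitely many small-$i$ terms are finite by the previous paragraph and do not affect convergence. Hence $\sum_i -\log\varphi(a_i) < \infty$ precisely because $d \le 3$, and the Hellinger product is strictly positive.

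To conclude I would give the short martingale argument underlying this one-sided form of Kakutani's theorem. With $\nu = \P_{\sigma_2,\theta_2}$ and $L_N = \prod_{i\le N}\mathrm{d}\P_{\sigma_1\lambda_i(\theta_1)}/\mathrm{d}\P_{\sigma_2\lambda_i(\theta_2)}$ (each factor a function of the $i$-th coordinate), $\{L_N\}$ is a nonnegative $\nu$-martingale, and independence of coordinates under $\nu$ gives $\E_\nu[(\sqrt{L_N}-\sqrt{L_M})^2] = 2 - 2\prod_{M<i\le N}\rho_i$ for $M<N$. Since $\prod_i\rho_i>0$, the tail products tend to $1$, so $\{\sqrt{L_N}\}$ is Cauchy, hence convergent, in $L^2(\nu)$; therefore $L_N\to L_\infty$ in $L^1(\nu)$, the relation $\P_{\sigma_1,\theta_1}(B)=\int_B L_N\,\mathrm{d}\nu$ for cylinder sets $B$ passes to the limit, and a monotone-class argument extends it to all of $\mathcal{B}^\infty$, yielding $\P_{\sigma_1,\theta_1}=L_\infty\,\nu\ll\P_{\sigma_2,\theta_2}$. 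Restricting to $\hat{H}^0_L$ via Proposition~\ref{prop:sigma_algebra_coincide} (recall $\P_{\sigma,\theta}(\hat{H}^0_L)=1$ by Proposition~\ref{prop:smoothness_class}) and pushing forward by $\iota^{-1}$ then transfers the statement to the laws of the processes on $L_2(\mathcal{M})$. The point requiring care is exactly this last stretch: the version of Kakutani's theorem usually stated assumes the factor measures are \emph{equivalent}, so one must re-run the $L^2(\nu)$-Cauchy estimate using only one-sided factor absolute continuity — which is precisely where the extra ordering hypothesis on $\tau_1,\tau_2$ enters — and then promote the density from the cylinder algebra to $\mathcal{B}^\infty$; every other ingredient is lifted verbatim from the analysis of Theorem~\ref{thm:equivalent_measures}.
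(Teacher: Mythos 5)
Your overall architecture — factor-wise absolute continuity plus a positive Hellinger product, wrapped up by the one-sided martingale form of Kakutani's lemma, then transferred to $L_2(\mathcal{M})$ via $\iota^{-1}$ — is exactly what the paper's one-line proof (citing Kakutani's Lemma 5 and the proof of Theorem~\ref{thm:equivalent_measures}) intends, and your expansion of the $L^2(\nu)$-Cauchy estimate and the cylinder-to-$\mathcal{B}^\infty$ extension is correct. The gap is in the factor-wise verification. Under~\eqref{eq:equiv_measures} one has $\sigma_1\sqrt{v(\theta_1)} = \sigma_2\sqrt{v(\theta_2)}$, so
\begin{align*}
  a_i = \frac{\sigma_1\lambda_i(\theta_1)}{\sigma_2\lambda_i(\theta_2)} = \left(\frac{\tau_2+\lambda_i}{\tau_1+\lambda_i}\right)^{s_1/2},
\end{align*}
and with $\tau_1 \leq \tau_2$ the base is $\geq 1$ while the exponent $s_1/2$ is positive, so $a_i \geq 1$ — not $a_i \leq 1$ as you assert. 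Your appeal to ``monotonicity of $t\mapsto t^{-s_1/2}$'' contradicts the exponent $+s_1/2$ in your own displayed formula. Hence $\sigma_1\lambda_i(\theta_1) \geq \sigma_2\lambda_i(\theta_2)$, and the hypothesis ``$\mathrm{law}(a\xi_i) \ll \mathrm{law}(b\xi_i)$ whenever $a \leq b$'' delivers $\P_{\sigma_2\lambda_i(\theta_2)} \ll \P_{\sigma_1\lambda_i(\theta_1)}$ — the reverse of what your martingale argument needs in order to conclude $\P_{\sigma_1,\theta_1} \ll \P_{\sigma_2,\theta_2}$.

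This step cannot be quietly patched: the calculation actually produces $\P_{\sigma_2,\theta_2} \ll \P_{\sigma_1,\theta_1}$ under $\tau_1 \leq \tau_2$, which points to a misprint in the corollary itself (the ordering hypothesis and the stated direction of absolute continuity are incompatible as written). If you reverse the ordering hypothesis to $\tau_1 \geq \tau_2$, or equivalently swap $\P_{\sigma_1,\theta_1}$ and $\P_{\sigma_2,\theta_2}$ in the conclusion, then the factor-wise absolute continuity does hold, and the remainder of your argument — the bound $-\log\varphi(a_i) \asymp ((\tau_2-\tau_1)/(\tau_1+\lambda_i))^2 \asymp i^{-4/d}$ from~\eqref{eq:taylor2} and Weyl's law, the $L^2(\nu)$-Cauchy estimate, the monotone-class extension from cylinder sets, and the push-forward through $\iota^{-1}$ restricted to $\hat{H}^0_L$ — goes through unchanged.
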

\begin{proof}
    This is a consequence of Lemma 5 in \citet{kakutani_1948} and the proof of Theorem \ref{thm:equivalent_measures}.
\end{proof}

\begin{examples}
We give three examples for the distributions of $\xi_i$:
\begin{enumerate}
\item In the standard Gaussian case we recover the well-known results because a simple computation gives \smash{$\rho(\P_{a}, \P_1) = \sqrt{\smash[b]{2a/(1+a^2)}}$} and it is straightforward to see that \eqref{eq:taylor2} holds.
  \item More generally, it suffices to show that $\varphi$ is twice continuously differentiable to obtain \eqref{eq:taylor2}, which can be shown to be the case if $\xi_i$ has $t$-distribution with degrees of freedom larger than 2 in order to guarantee finite second moments.
    \item An example that pertains to Corollary \ref{cor:abs_cont} is $\xi_i + 1 \sim \mathrm{Exp}(1)$, where $\mathrm{Exp}(1)$ is the exponential distribution with scale 1.
    A simple computation shows that $\varphi(a) = 2\sqrt{a}/(1+a)$ and so \eqref{eq:taylor2} can be shown to hold.
\end{enumerate}
\end{examples}
We note here that while the independence assumption is crucial for the use of Kakutani's theorem, the assumption that $\{\xi_i\}$ are identically distributed may be relaxed.
The proofs remain essentially unchanged if additional technical assumptions are imposed.

\section{Results for the Dirichlet Laplacian}\label{sec:dirichlet_laplacian}

Some of our results can be directly applied to Laplacian operators on bounded Euclidean domains augmented with boundary conditions.
They are frequently studied in connection with Markov random field approximations of Mat\'ern type fields \citep{Lindgren_2011}.
As a concrete example, we take the Dirichlet Laplacian operator and present some of the results from the Sections~\ref{sec:background} to~\ref{sec:equivalent_measures}.
Most of the proofs are analogous, so most of the time we only provide sketches or point out where previously presented results need to be adapted.

In the following we consider an open and connected $M \subset \mathbb{R}^d$ with Lipschitz boundary $\partial M$ \citep[p.\@~189]{stein_1970}, which satisfies a \textit{interior cone condition} \citep[Definition 3.6]{wendland_2004}.
Let $\bar{M} = M \cup \partial M$ denote the closure of $M$.
For such a set $M$ we define the Sobolev spaces $H^s(M)$ as
\begin{align*}
    H^s(M) \coloneqq \big\{ f \in L_2(M) ~ \bigl\vert ~ \text{ there is } g \in H^s(\mathbb{R}^d) \text{ such that } g\vert_{M} = f \big\},
\end{align*}
which is equipped with the norm $\lVert f \rVert_{H^s(M)} = \inf\{ \lVert g \rVert_{H^s(M)} ~ \vert ~ g \in H^s(\mathbb{R}^d) \text{ s.t. } g\vert_M = f \}$.
Note that $H^{s^\prime}(M) \subset H^s(M) \subset L_2(M)$ if $s^\prime \geq s \geq 0$.
The Hilbert space $L_2(M)$ and the Hölder space $C^{k, \alpha}(\bar{M})$ are defined analogously to the spaces in Section~\ref{sec:background}.

For $M$ we consider the usual Laplacian operator
\begin{align*}
 -\Delta = -\sum_{i = 1}^d \frac{\partial^2}{\partial^2 x_i}.
\end{align*}
However, to obtain an eigendecomposition to define \eqref{eq:true_model} or to ensure that the SPDE in~\eqref{eq:matern_spde} has a unique solution, we need to augment the Laplacian with boundary conditions.
We consider Dirichlet boundary conditions and denote the resulting operator by $\Delta_D$.
It is known that there is a basis for $L_2(M)$ consisting of eigenfunctions $e_i$ of $-\Delta_D$ with eigenvalues $\lambda_i$.
The functions $e_i$ can be chosen so that they are infinitely differentiable on $M$ and so that $e_i|_{\partial M} \equiv 0$.
Moreover, Weyl's law~\eqref{eq:weyls-law} holds, that is $c i^{2/d} \leq \lambda_i \leq C i^{2/d}$. 
See Section~6 in~\citet{Davies_1995}, in particular Corollary~6.2.2 and Theorem~6.2.3.
We can thus define the spaces
\begin{align*}
    H^s_{L_D} &= \big\lbrace f \in L_2(M) ~ \bigl\vert ~ (1 - \Delta_D)^{s/2} f \in L_2(M) \big\rbrace, \\
    \hat{H}^s_{L_D} &= \bigg\lbrace \{a_i\} \subset \mathbb{R} ~ \biggl\vert ~ \sum_{i = 1}^\infty a_i^2 (1 + \lambda_i)^{s} < \infty \bigg\rbrace ,
\end{align*}
with corresponding norms $\lVert \cdot \rVert_{\theta}$ and $\lVert \cdot \rVert_{\hat{\theta}}$ for some $\tau> 0$.
These spaces are analogous to the spaces $H^s_L$ and \smash{$\hat{H}^s_L$} that we defined in Section~\ref{sec:function-spaces}.

The spaces $H^s_{L_D}$ have been characterized in Lemma~4.3 of~\citet{bolin_2023} for wide range of values of~$s$.
Define $\mathfrak{C} = \left\{s \in \mathbb{R} ~ \middle \vert ~ s \neq 2k + 1/2 \text{ for some } k \in \mathbb{N} \right\}$.
For any $s \notin \mathfrak{C}$ we have
\begin{align}
    H^{s}_{L_D} = \bigg\{f \in H^s(M) ~ \biggl \vert ~ (1 - \Delta)^j f = 0 \text{ in } L^2(\partial M) \text{ for all } j \leq \left\lfloor \frac{2s - 1}{4} \right\rfloor \bigg\}, \label{eq:dirichlet_space_characterization}
\end{align}
and the norm $\lVert \cdot \rVert_{\theta}$ is equivalent to $\lVert \cdot \rVert_{H^s(M)}$ on $H^s_{L_D}$.
This presents the key difference to the manifold case: rather than equal to the Sobolev spaces, the spaces $H^s_{L_D}$ are now proper subspaces.
This makes the analysis more difficult.
However, we do retain norm equivalence.

Denote the resulting kernel of $H_{L_D}^s$ by $K_{\theta}^D$.
Our assumptions on $M$ guarantee that the Sobolev imbedding theorem holds \citep[Theorem 2.33]{aubin_1998} and thus $H^s(M) \subset C^{0, \alpha}(\bar{M})$ for all $\alpha > s - d/2$ and the imbedding is compact.
Thus, Proposition \ref{prop:kernel_properties} is essentially unchanged, we only need $x_i \notin \partial M$ to guarantee strict positive definiteness.
The results in Section \ref{sec:equivalent_measures} apply without changes.
Let $u$ now be defined as
\begin{align}\label{eq:true_model_dirichlet}
 u(x) = \sigma_0 \sqrt{\smash[b]{v(\theta_0)}} \sum_{i = 1}^\infty (\tau_0 + \lambda_i)^{-s_0/2} \xi_i e_i(x),
\end{align}
where $\{(\lambda_i, e_i)\}$ are the eigenpairs of $\Delta_D$.

Unfortunately, we cannot show that all results in Section~\ref{sec:MLE} hold in the same generality.
First, we need to slightly adapt our sampling assumption.
Define
\begin{align*}
    \tilde{q}_{\mb{x}} = \min\left\{ \frac{1}{2} \mathrm{dist}(\mb{x}, \partial M), q_{\mb{x}} \right\} \quad \text{ and } \quad \quad h_{\mb{x}} = \sup_{x \in M} \min_{1 \leq i \leq n} \lVert x_i - x \rVert,
\end{align*}
where $q_{\mb{x}} = \min_{i \neq j} \lVert x_i - x_j \rVert/ 2$ and $h_{\mb{x}}$ are analogous to the quantities in~\eqref{eq:fill-distance}.
The difference is that we now have to work with $\tilde{q}_{\mb{x}}$ to make sure that the points are far enough away from the boundary $\partial M$.
Set $\tilde{\rho}_{\mb{x}} = \tilde{q}_{\mb{x}} / h_{\mb{x}}$.
We then assume that $\tilde{q}_{\mb{x}} \asymp h_{\mb{x}} \asymp n^{-1/d}$ and call points that fulfil this condition quasi-uniform.

Then the results of Propositions~\ref{prop:smoothness_limit} and~\ref{prop:lower_bound_norm_interpolant} apply unchanged.
However, the result of the Proposition~\ref{prop:escape_manifold} must be changed.
Firstly, we need $s \notin \mathfrak{C}$.
Also, at the moment we cannot show the analogue of \eqref{eq:concrete_escape},
\begin{align*}
    \lVert u - m_{s, n} \rVert_{0} \leq h_{\mb{x}}^{s_0-d/2-\epsilon}\tilde{\rho}_{\mb{x}}^{s-s_0 + d/2 + \epsilon} \lVert u \rVert_{H^{s_0 -d/2 - \epsilon}_{L_D}}
\end{align*}
for the minimum norm interpolant $m_{s, n}$ from $H^{s}_{L_D}$.
However, we can show the following.

\begin{proposition}\label{prop:almost_escape}
    Let $m_{s, n}$ be the minimum norm interpolant from an RKHS $\mathcal{H}^s_D$ that is equal to $H^s_{L_D}$ up to equivalent norms.
    Let $u$ be defined as in \eqref{eq:true_model_dirichlet} with $s_0 > d$ and $s \geq s_0$, $s \notin \mathfrak{C}$.
    Then almost surely for every $0 < \epsilon < s_0 - d/2$ such that $s_0 - d/2 - \epsilon \notin \mathfrak{C}$ we have
    \begin{align*}
        \lVert u - m_{s, n} \rVert_0 \leq C h_{\mb{x}}^{s_0-d/2-\epsilon}\tilde{\rho}_{\mb{x}}^{s-s_0 + d/2 + \epsilon} \lVert u \rVert_{s_0 - d/2 - \epsilon} + C h_{\mb{x}}^{s}\lVert m_{s, n} \rVert_{s}.
    \end{align*}
\end{proposition}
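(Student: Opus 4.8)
The plan is to reduce the problem to the unconstrained Sobolev space $H^s(M)$, where the Narcowich--Ward--Wendland ``escape'' machinery behind Proposition~\ref{prop:escape_manifold} is available. The obstruction in the Dirichlet setting is that $m_{s,n}$ is the minimum-norm interpolant in $H^s_{L_D}$, which is a \emph{proper} subspace of $H^s(M)$; I therefore expect to pay for this detour with the extra term $C h_{\mb{x}}^s \lVert m_{s,n}\rVert_s$, recovering a clean escape bound only for the unconstrained interpolant.

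First I would fix an admissible $\epsilon$; we may assume $\epsilon < s_0 - d$ (so that $t := s_0 - d/2 - \epsilon > d/2$, using $s_0 > d$), since larger admissible values only weaken the asserted inequality, and $t \notin \mathfrak{C}$ by hypothesis. Because $t \notin \mathfrak{C}$, the characterisation \eqref{eq:dirichlet_space_characterization} exhibits $H^t_{L_D}$ as a closed subspace of $H^t(M)$ on which $\lVert \cdot \rVert_{H^t(M)} \asymp \lVert \cdot \rVert_{t}$; by the Dirichlet analogue of Proposition~\ref{prop:smoothness_class}, $u \in H^t_{L_D} \subset H^t(M)$ almost surely. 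Likewise, since $s \notin \mathfrak{C}$, the interpolant $m_{s,n} \in H^s_{L_D}$ lies in $H^s(M)$ with $\lVert m_{s,n}\rVert_{H^s(M)} \lesssim \lVert m_{s,n}\rVert_{s}$, using also that $\mathcal{H}^s_D$ is norm-equivalent to $H^s_{L_D}$.

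Next I would introduce $\mu$, the minimum-norm interpolant of $u$ at $\mb{x}$ from $H^s(M)$ equipped with its standard Sobolev norm, and split $\lVert u - m_{s,n}\rVert_0 \le \lVert u - \mu\rVert_0 + \lVert \mu - m_{s,n}\rVert_0$. For the first summand I would invoke the Euclidean escape estimate of \citet{narcowich_2006}, transferred to $M$ through a Sobolev extension operator (available since $\partial M$ is Lipschitz) just as in the proof of Proposition~\ref{prop:escape_manifold}: for $h_{\mb{x}}$ small enough, $\lVert u - \mu\rVert_0 \le C h_{\mb{x}}^{t}\tilde{\rho}_{\mb{x}}^{\,s-t}\lVert u\rVert_{H^t(M)}$, which by the norm equivalence on $H^t_{L_D}$ is $\lesssim h_{\mb{x}}^{s_0-d/2-\epsilon}\tilde{\rho}_{\mb{x}}^{\,s-s_0+d/2+\epsilon}\lVert u\rVert_{s_0-d/2-\epsilon}$, i.e.\ the first term of the claim. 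For the second summand, $v := \mu - m_{s,n}$ belongs to $H^s(M)$ and vanishes at every $x_i$, since both $\mu$ and $m_{s,n}$ interpolate $u$; a standard zeros/sampling inequality for Sobolev functions on $M$ with scattered zeros then gives $\lVert v\rVert_0 \le C h_{\mb{x}}^{s}\lVert v\rVert_{H^s(M)} \le C h_{\mb{x}}^{s}\big(\lVert \mu\rVert_{H^s(M)} + \lVert m_{s,n}\rVert_{H^s(M)}\big)$ for $h_{\mb{x}}$ small. Because $m_{s,n}$ itself interpolates $u$ and lies in $H^s(M)$, minimality of $\mu$ forces $\lVert \mu\rVert_{H^s(M)} \le \lVert m_{s,n}\rVert_{H^s(M)} \lesssim \lVert m_{s,n}\rVert_{s}$, so $\lVert \mu - m_{s,n}\rVert_0 \le C h_{\mb{x}}^{s}\lVert m_{s,n}\rVert_{s}$. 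Summing the two estimates yields the proposition; the constant depends only on $s$, $s_0$, $\epsilon$ and $M$, and the ``almost surely, for every admissible $\epsilon$'' clause follows because $u$ almost surely belongs to $H^{s_0-d/2-\epsilon}_{L_D}$ for all such $\epsilon$ simultaneously.

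The step I expect to be the main obstacle is the escape estimate on the bounded domain: one must check that the Narcowich--Ward--Wendland escape phenomenon persists when $\mathbb{R}^d$ is replaced by $M$ and the usual mesh ratio by the boundary-adjusted $\tilde{\rho}_{\mb{x}}$. This is precisely where the separation radius $\tilde{q}_{\mb{x}}$, rather than $q_{\mb{x}}$, is forced upon us, since the dilated point configurations that appear in that argument must remain inside $M$. The remaining ingredients --- the zeros lemma on $M$, the two minimality comparisons, and the norm-equivalence bookkeeping that requires the smoothness exponents $s$ and $s_0-d/2-\epsilon$ to avoid $\mathfrak{C}$ --- are routine adaptations of the manifold case.
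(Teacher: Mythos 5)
Your proof is correct and follows essentially the same route as the paper's: you introduce the minimum-norm interpolant $\mu$ from $H^s(M)$ (the paper's $\tilde{m}_{s,n}$), split $\lVert u - m_{s,n}\rVert_0$ by the triangle inequality, apply the Narcowich--Ward--Wendland escape theorem to $\lVert u - \mu\rVert_0$, and control $\lVert \mu - m_{s,n}\rVert_0$ via a zeros/sampling inequality together with the minimum-norm property. The only cosmetic difference is that the paper packages the second term by noting $\tilde{m}_{s,n}$ is also the minimum-norm interpolant of $m_{s,n}$ and invoking the standard interpolation error bound directly, whereas you unpack this into a triangle inequality and a minimality comparison --- the two are equivalent.
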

\begin{proof}
    Let $\tilde{m}_{s,n}$ be the minimum norm interpolant from $H^s(M)$.
    Since $m_{s, n} \in H^s(M)$, $\tilde{m}_{s,n}$ is also the minimum norm interpolant for $m_{s, n}$.
    Using the escape result from \citet[Theorem~4.2]{narcowich_2006} and norm equivalence, we have 
    \begin{align*}
        \lVert u - m_{s, n} \rVert_0 &\leq \lVert u - \tilde{m}_{s,n} \rVert_{0} + \lVert \tilde{m}_{s,n} - m_{s, n} \rVert_{0} \\
        &\leq Ch_{\mb{x}}^{s_0 - d/2 - \epsilon} \tilde{\rho}_{\mb{x}}^{s-s_0 + d/2 + \epsilon} \lVert u \rVert_{H^{s_0 - d/2 - \epsilon}(M)} + Ch_{\mb{x}}^{s} \lVert m_{s, n} \rVert_{H^s(M)} \\
        &\leq Ch_{\mb{x}}^{s_0 - d/2 - \epsilon} \tilde{\rho}_{\mb{x}}^{s-s_0 + d/2 + \epsilon} \lVert u \rVert_{s_0 - d/2 - \epsilon} +C h_{\mb{x}}^{s} \lVert m_{s, n} \rVert_{s}. \qedhere
    \end{align*}
\end{proof}
As long as $s \notin \mathfrak{C}$, the result of Proposition \ref{prop:upper_bound_minimum_norm_interpolant} holds without change if we replace $H^s_L$ by $H^s_{L_D}$.
Finally, the results of Proposition \ref{prop:cond_var_bounds} are well known in the Euclidean case~\citep[e.g.,][Proposition~3.6 and~3.7]{karvonen2023asymptotic}.
We only need $s \notin \mathfrak{C}$ to translate these results to our setting.

\begin{theorem}\label{cor:consistency_dirichlet}
  Let $u$ be defined as in \eqref{eq:true_model_dirichlet} with $s_0 > d$.
  Let $\{K_{s}\}_{s > d/2}$ be any family of kernels such that the RKHS of $K_s$ is equal to $H^s_{L_D}$ up to equivalent norms and such that the corresponding norms are monotone.
  Assume that the sequence of points $\{x_i\}$ is quasi-uniform.
  If $\{\hat{s}_n\}$ is a sequence of maximizers of the Gaussian log-likelihood
    \begin{align*}
        \ell(s; u(\mb{x})) = -\frac{n}{2}\log(2\pi) - \frac{1}{2}\log(\det (K_{s}(\mb{x}))) - \frac{1}{2}u(\mb{x})^{\top}K_{s}(\mb{x})^{-1}u(\mb{x})
    \end{align*}
    in $(d/2, S_{\max})$ with $s_0 \leq S_{\max}$, then
    \begin{align*}
        \hat{s}_n \to s_0 \quad \text{ in probability}.
    \end{align*}
\end{theorem}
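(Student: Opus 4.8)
The plan is to run the proof of Theorem~\ref{thm:consistency} essentially verbatim, replacing each manifold auxiliary result by its Dirichlet--Laplacian analogue established above: Proposition~\ref{prop:almost_escape} in place of Proposition~\ref{prop:escape_manifold}; the adapted versions of Propositions~\ref{prop:smoothness_class}, \ref{prop:smoothness_limit}, \ref{prop:lower_bound_norm_interpolant}, \ref{prop:upper_bound_minimum_norm_interpolant} and \ref{prop:cond_var_bounds}; and $\tilde q_{\mb{x}}$, $\tilde\rho_{\mb{x}}$ in place of $q_{\mb{x}}$, $\rho_{\mb{x}}$ throughout. As there, one passes to the log-likelihood ratio $\mathcal{L}(s)=2(\ell(s;u(\mb{x}))-\ell(s_0;u(\mb{x})))$, writes $u(\mb{x})^\top K_s(\mb{x})^{-1}u(\mb{x})=\lVert m_{s,n}\rVert_s^2$ and $\log\det K_s(\mb{x})=\sum_{i=1}^n\log\V_s(x_i\mid\mb{x}^{i-1})$, invokes quasi-uniformity to get $h_{\mb{x}^i}\asymp\tilde q_{\mb{x}^i}\asymp i^{-1/d}$ and $\tilde\rho_{\mb{x}^i}\asymp 1$ for every prefix, and reduces everything to \eqref{eq:ell-difference-limit}.

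For the range $s\ge s_0+\delta$ I would first fix a good index $s^\ast\in(s_0,s_0+\delta]\setminus\mathfrak{C}$ (possible since $\mathfrak{C}$ is discrete) and $\epsilon>0$ with $s_0-d/2-\epsilon\notin\mathfrak{C}$. Proposition~\ref{prop:almost_escape}, quasi-uniformity and the almost-sure membership $u\in H^{s_0-d/2-\epsilon}_{L_D}$ then give, for $f=m_{s^\ast,n}$, an estimate of exactly the form \eqref{eq:assumption} --- the extra summand $Ch_{\mb{x}}^s\lVert m_{s,n}\rVert_s$ in Proposition~\ref{prop:almost_escape} is harmless because \eqref{eq:assumption} already permits a $\lVert f\rVert_s$-term --- so Proposition~\ref{prop:lower_bound_norm_interpolant} yields $\lVert m_{s^\ast,n}\rVert_{s^\ast}^2\ge Cn^{1+\delta/d-\epsilon'}$ almost surely, and norm monotonicity \eqref{eq:norm_monotonicity} together with the minimum-norm property transport this lower bound to $\lVert m_{s,n}\rVert_s^2$ for all $s\ge s^\ast$. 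I would bound $u(\mb{x})^\top K_{s_0}(\mb{x})^{-1}u(\mb{x})=\lVert m_{s_0,n}\rVert_{s_0}^2\le Cn^{1+\epsilon'}$ via the adapted Proposition~\ref{prop:upper_bound_minimum_norm_interpolant}, comparing with the true Dirichlet--Whittle--Mat\'ern kernel $\sigma_0^2K^D_{\theta_0}$ (of smoothness $s_0$) through norm equivalence whenever $s_0\in\mathfrak{C}$, and bound the log-determinant term by $\lesssim n\log n$ uniformly in $s$ via the adapted Proposition~\ref{prop:cond_var_bounds}, using a value $S\ge S_{\max}$ outside $\mathfrak{C}$ in the lower bound for $\V_s$ and the elementary monotonicity $\V_{s'}(\cdot\mid\mb{x})\le C\V_s(\cdot\mid\mb{x})$ for $s\le s'$ --- which follows directly from \eqref{eq:norm_monotonicity} applied to the worst-case errors $\sqrt{\V_s(\cdot\mid\mb{x})}$ --- to handle $\V_{s_0}$ if $s_0\in\mathfrak{C}$. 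These combine, as in \eqref{eq:log_dets}, to $\sup_{s_0+\delta\le s\le S_{\max}}\mathcal{L}(s)\le C_1n\log n-C_2n^{1+\delta/d-\epsilon'}\to-\infty$ almost surely, hence $\P(\limsup_n\hat s_n>s_0)=0$.

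For the range $d/2<s\le s_0-\delta$ the argument of Theorem~\ref{thm:consistency} carries over with no new ideas: the adapted Proposition~\ref{prop:cond_var_bounds} (with the same $\mathfrak{C}$-perturbation) gives a log-determinant contribution $\lesssim-\delta n\log n$; norm equivalence of $\mathcal{H}^{s_0}$ with $H^{s_0}_{L_D}$ and the minimum-norm property applied to $\sigma_0^2K^D_{\theta_0}$ give $u(\mb{x})^\top K_{s_0}(\mb{x})^{-1}u(\mb{x})\le C\sum_{i=1}^n\tilde\xi_{n,i}^2$ for a triangular array of pairwise uncorrelated, mean-zero, unit-variance variables; and discarding the nonnegative term $u(\mb{x})^\top K_s(\mb{x})^{-1}u(\mb{x})$ and applying Markov's inequality to $\sum_i\tilde\xi_{n,i}^2$ yields $\P(\sup_{d/2<s\le s_0-\delta}\mathcal{L}(s)\ge0)\lesssim 1/\log n\to0$. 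Combined with the previous paragraph this is \eqref{eq:ell-difference-limit}, giving $\hat s_n\to s_0$ in probability.

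The part I expect to require the most care is the bookkeeping around the exceptional set $\mathfrak{C}$, since the Dirichlet analogues of the escape estimate, the interpolant-norm upper bound and the conditional-variance bounds are only stated for indices outside $\mathfrak{C}$, whereas $\mathcal{L}(s)$ must be controlled uniformly over an interval of $s$ and at the fixed, possibly exceptional, value $s_0$. Because $\mathfrak{C}$ is discrete this is routine: one applies the $\lVert m_{s,n}\rVert_s^2$-lower bound at a nearby good index and transports it by \eqref{eq:norm_monotonicity}; one enlarges $S_{\max}$ slightly to escape $\mathfrak{C}$ in the determinant estimate, which only weakens an already-sufficient $n\log n$ bound; and one compares with $\sigma_0^2K^D_{\theta_0}$, or with a slightly smaller good index through the $\V_s$-monotonicity above, whenever an estimate at $s_0\in\mathfrak{C}$ is needed. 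As in Theorem~\ref{thm:consistency}, the $\limsup$ bound is almost sure but the $\liminf$ bound is only in probability without a Gaussianity assumption, which is why the statement asserts convergence in probability rather than almost surely.
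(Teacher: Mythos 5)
Your proposal is correct and follows essentially the same route the paper takes: run the proof of Theorem~\ref{thm:consistency} with Proposition~\ref{prop:almost_escape} replacing Proposition~\ref{prop:escape_manifold} and the Dirichlet analogues of the remaining auxiliary results, then choose $\delta$ and $\epsilon$ so as to dodge the exceptional set $\mathfrak{C}$. The paper's proof compresses the $\mathfrak{C}$-bookkeeping into a single sentence; what you have done is unfold it explicitly (nearby good index $s^\ast$, transport by norm monotonicity, enlarged $S_{\max}$ outside $\mathfrak{C}$, comparison via $\sigma_0^2K^D_{\theta_0}$ or a slightly smaller good index for the $\V_{s_0}$-bound), which is consistent with the paper's intent and fills in detail the paper leaves to the reader.
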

\begin{proof}
    The proof is the same as of that of Theorem \ref{thm:consistency}, the only change being that Proposition~\ref{prop:almost_escape} is used in the place of Proposition~\ref{prop:escape_manifold}.
    Note that the restriction $s \notin \mathfrak{C}$ in \eqref{eq:dirichlet_space_characterization} is not a problem.
    Because \smash{$u \in H^{s_0 - d/2 - \epsilon}_L$} for all $\epsilon > 0$, we can choose $\delta$ and $\epsilon$ appearing in the proof of Theorem \ref{thm:consistency} in such a way that we avoid any problems.
\end{proof}

While the path regularity result in the Gaussian case can be proved in the same way as in Proposition \ref{prop:path_property}, extending Corollary \ref{cor:consistency_dirichlet} to the case $s_0 > d/2$ is less straightforward as the results in \citet{hangelbroek_2018} are proved only for closed manifolds.
However, almost sure convergence can be shown in the Gaussian case.

One should be able to establish similar results for von Neumann boundary conditions with a similar procedure.
Some results characterizing the corresponding spaces $H^s_L$, can be found in~\citet{kim_2020} and Proposition B.2 of ~\citet{bolin_2024}.

\section{Simulation Study}\label{sec:simulation_study}

We conduct simulation studies to assess finite sample properties and stability against model misspecification for smoothness estimation.
As underlying manifold, we use the sphere $\mathbb{S}^{2}$.
In this case, the spectral decomposition of the covariance operators is given as 
\begin{align*}
    (\tau-\Delta)^{-s} = \sum_{l = 0}^\infty (\tau+\lambda_l)^{-s} \sum_{m = -l}^l e_{lm} \otimes e_{lm}, 
\end{align*}
where $\lambda_l = l(l+1)$ and the functions $e_{lm}$ are the so-called real spherical harmonics for which explicit expressions are available \citep[Chapter~IV]{stein_1971}.
Even though in this case the eigenfunctions and eigenvalues are known, in practice we must truncate the infinite sums in order to compute the kernel $K_{\theta}$ of $(\tau - \Delta)^{s}$ and sample the corresponding random fields. 
With the addition theorem for spherical harmonics \citep[Chapter~IV, Corollary~2.9]{stein_1971}, it is straightforward to show that the error incurred on the diagonal of the kernel when truncating at $l$ is of order $l^{2(1 - s)}$.
An application of the Cauchy--Schwarz inequality then shows that the same bounds holds for the off-diagonal elements as well.
Let $K_\theta$ stand for the truncated kernel.
Because the addition theorem implies that the truncated kernel is constant as a function of $x$ for any truncation level $l$, we may normalize the kernel such that $K_{\theta}(x) = K_\theta(x, x) \equiv 1$.
In our simulation studies we chose $l = 100$.
That is, we use the first $\sum_{i=0}^{100} (2i+1) = 10\,201$ eigenfunctions and eigenvalues.
With $s$ chosen as in the simulation setup below, this results in a uniform pointwise error for the kernel of order $10^{-16}$.

We consider the following three cases of model misspecification: (i) we use Gaussian observations and the correct (truncated) kernel but with incorrect $\tau$ (range scale) parameters, (ii) we use non-Gaussian observations, and (iii) we estimate the smoothness using Eucliean Mat\'ern kernels and Euclidean generalized Wendland kernels restricted to the sphere even though the observations are generated using the (truncated) kernel $K_{\theta_0}$.
For scenario (ii), we always standardize the Karhunen--Lo\`eve coefficients to have mean 0 and variance 1.
In the last scenario (iii), the other kernel parameters have to be specified
In an attempt to isolate the effect of the kernel, we chose the other parameters before optimization by minimizing the difference to the true covariance.
This works very well for Euclidean Mat\'ern kernels but slightly less well for the generalized Wendland kernels.

To assess the effect of sample size, we consider sample sizes $n = 500, 1000, 2000$.
For each sample size, we generate 100 samples of a Gaussian process with true parameters $\theta_0 = (s_0, \tau_0) = (5, 20)$ and 100 samples of non-Gaussian processes for scenario (ii) in which we use the true range scale parameter during smoothness estimation.
The range over which we optimize is fixed to $[1 + 10^{-7}, 30]$.
As our theoretical results require quasi-uniformity, we use nearly equidistant points on the sphere generated by the regular placement algorithm given in \cite{deserno_2004}. 
To maximize the Gaussian log-likelihood, we use the implementation of the SHGO global optimization algorithm from the Python library \texttt{scipy}.
When using simpler (local) optimizers, we found that they could on occasion get stuck in local minima, especially for larger sample sizes, perhaps due to numerical instability of the likelihood evaluation.
A summary of the resulting estimates for the different scenarios can be found in the violin plots in Figures~\ref{fig:mis_dist}--\ref{fig:mis_kernel}.
Overall, we find that smoothness estimation appears to work well even for misspecified models.

First, from Figure \ref{fig:mis_dist}, we note that estimation appears to be stable against distribution misspecification of the Karhunen--Lo\`eve  coefficients $\{\xi_i\}$. 
When the coefficients are Bernoulli distributed, estimation appears to work even better than in the case of a correctly specified model (compare with Figure \ref{fig:mis32}).
For exponentially and $t$ distributed coefficients, the estimation of $s_0$ appears to work similarly well.
In all cases, the bias appears to be very small and decreases with increasing sample size.

Next, from Figure \ref{fig:mis_range_scale}, we note that misspecifying the range scale parameter results in biased smoothness estimates for finite sample sizes.
Moreover, there is clear positive correlation between range scale parameter and estimated smoothness parameter.
In our opinion, the bias is not very large relative to range scale misspecification.
Concretely, a normalized kernel with parameter $\theta = (s, \tau) = (5, 1)$ is somewhat pathological as the correlation is never smaller than 0.975 across the entire sphere while with $\tau_0 = 20$ the correlation is essentially 0 at large distances.
However, even when $\tau$ is not grossly misspecified, the bias appears to decrease only slowly as the sample size increases.

Finally, smoothness estimation also works well with misspecified kernels, see Figure \ref{fig:mis_kernel}.
In this setting the Euclidean Mat\'ern kernel appears to be superior to the generalized Wendland kernel, perhaps due to the fact that the true kernel is more closely related to the Mat\'ern kernel.
It is not clear whether the biases present in the estimation results are inherent to the kernel used or whether they are the result of the specification of the remaining kernel parameters.

\begin{figure}[!htb]
\begin{subfigure}{0.33\textwidth}
\centering
\includegraphics[width=\linewidth]{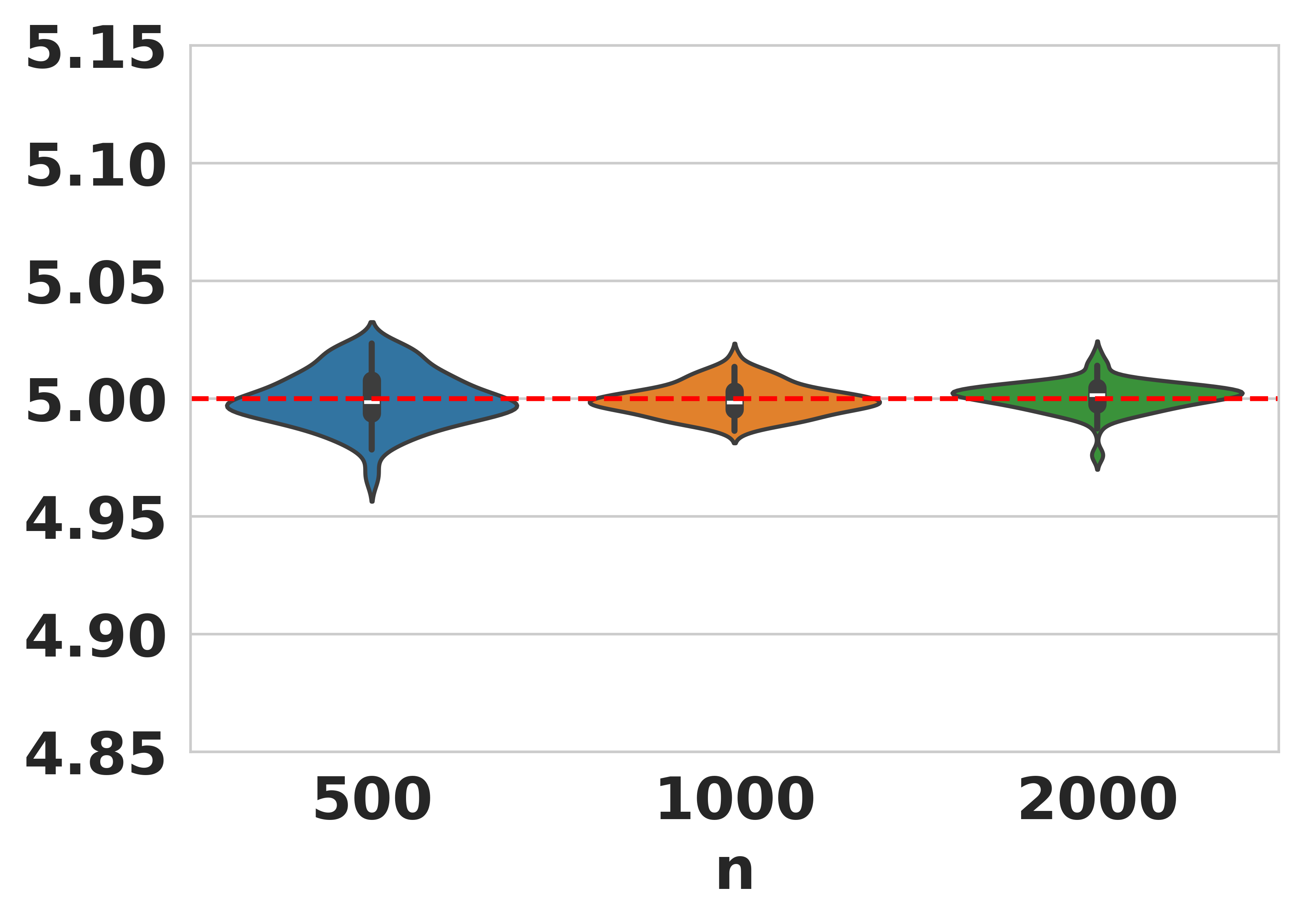}
\caption{$\xi_i \sim$ Bernoulli}
\end{subfigure}
\begin{subfigure}{0.33\textwidth}
\centering
\includegraphics[width=\linewidth]{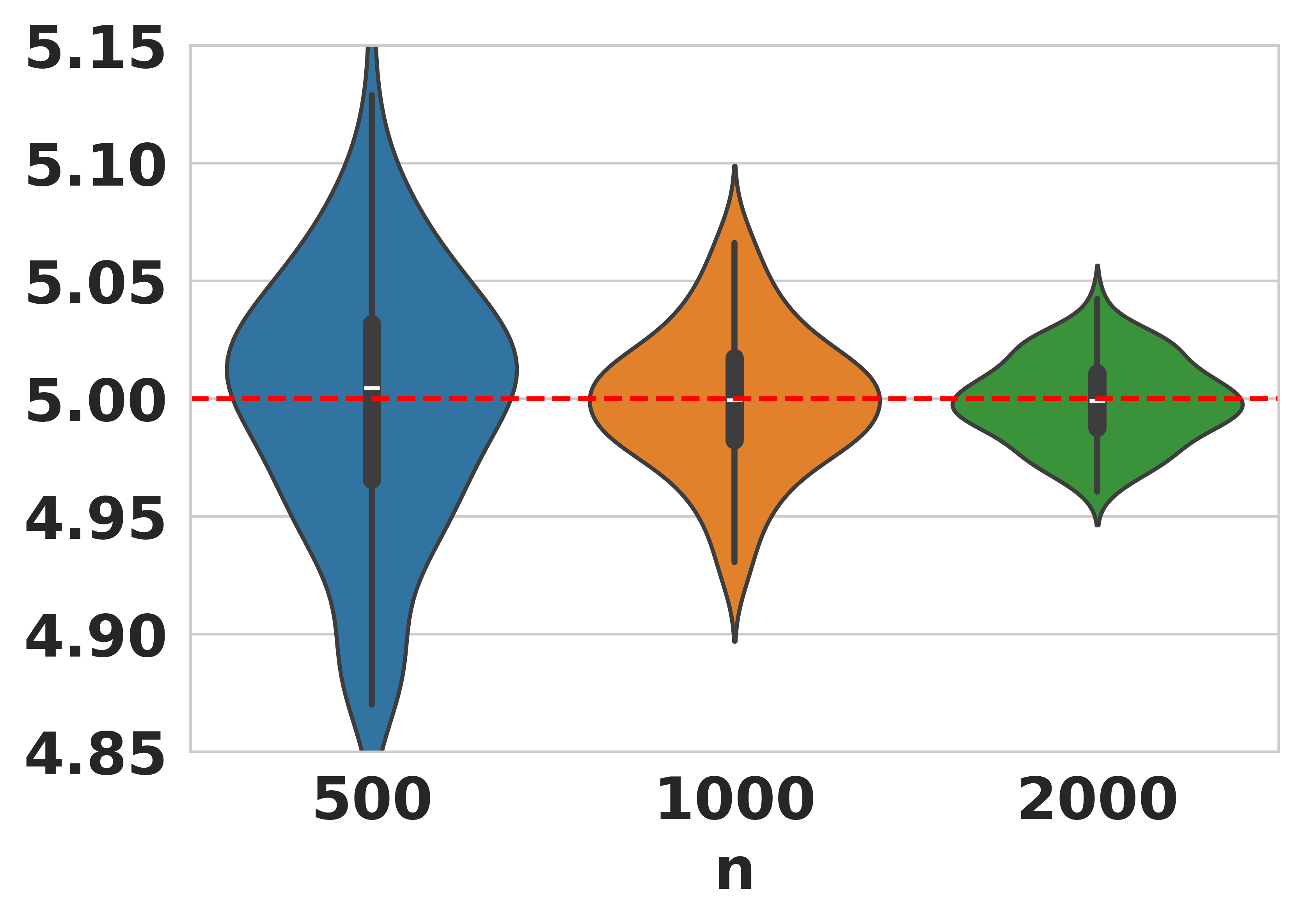}
\caption{$\xi_i \sim$ Exponential}
\end{subfigure}
\begin{subfigure}{0.33\textwidth}
\centering
\includegraphics[width=\linewidth]{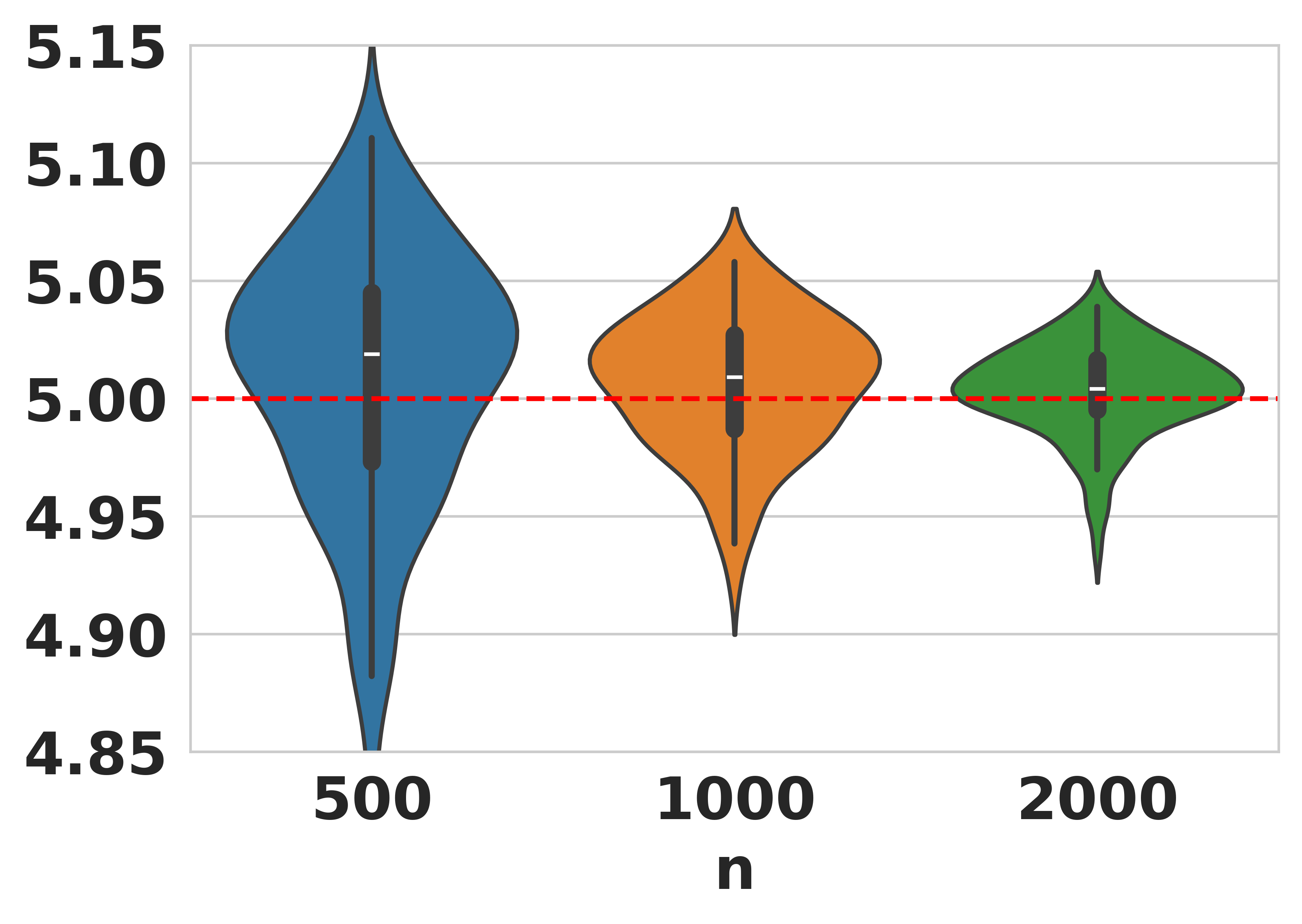}
\caption{$\xi_i \sim$ $t$ distribution (df = 4)}
\end{subfigure}
\caption{Robustness against distributional misspecification. Red dashed line corresponds to $s_0 = 5$.}
\label{fig:mis_dist}
\end{figure}

\begin{figure}[!htb]
\begin{subfigure}{0.33\textwidth}
\centering
\includegraphics[width=\linewidth]{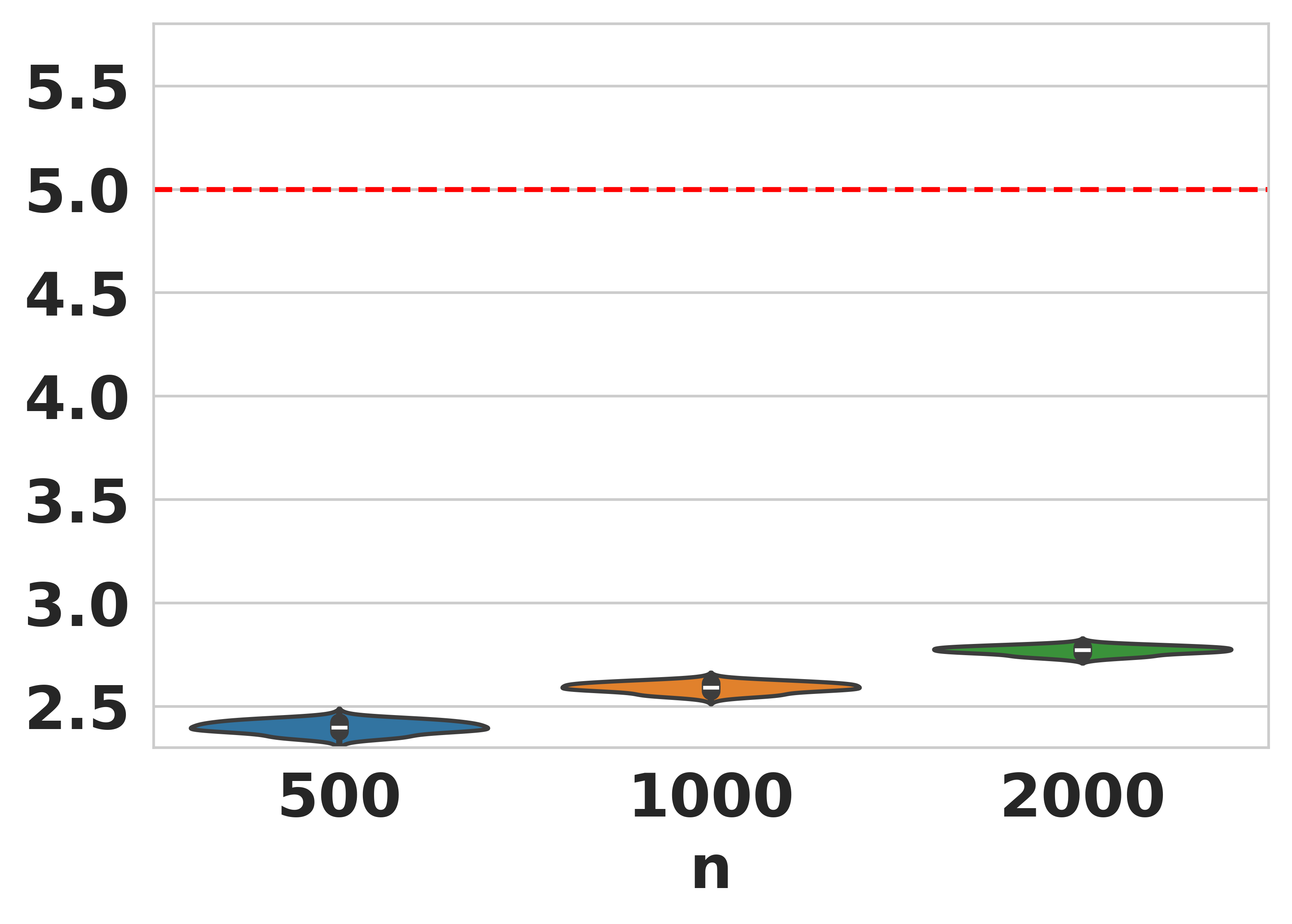}
\caption{$\tau = 1$}
\label{fig:mis11}
\end{subfigure}
\begin{subfigure}{0.33\textwidth}
\centering
\includegraphics[width=\linewidth]{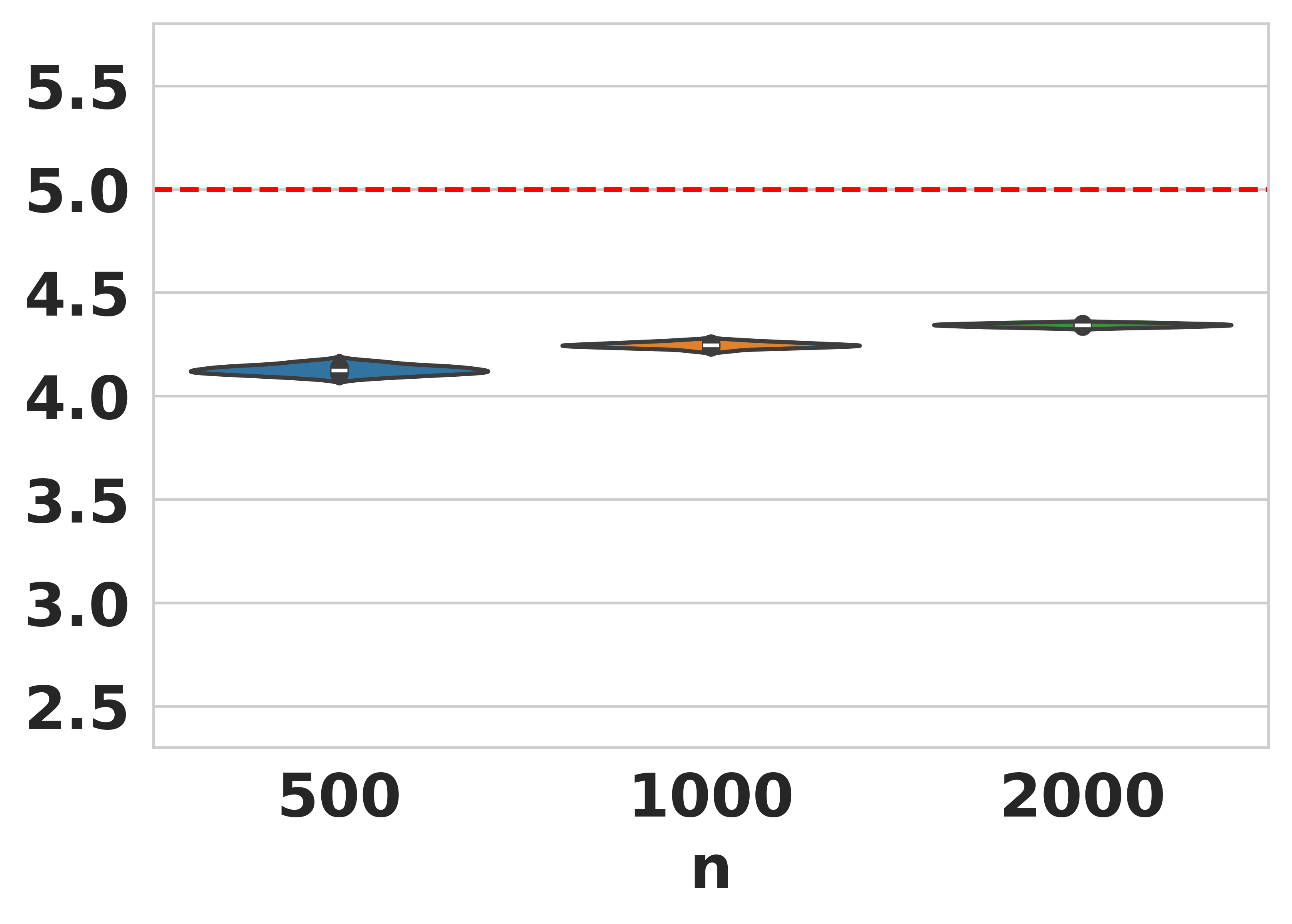}
\caption{$\tau = 10$}
\label{fig:mis12}
\end{subfigure}
\begin{subfigure}{0.33\textwidth}
\centering
\includegraphics[width=\linewidth]{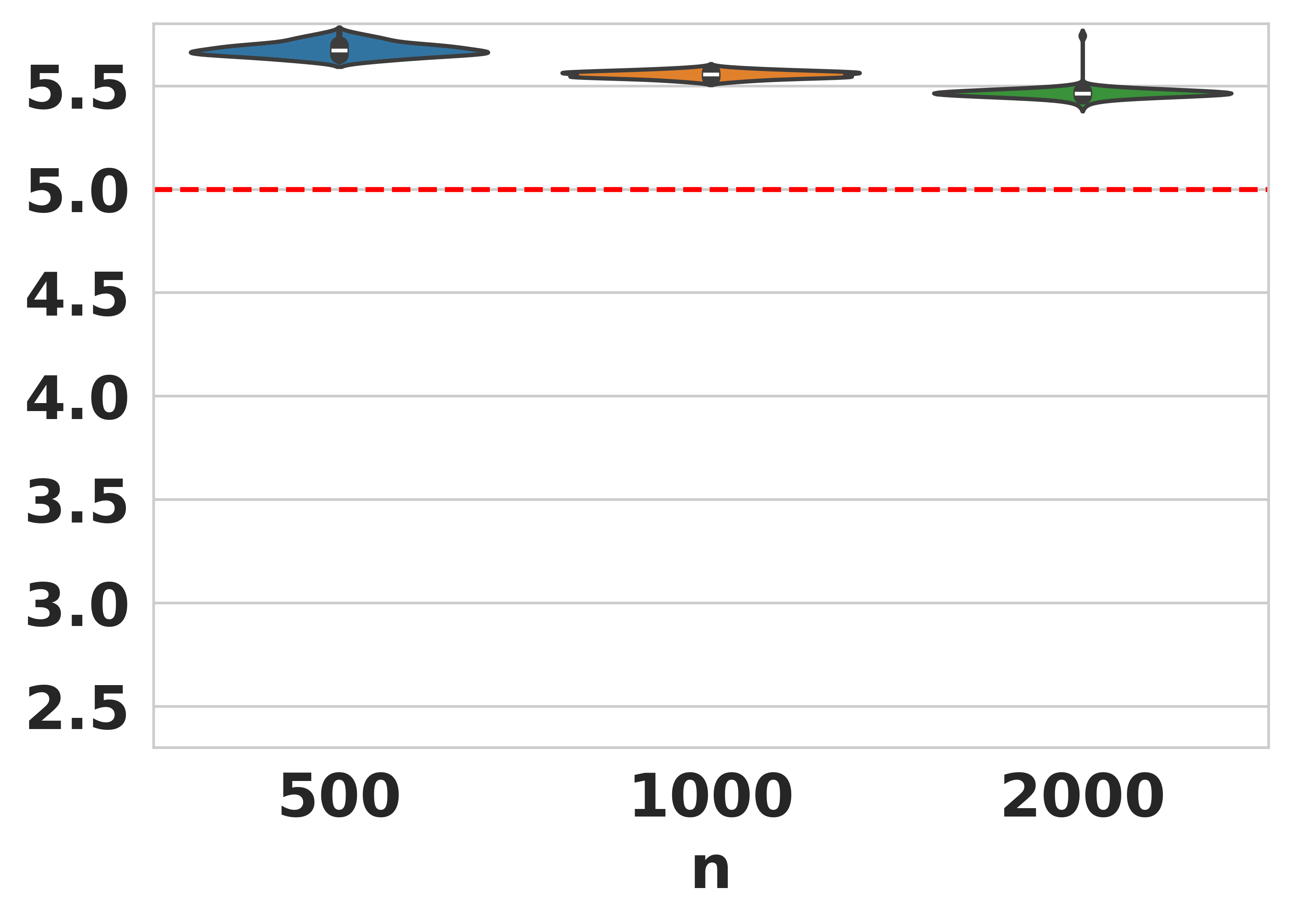}
\caption{$\tau = 30$}
\label{fig:mis13}
\end{subfigure}
\caption{Robustness against misspecification of the range scale parameter. True parameter is $\tau_0 = 20$. Red dashed line corresponds to $s_0 = 5$.}
\label{fig:mis_range_scale}
\end{figure}

\begin{figure}[!htb]
\begin{subfigure}{0.33\textwidth}
\centering
\includegraphics[width=\linewidth]{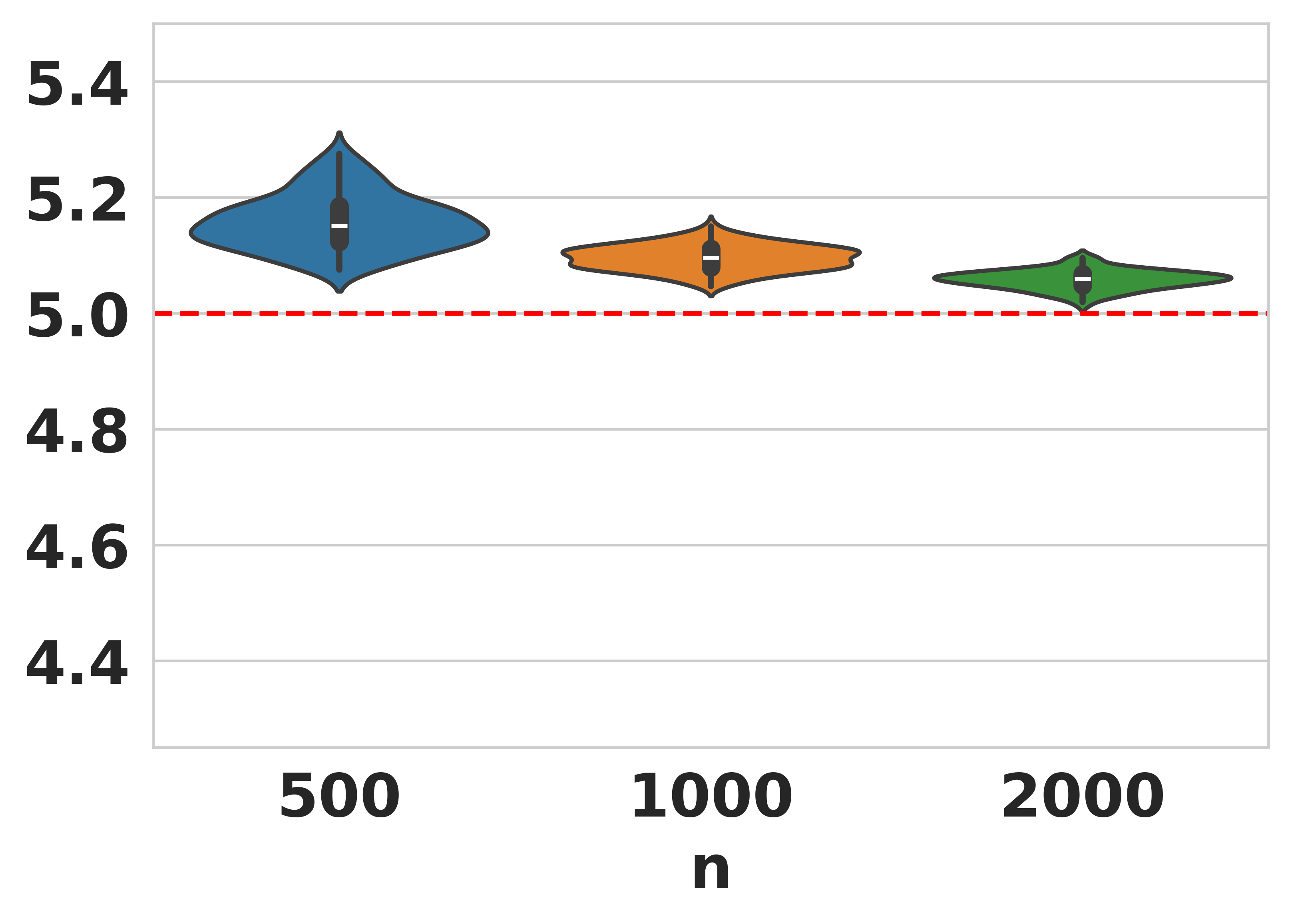}
\caption{Euclidean Mat\'ern Kernel}
\label{fig:mis31}
\end{subfigure}
\begin{subfigure}{0.33\textwidth}
\centering
\includegraphics[width=\linewidth]{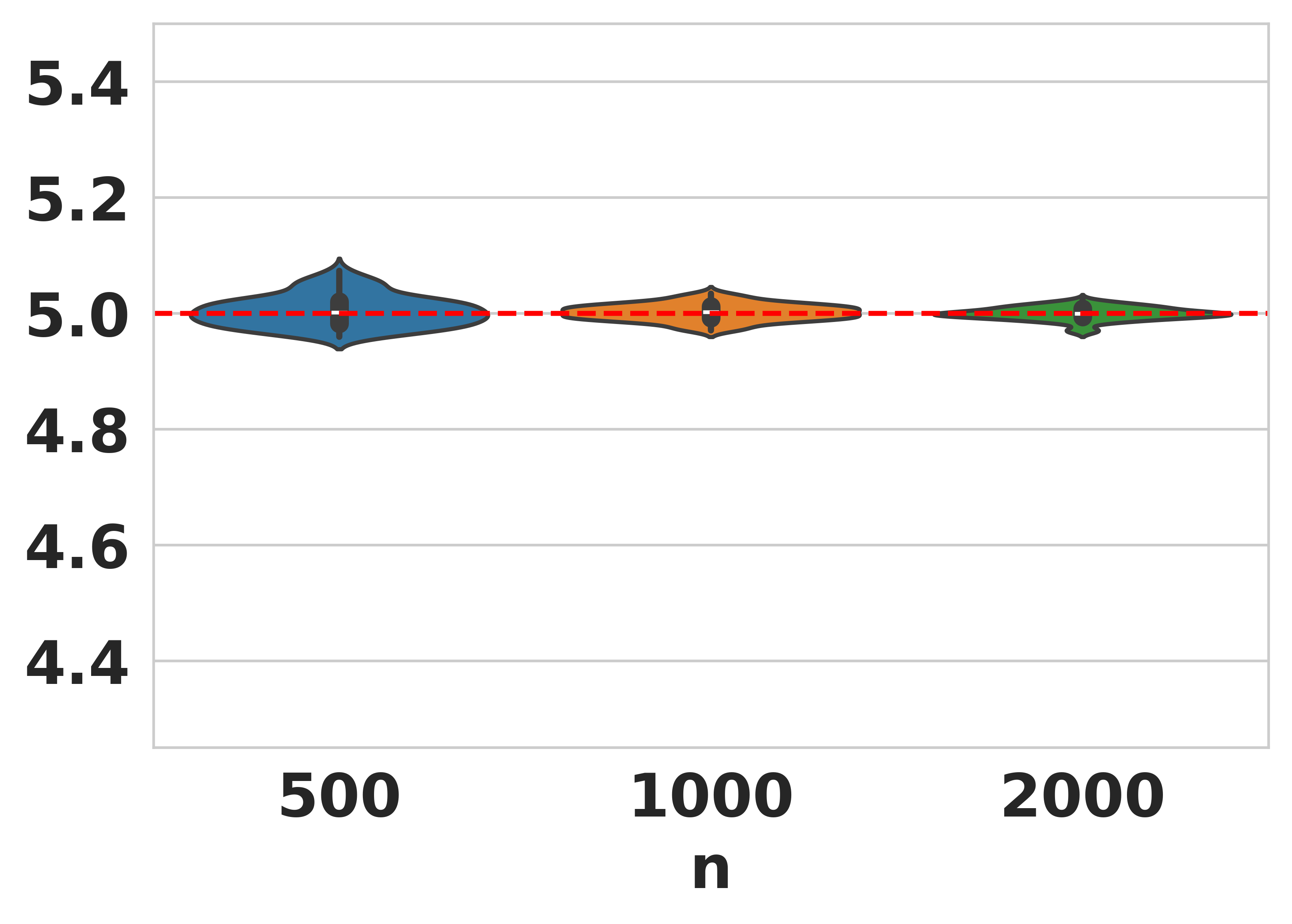}
\caption{Correctly specified kernel.}
\label{fig:mis32}
\end{subfigure}
\begin{subfigure}{0.33\textwidth}
\centering
\includegraphics[width=\linewidth]{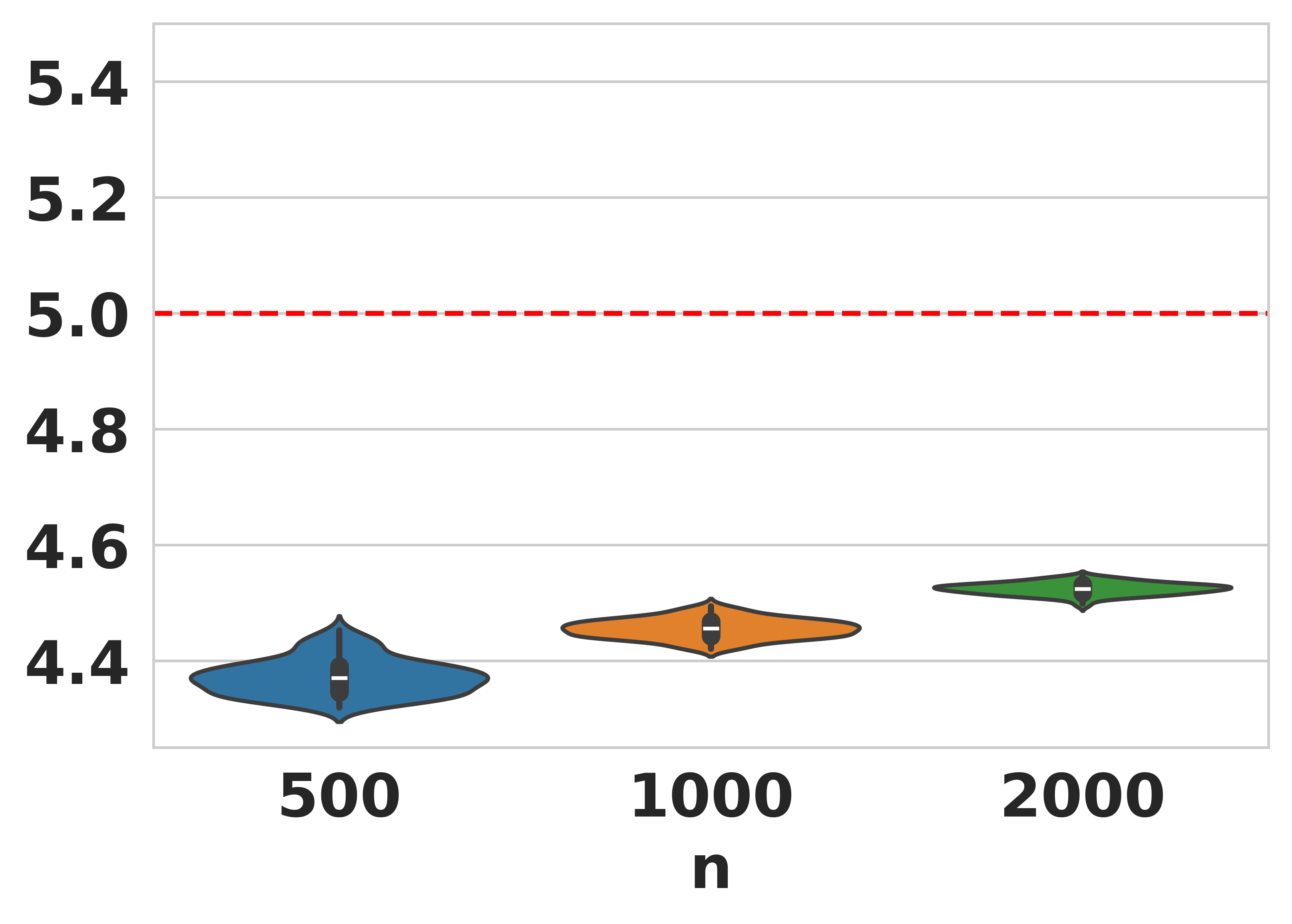}
\caption{Euclidean generalized Wendland kernel}
\label{fig:mis33}
\end{subfigure}
\caption{Robustness against misspecification of the kernel. Red dashed line corresponds to $s_0 = 5$.}
\label{fig:mis_kernel}
\end{figure}

\section*{Acknowledgements}
MK-S was supported by NSF Grant DMS-1916226.
TK was supported by the Research Council of Finland postdoctoral researcher grant number 338567, ``Scalable, adaptive and reliable probabilistic integration''.
Finally, we would like to thank the reviewers and the editor for important comments and suggestions that substantially improved the presentation of the results in the paper.

\appendix

\renewcommand{\thetheorem}{\Alph{section}.\arabic{theorem}}
\renewcommand{\theproposition}{\Alph{section}.\arabic{theorem}}


\section{Further Background on Manifolds and Function Spaces.}\label{sec:appendix_manifold}
Here, we provide additional technical details on our manifold setting that are necessary for our proofs.
Throughout, we require background on manifolds for which we again refer to~\citet{lee_smooth, lee_riemannian}.
Recall that $\mathcal{M} \subset \mathbb{R}^k$ is assumed to be a connected and closed (i.e. compact and without boundary) embedded $d$-dimensional Riemannian submanifold in the sense of \citep[Chapter 8]{lee_riemannian}.
Thus, the Riemannian metric $g$ on~$\mathcal{M}$ is the one induced by the standard Riemannian metric on $\mathbb{R}^k$.
The tangent space at a point $p \in \mathcal{M}$ is denoted by $T_p\mathcal{M}$.
We denote by $\nabla$ the Levi-Civita connection.
The exponential map $\exp^{\mathcal{M}}_p : \mathcal{E} \subset T_{p}\mathcal{M} \to \mathcal{M}$ is given by
\begin{align*}
    \exp^{\mathcal{M}}_p(v) = \gamma_v(1) ,
\end{align*}
where $\gamma_v$ is the unique geodesic with $\gamma(0) = p$ and $\gamma^\prime(0) = v$.
Since $\mathcal{M}$ is compact, it has a finite but positive injectivity radius $r_0 > 0$, which means that whenever $r \leq r_0$ the exponential map $\exp^{\mathcal{M}}_p$ is a diffeomorphism on the sets \sloppy{${B_{r}(\mb{0}) = \{ v \in \mathbb{R}^d ~ \vert ~ \lVert v \rVert < r \}}$} for all $p \in \mathcal{M}$ (where we have identified $T_p\mathcal{M}$ with $\mathbb{R}^d$) with image $\exp_p^{\mathcal{M}}(B_{r}(\mb{0})) = \Omega_p(r)$.
This results in local charts given by $\{(\Omega_p(r), (\exp^{\mathcal{M}}_p)^{-1})\}$, which give the so-called normal coordinates.
Due to compactness, we can cover $\mathcal{M}$ with finitely many $\{\Omega_{p_i}(r)\}$, where the $p_i$ are points on $\mathcal{M}$ to obtain an atlas for $\mathcal{M}$ of normal coordinates.
To simplify notation, we write $\phi_i$ for $(\exp^{\mathcal{M}}_{p_i})^{-1}$ and $\Omega_i$ for $\Omega_{p_i}(r)$.
Furthermore, let $\{\psi_i\}$ be a partition of unity subordinate to $\{(\Omega_i, \phi_i)\}$, which is to say that $\supp \psi_i \subset \Omega_i$.
Such a partition always exists, see Proposition~7.2.1 in~\citet{triebel_1991}.
We require   $0 < r \leq r_0/32$ for technical reasons~\citep[cf.][Remark 7.2.1/2]{triebel_1991}.

To define the Laplace-Beltrami operator, let $\mb{G} = (g_{ij})_{ij}$ be the component matrix of the Riemannian metric on $\mathcal{M}$ in local coordinates.
The Laplace--Beltrami operator $\Delta$ is the operator given in local coordinates for $f \in C^{\infty}(\mathcal{M})$ as
\begin{align*}
    \Delta f = \frac{1}{\sqrt{\smash[b]{\det(\mb{G})}}} \bigg(\sum_{i = 1}^n \partial_i \sqrt{\smash[b]{\det(\mb{G})}} \, \sum_{j = 1}^n g^{ij}\partial_j f \bigg),
\end{align*}
where $g^{ij}$ are the entries of $\mb{G}^{-1}$ and $\{\partial_i\}_{i = 1}^d$ are the basis for the $T_p\mathcal{M}$.
There is a unique, self-adjoint and negative semi-definite extension of $\Delta$ [originally defined on $C^{\infty}(\mathcal{M})$] to a domain $\mathcal{D}(\Delta) \subset L_2(\mathcal{M})$ \citep{strichartz_83}.
We identify the extension with $\Delta$.

Next, we give a definition of the Hölder spaces.
For $0 < \alpha < 1$ and a function $f : \mathcal{M} \to \mathbb{R}$ such that the $m$th covariant derivative $\nabla^m$ exists we define the Hölder semi-norm
\begin{align*}
    \lvert \nabla^m f \rvert_{\alpha} \coloneqq \sup_{x\neq y \in \mathcal{M}} \frac{\lVert \nabla^m f(x) - \nabla^m f(y) \rVert_g}{d_{\mathcal{M}}(x, y)^{\alpha}}.
\end{align*}
The norm $\lVert \cdot \rVert_g$ is the natural norm induced by the Riemannian metric $g$, see for example \citet{hangelbroek_2010}, Section 2.2 for details.

Following \citet{aubin_1998}, for any $l \in \mathbb{N}$ and $0 < \alpha < 1$ we define the Hölder space
\begin{align*}
    C^{l, \alpha}(\mathcal{M}) = \Big\lbrace f: \mathcal{M} \to \mathbb{R} ~ \Bigl\vert ~ \lVert f \rVert_{C^{l, \alpha}(\mathcal{M})} \coloneqq \lvert \nabla^{l} f \rvert_{\alpha} + \max_{m \leq l} \sup_{x \in \mathcal{M}} \lVert \nabla^m f(x) \rVert_g < \infty \Big\rbrace.
\end{align*}

\section{Proofs for Section \ref{sec:MLE}}\label{sec:tech_proofs}

In this section we provide proofs for some of the results in Section~\ref{sec:MLE}.

\subsection{Proofs of Propositions \ref{prop:smoothness_limit} and \ref{prop:lower_bound_norm_interpolant}}

Before we can provide the proof, we need tail bounds for sums of the form 
\begin{align*}
    \sum_{i = R}^\infty a_i (\xi_i - \E(\xi_i)),
\end{align*}
where $a_i \geq 0$ and $\xi_i$ is a sub-Gaussian random variable.
These are standard results, but we have not been able to find a reference for infinite sums.
Since the proof is short, we include it here.
Following Definition~2.2 in \citet{wainwright_2019}, a random variable $X$ is sub-Gaussian with sub-Gaussian parameter $\gamma$ if 
\begin{align*}
    \E \big(e^{t (X- \E(X))} \big) \leq e^{\gamma^2t^2 / 2}
\end{align*}
for all $t \in \mathbb{R}$.

\begin{lemma}\label{l:tail_bound}
    Let $\{a_i\}$ be a square-summable sequence with $a_i \geq 0$ and $\{\xi_i\}$ an i.i.d.\ sequence of sub-Gaussian random variables with mean $\mu$ and sub-Gaussian parameter $\gamma$. 
    Then for $ t \geq 0$, we have
    \begin{align*}
        \P \bigg(\, \bigg\lvert\sum_{i = R}^\infty a_i (\xi_i - \mu) \bigg\rvert \geq t \bigg) \leq 2\exp \bigg(-  \frac{t^2}{2\gamma^2\sum_{i = R}^\infty a_i^2}\bigg).
    \end{align*}
\end{lemma}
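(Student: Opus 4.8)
The plan is to prove the bound first for the truncated sums $S_{R,N} = \sum_{i=R}^{N} a_i(\xi_i - \mu)$ via the usual Chernoff argument, and then pass to the limit $N \to \infty$ using almost sure convergence together with Fatou's lemma.

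For the finite sums, fix $N \ge R$ and assume $A_{R,N} := \sum_{i=R}^{N} a_i^2 > 0$, the case $A_{R,N} = 0$ being trivial. For any $\lambda > 0$, Markov's inequality applied to $e^{\lambda S_{R,N}}$, the independence of the $\xi_i$, and the defining sub-Gaussian estimate $\E\exp(\lambda a_i(\xi_i - \mu)) \le \exp(\gamma^2 \lambda^2 a_i^2 / 2)$ give
\[
  \P(S_{R,N} \ge t) \le \exp\!\left(-\lambda t + \tfrac{1}{2}\gamma^2 \lambda^2 A_{R,N}\right)
\]
for every $t \ge 0$. Choosing $\lambda = t / (\gamma^2 A_{R,N})$ yields $\P(S_{R,N} \ge t) \le \exp(-t^2/(2\gamma^2 A_{R,N}))$; applying this also to $-S_{R,N}$ and taking a union bound produces the two-sided estimate $\P(|S_{R,N}| \ge t) \le 2\exp(-t^2/(2\gamma^2 A_{R,N}))$. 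Since $A_{R,N} \le \sum_{i=R}^{\infty} a_i^2$, this is in turn bounded by $2\exp(-t^2/(2\gamma^2 \sum_{i=R}^{\infty} a_i^2))$, uniformly in $N$.

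Next I would note that $S_{R,N}$ converges almost surely as $N \to \infty$: the summands are independent, centred, and have variances at most $\gamma^2 a_i^2$ (a sub-Gaussian variable of parameter $\gamma$ has variance at most $\gamma^2$), which are summable because $\{a_i\}$ is square-summable; hence Kolmogorov's convergence criterion — equivalently, $L^2$-boundedness of the martingale $(S_{R,N})_N$ — gives $S_{R,N} \to S_R := \sum_{i=R}^{\infty} a_i(\xi_i - \mu)$ almost surely, and this limit is the sum appearing in the statement. To transfer the tail bound, fix $t > 0$ and $\varepsilon \in (0,t)$. On the event $\{|S_R| \ge t\}$ almost sure convergence forces $|S_{R,N}| \ge t - \varepsilon$ for all sufficiently large $N$, so $\{|S_R| \ge t\} \subseteq \liminf_N \{|S_{R,N}| \ge t - \varepsilon\}$. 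Fatou's lemma for events then gives $\P(|S_R| \ge t) \le \liminf_N \P(|S_{R,N}| \ge t - \varepsilon) \le 2\exp(-(t-\varepsilon)^2/(2\gamma^2 \sum_{i=R}^{\infty} a_i^2))$, and letting $\varepsilon \downarrow 0$ concludes the proof.

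The argument is routine; the only point requiring a little care is the last step, where one must use $\liminf$ of the events so that Fatou applies in the correct direction, and insert the $\varepsilon$-cushion, since the non-strict inequality defining the limiting event need not be inherited by the approximating events without slack.
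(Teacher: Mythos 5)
Your proof is correct, but the mechanism by which you pass from finite partial sums to the infinite sum differs from the paper's. The paper works at the level of the moment generating function: it bounds $\E \exp(\lambda \sum_{i=R}^{M} a_i(\xi_i - \mu))$ uniformly in $M$ by $\exp(\gamma^2 \lambda^2 \sum_{i \ge R} a_i^2 / 2)$, then applies Fatou's lemma (monotonicity and continuity of $\log$) to transfer this bound to the MGF of the infinite sum, thereby concluding directly that $\sum_{i \ge R} a_i(\xi_i - \mu)$ is itself sub-Gaussian with variance proxy $\gamma^2 \sum_{i \ge R} a_i^2$; the stated tail bound is then a single appeal to the standard sub-Gaussian concentration inequality. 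You instead prove the tail bound for each finite truncation via Chernoff, then transfer it to the limit using almost sure convergence of the partial sums (via Kolmogorov), set-theoretic Fatou, and an $\varepsilon$-cushion to handle the non-strict inequality. Both routes are sound and of comparable length. The paper's version yields the slightly stronger intermediate statement that the infinite sum is itself sub-Gaussian, which is a reusable fact; your version avoids reasoning about the MGF of the limit and instead keeps everything at the level of finite sums plus a limiting argument on events, at the cost of the extra bookkeeping with the $\varepsilon$-slack. You are also more explicit than the paper about the almost sure convergence needed to identify the limit, which the paper leaves implicit in its application of Fatou.
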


\begin{proof}
    Note first that by definition, we have
    \begin{align*}
        \lim_{M \to \infty}\log \left[\E \left( \exp \bigg(\lambda\sum_{i = R}^M a_i (\xi_i - \mu) \bigg)\right) \right] \leq \frac{(\gamma\lambda)^2}{2} \sum_{i = R}^\infty a_i^2 < \infty.
    \end{align*}
    Because $\log$ is continuous and monotonically increasing, Fatou's lemma implies that
    \begin{align*}
        \log \left[\E \left( \exp \bigg(\lambda\sum_{i = R}^\infty a_i (\xi_i - \mu) \bigg)\right) \right] &\leq \liminf_{M \to \infty} \log \left[\E \left( \exp \bigg(\lambda\sum_{i = R}^M a_i (\xi_i - \mu)\bigg)\right) \right] \\
        &\leq \frac{(\gamma\lambda)^2}{2} \sum_{i = R}^\infty a_i^2.
    \end{align*}
    Thus $\sum_{i = R}^\infty a_i \xi_i$ is sub-Gaussian with sub-Gaussian parameter  $\gamma^2 \sum_{i = R}^\infty a_i^2$ so that Proposition 2.5 in \citet{wainwright_2019} yields, for $t > 0$,
    \begin{equation*}
        \P \bigg(\sum_{i = R}^\infty a_i (\xi_i^2 - 1) \geq t \bigg) \leq \exp \bigg(- \frac{t^2}{2\gamma^2\sum_{i = R}^\infty a_i^2} \bigg). \qedhere
    \end{equation*}
\end{proof}

\begin{namedtheorem}[Proposition \ref{prop:smoothness_limit}]
  Let $s_0 > d/2$ and let $\{\xi_i\}$ be the sequence of i.i.d. random variables used in the definition of $u$ in \eqref{eq:true_model}.
    Then almost surely, there is a positive constant $c$ such that 
    \begin{align*}
        \sum_{i = R}^\infty (\tau_0 + \lambda_i)^{-s_0} \xi_i^2 \geq c R^{-2s_0/d + 1} \quad \text{ for all } \quad R \in \mathbb{N}.
    \end{align*} 
\end{namedtheorem}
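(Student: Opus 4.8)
The plan is to reduce the claim to the strong law of large numbers applied to the i.i.d.\ sequence $\{\xi_i^2\}$, which has finite mean $\E(\xi_i^2)=1$. Since $s_0>d/2$ we have $1-2s_0/d<0$, so the right-hand side $R^{-2s_0/d+1}$ decays, and a matching lower bound should already come from a single dyadic block $R\le i\le 2R-1$ of the tail sum.

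First I would use Weyl's law \eqref{eq:weyls-law} to record that $\tau_0+\lambda_i\le C_1 i^{2/d}$ for all $i\ge 1$, where $C_1$ depends only on $\mathcal M$ and $\tau_0$, so that $(\tau_0+\lambda_i)^{-s_0}\ge C_1^{-s_0}i^{-2s_0/d}$. Restricting the infinite sum to the block $R\le i\le 2R-1$ and bounding $i^{-2s_0/d}\ge (2R)^{-2s_0/d}$ there gives
\begin{equation*}
  \sum_{i=R}^\infty (\tau_0+\lambda_i)^{-s_0}\xi_i^2 \;\ge\; C_1^{-s_0}(2R)^{-2s_0/d}\sum_{i=R}^{2R-1}\xi_i^2 .
\end{equation*}
It therefore suffices to show that, almost surely, $\sum_{i=R}^{2R-1}\xi_i^2\ge R/2$ for all sufficiently large $R$: the constant $C_1^{-s_0}2^{-2s_0/d-1}$ then delivers the bound for all large $R$.

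The moving-block estimate follows from the SLLN. Writing $U_n=\sum_{i=1}^n\xi_i^2$, Kolmogorov's SLLN gives $U_n/n\to 1$ almost surely, hence, along the deterministic subsequences $2R-1$ and $R-1$,
\begin{equation*}
  \frac{1}{R}\sum_{i=R}^{2R-1}\xi_i^2 \;=\; \frac{2R-1}{R}\cdot\frac{U_{2R-1}}{2R-1}-\frac{R-1}{R}\cdot\frac{U_{R-1}}{R-1}\;\longrightarrow\; 2-1=1
\end{equation*}
almost surely as $R\to\infty$. Thus there is an almost surely finite random $R_0$ with $\sum_{i=R}^{2R-1}\xi_i^2\ge R/2$ for every $R\ge R_0$, which combined with the previous display proves the claimed inequality with the deterministic constant $C_1^{-s_0}2^{-2s_0/d-1}$ for all $R\ge R_0$.

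It remains to handle the finitely many indices $R<R_0$. For each fixed $R$ the event $\{\sum_{i\ge R}(\tau_0+\lambda_i)^{-s_0}\xi_i^2=0\}$ forces $\xi_i=0$ for every $i\ge R$, which has probability zero because $\P(\xi_1=0)<1$ (otherwise $\E(\xi_1^2)=0$); intersecting over the countably many $R$, almost surely every tail sum is strictly positive. On the intersection of this full-measure event with the SLLN event one may take
\begin{equation*}
  c(\omega)=\min\Big\{C_1^{-s_0}2^{-2s_0/d-1},\ \min_{1\le R<R_0(\omega)} R^{2s_0/d-1}\sum_{i\ge R}(\tau_0+\lambda_i)^{-s_0}\xi_i(\omega)^2\Big\}>0,
\end{equation*}
which works for all $R\in\mathbb N$. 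I do not expect a genuinely hard step here; the only subtlety is upgrading the per-$R$ positivity to a bound uniform in $R$, which is exactly what the moving-block SLLN supplies. Note that, in contrast with the tail bound of Lemma~\ref{l:tail_bound}, this argument uses only two finite moments of the $\xi_i$ and no sub-Gaussianity.
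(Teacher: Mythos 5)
Your proof is correct, and it takes a genuinely different route from the paper's. The paper first truncates, setting $\tilde{\xi}_i^2 = \min\{\xi_i^2, b\}$ for $b$ chosen so that $\E[\tilde{\xi}_i^2]>0$, observes $\tilde{\xi}_i^2$ is bounded (hence sub-Gaussian), and then applies the sub-Gaussian concentration bound of Lemma~\ref{l:tail_bound} to the centered tail sum $\sum_{i\ge R}(\tau_0+\lambda_i)^{-s_0}(\tilde{\xi}_i^2-\E\tilde{\xi}_i^2)$, concluding via Borel--Cantelli with exponentially small per-$R$ failure probabilities. You instead isolate the single dyadic block $R\le i\le 2R-1$, lower-bound the weights there by Weyl's law, and apply the strong law of large numbers directly to $\{\xi_i^2\}$ via the telescoping $\sum_{i=R}^{2R-1}\xi_i^2 = U_{2R-1}-U_{R-1}$; the small-$R$ indices are then handled by the observation that each fixed tail sum is a.s.\ strictly positive. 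Your approach is more elementary: it avoids both the truncation step and the sub-Gaussian machinery, and it makes transparent that only $\E[\xi_i^2]<\infty$ is being used. The paper's route gives exponential-speed concentration per $R$ (and Borel--Cantelli), which is sharper quantitatively but unnecessary for the stated almost-sure conclusion. Both proofs cover the same generality of distributions and both yield an $\omega$-dependent constant $c$, which is all the proposition requires.
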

\begin{proof}
    If $\xi_i$ satisfies $0 < \E(\xi_i^2) < \infty$, we may select $b$ be such that 
    \begin{align*}
        \tilde{\xi}_i^2 \coloneqq \min\{ \xi_i^2, b\}
    \end{align*}
    has positive mean.
    Then $\tilde{\xi}_i^2$ is bounded and thus sub-Gaussian.
    Then, because 
    \begin{align*}
        \sum_{i = R}^\infty (\tau_0 + \lambda_i)^{-s_0} \xi_i^2 \geq \sum_{i = R}^\infty (\tau_0 + \lambda_i)^{-s_0} \tilde{\xi}_i^2,
    \end{align*}
    it suffices to prove the claim for $\xi_i$ such that $\xi_i^2$ is sub-Gaussian.
    First, note that Weyl's law~\eqref{eq:weyls-law} implies that
    \begin{equation}
      \sum_{i = R}^\infty (\tau_0 + \lambda_i)^{-s_0} \geq c \sum_{i = R}^\infty i^{-2s_0/d} \geq \int_{R}^\infty x^{-2s_0/d} \, \mathrm{d}x = c R^{-2s_0/d + 1} \label{eq:R_equiv-1}
    \end{equation}
    and
    \begin{equation}
        \sum_{i = R}^\infty (\tau_0 + \lambda_i)^{-2s_0} \leq C \sum_{i = R}^\infty i^{-4s_0/d} \leq \int_{R+1}^\infty x^{-4s_0/d} \, \mathrm{d}x = C (R+1)^{-4 s_0/d + 1}. \label{eq:R_equiv-2}
    \end{equation}
    Next, we can write 
    \begin{align*}
        \sum_{i = R}^\infty (\tau_0 + \lambda_i)^{-s_0} \xi_i^2 = \sum_{i = R}^\infty (\tau_0 + \lambda_i)^{-s_0} (\xi_i^2 - 1) + \sum_{i = R}^\infty (\tau_0 + \lambda_i)^{-s_0} .
    \end{align*}
    Dividing through by $R^{-\alpha s_0 + 1}$ combined with \eqref{eq:R_equiv-1} yields
    \begin{align}
        R^{2 s_0/d - 1}\sum_{i = R}^\infty (\tau_0 + \lambda_i)^{-s_0} \xi_i^2 &= R^{2 s_0/d - 1} \bigg( \, \sum_{i = R}^\infty (\tau_0 + \lambda_i)^{-s_0} (\xi_i^2 - 1) + \sum_{i = R}^\infty (\tau_0 + \lambda_i)^{-s_0}\bigg) \nonumber \\
        &\geq c + R^{2s_0/d - 1} \sum_{i = R}^\infty (\tau_0 + \lambda_i)^{-s_0}(\xi_i^2 - 1). \label{eq:lower_bound}
    \end{align}
    Lemma~\ref{l:tail_bound} and \eqref{eq:R_equiv-2} give
    \begin{align*}
        \P \bigg(\bigg\lvert  R^{2 s_0/d - 1}\sum_{i = R}^\infty (\tau_0 + \lambda_i)^{-s_0}(\xi_i^2 - 1) \bigg\rvert \geq \frac{c}{2}\bigg) &\leq 2 \exp \left(- \frac{c^2R^{-4s_0/d + 2}}{8\gamma^2\sum_{i = R}^\infty (\tau_0 + \lambda_i)^{-2s_0}}\right) \\
        &\leq 2 \exp \left(- \frac{c^2R^{-4s_0/d + 2}}{8\gamma^2 C (R+1)^{-4s_0/d + 1}}\right)\\
        &\leq 2 \exp \left(- CR \right).
    \end{align*}
    For the sets
    \begin{align*}
        A_R = \left\{ \left\lvert  R^{2s_0/d - 1} \sum_{i = R}^\infty (\tau_0 + \lambda_i)^{-s_0}(\xi_i^2 - 1) \right\rvert \geq \frac{c}{2}\right\},
    \end{align*}
    we thus have
    \begin{align*}
        \sum_{R = 1}^\infty \P (A_R) \leq \sum_{R = 1}^\infty 2\exp(-CR) < \infty .
    \end{align*}
    An application of the Borel--Cantelli lemma yields that almost surely there exists an $R_0 < \infty$ such that
    \begin{align*}
        \left\lvert R^{2s_0/d - 1} \sum_{i = R}^\infty (\tau_0 + \lambda_i)^{-s_0}(\xi_i^2 - 1)\right\rvert \leq \frac{c}{2}
    \end{align*}
    for all $R \geq R_0$.
    Plugging this back into \eqref{eq:lower_bound}, we obtain
    \begin{align*}
        R^{2 s_0/d - 1}\sum_{i = R}^\infty (\tau_0 + \lambda_i)^{-s_0} \xi_i^2 \geq c + R^{2s_0/d - 1} \sum_{i = R}^\infty (\tau_0 + \lambda_i)^{-s_0}(\xi_i^2 - 1) \geq \frac{c}{2}
    \end{align*}
    for $R \geq R_0$.
    This is equivalent to 
    \begin{align*}
        \sum_{i = R}^\infty (\tau_0 + \lambda_i)^{-s_0} \xi_i^2 \geq \frac{c}{2}R^{-2s_0/d + 1}.
    \end{align*}
    Finally, note that for any $R_0$, the sum of the first $R_0-1$ is strictly positive almost surely and so adjusting the constant $c$ yields the claim for any $R \in \mathbb{N}$.
\end{proof}

\begin{namedtheorem}[Proposition \ref{prop:lower_bound_norm_interpolant}]
  Let $\theta = (s, \tau)$ with $s > d/2$.
  Suppose that $f \in \mathcal{H}^s$ and, almost surely,
    \begin{align}
        \lVert u - f \rVert_0 \leq C_1 n^{- s_0/d + 1/2 +\epsilon/d} + C_2 n^{-s/d} \lVert f \rVert_s \label{eq:app_assumption}
    \end{align}
    for every $\epsilon > 0$, where $C_1,C_2 > 0$ can depend on $s, s_0, \epsilon,\mathcal{M}$ and the sample path.
    Then almost surely
    \begin{align}
        \lVert f \rVert_{s}^2 \geq Cn^{1 + 2(s-s_0)/d - \epsilon^\prime}
    \end{align} 
    for every $\epsilon^\prime > 0$, where $C > 0$ depends on $s, s_0,\epsilon^\prime, \mathcal{M}$ and the sample path.
\end{namedtheorem}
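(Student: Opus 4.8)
The plan is to convert the lower bound on the tails of the $L_2$-Fourier coefficients of $u$ supplied by Proposition~\ref{prop:smoothness_limit} into a lower bound on $\lVert f \rVert_s$: if $f$ is close to $u$ in $L_2(\mathcal{M})$ then $f$ must carry a non-negligible amount of high-frequency energy, and since high frequencies are expensive in the $H^s_L$-norm when $s > d/2$, the norm $\lVert f \rVert_s$ is forced to be large. Write $u = \sum_i u_i e_i$ with $u_i = \langle u, e_i \rangle_0 = \sigma_0 \sqrt{v(\theta_0)}\,(\tau_0 + \lambda_i)^{-s_0/2} \xi_i$ and $f = \sum_i f_i e_i$ with $f_i = \langle f, e_i \rangle_0$, and for $R \in \mathbb{N}$ let $P_R$ denote the orthogonal projection of $L_2(\mathcal{M})$ onto the closed linear span of $\{ e_i : i \ge R \}$.

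First I would record two bounds. Proposition~\ref{prop:smoothness_limit} gives, almost surely, a (sample-path dependent) constant $\tilde{c} > 0$ with
\[
 \lVert P_R u \rVert_0^2 = \sigma_0^2 v(\theta_0) \sum_{i \ge R} (\tau_0 + \lambda_i)^{-s_0} \xi_i^2 \ \ge\ \tilde{c}\, R^{-2s_0/d + 1},
\]
while, using that $(1 + \lambda_i)^{-s}$ is nonincreasing in $i$, Weyl's law~\eqref{eq:weyls-law}, and the norm equivalence $\lVert \cdot \rVert_s \asymp \lVert \cdot \rVert_{H^s_L}$,
\[
 \lVert P_R f \rVert_0^2 = \sum_{i \ge R} f_i^2 \ \le\ (1 + \lambda_R)^{-s} \sum_{i \ge R} (1 + \lambda_i)^s f_i^2 \ \le\ C\, R^{-2s/d}\, \lVert f \rVert_s^2 .
\]
Applying the triangle inequality $\lVert P_R u \rVert_0 \le \lVert u - f \rVert_0 + \lVert P_R f \rVert_0$ together with the hypothesis~\eqref{eq:app_assumption} then yields, almost surely and for every $\epsilon > 0$,
\[
 \tilde{c}^{1/2}\, R^{-s_0/d + 1/2} \ \le\ C_1\, n^{-s_0/d + 1/2 + \epsilon/d} + \bigl( C^{1/2} R^{-s/d} + C_2\, n^{-s/d} \bigr) \lVert f \rVert_s .
\]

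The key step is to set $R = R(n) = \lceil n^{1-\eta} \rceil$ for a small $\eta > 0$ to be fixed at the end. Because the hypothesis is available for every $\epsilon > 0$, I may take $\epsilon < \eta\,(s_0 - d/2)$; then $-s_0/d + 1/2 + \epsilon/d < (1-\eta)(-s_0/d + 1/2) < 0$, and since $n^{1-\eta} \le R \le 2 n^{1-\eta}$ this forces $C_1 n^{-s_0/d + 1/2 + \epsilon/d} \le \tfrac12 \tilde{c}^{1/2} R^{-s_0/d + 1/2}$ for all large $n$. As also $R \le n$ for large $n$, the coefficient of $\lVert f \rVert_s$ is at most $(C^{1/2} + C_2) R^{-s/d}$, so rearranging gives $\lVert f \rVert_s \ge c'\, R^{-s_0/d + 1/2 + s/d} = c'\, R^{(1 + 2(s - s_0)/d)/2}$, i.e.\ $\lVert f \rVert_s^2 \ge c''\, R^{1 + 2(s - s_0)/d}$ almost surely. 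Using once more $n^{1-\eta} \le R \le 2 n^{1-\eta}$ this is $\gtrsim n^{(1-\eta)(1 + 2(s-s_0)/d)}$, and choosing $\eta$ small enough (in terms of $\epsilon'$, $s$, $s_0$, $d$) that $(1-\eta)(1 + 2(s-s_0)/d) \ge 1 + 2(s-s_0)/d - \epsilon'$ yields the claimed bound $\lVert f \rVert_s^2 \ge C\, n^{1 + 2(s-s_0)/d - \epsilon'}$.

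I expect the only real difficulty to be the bookkeeping of the nested small parameters — fixing $\eta$ from $\epsilon'$, then $\epsilon$ from $\eta$, then passing to large $n$ — together with the observation that the constant from Proposition~\ref{prop:smoothness_limit} is random but almost surely positive, so that the conclusion holds on the intersection of the two full-measure events (the one from Proposition~\ref{prop:smoothness_limit} and the one carrying~\eqref{eq:app_assumption} for the selected $\epsilon$). There is no substantive analytic obstacle beyond this balancing.
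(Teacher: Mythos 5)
Your proof is correct and takes essentially the same route as the paper's: project $u$ and $f$ onto high frequencies (index $\ge R$), obtain a lower bound for the tail of $u$ from Proposition~\ref{prop:smoothness_limit} and an upper bound for the tail of $f$ via Weyl's law and norm equivalence, combine by the triangle inequality with the hypothesis, and choose $R = R(n)$ to make the $C_1$-term subdominant before rearranging. The only cosmetic difference is that you set $R \asymp n^{1-\eta}$ with a free parameter $\eta$ fixed at the end, whereas the paper picks $R$ so that the tail lower bound is exactly twice the $C_1$-term (effectively $R \asymp n^{1 - 2\epsilon/(2s_0-d)}$); the two parameterizations are equivalent.
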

\begin{proof}
The proof is similar to that of Theorem~8 in~\citet{VaartZanten2011}.
Recall the spaces $\hat{H}^{s}_L$ and the isomorphism $\iota: H^{s}_L \to \hat{H}^{s}_L$ given by $\iota(f)_i = \langle f, e_i \rangle_0$ from Section~\ref{sec:function-spaces}.
To simplify notation, we let $\hat{f} = \iota(f)$.
For two sequences $a, b \in \hat{H}^{s}_L$ we define $(ab)_i = a_ib_i$.
Because $f \in H^{s}_L$, we have $\hat{f} \in \hat{H}^s_L$.
Define $\mb{1}_{R}$ as the sequence such that $(\mb{1}_{R})_i = 0$ if $i < R$ and $(\mb{1}_{R})_i = 1$ if $i \geq R$. 
By norm equivalence,
\begin{equation}
    \begin{aligned}
        \lVert \hat{f}\mb{1}_R \rVert_{\hat{0}}^2 = \sum_{i = R}^\infty (1 + \lambda_i)^{s-s}\hat{f}_i^2 &\leq (1 + \lambda_R)^{-s} \sum_{i = R}^\infty (1 + \lambda_i)^{s}\hat{f}_i^2 \\
        &\leq C(1 + \lambda_R)^{-s} \lVert f \rVert_{s}^2. \label{eq:app_interpolant_lower_bound}
    \end{aligned}
\end{equation}
By Proposition \ref{prop:smoothness_limit}, almost surely there exists $0 < c < \infty$ such that, for all $R \in \mathbb{N}$,
\begin{align}
    \lVert \hat{u} \mb{1}_R \rVert_{\hat{0}}^2 &= \sum_{i = R}^\infty \hat{u}_i^2  = v(\theta_0)\sum_{i = R}^\infty (\tau_0 + \lambda_i)^{-s_0}\xi_i^2  \geq c v(\theta_0) R^{-2 s_0/d + 1}.\label{eq:app_norm_lower_bound_u}
\end{align}
Via the reverse triangle inequality, we obtain
\begin{align*}
    \lVert \hat{f} \mb{1}_R \rVert_{\hat{0}} \geq \lVert \hat{u} \mb{1}_R \rVert_{\hat{0}} - \lVert (\hat{f} - \hat{u}) \mb{1}_R \rVert_{\hat{0}} \geq \lVert \hat{u} \mb{1}_R \rVert_{\hat{0}} - \lVert f - u \rVert_{0}.
\end{align*}
Now set 
\begin{align*}
    R = \bigg\lceil \bigg( \frac{2C_1 n^{-s_0/d + 1/2 + \epsilon/d}}{\sqrt{\smash[b]{c v(\theta_0)}}} \bigg)^{1/(-s_0/d+1/2)} \bigg\rceil.
\end{align*}
Using \eqref{eq:app_assumption} and \eqref{eq:app_norm_lower_bound_u} and the above choice of $R$ we get
\begin{align*}
  \lVert \hat{f} \mb{1}_R \rVert_{\hat{0}} &\geq \lVert \hat{u} \mb{1}_R \rVert_{\hat{0}} - \lVert f - u \rVert_{0} \\
  &\geq \sqrt{\smash[b]{cv(\theta_0)}} R^{-s_0/d + 1/2} - C_1 n^{-s_0 /d + 1/2 + \epsilon/d} - C_2 n^{-s/d}\lVert f \rVert_{s} \\ 
    &\geq C_1 n^{-s_0/d + 1/2 +\epsilon/d} - C_2 n^{-s/d}\lVert f \rVert_{s}.
\end{align*}
Then~\eqref{eq:app_interpolant_lower_bound} and Weyl's law in~\eqref{eq:weyls-law} give 
\begin{align*}
    \lVert f \rVert_{s} &\geq c (1 + \lambda_R)^{s/2} \lVert \hat{f} \mb{1}_R\rVert_{\hat{0}}\\
    &\geq c R^{ s/d}\big(C_1 n^{-s_0/d + 1/2 +\epsilon/d} - C_2 n^{-s/d} \lVert f \rVert_{s} \big) \\
    &\geq c n^{s/d - 2\epsilon s / (d(2s_0 - d))} \big(C_1 n^{-s_0/d + 1/2 +\epsilon/d}  - C_2 n^{-s/d} \lVert f \rVert_{s} \big) \\ 
    &=c \big(C_1 n^{1/2 + (s-s_0)/d +  \epsilon/d - 2\epsilon s /(d(2s_0 - d))} - C_2 n^{-2\epsilon s / (d(2s_0 - d))} \lVert f \rVert_{s} \big) \\
    &\geq c \big(C_1 n^{1/2 + (s-s_0)/d +  \epsilon/d - 2\epsilon s /(d(2s_0 - d))} - C_2 \lVert f \rVert_{s}\big),
\end{align*}
where on the last line we used the fact that $2s_0 > d$.
Rearranging then yields 
\begin{align*}
    \lVert f \rVert_{s} \geq c n^{1/2 - (s-s_0)/d +  \epsilon/d - 2\epsilon s /(d(2s_0 - d))}.
\end{align*}
Since $\epsilon/d - 2\epsilon s/(d(2s_0-d)) \to 0 $ as $\epsilon \to 0$, relabelling completes the proof.
\end{proof}
\subsection{Function Approximation in RKHS on Manifolds}

To prepare the remaining proofs, the following lemma will be convenient.

\begin{lemma}[Lemma 1 in \citealp{krieg_manifold_2022}]\label{l:equiv_distances}
    The exponential map is bi-Lipschitz. 
    That is, 
    \begin{align*}
        c_j \lVert x - y \rVert \leq d_{\mathcal{M}}(\phi_j^{-1}(x), \phi_j^{-1}(y)) \leq C_j \lVert x - y \rVert
    \end{align*}
    for some $c_j, C_j > 0$ and all $x, y \in \phi_j(\Omega_j)$.
    Consequently, choosing $c = \min_{1 \leq j \leq m}$ and $C = \max_{1 \leq j \leq m} C_j$ we have 
    \begin{align*}
        c \lVert x - y \rVert \leq d_{\mathcal{M}}(\phi_j^{-1}(x), \phi_j^{-1}(y)) \leq C \lVert x - y \rVert ~~~ \text{ for all }  x, y \in \Omega_j \text{ and } 1 \leq j \leq m .
    \end{align*}
\end{lemma}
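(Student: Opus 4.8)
The plan is to reduce the statement, one chart at a time, to a uniform ellipticity estimate for the metric in normal coordinates. Fix $j$ and recall $\phi_j^{-1} = \exp^{\mathcal{M}}_{p_j}$, which is a diffeomorphism of $B_{r_0}(\mb{0})$ onto $\Omega_{p_j}(r_0)$. Let $g^{(j)}_{kl}$ be the components of $g\vert_{\mathcal{M}}$ in these normal coordinates; they are smooth on $B_{r_0}(\mb{0})$, so on the compact ball $\overline{B_{2r}(\mb{0})}$ (note $2r \le r_0/16 < r_0$) there exist $0 < c_j \le C_j$ with
\[
  c_j^2 \lVert v \rVert^2 \;\le\; \sum_{k,l} g^{(j)}_{kl}(w)\, v^k v^l \;\le\; C_j^2 \lVert v \rVert^2 \qquad \text{for all } w \in \overline{B_{2r}(\mb{0})}, \ v \in \mathbb{R}^d .
\]
The upper bound is then immediate: for $x, y \in B_r(\mb{0})$ the segment $t \mapsto (1-t)x + ty$ stays in the convex set $B_r(\mb{0})$, so its image under $\phi_j^{-1}$ joins $\phi_j^{-1}(x)$ to $\phi_j^{-1}(y)$ and, by the right-hand inequality above, has Riemannian length at most $C_j \lVert x - y \rVert$; hence $d_{\mathcal{M}}(\phi_j^{-1}(x), \phi_j^{-1}(y)) \le C_j \lVert x - y \rVert$.

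For the lower bound I would take an arbitrary piecewise-$C^1$ curve $\sigma$ in $\mathcal{M}$ from $\phi_j^{-1}(x)$ to $\phi_j^{-1}(y)$ and split into two cases. If $\sigma$ stays inside $\Omega_{p_j}(2r) = \phi_j^{-1}(B_{2r}(\mb{0}))$, then $\phi_j \circ \sigma$ is a piecewise-$C^1$ curve in $B_{2r}(\mb{0})$ from $x$ to $y$, and the left-hand inequality above together with the fact that a Euclidean curve from $x$ to $y$ has length at least $\lVert x - y \rVert$ gives $\mathrm{length}(\sigma) \ge c_j \lVert x - y \rVert$. If $\sigma$ leaves $\Omega_{p_j}(2r)$, pick a point $q$ on $\sigma$ with $q \notin \Omega_{p_j}(2r)$ and invoke the two standard facts about normal coordinates on a complete manifold (see \citet{lee_riemannian}): for $\rho < r_0$ the metric ball $\{ z : d_{\mathcal{M}}(p_j, z) < \rho \}$ equals $\phi_j^{-1}(B_\rho(\mb{0}))$, and $d_{\mathcal{M}}(p_j, \phi_j^{-1}(v)) = \lVert v \rVert$ whenever $\lVert v \rVert < r_0$. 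The first gives $d_{\mathcal{M}}(p_j, q) \ge 2r$, so by the triangle inequality $\mathrm{length}(\sigma) \ge d_{\mathcal{M}}(\phi_j^{-1}(x), q) \ge d_{\mathcal{M}}(p_j, q) - \lVert x \rVert \ge 2r - r = r > \tfrac{1}{2} \lVert x - y \rVert$, where the last step uses $\lVert x - y \rVert \le \lVert x \rVert + \lVert y \rVert < 2r$. Taking the infimum over $\sigma$ yields $d_{\mathcal{M}}(\phi_j^{-1}(x), \phi_j^{-1}(y)) \ge \min\{ c_j, \tfrac{1}{2} \} \lVert x - y \rVert$, and after replacing $c_j$ by $\min\{ c_j, \tfrac{1}{2} \}$ this is the two-sided estimate in chart $j$. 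The ``consequently'' statement follows at once by setting $c = \min_{1 \le j \le m} c_j$ and $C = \max_{1 \le j \le m} C_j$, the atlas being finite.

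The only genuinely delicate point is the lower bound for curves that leave the coordinate patch; it is handled entirely by the two normal-coordinate facts above, and this is precisely where the standing restriction $r \le r_0/32$ (any $r < r_0/2$ would do) enters, through $2r < r_0$. The ellipticity bounds rely only on smoothness of the exponential map and compactness, and everything else is the routine length comparison sketched above, so I expect no further obstacle.
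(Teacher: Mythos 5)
Your proof is correct. The paper does not supply its own argument for this lemma — it is stated verbatim as Lemma~1 of the cited reference \citep{krieg_manifold_2022} — so there is no internal proof to compare against. Your reduction to a uniform ellipticity estimate on $\overline{B_{2r}(\mb{0})}$ is the standard route; the one genuinely nontrivial step, controlling curves that exit the coordinate patch, is handled correctly via the geodesic-ball-equals-metric-ball and radial-distance facts from Lee (both valid since $2r < r_0$), giving the clean lower bound $\min\{c_j, \tfrac12\}\lVert x-y\rVert$, and the ``consequently'' clause then follows from finiteness of the atlas exactly as you say.
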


\begin{namedtheorem}[Proposition \ref{prop:escape_manifold}]
    Let $f \in H^{s_0}_L$ with $s \geq s_0 > d$ and let $m_{s,n}^f$ be its minimum norm interpolant from the RKHS $\mathcal{H}^s$.
    Then there exists a positive constant $C$ that depends on $s_0$ and $\mathcal{M}$ such that we have almost surely
    \begin{align}
        \lVert f - m_{s,n}^f \rVert_{0} \leq C h_{\mb{x}}^{s_0} \rho_{\mb{x}}^{s-s_0} \lVert f \rVert_{s_0} 
    \end{align}
    when $h_{\mb{x}}$ is small enough.
    Thus, if $h_{\mb{x}} \to 0 $ and $C$ is allowed to depend on $f$, then \eqref{eq:escape_inequality} holds for all $n$.
\end{namedtheorem}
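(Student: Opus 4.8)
The plan is to transfer the Euclidean ``escape'' estimate to $\mathcal{M}$, following the argument of \citet{fuselier_2012}. The Euclidean input is \citet[Theorem~4.2]{narcowich_2006}: if $\Omega\subset\mathbb{R}^d$ is a bounded domain satisfying an interior cone condition, $d/2<\beta\leq\tau$, and $I_X v$ denotes the minimum-norm interpolant of $v\in H^{\beta}(\Omega)$ in a native space norm-equivalent to $H^{\tau}(\Omega)$ based on a finite set $X\subset\Omega$ with fill distance $h_X$ and mesh ratio $\rho_X=h_X/q_X$, then $\lVert v-I_X v\rVert_{L_2(\Omega)}\leq C\,h_X^{\beta}\rho_X^{\tau-\beta}\lVert v\rVert_{H^{\beta}(\Omega)}$ provided $h_X$ is small enough. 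I would apply this with $\beta=s_0$ and $\tau=s$, taking $s\geq s_0$ (the regime used in the sequel, in which the factor $\rho_{\mb{x}}^{s-s_0}$ is meaningful). Note that the asserted bound is deterministic in $f$ and $\mb{x}$; the phrase ``almost surely'' is vacuous here and is inherited from the application to the random element $u$ in Section~\ref{sec:MLE}.

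The transfer is by localization. With the partition of unity $\{\psi_i\}$ subordinate to the normal-coordinate charts $(\Omega_i,\phi_i)$, $\phi_i=(\exp^{\mathcal{M}}_{p_i})^{-1}$, from Section~\ref{sec:manifold+laplacian}, one has $\lVert f-m^f_{s,n}\rVert_{0}^2\leq\sum_i\lVert (f-m^f_{s,n})\circ\exp^{\mathcal{M}}_{p_i}\rVert_{L_2(B_r(\mb{0}))}^2$ up to bounded Jacobian factors, so it suffices to bound each transported error $v_i\coloneqq(f-m^f_{s,n})\circ\exp^{\mathcal{M}}_{p_i}$ on the ball $B_r(\mb{0})\subset\mathbb{R}^d$, which is a cone-condition domain. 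The function $v_i$ vanishes on $\phi_i(\mb{x}\cap\Omega_i)$; moreover $f\circ\exp^{\mathcal{M}}_{p_i}\in H^{s_0}(B_r(\mb{0}))$ and $m^f_{s,n}\circ\exp^{\mathcal{M}}_{p_i}\in H^{s}(B_r(\mb{0}))$ is an interpolant of its values whose $H^{s}$ norm is controlled, via the minimum-norm property of $m^f_{s,n}$ together with a Bernstein-type bound transferred from $\mathbb{R}^d$, by $C q_{\mb{x}}^{-(s-s_0)}\lVert f\rVert_{s_0}$. Two further facts make \citet[Theorem~4.2]{narcowich_2006} applicable to $v_i$: by Theorem~\ref{thm:manifold_extension_theorem} and the chartwise description of $H^{s_0}_L$ and $H^{s}_L$ (see \citealp[Ch.~7]{triebel_1991}), the relevant chartwise Sobolev norms are equivalent---with finitely many constants depending only on $\mathcal{M}$ and the smoothness indices---to the global norms $\lVert\cdot\rVert_{s_0}$ and $\lVert\cdot\rVert_{s}$; and by Lemma~\ref{l:equiv_distances} the exponential map is bi-Lipschitz, so the Euclidean fill distance and separation radius of $\phi_i(\mb{x}\cap\Omega_i)$, measured over a neighbourhood of $\phi_i(\supp\psi_i)$, are comparable up to constants to $h_{\mb{x}}$ and $q_{\mb{x}}$, and hence the local mesh ratio is comparable to $\rho_{\mb{x}}$. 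Applying the Euclidean estimate on $B_r(\mb{0})$ to each $v_i$, summing over the finitely many charts, and absorbing all chart, bi-Lipschitz, and norm-equivalence constants into a single $C=C(s_0,\mathcal{M})$ yields $\lVert f-m^f_{s,n}\rVert_{0}\leq C\,h_{\mb{x}}^{s_0}\rho_{\mb{x}}^{s-s_0}\lVert f\rVert_{s_0}$. Allowing $C$ to depend on $f$ and letting $h_{\mb{x}}\to0$ gives the final clause.

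The hypothesis ``$h_{\mb{x}}$ small enough'' enters in two places and is exactly where care is needed: it guarantees (i) that every point of $\mathcal{M}$ has a nearest data site lying in the same chart, so the Lebesgue number of the cover is respected, and (ii) that inside each chart the transported set $\phi_i(\mb{x}\cap\Omega_i)$ fills a neighbourhood of $\phi_i(\supp\psi_i)$ rather than leaving a gap near $\partial B_r(\mb{0})$---the Euclidean estimate being a statement about data that fills its domain. This is the content of Lemma~3.8 in \citet{hangelbroek_2010}, to which the quantitative smallness condition may be deferred.

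I expect the principal obstacle to be the passage between the \emph{global} minimum-norm interpolation problem on $\mathcal{M}$ and the \emph{local} Euclidean estimates: one must verify that $m^f_{s,n}\circ\exp^{\mathcal{M}}_{p_i}$, although no longer a minimum-norm interpolant on $B_r(\mb{0})$, still has an $H^{s}$ norm that can be fed into \citet[Theorem~4.2]{narcowich_2006} with the right dependence on $q_{\mb{x}}$; that the $(k-d)/2$ shift between ambient and intrinsic Sobolev scales appearing in Theorems~\ref{thm:manifold_extension_theorem} and~\ref{thm:fusilier_theorem} is tracked consistently; and that the finitely many constants from the atlas, the bi-Lipschitz bounds of Lemma~\ref{l:equiv_distances}, and the norm equivalences can all be absorbed into one constant depending only on $s_0$ and~$\mathcal{M}$. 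Each of these points is handled in \citet{fuselier_2012} (and, for the underlying sampling inequality, in \citealp{hangelbroek_2010}), so I would adapt their arguments, observing that the standing hypothesis $s_0>d$ and---where later invoked---the quasi-uniformity of $\{x_i\}$ are more than sufficient.
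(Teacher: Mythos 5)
Your plan is to localize the whole escape estimate to normal-coordinate charts in $\mathbb{R}^d$ and then apply \citet[Theorem~4.2]{narcowich_2006} chartwise. This is not the route the paper takes, and the difference is not cosmetic. The paper (following \citealp{fuselier_2012}) localizes only the \emph{sampling inequality}: Lemma~10 of Fuselier \& Wright, invoked twice, gives $\lVert g\rVert_0 \leq C h_{\mb{x}}^{\beta} \lVert g\rVert_{\beta}$ for any manifold function $g$ vanishing on $\mb{x}$, and this lemma is indeed proved chart-by-chart. But the crucial second ingredient — controlling $\lVert f - m^f_{s,n}\rVert_{s_0}$ by $C\rho_{\mb{x}}^{s-s_0}\lVert f\rVert_{s_0}$ — is obtained \emph{globally}: the paper extends $f$ to $E_{\mathcal{M}}f \in H^{s_0+(k-d)/2}(\mathbb{R}^k)$ via Theorem~\ref{thm:manifold_extension_theorem}, invokes \citet[Theorem~3.4, Corollary~3.5]{narcowich_2006} to produce a bandlimited interpolant $f_\eta$ of $E_{\mathcal{M}}f$ on $\mathbb{R}^k$ with bandwidth $\eta \asymp q_{\mb{x}}^{-1}$, restricts $T_{\mathcal{M}}f_\eta$ back to $\mathcal{M}$, and then combines the triangle inequality, the Pythagorean/minimum-norm property $\lVert T_{\mathcal{M}}f_\eta - m^f_{s,n}\rVert_s \leq \lVert T_{\mathcal{M}}f_\eta\rVert_s$, and the Bernstein inequality for $f_\eta$ in $\mathbb{R}^k$. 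No chart-level bandlimited approximation is attempted, and for good reason: a bandlimited function cannot be patched across charts with a partition of unity without destroying its compact Fourier support, so this step is intrinsically global.

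This is precisely where your sketch has a genuine gap, and you already sense it — you name ``the passage between the global minimum-norm interpolation problem on $\mathcal{M}$ and the local Euclidean estimates'' as the principal obstacle. But your proposed resolution does not close it: $m^f_{s,n}\circ\exp^{\mathcal{M}}_{p_i}$ restricted to $B_r(\mb{0})$ is not a minimum-norm interpolant in any native space on $B_r(\mb{0})$, so \citet[Theorem~4.2]{narcowich_2006} cannot be ``applied'' to it; you would instead need to re-derive the escape estimate for an arbitrary interpolant with a given norm bound, and the norm bound $\lVert m^f_{s,n}\rVert_s \lesssim q_{\mb{x}}^{-(s-s_0)}\lVert f\rVert_{s_0}$ you invoke in passing (``via the minimum-norm property together with a Bernstein-type bound transferred from $\mathbb{R}^d$'') is exactly the nontrivial global fact the paper proves through the ambient-space $E_{\mathcal{M}}/T_{\mathcal{M}}$ machinery. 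Your appeal to Fuselier \& Wright to ``handle'' this point is therefore somewhat circular: their Theorem~3, and the paper's Proposition~\ref{prop:escape_manifold}, resolve it by the extension-to-$\mathbb{R}^k$ argument, not by pure chart localization. Your identification of the right inputs (Narcowich--Ward--Wendland, the $(k-d)/2$ shift, the bi-Lipschitz charts, the Lebesgue-number role of ``$h_{\mb{x}}$ small enough'', the deterministic nature of the bound) is accurate, but to turn the sketch into a proof you would, in effect, be forced back onto the paper's route for the bandlimited step.
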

\begin{proof}
    We make extensive use of Theorem \ref{thm:manifold_extension_theorem}.
    To that end, recall that $T_{\mathcal{M}} f = f \vert_{\mathcal{M}}$ is a bounded operator $T_{\mathcal{M}} : H^{s + (k-d)/2}(\mathbb{R}^k) \to H^{s}_L$. 
    Its inverse $E_{\mathcal{M}}$ on $H^{s}_L$ is bounded as well.
    Furthermore, note that from Lemma \ref{l:equiv_distances} it follows that the point set $\left\{\phi_j(x_i) ~ \middle \vert ~ x_i \in \Omega_j \right\}$ has separation radius $q_{\mb{x}}^j$  such that 
    \begin{align*}
        c q_{\mb{x}} \leq q_{\mb{x}}^j \leq C q_{\mb{x}.}
    \end{align*}

    Throughout the remainder of the proof $C$ denotes universal constants only depending on $s_0, s, \tau, \mathcal{M}$ that may be different from one occurence to the next.
    Note that as in Lemma 10 in \citet{fuselier_2012}, we have for $h_{\mb{x}}$ small enough
    \begin{align}
        \lVert f - m_{s,n}^f \rVert_0 \leq Ch_{\mb{x}}^{s_0} \lVert f - m_{s, n}^f \rVert_{s_0}.\label{eq:sobolev_error_calc1}
    \end{align}
    Now let $E_{\mathcal{M}}f$ be the extension of $f$. 
    It follows from Theorem 3.4 and Corollary 3.5 in \citet{narcowich_2006} that there exists a sequence $\eta = \eta(n)$ such that $\eta \asymp q_{\mb{x}}^{-1}$ and corresponding functions $f_{\eta} : \mathbb{R}^k \to \mathbb{R}$ satisfying the following:
    \begin{enumerate}
        \item For the Fourier transform $\hat{f}_{\eta}$, we have $\supp{\hat{f}_{\eta}} \subset B_{\eta}(\mb{0})$. That is, $f_{\eta}$ is \textit{bandlimited}. \label{enum:bandlimited}
        \item The function $f_{\eta}$ interpolates $u$, that is 
        \begin{align*}
            f_{\eta}(x_i) = f(x_i),~~ 1 \leq i \leq n.
        \end{align*}
        \item We have $\lVert E_{\mathcal{M}} f - f_{\eta} \rVert_{H^{s_0 + (k-d)/2}(\mathbb{R}^k)} \leq 5 \lVert E_{\mathcal{M}} f \rVert_{H^{s_0 + (k-d)/2}(\mathbb{R}^k)}$, which, together with the boundedness of the restriction operator, implies \label{enum:bandlimited_approximation}
        \begin{align*}
            \lVert f - T_{\mathcal{M}} f_{\eta} \rVert_{s_0} \leq C\lVert f \rVert_{s_0}.
        \end{align*}
        \item We have $\lVert f_{\eta} \rVert_{H^{s_0+ (k-d)/2}(\mathbb{R}^k)} \leq C \lVert E_{\mathcal{M}} f \rVert_{H^{s_0+ (k-d)/2}(\mathbb{R}^k)}$, which, together with norm equivalence, implies \label{enum:bandlimited_norm_upper}
        \begin{align*}
            \lVert f_{\eta} \rVert_{H^{s_0+ (k-d)/2}(\mathbb{R}^k)} \leq C \lVert f \rVert_{s_0}.
        \end{align*}
    \end{enumerate}
    Moreover, the functions $f_{\eta}$ satisfy the Bernstein inequality 
    \begin{align*}
        \lVert f_{\eta} \rVert_{H^{s}(\mathbb{R}^k)} \leq C\eta^{s - s^\prime} \lVert f_{\eta} \rVert_{H^{s^\prime}(\mathbb{R}^k)}
    \end{align*}
    for any $s \geq s'$.
    
    Because $u_{\eta}$ is bandlimited, it follows that $f_{\eta} \in H^s(\mathbb{R}^k)$ for all $s$.
    Then, norm equivalence and standard Hilbert space theory imply 
    \begin{align}
        \lVert T_{\mathcal{M}} f_{\eta} - m_{s, n}^f \rVert_{H^{s}(\mathcal{M})} \leq C\lVert T_{\mathcal{M}}f_{\eta} - m_{s,n}^f \rVert_{s} \leq C\lVert  T_{\mathcal{M}}f_{\eta} \rVert_{s}. \label{eq:bandlimited_prop1}
    \end{align}
    Furthermore, norm equivalence, the Bernstein inequality of bandlimited functions, property~\ref{enum:bandlimited_norm_upper} above and boundedness of the extension and restriction operators imply
    \begin{align}
        C\lVert  T_{\mathcal{M}}f_{\eta} \rVert_{s} \leq C\eta^{s-s_0}\lVert f_{\eta} \rVert_{H^{s_0 + (k-d)/2}(\mathbb{R}^k)}  \leq C \eta^{s-s_0} \lVert f \rVert_{s_0}.\label{eq:bandlimited_prop2}
    \end{align}
    Combining \eqref{eq:bandlimited_prop1} and \eqref{eq:bandlimited_prop2} with property \ref{enum:bandlimited_approximation} and using once more Lemma 10 in \cite{fuselier_2012} gives
    \begin{equation}
    \begin{aligned}
        \lVert f - m_{s, n}^f \rVert_{s_0} &\leq \lVert f - T_{\mathcal{M}} f_{\eta} \rVert_{s_0} + \lVert T_{\mathcal{M}} f_{\eta} - m_{s, n}^f \rVert_{s_0} \\
        &\leq C \lVert f \rVert_{s_0} + C h_{\mb{x}}^{s-s_0}\lVert T_{\mathcal{M}} f_{\eta} - m_{s, n}^f \rVert_{s} \\
        &\leq C \lVert f \rVert_{s_0} + C h_{\mb{x}}^{s-s_0}\lVert T_{\mathcal{M}} f_{\eta} \rVert_{s} \\
        &\leq C \lVert f \rVert_{s_0} + C \left(\frac{h_{\mb{x}}}{q_{\mb{x}}} \right)^{s-s_0}  C \lVert f \rVert_{s_0} \\
        &\leq (C + C\rho_{\mb{x}}^{s-s_0}) \lVert f \rVert_{s_0}.\label{eq:sobolev_error_calc2}
    \end{aligned}
    \end{equation}
    Because $\rho_{\mb{x}} \geq 1$, it follows that $(C+C\rho_{\mb{x}}^{s-s_0}) \leq C  \rho_{\mb{x}}^{s-s_0}$.
    Thus combining \eqref{eq:sobolev_error_calc1} and \eqref{eq:sobolev_error_calc2} we obtain for $h_{\mb{x}}$ small enough
    \begin{equation*}
        \lVert f - m_{s, n}^f \rVert_{0} \leq C h_{\mb{x}}^{s_0} \rho_{\mb{x}}^{s-s_0} \lVert f \rVert_{s_0}. \qedhere
    \end{equation*}
\end{proof}

\begin{namedtheorem}[Proposition \ref{prop:upper_bound_minimum_norm_interpolant}]
    Let $u$ be as in \eqref{eq:true_model}.
    Suppose that $s_0 > d$ and $s > d/2$ satisfy $s \geq s_0 - d/2$.
    Let $m_{s,n}$ be the minimum norm interpolant for $u$.
    Then almost surely
    \begin{align}
        \lVert m_{s,n} \rVert_s^2 \leq C q_{\mb{x}}^{s_0 - s  - d/2 - \epsilon}
        \label{eq:norm_upper_bound_interpolant_app}
    \end{align}
    for all $\epsilon > 0$, where $C$ depends on $s, \theta_0, \epsilon$ and the sample path.
    If $\xi_i$ are Gaussian, then it is only necessary to assume $s_0 > d/2$ to obtain
    \begin{align}
        c n \leq \lVert m_{s_0, n} \rVert_{s_0}^2 \leq C n \label{eq:norm_upper_bound_interpolant_gaussian_app}
    \end{align}
    almost surely for positive constants $c$ and $C$ that depend only on $\theta_0$ and the sample path.
\end{namedtheorem}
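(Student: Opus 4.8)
The plan is to treat the two bounds separately: \eqref{eq:norm_upper_bound_interpolant_app} will follow from a Bernstein-type (``inverse'') estimate for the minimum-norm interpolant obtained by a minor variation of the proof of Proposition~\ref{prop:escape_manifold}, while \eqref{eq:norm_upper_bound_interpolant_gaussian_app} will follow from an exact distributional identity for the quadratic form together with a concentration argument.

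For \eqref{eq:norm_upper_bound_interpolant_app}: by Proposition~\ref{prop:smoothness_class} we have almost surely $u \in H^{\sigma}_L$ with $\sigma = s_0 - d/2 - \epsilon$, and since $s_0 > d$ we may pick $\epsilon$ small enough that $\sigma > d/2$, so point evaluations of $u$ are well defined. The minimum-norm property gives $\lVert m_{s,n}\rVert_s \le \lVert g \rVert_s$ for \emph{any} $g \in \mathcal{H}^s$ with $g(x_i) = u(x_i)$ for $1 \le i \le n$, so by norm equivalence it suffices to exhibit such a $g$ with $\lVert g \rVert_{H^s_L} \le C\,q_{\mb{x}}^{-(s-\sigma)}\lVert u \rVert_{H^{\sigma}_L}$ when $s \ge \sigma$ (for smaller $s$ one has $u \in H^s_L$ and may take $g = u$). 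I would build $g$ exactly as in the proof of Proposition~\ref{prop:escape_manifold}: extend $u$ to $E_{\mathcal{M}}u \in H^{\sigma + (k-d)/2}(\mathbb{R}^k)$ via Theorem~\ref{thm:manifold_extension_theorem} (note $\sigma + (k-d)/2 > k/2$), invoke Theorem~3.4 and Corollary~3.5 of \citet{narcowich_2006} to obtain a bandlimited $f_\eta$ with $\eta \asymp q_{\mb{x}}^{-1}$ interpolating $u$ at $\mb{x}$ and satisfying $\lVert f_\eta \rVert_{H^{\sigma + (k-d)/2}(\mathbb{R}^k)} \le C\lVert E_{\mathcal{M}}u \rVert_{H^{\sigma + (k-d)/2}(\mathbb{R}^k)}$, and apply the Bernstein inequality for bandlimited functions to bound $\lVert f_\eta \rVert_{H^{s + (k-d)/2}(\mathbb{R}^k)}$ by $C\eta^{s-\sigma}\lVert f_\eta \rVert_{H^{\sigma + (k-d)/2}(\mathbb{R}^k)}$; then $g = T_{\mathcal{M}}f_\eta$ interpolates $u$ and, by boundedness of $T_{\mathcal{M}}$ and norm equivalence,
\[
  \lVert m_{s,n}\rVert_s \;\le\; \lVert g\rVert_s \;\le\; C\lVert T_{\mathcal{M}}f_\eta\rVert_{H^s_L} \;\le\; C\eta^{s-\sigma}\lVert E_{\mathcal{M}}u\rVert_{H^{\sigma+(k-d)/2}(\mathbb{R}^k)} \;\le\; C\,q_{\mb{x}}^{-(s-\sigma)}\lVert u\rVert_{H^{\sigma}_L}.
\]
Quasi-uniformity gives $q_{\mb{x}} \asymp n^{-1/d}$, hence $\lVert m_{s,n}\rVert_s^2 \le C n^{2(s-\sigma)/d}\lVert u\rVert_{H^{\sigma}_L}^2 = C n^{1 + 2(s-s_0)/d + 2\epsilon/d}\lVert u\rVert_{H^{\sigma}_L}^2$, and relabelling $\epsilon$ yields \eqref{eq:norm_upper_bound_interpolant_app}, the constant absorbing the sample-path-dependent quantity $\lVert u\rVert_{H^\sigma_L}$.

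For \eqref{eq:norm_upper_bound_interpolant_gaussian_app}: the starting point is the identity $u(\mb{x})^{\top}K_{\theta_0}(\mb{x})^{-1}u(\mb{x}) = \sigma_0^2\,\lVert W_n\mb{\xi}\rVert^2$, valid already for $s_0 > d/2$, obtained by writing the expansion \eqref{eq:true_model} as $u(\mb{x}) = \sigma_0\sqrt{v(\theta_0)}\,A\mb{\xi}$, where $A : \ell^2 \to \mathbb{R}^n$ is the map $\mb{a}\mapsto \sum_{i=1}^\infty (\tau_0+\lambda_i)^{-s_0/2}a_i\,(e_i(x_1),\dots,e_i(x_n))^{\top}$. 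This $A$ is bounded (indeed Hilbert--Schmidt) because $\sum_{i=1}^\infty (\tau_0+\lambda_i)^{-s_0}e_i(x_j)^2 < \infty$ by Proposition~\ref{prop:kernel_properties}(i); moreover $K_{\theta_0}(\mb{x}) = v(\theta_0)\,AA^{\ast}$, and $W_n := (AA^{\ast})^{-1/2}A$ (the inverse exists since $K_{\theta_0}$ is strictly positive definite, Proposition~\ref{prop:kernel_properties}(ii)) satisfies $W_nW_n^{\ast} = I_n$, while $W_n\mb{\xi} = \sum_i \xi_i W_n e_i$ converges in $L^2(\Omega)$ since $\sum_i \lVert W_n e_i\rVert^2 = \tr(W_n^{\ast}W_n) = n$. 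When the $\xi_i$ are standard Gaussian, $W_n\mb{\xi}$ is a centred Gaussian vector with covariance $\sum_i (W_ne_i)(W_ne_i)^{\top} = W_nW_n^{\ast} = I_n$, hence $\lVert W_n\mb{\xi}\rVert^2 \sim \chi^2_n$. The deviation bound $\P(\lvert \chi^2_n - n\rvert > n/2) \le 2e^{-cn}$ and the Borel--Cantelli lemma then give that almost surely $n/2 \le \lVert W_n\mb{\xi}\rVert^2 \le 3n/2$ for all large $n$. Finally, with $m_{\theta_0,n}$ the minimum-norm interpolant of $u$ for the kernel $K_{\theta_0}$, the minimum-norm property and the norm equivalences among $\lVert\cdot\rVert_{s_0}$, $\lVert\cdot\rVert_{H^{s_0}_L}$ and $\lVert\cdot\rVert_{\theta_0}$ give $c\,\lVert m_{\theta_0,n}\rVert_{\theta_0}^2 \le \lVert m_{s_0,n}\rVert_{s_0}^2 \le C\,\lVert m_{\theta_0,n}\rVert_{\theta_0}^2$, while by \eqref{eq:minimum_norm_interpolant_quad_form} $\lVert m_{\theta_0,n}\rVert_{\theta_0}^2 = u(\mb{x})^{\top}K_{\theta_0}(\mb{x})^{-1}u(\mb{x}) = \sigma_0^2\lVert W_n\mb{\xi}\rVert^2$; combining these yields \eqref{eq:norm_upper_bound_interpolant_gaussian_app}.

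The main obstacle, for the general bound, is the bookkeeping in the Bernstein/inverse estimate: although it is a near-verbatim repetition of the bandlimited-interpolation machinery behind Proposition~\ref{prop:escape_manifold} and \citet[Theorems~3.4 and~4.2]{narcowich_2006}, one must keep careful track of the extension operator, of the exponent $s-\sigma$ (and of the trivial regime $s \le \sigma$), and of the sample-path dependence of the constants through $\lVert u\rVert_{H^\sigma_L}$. For the Gaussian bound the delicate points are instead (i) making the identity $u(\mb{x})^{\top}K_{\theta_0}(\mb{x})^{-1}u(\mb{x}) = \sigma_0^2\lVert W_n\mb{\xi}\rVert^2$ rigorous, since $\mb{\xi}\notin\ell^2$ almost surely and every sum above must be read as an $L^2(\Omega)$-convergent series, and (ii) upgrading the per-$n$ law $\chi^2_n$ to an almost-sure two-sided bound holding simultaneously for all large $n$, which is exactly what the exponential deviation inequality together with Borel--Cantelli delivers.
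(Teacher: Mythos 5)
Your proof is correct and follows essentially the same route as the paper: the first bound is obtained via a bandlimited interpolant from the ambient Euclidean space, the extension operator of Theorem~\ref{thm:manifold_extension_theorem}, and the Bernstein inequality, and the Gaussian bound via identifying the quadratic form with a $\chi^2_n$ random variable. You are in fact slightly more rigorous than the published proof on the Gaussian bound: the paper cites the strong law of large numbers for the triangular array $\{\tilde{\xi}_{n,i}\}$, whose entries depend on $n$ so the classical SLLN does not apply directly, and your exponential $\chi^2_n$ concentration inequality combined with Borel--Cantelli is the correct way to obtain the almost-sure two-sided bound for all large $n$ (after which the constants may be adjusted for small $n$, e.g.\ using the monotonicity of $\lVert m_{s_0,n}\rVert_{s_0}$ in $n$); you also usefully make explicit why $s_0 > d$ is needed (so that $\sigma = s_0 - d/2 - \epsilon > d/2$ and point evaluations of $u$ are defined) and handle the trivial regime $s \leq \sigma$ by taking $g = u$.
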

\begin{proof}
    Let $T_{\mathcal{M}}$  and $E_{\mathcal{M}}$ be the bounded restriction and extension operators from Theorem \ref{thm:manifold_extension_theorem}.
    Let $u_{\eta}$ be a sequence of bandlimited function as in the proof of Proposition \ref{prop:escape_manifold} such that $u(x_i) = E_{\mathcal{M}}u(x_i) = u_{\eta}(x_i)$.
    Then
    \begin{align*}
        \lVert u_{\eta} \rVert_{H^{s_0 + (k-d)/2}(\mathbb{R}^k)} \leq C \lVert E_{\mathcal{M}} u \rVert_{H^{s_0 + (k-d)/2}(\mathbb{R}^k)}.
    \end{align*}
    Theorem \ref{thm:manifold_extension_theorem}, norm equivalence, the minimum norm property and the Bernstein inequality for bandlimited functions yield
    \begin{align*}
        \lVert m_{s, n} \rVert_{s} \leq \lVert T_{\mathcal{M}}u_{\eta} \rVert_{s} \leq C\lVert u_{\eta} \rVert_{H^{s + (k-d)/2}(\mathbb{R}^k)} &\leq C\eta^{s_0 - s - d/2 - \epsilon} \lVert u_{\eta} \rVert_{H^{s_0 + k/2 -d - \epsilon}(\mathbb{R}^k)} \\
        &\leq C q_{\mb{x}}^{s_0 - s  - d/2 - \epsilon} \lVert u \rVert_{s_0 - d/2 - \epsilon}.
    \end{align*}
    This proves \eqref{eq:norm_upper_bound_interpolant_app}.

    To prove \eqref{eq:norm_upper_bound_interpolant_gaussian_app}, let $m_{\theta_0, n}$ be the minimum norm interpolant from $H^{s_0}_L$ equipped with the norm $\lVert \cdot \rVert_{\theta_0}$.
    Since $\V(u(\mb{x})) = \sigma_0^2 K_{\theta_0}(\mb{x})$, we can write
    \begin{align*}
        \lVert m_{\theta_0, n} \rVert^2_{\theta_0} = u(\mb{x})^{\top} K_{\theta_0}(\mb{x})^{-1}u(\mb{x}) = \sigma_0^2 \tilde{\mb{\xi}}_n^{\top}K_{\theta_0}(\mb{x})^{1/2}K_{\theta_0}(\mb{x})^{-1}K_{\theta_0}(\mb{x})^{1/2} \tilde{\mb{\xi}}_n = \sigma_0^2 \sum_{i = 1}^n \tilde{\xi}_{n,i}^2,
    \end{align*}
    where $\{\tilde{\mb{\xi}}_n\} = \{(\tilde{\xi}_{n,1}, \dots, \tilde{\xi}_{n,n})\}$ forms a triangular array of independent chi-squared random variables.
    By the strong law of large numbers,
    \begin{align*}
        \frac{1}{n} \sum_{i = 1}^n \tilde{\xi}_{n,i}^2 \overset{\text{a.s.}}{\longrightarrow}  1,
    \end{align*}
    which yields the claim for $\lVert m_{\theta_0, n} \rVert_{\theta_0}$.
    For any other kernel the claim then follows from the minimum norm property and norm equivalence .
\end{proof}

In the proof of Proposition~\ref{prop:cond_var_bounds} it will be convenient to use the following equivalent definition of Sobolev spaces on $\mathcal{M}$.
Let $\{(\Omega_i, \phi_i)\}_{i = 1}^m$ denote the atlas of normal coordinates with subordinate partition of unity $\{\psi_i\}$.
We define the Sobolev spaces 
\begin{align}
    H^s(\mathcal{M}) = \left\{ f\in L_2(\mathcal{M}) ~ \middle \vert ~ \lVert f \rVert_{H^s(\mathcal{M})}^2 \coloneqq \sum_{i = 1}^m \left\lVert \pi_i(f) \right\rVert_{H^s(\mathbb{R}^d)}^2 < \infty\right\}, \label{eq:sobolev_manifold}
\end{align}
where 
\begin{align}
    \pi_i(f) (x) = \begin{cases}
        \psi_i f \circ \phi_i^{-1}(x) & \text{ if } x \in \phi_i(\Omega_i) \\
        0 & \text{ else}.
    \end{cases} \label{eq:projection_pi}
\end{align}
As in Appendix~\ref{sec:appendix_manifold}, we have identified $T_p\mathcal{M}$ with $\mathbb{R}^d$.
It follows from \citet[Theorem~7.2.3]{triebel_1991} that this definition is independent of the choice of normal coordinates and the partition of unity up to equivalent norms and it follows from \citet[Theorem~7.4.5]{triebel_1991} that $H^s(\mathcal{M})$ is norm equivalent to $H^s_L$.
Define balls in a bounded Euclidean domain $M$ and on a manifold $\mathcal{M}$ as
\begin{align*}
    B_{r}(x) = \left\{y \in M ~ \vert ~ \lVert x - y \rVert \leq r \right\} \quad \text{ and } \quad B_{r}^{\mathcal{M}}(x) = \left\{y \in \mathcal{M} ~ \vert ~  d_{\mathcal{M}}(x,y)  \leq r \right\} .
\end{align*}

\begin{namedtheorem}[Proposition \ref{prop:cond_var_bounds}]
    Let $s > d/2$.
    \begin{enumerate}
        \item  Suppose the family of norms $\{\lVert \cdot \rVert_s\}_{s > d/2}$ is such that $\lVert f \rVert_{s} \leq C \lVert f \rVert_{s^\prime}$ for every $f$ whenever $s \leq s^\prime$.
          Let $\mb{x}^n_i$ be all points in $\mb{x}$ except $x_i$.
          Then, for $s \leq S$, there is a constant $c > 0$, that only depends on $S$, such that
        \begin{align*}
            \min_{1 \leq i \leq n}\V_{s} (x_i \vert \mb{x}^n_i) \geq c q_{\mb{x}}^{2S-d}.
        \end{align*}
        \label{eq:cond_var_bounds_appendix1}
        \item There exists a positive constant $C$, that only depends on $s$ and $\mathcal{M}$, such that
        \begin{align*}
            \sup_{y \in \mathcal{M}}\V_{s}(y \vert \mb{x}) \leq C h_{\mb{x}}^{2s-d}.
        \end{align*}
        \label{eq:cond_var_bounds_appendix2}
    \end{enumerate}
\end{namedtheorem}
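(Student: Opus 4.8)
The starting point is the standard identification of the conditional variance with a constrained extremal problem in the reproducing kernel Hilbert space. For any $y\in\mathcal M$ and any finite set $\mb y\subset\mathcal M$, writing $u_j$ for the cardinal functions of interpolation at the points of $\mb y$, the error function $P_{\mb y}=K_s(\cdot,y)-\sum_j u_j(y)K_s(\cdot,x_j)$ vanishes on $\mb y$, has $\lVert P_{\mb y}\rVert_s^2=\V_s(y\mid\mb y)$, and reproduces $f(y)-m_{s,n}^f(y)$ for every $f$, so that
\begin{align*}
  \sqrt{\smash[b]{\V_s(y\mid\mb y)}}=\sup\big\{\,\lvert f(y)\rvert \ \big\vert\ f\in\mathcal H^s,\ \lVert f\rVert_s\le 1,\ f\vert_{\mb y}=0\,\big\}.
\end{align*}
With this identity, part (2) amounts to a quantitative ``zeros lemma'' (functions in the unit ball of $\mathcal H^s\simeq H^s_L$ that vanish on a set of fill distance $h_{\mb x}$ are uniformly $O(h_{\mb x}^{s-d/2})$), and part (1) amounts to exhibiting, for each $i$, a competitor in the above supremum that is localized near $x_i$.

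For part (1) I would fix $i$ and work in the normal coordinate chart $(\Omega_j,\phi_j)$ containing $x_i$; for $n$ large this chart contains $B^{\mathcal M}_{2q_{\mb x}}(x_i)$, and the finitely many remaining $n$ are handled by strict positive definiteness of $K_s$ (the argument of Proposition~\ref{prop:kernel_properties} applies since $\mathcal H^s=H^s_L$ contains bump functions), which bounds $\V_s(x_i\mid\mb x^n_i)$ below by a positive constant. Let $\phi_0\in C_c^\infty(\R^d)$ satisfy $\phi_0(\mb 0)=1$ and $\supp\phi_0\subset B_1(\mb 0)$, and set $f=\phi_0\big((\phi_j(\cdot)-\phi_j(x_i))/(cq_{\mb x})\big)$ on $\Omega_j$, extended by zero, where by Lemma~\ref{l:equiv_distances} the constant $c$ is chosen so that $\supp f\subset B^{\mathcal M}_{q_{\mb x}}(x_i)$; by the definition of $q_{\mb x}$ this support contains no $x_k$ with $k\ne i$, so $f$ is admissible in the supremum for $\V_s(x_i\mid\mb x^n_i)$ and $f(x_i)=1$. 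A Fourier scaling computation gives $\lVert\phi_0(\cdot/r)\rVert_{H^S(\R^d)}\le Cr^{d/2-S}$ for $r\le 1$; since multiplication by the partition-of-unity functions and the transition maps between normal charts are bounded on $H^S$, the chart definition \eqref{eq:sobolev_manifold} yields $\lVert f\rVert_{H^S(\mathcal M)}\le Cq_{\mb x}^{d/2-S}$, whence by norm equivalence and the monotonicity hypothesis $\lVert f\rVert_s\le C\lVert f\rVert_S$ we get $\lVert f\rVert_s\le Cq_{\mb x}^{d/2-S}$ with $C$ independent of $i$ and of $s\le S$. Plugging $f/\lVert f\rVert_s$ into the extremal problem gives $\sqrt{\smash[b]{\V_s(x_i\mid\mb x^n_i)}}\ge\lVert f\rVert_s^{-1}\ge cq_{\mb x}^{S-d/2}$ uniformly in $i$, i.e.\ the claim after squaring.

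For part (2), given $f\in H^s_L$ with $f\vert_{\mb x}=0$ and $\lVert f\rVert_s\le 1$, I would transfer the localized pieces $\pi_i(f)$ from \eqref{eq:projection_pi} to $\R^d$ through the normal coordinate atlas. On a slightly shrunk subdomain on which $\psi_i\equiv1$ (these still cover $\mathcal M$; this is the role of the technical condition $r\le r_0/32$, cf.\ \citet{triebel_1991}) the function $\pi_i(f)$ coincides with $f\circ\phi_i^{-1}$ and hence vanishes on the image point set, whose fill distance is comparable to $h_{\mb x}$ by Lemma~\ref{l:equiv_distances} and which inherits an interior cone condition from the chart. The Euclidean scattered-zeros inequality of Narcowich, Ward and Wendland (see \citet{narcowich_2006}) then gives $\lVert\pi_i(f)\rVert_{L_\infty}\le Ch_{\mb x}^{s-d/2}\lVert\pi_i(f)\rVert_{H^s(\R^d)}$ for $h_{\mb x}$ small enough; since $\lVert\pi_i(f)\rVert_{H^s(\R^d)}\le\lVert f\rVert_{H^s(\mathcal M)}\asymp\lVert f\rVert_s\le 1$, covering $\mathcal M$ by the shrunk subdomains and using the norm equivalences $\lVert\cdot\rVert_s\asymp\lVert\cdot\rVert_{H^s_L}\asymp\lVert\cdot\rVert_{H^s(\mathcal M)}$ yields $\lvert f(y)\rvert\le Ch_{\mb x}^{s-d/2}$ for every $y\in\mathcal M$, hence $\V_s(y\mid\mb x)\le Ch_{\mb x}^{2s-d}$.

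I expect the main obstacle to be the bookkeeping in part (2): transferring a Euclidean scattered-zeros Sobolev bound to $\mathcal M$ requires that the truncated pieces genuinely vanish on a set whose fill distance in the relevant subdomain is $\lesssim h_{\mb x}$ (so one cannot just multiply by $\psi_i$ blindly near chart boundaries) and that these subdomains still cover $\mathcal M$ and satisfy interior cone conditions uniformly — which is precisely why the overlapping normal-coordinate atlas with $r\le r_0/32$ was set up in Section~\ref{sec:manifold+laplacian}. Once this intrinsic zeros lemma is in place, the remaining steps — the extremal characterization, the bump construction in part (1), and the Fourier scaling — are routine adaptations of well-known arguments.
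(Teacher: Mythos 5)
Your overall strategy coincides with the paper's: both parts go through the extremal characterization of $\V_s$, and part~(1) is a bump-function construction with the key Fourier scaling estimate $\lVert\phi_0(\cdot/r)\rVert_{H^S(\R^d)}\lesssim r^{d/2-S}$, transferred to the manifold through normal coordinates and the chart-based Sobolev norm. For part~(1), where the paper constructs auxiliary atlases $\mathcal A_k$ with partitions $\Psi_k$ satisfying $\tilde\psi_k\equiv1$ near the bump (so that only one chart contributes and the partition does not distort the bump), you instead invoke boundedness of multiplication by the fixed $\psi_i$ and of composition with the fixed transition maps on $H^S$; this is a valid shortcut, since those operator norms are fixed independently of $q_{\mb x}$. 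For part~(2) the paper simply cites Lemma~10 of Fuselier and Wright, while you sketch a re-derivation by localizing to charts and applying the Euclidean scattered-zeros lemma — substantively the same route, with you doing the transfer explicitly.

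Two points should be tightened. First, in part~(1), your dispatch of the degenerate configurations via ``the finitely many remaining $n$ are handled by strict positive definiteness'' is not a complete argument: for a fixed small $n$ there are uncountably many configurations $\mb x$, and pointwise strict positivity of $\V_s$ does not by itself yield the uniform lower bound $c\,q_{\mb x}^{2S-d}$. Either run a compactness argument over $\{\mb x:q_{\mb x}\ge r/2,\ n\le N_0\}$ (using continuity of $\V_s$ in the points), or, more simply, do what the paper does and take the bump radius to be $\min\{c\,q_{\mb x},\,r/2\}$ — then $\V_s\ge c(r/2)^{2S-d}\ge c'q_{\mb x}^{2S-d}$ for $q_{\mb x}\ge r/2$ since $q_{\mb x}$ is bounded above by the diameter of $\mathcal M$. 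Second, in part~(2), the parenthetical claim that the slightly shrunk subdomains on which $\psi_i\equiv1$ ``still cover $\mathcal M$'' is false for a generic partition of unity: since $\sum_i\psi_i=1$ with $\psi_i\ge0$, the sets $\{\psi_i=1\}$ are disjoint from the regions where two of the $\psi_j$ overlap, so they cannot cover $\mathcal M$. The fix is either to bound $\lvert f(y)\rvert\le\sum_i\lvert\pi_i(f)(\phi_i(y))\rvert$ and apply the scattered-zeros bound to each $\pi_i(f)$ on a fixed domain in $\R^d$ containing $\supp(\psi_i\circ\phi_i^{-1})$ (noting that $\pi_i(f)$ vanishes there on a set of fill distance $\lesssim h_{\mb x}$), or to simply cite Lemma~10 in \citet{fuselier_2012} as the paper does. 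Neither issue affects the substance of the approach, which is the same as the paper's.
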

\begin{proof}
    We start by proving \eqref{eq:cond_var_bounds_appendix1} and follow the same strategy as in the proof of Proposition~3.5 in~\cite{karvonen2023asymptotic}.
    We have 
    \begin{align*}
        \V_{s}(x_i \vert \mb{x}^{i-1}) = \sup_{\lVert f \rVert_{\mathcal{H}^s} \leq 1} \lvert f(x_i) - m^f_{s,n}(x_i \vert \mb{x}^{i-1}) \rvert^2, 
    \end{align*}
    where $m_{s,n}^f(\cdot \vert \mb{x}^n_i)$ is the minimum norm interpolant based on the observations $\{f(x_j)\vert j \neq i\}$.
    The goal is to construct a function $f$ such that $\lVert f \rVert_{\mathcal{H}^s} \leq 1 $ and $f(x_j) = 0$ for all $j \neq i$.
    In this case, it is clear that $m^f_{s,n}(\cdot \vert \mb{x}^n_i)  \equiv 0$ and so $f(x_i) - m^f_{s,n}(x_i \vert \mb{x}^n_i)  = f(x_i)$.
    This is a well-known strategy and usually so-called bump functions are used.
    Our construction here is due to \citet[Section~2.2]{krieg_manifold_2022}.
    Recall the atlas $\{(\Omega_j, \phi_j)\}_{j = 1}^m$ and let $\{\psi_j\}_{j = 1}^m$ be a subordinate partition of unity.
    We now generate a finite set of new atlasses based on geodesic coordinates that are particularly well suited for our purposes.
    First, we construct a new covering of $\mathcal{M}$ via $\left\{(4\Omega_j, \phi_j)\right\}$, where 
    \begin{align*}
        4\Omega_j = \left\{ x \in \mathcal{M} ~ \middle \lvert ~ d_{\mathcal{M}}(x, p_j) < 4r \right\}
    \end{align*}
    and $p_j$ is the center of $\Omega_j$, that is, $ \mb{0}$ in local coordinates of $(\Omega_j, \phi_j)$ and the $\phi_i$ are now to be understood as inverses of the exponential maps on $B_{4r}(\mb{0})$.
    This is possible because $r < r_0/32$ and so the extensions are still diffeomorphisms and the radii of the $4 \Omega_j$ are still less than $r_0/8$ as required by Chapter 7 in \cite{triebel_1991}.
    We construct new atlasses $\mathcal{A}_k$ and partitions of unity $\Psi_k$ in the following way. 
    First, let $I_k = \left\{ j ~ \middle \vert ~ \Omega_j \cap 1.5 \Omega_k = \emptyset\right\}$.
    Then define the atlas $\mathcal{A}_k = \{ (4\Omega_j, \phi_j)\}_{j \in I_k} \cup \{(4\Omega_k, \phi_k)\}$.
    Note that $4\Omega_k \cup \bigcup_{j \in I_k} 4\Omega_j$ still covers $\mathcal{M}$. 
    Finally, define $\tilde{\psi}_k$ as
    \begin{align*}
        \tilde{\psi}_k = \sum_{ j \leq m, ~ j \notin J_k } \psi_j
    \end{align*}
    and \smash{$\Psi_k = \left\{\psi_j\right\}_{j \in J_k} \cup \tilde{\psi_k}$}.
    This is still a partition of unity such that \smash{$\supp \psi_j \subset 4\Omega_j$} and  $\supp \tilde{\psi}_k \subset 4 \Omega_k$ with the convenient property that by construction, $\tilde{\psi}_k \equiv 1$ on $1.5 \Omega_k$.
    Finally, it follows from Theorem 7.2.3 in \cite{triebel_1991} that each atlas $\mathcal{A}_k$ together with partition of unity $\Psi_k$ generates a space that is norm equivalent to $H^s(\mathcal{M})$.
    
    Now suppose $q_{\mb{x}} < r/2$.
    Then $B^{\mathcal{M}}_{q_{\mb{x}}}(x_i) \subset  1.5\Omega_k$ whenever $x_i \in \Omega_k$ and $x_j \notin B^{\mathcal{M}}_{q_{\mb{x}}}(x_i)$ for $j \neq i$.
    Lemma~\ref{l:equiv_distances} implies that 
    \begin{align*}
        \phi_k^{-1}(B_{c q_{\mb{x}}}(\phi_k(x_i)) := \phi_k^{-1} \left(\left\{ y \in \phi_k(\Omega_k) ~ \middle \vert ~ \lVert y - \phi_k(x_i) \rVert \leq c q_{\mb{x}} \right\}\right) \subset B_{q_{\mb{x}}}^{\mathcal{M}}(x_i) \subset 1.5 \Omega_k.
    \end{align*}
    Now, define $f:\mathcal{M} \to \mathbb{R}$ such that $f(x) = 0$ if $x \notin \Omega_k$ and, for $x \in \phi_k^{-1}(\Omega_k)$,
    \begin{equation*}
      f \circ \phi_k^{-1}(x) =
      \begin{dcases}
        \exp \left(- \frac{1}{1- \left\lVert \frac{x - \phi_k(x_i)}{c q_{\mb{x}}}  \right\rVert^2}\right) & \text{ if } \quad \lVert x - \phi_k(x_i) \rVert \leq c q_{\mb{x}}, \\
            0 & \text{ else}.
        \end{dcases}      
    \end{equation*}
    That is, $f \vert_{\Omega_k} = g \circ \phi_k$ for $g: \mathbb{R}^d \to \mathbb{R}$ such that
    \begin{align*}
        g(x) = \begin{dcases}
            \exp \left(- \frac{1}{1- \left\lVert \frac{x - \phi_i(x_i)}{c q_{\mb{x}}}  \right\rVert^2}\right) & \text{ if } \quad\lVert x - \phi_k(x_i) \rVert \leq c q_{\mb{x}}, \\
            0 & \text{ else.}
        \end{dcases}
    \end{align*}
    Note that $\tilde{\psi}_k f = f$ and  $\psi_j f \equiv 0$ for $j \neq k$.
    Following the exact same steps as in the proof of Proposition~3.5 in~\cite{karvonen2023asymptotic}, it follows that there exists a constant $C$ such that
    \begin{align*}
        \lVert \psi_k f \circ \phi_k^{-1} \rVert_{H^{S}(\mathbb{R}^d)} = \lVert g \rVert_{H^S(\mathbb{R}^d)} \leq C q_{\mb{x}}^{d/2 - S}.
    \end{align*}
    Thus, by norm equivalence, norm monotonicity and the fact that each atlas $\mathcal{A}_k$ yields a norm equivalent space and there are only finitely many such atlasses, we have that there exists an $f$ such that $f(x_i) = 1$ and 
    \begin{align*}
        \lVert f\rVert_{s} &\leq C \lVert f \rVert_{H^{S}(\mathcal{M})} = C \bigg(\sum_{j = 1}^m \lVert \pi^k_j(f) \rVert_{H^{S}(\mathbb{R}^d)}^2 \bigg)^{1/2} = C \lVert \pi_k^k(f) \rVert_{H^S_2(\mathbb{R}^d)} = C\lVert g \rVert_{H^S(\mathbb{R}^d)}\\ 
        &\leq  C q_{\mb{x}}^{d/2-S},
    \end{align*}
    where $\pi_j^k$ is the analog of the function defined in \eqref{eq:projection_pi} based on $\mathcal{A}_k$ and $\Psi_k$.
    The constant $C$ depends on $S$ and $\mathcal{H}^s$.
    Appropriately rescaling $f$ to $f_\textup{sc}$ such that $\lVert f_\textup{sc} \rVert_{\mathcal{H}^s} \leq 1$ for all $s \leq S$ yields $f_\textup{sc}(x_i) \geq C q_{\mb{x}}^{S-d/2}$.
    In the case $q_{\mb{x}} \geq r/2$ we repeat the procedure with $q_{\mb{x}}$ replaced by $r/2$.
    From this it follows that 
    \begin{align*}
        \V_{s}(u(x_i) \vert \mb{x}^n_i) &= \sup_{\lVert f \rVert_{\mathcal{H}^s} \leq 1} \lvert f(x_i) - m^f_{s,n}(x_i \vert \mb{x}^n_i)  \rvert^2 \\
        &\geq  f_\textup{sc}(x_i)^2 \\
        &\geq \min\{ C(r/2)^{2S-d}, C q_{\mb{x}}^{2S-d}\}
    \end{align*}
    as claimed.

    Next we prove~\eqref{eq:cond_var_bounds_appendix2}.
    It follows from Lemma~10 in~\cite{fuselier_2012} that, for every $f \in H^{s}_L$,
    \begin{align*}
        \lVert f - m_{s, n}^f \rVert_{\infty} \leq C h_{\mb{x}}^{s - d/2} \lVert f - m_{s, n}^f \rVert_{\mathcal{H}^s} \leq C h_{\mb{x}}^{s - d/2} \lVert f \rVert_{\mathcal{H}^s}
    \end{align*}
    for $h_{\mb{x}}$ small enough, see the comment below Proposition \ref{prop:escape_manifold}.
    Then
    \begin{align*}
        \sup_{x \in \mathcal{M}}\V_{s}(x \vert \mb{x}) &= \sup_{x \in \mathcal{M}}\sup_{\lVert f \rVert_{\mathcal{H}^s} \leq 1} \lvert f(x) - m_{s, n}^f(x) \rvert^2 = \sup_{\lVert f \rVert_{\mathcal{H}^s} \leq 1} \lVert f - m_{s, n}^f \rVert_{\infty}^2 \leq Ch_{\mb{x}}^{2s-d} \lVert f \rVert_{\mathcal{H}^s}^2 \\
        &\leq Ch_{\mb{x}}^{2s-d}
    \end{align*}
    for $h_{\mb{x}}$ small enough, where $C$ does not depend on $f$. 
    Adjusting the constant $C$, the bound holds for all $n$.
\end{proof}

\subsection{Proofs for Restricted Kernels}

Here we give proofs for some results in Section~\ref{sec:other_kernels}.

\begin{namedtheorem}[Proposition \ref{prop:inherited_norm_monotonicity}]
  Let $\tilde{K}_s(\cdot, \cdot)$ be as in Theorem \ref{thm:fusilier_theorem}.
  Let $\mathcal{H}^s(\mathbb{R}^k)$ be the RKHS of $\tilde{K}_s$ and $\mathcal{H}^s$ the RKHS of $K_s(\cdot, \cdot) = \tilde{K}_s(\cdot, \cdot)\vert_{\mathcal{M} \times \mathcal{M}}$.
    Let $s \leq s^\prime$.
    If $\lVert f \rVert_{\mathcal{H}^s(\mathbb{R}^k)} \leq C \lVert f \rVert_{\mathcal{H}^{s^\prime}(\mathbb{R}^k)}$ for all $f \in \mathcal{H}^{s^\prime}(\mathbb{R}^k)$ and some $C$ independent of $s$, then $\lVert f \rVert_{s} \leq C \lVert f \rVert_{s^\prime}$ for all $f \in \mathcal{H}^s$.  
\end{namedtheorem}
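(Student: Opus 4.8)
The plan is to deduce the monotonicity of the restricted norms from that of the ambient norms by way of the standard description of the native space of a restricted kernel. Recall that, by the restriction principle for native spaces (\citet[Theorem~10.46 and Corollary~10.47]{wendland_2004}; see also \citet[Lemma~4]{fuselier_2012}), a function $g \colon \mathcal{M} \to \mathbb{R}$ lies in $\mathcal{H}^s$ if and only if it is the restriction to $\mathcal{M}$ of some $F \in \mathcal{H}^s(\mathbb{R}^k)$, and in that case
\begin{align*}
    \lVert g \rVert_{s} = \min\big\{ \lVert F \rVert_{\mathcal{H}^s(\mathbb{R}^k)} ~ \big\vert ~ F \in \mathcal{H}^s(\mathbb{R}^k), ~ F\vert_{\mathcal{M}} = g \big\},
\end{align*}
with the minimum attained; the analogous statement holds with $s$ replaced by $s^\prime$. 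Equivalently, the restriction operator $F \mapsto F\vert_{\mathcal{M}}$ maps $\mathcal{H}^s(\mathbb{R}^k)$ onto $\mathcal{H}^s$ and is norm-nonincreasing, and likewise at level $s^\prime$.

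First I would fix $f \in \mathcal{H}^{s^\prime}$ (if $f \notin \mathcal{H}^{s^\prime}$ the claimed inequality is trivial, since then $\lVert f \rVert_{s^\prime} = \infty$) and choose, via the display above, an ambient extension $F \in \mathcal{H}^{s^\prime}(\mathbb{R}^k)$ with $F\vert_{\mathcal{M}} = f$ and $\lVert F \rVert_{\mathcal{H}^{s^\prime}(\mathbb{R}^k)} = \lVert f \rVert_{s^\prime}$. Next I would observe that this $F$ also lies in $\mathcal{H}^{s}(\mathbb{R}^k)$: by Theorem~\ref{thm:fusilier_theorem} and the discussion following it, $\mathcal{H}^{s}(\mathbb{R}^k)$ and $\mathcal{H}^{s^\prime}(\mathbb{R}^k)$ coincide, up to equivalent norms, with $H^{s + (k-d)/2}(\mathbb{R}^k)$ and $H^{s^\prime + (k-d)/2}(\mathbb{R}^k)$ respectively, and since $s \leq s^\prime$ the latter Sobolev space is continuously contained in the former. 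Hence the hypothesized ambient monotonicity applies to $F$, giving $\lVert F \rVert_{\mathcal{H}^s(\mathbb{R}^k)} \leq C \lVert F \rVert_{\mathcal{H}^{s^\prime}(\mathbb{R}^k)}$.

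Finally I would chain the two restriction characterizations: because $F$ is one admissible extension of $f$ into $\mathcal{H}^s(\mathbb{R}^k)$,
\begin{align*}
    \lVert f \rVert_{s} \leq \lVert F \rVert_{\mathcal{H}^s(\mathbb{R}^k)} \leq C \lVert F \rVert_{\mathcal{H}^{s^\prime}(\mathbb{R}^k)} = C \lVert f \rVert_{s^\prime},
\end{align*}
which is exactly the assertion. There is no serious obstacle in this argument; the one step that deserves care is the middle one, namely confirming that the minimal-norm ambient extension $F$ of $f$ genuinely belongs to $\mathcal{H}^s(\mathbb{R}^k)$ so that the ambient inequality may legitimately be invoked, and this is precisely where the identification of $\mathcal{H}^s(\mathbb{R}^k)$ with the Sobolev scale on $\mathbb{R}^k$ and the nesting $H^{s^\prime + (k-d)/2}(\mathbb{R}^k) \subset H^{s + (k-d)/2}(\mathbb{R}^k)$ enter. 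Everything else is a formal consequence of the restriction principle.
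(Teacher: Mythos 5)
Your argument is correct and is essentially the paper's own proof: the norm-preserving extension $F$ you choose via the restriction principle is exactly the output of the extension operator $E$ that the paper invokes from Lemma~4 of \citet{fuselier_2012}, and both proofs then chain the same three inequalities (restriction is norm-nonincreasing, ambient monotonicity, norm of the extension equals $\lVert f \rVert_{s^\prime}$). The only cosmetic difference is that you pause to verify $F \in \mathcal{H}^s(\mathbb{R}^k)$ via Sobolev nesting, which is in fact already implicit in the hypothesis $\lVert f \rVert_{\mathcal{H}^s(\mathbb{R}^k)} \leq C \lVert f \rVert_{\mathcal{H}^{s^\prime}(\mathbb{R}^k)}$ being finite for all $f \in \mathcal{H}^{s^\prime}(\mathbb{R}^k)$.
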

\begin{proof}
    Lemma 4 in \cite{fuselier_2012} shows that for $f \in \mathcal{H}^{s^\prime}$ there exists an extension operator $E$ such that $\lVert f \rVert_{s^\prime} = \lVert E f \rVert_{\mathcal{H}^{s^\prime}(\mathbb{R}^k)}$.
    Moreover, for every $s$ the restriction operator $T_{\mathcal{M}}$ has norm bounded by 1 as operator $T_{\mathcal{M}} : \mathcal{H}^s(\mathbb{R}^k) \to \mathcal{H}_s$.
    It follows that $f = T_{\mathcal{M}}E f$ and so 
    \begin{equation*}
        \lVert f \rVert_s = \lVert T_{\mathcal{M}}E f \rVert_s \leq \lVert E f \rVert_{\mathcal{H}^s(\mathbb{R}^k)} \leq C \lVert E f \rVert_{\mathcal{H}^{s^\prime}(\mathbb{R}^k)} = C \lVert f \rVert_{s^\prime}. \qedhere
    \end{equation*}
\end{proof}
\begin{proposition}\label{prop:generalized_wendland_monotone}
    Set $\mu = \kappa + (d+1)/2$.
    Let $\kappa \leq \kappa^\prime$ and let $\mathcal{H}^\kappa(\mathbb{R}^k)$ and $\mathcal{H}^{\kappa^\prime}(\mathbb{R}^k)$ be the RKHS of $\Phi_{\kappa}$ and $\Phi_{\kappa^\prime}$ with norms  $\lVert \cdot \rVert_{\kappa}$ and $\lVert \cdot \rVert_{\kappa^\prime}$, respectively.
    Then $\mathcal{H}^{\kappa^\prime}(\mathbb{R}^k) \subset \mathcal{H}^{\kappa}(\mathbb{R}^k)$ and
    \begin{align*}
        \lVert f \rVert_{\kappa} \leq \lVert f \rVert_{\kappa^\prime}.
    \end{align*}
    for all $f \in \mathcal{H}_{\kappa^\prime}$.
\end{proposition}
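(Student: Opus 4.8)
The plan is to deduce the statement from the classical fact that an ordering of reproducing kernels induces the reverse ordering of native-space norms. Concretely, I would first recall that if $\Phi_\kappa$ and $\Phi_{\kappa'}$ are strictly positive definite radial functions on $\mathbb{R}^k$ and the difference $g(x,y) \coloneqq \Phi_\kappa(\lVert x-y\rVert) - \Phi_{\kappa'}(\lVert x-y\rVert)$ is again a positive semidefinite kernel on $\mathbb{R}^k$, then $\mathcal{H}^{\kappa'}(\mathbb{R}^k) \subseteq \mathcal{H}^{\kappa}(\mathbb{R}^k)$ and $\lVert f\rVert_{\kappa} \leq \lVert f\rVert_{\kappa'}$ for every $f \in \mathcal{H}^{\kappa'}(\mathbb{R}^k)$; this is the standard result on differences of kernels due to Aronszajn, which in the present radial, translation-invariant setting is precisely the content of the native-space comparison theorems already used in the paper (\citealp[Section~9]{schaback_1999}; \citealp[Theorem~10.46 and Corollary~10.47]{wendland_2004}). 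In Fourier-analytic terms, $g$ is positive semidefinite if and only if the $k$-dimensional generalized Fourier transforms satisfy $\widehat{\Phi_{\kappa'}}(\xi) \leq \widehat{\Phi_\kappa}(\xi)$ for all $\xi \in \mathbb{R}^k$. This pointwise comparison is therefore what must be established, and it is the crux of the argument.

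Second, I would obtain a usable expression for the radial Fourier (Hankel) transform $\widehat{\Phi_\kappa}$. Writing $\Phi_\kappa$ through its defining integral as a Riemann--Liouville fractional mont\'ee of order $\kappa$ applied to the truncated power $(1-\cdot)_+^{\mu}$ with $\mu = \kappa + (d+1)/2$, and invoking the known closed form of $\widehat{\Phi_\kappa}$ for the generalized Wendland family — a product of Gamma-function constants and a generalized hypergeometric function; see \citet{bevilacqua_2019, hubbert_2023} — one has in particular $\widehat{\Phi_\kappa}(\xi) \asymp (1+\lVert\xi\rVert^2)^{-(\kappa + (k+1)/2)}$ in the sense of \eqref{eq:kernel_fourier_decay}. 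Third, I would prove that for each fixed $\xi$ the map $\kappa \mapsto \widehat{\Phi_\kappa}(\xi)$ is nonincreasing when $\beta$ and $\sigma^2$ are held fixed. For large $\lVert\xi\rVert$ this follows from the leading-order expansion together with a ratio-of-Gamma-functions estimate showing that the quotient of the normalizing prefactors $2^{1-\kappa'}/\Gamma(\kappa')$ and $2^{1-\kappa}/\Gamma(\kappa)$ tends to $0$; for small and moderate $\lVert\xi\rVert$ I would argue directly, using that increasing $\kappa$ simultaneously raises the exponent $\mu$ (so the weight $(1-u)^\mu$ decreases pointwise on $[0,1]$) and shrinks the prefactor. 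Combining the two frequency regimes gives $\widehat{\Phi_{\kappa'}} \leq \widehat{\Phi_\kappa}$ on all of $\mathbb{R}^k$, and the first step then yields the proposition (the inclusion $\mathcal{H}^{\kappa'}(\mathbb{R}^k)\subseteq\mathcal{H}^{\kappa}(\mathbb{R}^k)$ being automatic, since $\lVert f\rVert_{\kappa}\leq\lVert f\rVert_{\kappa'}<\infty$).

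The main obstacle is this third step: making the pointwise-in-frequency monotonicity rigorous when the normalization $\Phi_\kappa(0)$, the exponent $\mu = \kappa + (d+1)/2$, and the $\kappa$-dependent prefactor all move together, since the hypergeometric factor is not manifestly monotone in $\kappa$. A cleaner route, which I would attempt first, is to construct an explicit nonnegative integral representation of $\Phi_{\kappa'}$ in terms of $\Phi_\kappa$: iterate the mont\'ee operator to pass from $\kappa$ to $\kappa'$ at fixed $\mu$, and then use the Riemann--Liouville identity $(1-u)_+^{\mu'} = \frac{\Gamma(\mu'+1)}{\Gamma(\mu+1)\Gamma(\mu'-\mu)}\int_u^1 (1-v)_+^{\mu}(v-u)^{\mu'-\mu-1}\,\mathrm{d}v$ to raise $\mu$ to $\mu' = \kappa' + (d+1)/2$, from which positive semidefiniteness of $\Phi_\kappa - \Phi_{\kappa'}$ (or directly the native-space nesting) would follow without computing any Fourier transform. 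In either approach, once $\Phi_\kappa - \Phi_{\kappa'}$ is shown to be positive semidefinite, the conclusion is immediate from the difference-of-kernels principle.
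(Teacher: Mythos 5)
Your first paragraph is correct and is exactly the reduction the paper uses: via Wendland's Theorem~10.12 (or equivalently the comparison results in \citealp[Section 9]{schaback_1999} and \citealp[Theorem 10.46, Corollary 10.47]{wendland_2004}), everything comes down to the pointwise inequality $\widehat{\Phi}_{\kappa'}(z) \le \widehat{\Phi}_{\kappa}(z)$ for $\kappa \le \kappa'$. You also correctly name the crux: the normalizing prefactor, the exponent $\mu$, and the hypergeometric factor all move with $\kappa$, and the hypergeometric part ``is not manifestly monotone in $\kappa$.'' But that is where the proposal stops; neither of your two routes actually closes this gap.

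Your primary route --- split into large-$\lVert\xi\rVert$ and small-$\lVert\xi\rVert$ regimes --- is not a proof. For large $\lVert\xi\rVert$ you appeal only to the \emph{leading-order} decay $(1+\lVert\xi\rVert^2)^{-(\kappa+(k+1)/2)}$, but equivalence of decay rates up to constants (\eqref{eq:kernel_fourier_decay}) does not give a pointwise inequality, and the implicit constants there depend on $\kappa$ in an uncontrolled way. For small and moderate $\lVert\xi\rVert$ you argue in the \emph{spatial} domain (the integrand $(1-u)^\mu$ shrinks pointwise as $\mu$ grows), but this controls $\Phi_\kappa(r)$, not $\widehat{\Phi}_\kappa(\xi)$, and a pointwise inequality in $r$ does not transfer to a pointwise inequality in Fourier space. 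What the paper actually does is bypass the two-regime split entirely: it takes Hubbert's closed form $\widehat{\Phi}_\kappa(z) = \vartheta(\lambda)\,{}_1F_2(\lambda;\tfrac{3}{2}\lambda;\tfrac{3\lambda+1}{2};-(z\beta/2)^2)$ with $\lambda=(d+1)/2+\kappa$, uses Euler's integral representation of ${}_1F_2$ so that the hypergeometric factor becomes $\int_0^1 t^{3\lambda/2-1}(1-t)^{-1/2}\bigl(1+t(z\beta/2)^2\bigr)^{-\lambda}\,\mathrm{d}t$ up to the constant absorbed into $\vartheta$, and shows \emph{each} factor is nonincreasing in $\lambda$: the integrand's $\lambda$-derivative of its logarithm is $\tfrac{3}{2}\log t - \log(1+t(z\beta/2)^2) < 0$ for $t\in(0,1)$, and the prefactor $\vartheta(\lambda)$ is shown to be decreasing by a digamma-function calculation. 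This is the concrete mechanism your write-up is missing.

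Your ``cleaner route'' via iterating the mont\'ee and a Riemann--Liouville identity is a genuinely different idea, but as written it does not obviously work. Raising $\kappa$ at fixed $\mu$ and then raising $\mu$ at fixed $\kappa$ are not individually guaranteed to produce a positive semidefinite difference on $\mathbb{R}^k$ (the mont\'ee changes the ambient dimension in which positive definiteness is guaranteed, and the Riemann--Liouville step only says something about the profile function, not about positive definiteness of the difference as a kernel on $\mathbb{R}^k$), and you do not supply the argument. In short: the reduction and the identification of the obstacle are right and match the paper, but the obstacle itself --- the pointwise Fourier inequality --- is left unresolved.
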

\begin{proof}
    By Theorem 10.12 in~\cite{wendland_2004}, for any integrable and continuous radial basis function $\Phi$ on $\mathbb{R}^k$ the corresponding RKHS norm can be expressed via
    \begin{align*}
        \lVert f \rVert_{\Phi}^2 = \int_{\mathbb{R}^k} \lvert \hat{f}(\xi) \rvert^2 \hat{\Phi}(\lVert \xi \rVert)^{-1} \, \mathrm{d}\xi.
    \end{align*}
    The generalized Wendland functions $\Phi_{\kappa}$ are such radial basis functions \citep{hubbert_2023}.
    This implies $\mathcal{H}_{\kappa^\prime} \subset \mathcal{H}_{\kappa}$
    It thus suffices to show that $\hat{\Phi}_{\kappa^\prime}(z) \leq \hat{\Phi}_{\kappa}(z)$ whenever $\kappa \leq \kappa^\prime$.
    Equation 3.17 in \cite{hubbert_2023} provides a closed form for the Fourier transform of $\Phi_\kappa$.
    Set $\lambda = (d+1)/2 + \kappa$ and note that $\lambda > 1$.
    Then
    \begin{align*}
        \hat{\Phi}_{\kappa}(z) &= \frac{2^{\lambda}\Gamma(\lambda)\Gamma(\lambda + 1) }{\Gamma(3\lambda)}~ {}_1F_2\left(\lambda; \frac{3}{2}\lambda; \frac{3\lambda + 1}{2}; -\left(\frac{z\beta}{2}\right)^2\right), 
    \end{align*}
    where ${}_1F_2$ is the hypergeometric function~\citep[Chapter 15]{abramowitz_1964}. Note that the fourth argument is negative.
    Display~15.3.1 in~\cite{abramowitz_1964} shows that the hypergeometric function has the integral representation 
    \begin{equation}
    \begin{aligned}
        {}_1F_2&\left(\lambda; \frac{3}{2}\lambda; \frac{3\lambda + 1}{2}; -\left(\frac{z\beta}{2}\right)^2\right) \\
        &= \frac{\Gamma(3/2\lambda + 1/2)}{\Gamma(3/2\lambda) \Gamma(1/2)} \int_{0}^1 t^{3/2\lambda - 1}(1-t)^{-1/2} \bigg(1 + t\bigg(\frac{z \beta}{2}\bigg)^2 \bigg)^{-\lambda} \, \mathrm{d}t. \label{eq:factor1}
    \end{aligned}
    \end{equation}
    Now consider 
    \begin{align}
        \vartheta(\lambda) = \frac{\Gamma(3/2\lambda + 1/2)}{\Gamma(3/2\lambda) \Gamma(1/2)} \cdot \frac{2^{\lambda}\Gamma(\lambda)\Gamma(\lambda + 1) }{\Gamma(3\lambda)} \label{eq:factor2}
    \end{align}
    and let $\psi^{(0)}$ be the digamma function, that is $\psi^{(0)}(z) \Gamma(z) = \Gamma^\prime(z)$ \citep[Chapter 6]{abramowitz_1964}.
    It is an easy consequence of display~6.4.1 in~\cite{abramowitz_1964} that $\psi^{(0)}$ is monotonically increasing. 
    Straightforward calculation yields 
    \begin{align*}
        \frac{\vartheta^\prime(\lambda)}{\vartheta(\lambda)} = \frac{3}{2}\psi^{(0)}\left(\frac{3}{2}\lambda+\frac{1}{2}\right) + \log(2) + \psi^{(0)}(\lambda) + \psi^{(0)}(\lambda+1) - \frac{3}{2}\psi^{(0)}\left(\frac{3}{2}\lambda\right)- 3\psi^{(0)}(3\lambda).
    \end{align*}
    Because $\vartheta(\lambda) > 0$, the derivative $\vartheta^\prime$ is negative if and only if the right hand side is negative. 
    Due to the monotonicity of of $\psi^{(0)}$, for $\lambda > 1/2$ we have
    \begin{align}
        \frac{3}{2}\psi^{(0)}\left(\frac{3}{2}\lambda + \frac{1}{2}\right) + \psi^{(0)}(\lambda+1) - \frac{5}{2}\psi^{(0)}(3\lambda) < 0. \label{eq:factor2_deriv1}
    \end{align}
    Moreover, for $\lambda = 1$ we have by displays~6.3.2 and~6.3.5 in~\cite{abramowitz_1964} that
    \begin{align*}
        -\frac{1}{2}\left(\psi^{(0)}\left(\frac{3}{2}\lambda\right)+ \psi^{(0)}(3\lambda)\right) - \psi^{(0)}\left(\frac{3}{2}\lambda\right)+ \log(2)  + \psi^{(0)}(\lambda) \leq 0.
    \end{align*}
    Again by monotonicity, for all $\lambda \geq 1$ we then have
    \begin{align}
        -\frac{1}{2}\left(\psi^{(0)}\left(\frac{3}{2}\lambda\right)+ \psi^{(0)}(3\lambda)\right) - \psi^{(0)}\left(\frac{3}{2}\lambda\right)+ \log(2)  + \psi^{(0)}(\lambda) \leq 0.\label{eq:factor2_deriv2}
    \end{align}
    Combining \eqref{eq:factor2_deriv1} and \eqref{eq:factor2_deriv2} yields $\vartheta^\prime(\lambda) < 0$ for all $\lambda > 1$.

    To see that the integral in~\eqref{eq:factor1} is decreasing in $\lambda$, note that, for $t \in (0, 1)$,
    \begin{align*}
        \deriv{\lambda}t^{3/2\lambda - 1} \left(1 + t\frac{(\beta z)^2}{4}\right)^{-\lambda} = t^{3/2\lambda - 1} \left(1 + t\frac{(\beta z)^2}{4}\right)^{-\lambda} \left[ \frac{3}{2}\log(t) - \log\left(1 + t\frac{(\beta z)^2}{4}\right)\right] .
    \end{align*}
    Because $0< t < 1$, this implies that the non-negative function inside the integral in~\eqref{eq:factor1} is pointwise (in $t$) decreasing in $\lambda$, and thus the integral must be decreasing in $\lambda$.
    Combining \eqref{eq:factor1} and \eqref{eq:factor2}, we thus see that
    \begin{align*}
        \Phi_{\kappa}(x) = \vartheta(\lambda) \int_{0}^1 t^{3/2\lambda - 1}(1-t)^{-1/2} \left(1 + t\frac{(\beta z)^2}{4}\right)^{-\lambda} \, \mathrm{d}t
    \end{align*}
    is the product of two decreasing functions in $\lambda$.
    Because $\lambda$ is increasing in $\kappa$, it follows that $\Phi_{\kappa}$ is decrasing in $\kappa$ and thus for all $z > 0$ and for $\kappa \leq \kappa^\prime$ we have
    \begin{align*}
        \Phi_{\kappa^\prime}(z) \leq \Phi_{\kappa}(z), 
    \end{align*}
    which proves the claim.
\end{proof}

\subsection{An Asymptotic Normality Result}

Here we formally state and proof an asymptotic normality result from Section~\ref{sec:magnitude-estimation}.

\begin{proposition}\label{prop:asymptotic_normality}
    Let $u$ be as in \eqref{eq:true_model} and assume that $\{\xi_i\}$ are i.i.d. standard Gaussian.
    Assume that the true smoothness parameter is known and fixed at $s_0 > d/2$.
    Let $\hat{\sigma}_n^2$ and $\hat{\tau}_n$ be the maximizers of the log-likelihood 
    \begin{align*}
        \ell(\tau, \sigma^2; u(\mb{x})) = -\frac{n}{2}\log(2\pi) - \frac{1}{2}\log\left(\det( \sigma^2K_{\theta}(\mb{x})\right) - \frac{1}{2\sigma^2}u(\mb{x})^{\top} K_\theta(\mb{x})^{-1} u(\mb{x}),
    \end{align*}
    where $\theta = (s_0, \tau)$ and $\hat{\tau}_n$ is constrained to be in $[\tau_L, \tau_U]$ for some $1 \leq \tau_L < \tau_U < \infty$.
    Let $\hat{\theta}_n = (s_0, \hat{\tau}_n)$.
    If $d \leq 3$, then
    \begin{align*}
        \sqrt{n}(\hat{\sigma}_n^2v(\hat{\theta}_n) - \sigma_0^2v(\theta_0)) \to \mathcal{N}(0,2\sigma_0^4 v(\theta_0)^2) .
    \end{align*}
\end{proposition}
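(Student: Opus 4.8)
The plan is to profile out the magnitude parameter and reduce the statement to an assertion about the estimated microergodic parameter viewed as a function of the non-identifiable range parameter, and then to show that this estimator is asymptotically insensitive to which value of $\tau$ is plugged in. Concretely, for fixed $\tau$ the log-likelihood is maximised in $\sigma^2$ at $\hat\sigma_n^2(\tau)=n^{-1}u(\mb{x})^{\top}K_{(s_0,\tau)}(\mb{x})^{-1}u(\mb{x})$; set $\hat\omega_n(\tau)=\hat\sigma_n^2(\tau)\,v(s_0,\tau)$, $\omega_0=\sigma_0^2 v(\theta_0)$ and $G_n(\tau)=\sqrt n\,(\hat\omega_n(\tau)-\omega_0)$, so that $\hat\sigma_n^2 v(\hat\theta_n)=\hat\omega_n(\hat\tau_n)$ and the claim is that $G_n(\hat\tau_n)\to\mathcal N(0,2\omega_0^2)$ in distribution. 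Since $\hat\tau_n\in[\tau_L,\tau_U]$, it suffices by Slutsky's theorem to prove (i) $G_n(\tau_0)\to\mathcal N(0,2\omega_0^2)$ in distribution, and (ii) $\sup_{\tau\in[\tau_L,\tau_U]}|G_n(\tau)-G_n(\tau_0)|\to 0$ in probability.

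Claim (i) is immediate. Under $\P_{\sigma_0^2,\theta_0}$ the vector $u(\mb{x})$ is centred Gaussian with covariance $\sigma_0^2 K_{\theta_0}(\mb{x})$, so $u(\mb{x})^{\top}K_{\theta_0}(\mb{x})^{-1}u(\mb{x})\stackrel{d}{=}\sigma_0^2\chi_n^2$, whence $\sqrt n(\hat\sigma_n^2(\tau_0)-\sigma_0^2)=\sigma_0^2 n^{-1/2}(\chi_n^2-n)\to\mathcal N(0,2\sigma_0^4)$ by the central limit theorem, and multiplying by $v(\theta_0)$ yields $G_n(\tau_0)\to\mathcal N(0,2\omega_0^2)$.

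For (ii) I would pass to the unnormalised kernel $R_\theta=v(\theta)^{-1}K_\theta$, for which $n\hat\omega_n(\tau)=Q_n(\tau):=u(\mb{x})^{\top}R_{(s_0,\tau)}(\mb{x})^{-1}u(\mb{x})$, so $G_n(\tau)-G_n(\tau_0)=n^{-1/2}(Q_n(\tau)-Q_n(\tau_0))$. Writing $u(\mb{x})=\sqrt{\omega_0}\,R_{\theta_0}(\mb{x})^{1/2}Z$ with $Z\sim\mathcal N(0,I_n)$ and setting $A_n(\tau)=R_{\theta_0}(\mb{x})^{1/2}R_{(s_0,\tau)}(\mb{x})^{-1}R_{\theta_0}(\mb{x})^{1/2}$ (so $A_n(\tau_0)=I_n$) turns this into the quadratic form $Q_n(\tau)-Q_n(\tau_0)=\omega_0\,Z^{\top}(A_n(\tau)-I_n)Z$, with mean $\omega_0(\tr A_n(\tau)-n)$ and variance $2\omega_0^2\lVert A_n(\tau)-I_n\rVert_F^2$, where $\lVert\cdot\rVert_F$ is the Frobenius norm. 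It therefore suffices to show, uniformly over $\tau\in[\tau_L,\tau_U]$, that $\tr A_n(\tau)-n=o(\sqrt n)$ and $\lVert A_n(\tau)-I_n\rVert_F^2=o(n)$; the pointwise-in-$\tau$ consequence $G_n(\tau)-G_n(\tau_0)\to 0$ in probability is then upgraded to a supremum by a $\delta$-net argument with $\delta$ polynomially small in $n$, using that the increments $\partial_\tau A_n(\tau)$ are controlled through the lower eigenvalue bound $\min_i \V_{s_0}(x_i\mid\mb{x}^n_i)\gtrsim q_{\mb{x}}^{2s_0-d}\asymp n^{-(2s_0-d)/d}$ from Proposition~\ref{prop:cond_var_bounds}, and a Hanson--Wright (or high-moment) bound at the net points.

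The two trace estimates are the heart of the matter, and they are exactly where the restriction $d\le 3$ enters. The eigenvalues $\mu_j(\tau)$ of $A_n(\tau)$ are the generalised eigenvalues of the pencil $(R_{\theta_0}(\mb{x}),R_{(s_0,\tau)}(\mb{x}))$, hence Rayleigh stationary values, over an $n$-dimensional subspace of $H^{s_0}_L$, of the operator with eigenvalues $\alpha_i(\tau)=\big((\tau+\lambda_i)/(\tau_0+\lambda_i)\big)^{s_0}-1$; by Courant--Fischer, $|\mu_j(\tau)-1|\le|\alpha(\tau)|_{(j)}$, the $j$-th largest of the $|\alpha_i(\tau)|$. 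Since $|\alpha_i(\tau)|\lesssim(1+\lambda_i)^{-1}\asymp i^{-2/d}$ uniformly in $\tau\in[\tau_L,\tau_U]$ by Weyl's law~\eqref{eq:weyls-law}, we get $\lVert A_n(\tau)-I_n\rVert_F^2=\sum_{j\le n}(\mu_j(\tau)-1)^2\lesssim\sum_{i\ge1}i^{-4/d}<\infty$ precisely when $d\le 3$ --- the same summability that underlies Theorem~\ref{thm:equivalent_measures} --- so the variance term is $O(1/n)$; and $\tr A_n(\tau)-n=\tr\!\big(R_{(s_0,\tau)}(\mb{x})^{-1}[R_{\theta_0}(\mb{x})-R_{(s_0,\tau)}(\mb{x})]\big)=\sum_{j\le n}(\mu_j(\tau)-1)$, which in absolute value is $\lesssim\sum_{j\le n}j^{-2/d}\lesssim n^{\max\{1-2/d,\,0\}}\log n=o(\sqrt n)$ for $d\le 3$. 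The main obstacle is making the comparison between the finite Gram matrices and the integral operators $(\tau_0-\Delta)^{-s_0}$ and $(\tau-\Delta)^{-s_0}$ fully rigorous and uniform in $\tau$; this is essentially the computation in \citet[Proposition~3]{bolin_graph_2023} together with the classical arguments of \citet{du_2009,wang_2011,Kaufman2013}, which transfer to the present manifold setting once Weyl's law and the norm-equivalence of the RKHS of $K_\theta$ with $H^{s_0}_L$ are in hand.
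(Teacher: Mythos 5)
Your high-level strategy---profile out $\sigma$, establish the $\chi^2$-CLT at $\theta_0$, and show that the choice of $\tau\in[\tau_L,\tau_U]$ is asymptotically immaterial---matches the paper's, but both substeps are executed by different means. For the fixed-$\tau$ comparison, the paper chooses $\sigma_1$ with $\sigma_1^2 v(\theta_1)=\sigma_0^2 v(\theta_0)$ and invokes the equivalence of the associated Gaussian measures (Section~\ref{sec:equivalent_measures}, $d\le 3$) together with the Ibragimov--Rozanov criterion to conclude that the variance of $n^{-1/2}\big(\sigma_1^{-2}u(\mb x)^{\top}K_{\theta_1}(\mb x)^{-1}u(\mb x)-\sigma_0^{-2}u(\mb x)^{\top}K_{\theta_0}(\mb x)^{-1}u(\mb x)\big)$ is uniformly bounded, whence Chebyshev gives $o_P(1)$. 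Your interlacing argument---identifying the generalized eigenvalues of $(R_{\theta_0}(\mb x),R_\theta(\mb x))$ with the spectrum of a compression to an $n$-dimensional subspace of $\mathrm{diag}\big(((\tau+\lambda_i)/(\tau_0+\lambda_i))^{s_0}\big)$, a compact perturbation of the identity, and then applying Courant--Fischer and Weyl's law to get $\mathrm{tr}(A_n(\tau)-I)=o(\sqrt n)$ and $\lVert A_n(\tau)-I\rVert_F=O(1)$---proves the same bounds from first principles and is a sound, self-contained alternative to citing Ibragimov.

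Where your plan is unnecessarily heavy is uniformity over the data-dependent $\hat\tau_n$: the $\delta$-net with Hanson--Wright and Lipschitz control can be dropped entirely. The paper observes that $R_\theta(\mb x)=v(\theta)^{-1}K_\theta(\mb x)=\sum_i(\tau+\lambda_i)^{-s_0}e_i(\mb x)e_i(\mb x)^{\top}$ is decreasing in $\tau$ in the Loewner order, so $v(\theta)K_\theta(\mb x)^{-1}$ is increasing in $\tau$, and therefore $\hat\omega_n(\tau)=n^{-1}u(\mb x)^{\top}v(s_0,\tau)K_{(s_0,\tau)}(\mb x)^{-1}u(\mb x)$ is nondecreasing in $\tau$ for every realization. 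Consequently $\hat\omega_n(\tau_L)\le\hat\omega_n(\hat\tau_n)\le\hat\omega_n(\tau_U)$ deterministically, the fixed-$\tau$ step applied at $\tau_L$ and $\tau_U$ gives $\sqrt n\big(\hat\omega_n(\tau_U)-\hat\omega_n(\tau_L)\big)=o_P(1)$, and the sandwich yields $G_n(\hat\tau_n)=G_n(\tau_L)+o_P(1)$ with no chaining argument at all. You should incorporate this monotonicity observation; it removes the least tractable piece of your outline and makes the uniformity step one line.
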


Note that the restriction $\tau_L \geq 1$ is not really a restriction.
We can always rescale the underlying distances to obtain $\tau_L \geq 1$.
The assumption $d \leq 3$ is due to the fact that both $\sigma_0^2$ and $\tau_0$ can be consistently estimated if $d > 3$~\citep{anderes_2010}.

\begin{proof}
    We have 
    \begin{align}
        \sqrt{n}\left[\frac{u(\mb{x})^{\top} K_{\theta_0}(\mb{x})^{-1} u(\mb{x})}{n \sigma_0^2} - 1 \right]= \frac{1}{\sqrt{n}}\sum_{i = 1}^n (\tilde{\xi}_{n,i}^2 - 1)\to \mathcal{N}(0, 2), \label{eq:base_clt}
    \end{align}
    for example by an application of the Lindeberg--Feller theorem \citep[Theorem 6.13]{kallenberg_2002}.
    Now fix an arbitrary $\tau_1$ and let $\theta_1 = (s_0, \tau_1)$. 
    Let furthermore $\sigma_1^2$ be such that $\sigma_1^2 v(\theta_1) = \sigma_0^2v(\theta_0)$ and let $\hat{\sigma}_{\tau_1, n}^2$ be the maximum likelihood estimator when using $\tau = \tau_1$ and $s = s_0$.
    Because $\sigma_1^2v(\theta_1)  =\sigma_0^2 v(\theta_0)$, it suffices to show $\sqrt{n}(\hat{\sigma}_{n,\tau_1}^2/ \sigma_1^2 - 1) \to \mathcal{N}(0,2)$ in order to show that $\sqrt{n}(\hat{\sigma}_{\tau_1, n}^2 v(\theta_1) - \sigma_0^2 v(\theta_0)) \to \mathcal{N}(0, \sigma_0^4 v(\theta_0)^2)$.
    Now
    \begin{align*}
        \sqrt{n}\left(\frac{\hat{\sigma}_{\tau_1, n}^2}{\sigma_1^2} - 1\right) ={}& \frac{1}{\sqrt{n}}\left[\frac{1}{\sigma_1^{2}}u(\mb{x})^{\top}K_{\theta_1}(\mb{x})^{-1}u(\mb{x}) - \frac{1}{\sigma_0^{2}}u(\mb{x})^{\top}K_{\theta_0}(\mb{x})^{-1}u(\mb{x}) \right] \\
        &+ \sqrt{n}\left[\frac{u(\mb{x})^{\top}K_{\theta_0}(\mb{x})^{-1}u(\mb{x})}{n \sigma_0^{2}} - 1 \right] .
    \end{align*}
    Asymptotic normality of the final summand follows from \eqref{eq:base_clt}.
    The choice of $\sigma_1$ yields equivalence of the measures $\mathcal{N}(0, \sigma_1^2 v(\theta_1)(\tau_1 -\Delta)^{s_0})$ and $\mathcal{N}(0, \sigma_0^2 v(\theta_0)(\tau_0 -\Delta)^{s_0})$.
    Therefore the other summand tends to 0 by an application of Chebychev's inequality as the variances of
    \begin{align*}
        \frac{1}{\sigma_1^{2}}u(\mb{x})^{\top}K_{\theta_1}(\mb{x})^{-1}u(\mb{x}) - \frac{1}{\sigma_0^{2}}u(\mb{x})^{\top}K_{\theta_0}(\mb{x})^{-1}u(\mb{x}) 
    \end{align*}
    are uniformly bounded; see for example Theorem~1 combined with display (2.9) in~\cite{ibragimov_1978}.
    This yields the claim for arbitrary $\tau_1$, and in particular for $\tau_L$ and $\tau_U$.
    Let $\theta_L = (s_0, \tau_L)$ and $\theta_U = (s_0, \tau_U)$.
    Now, by construction $\hat{\tau}_n$ is a bounded sequence in $[\tau_L, \tau_U]$ and so the claim follows if we can show that $\hat{\sigma}_{\tau_L,n}v(\theta_L) \leq \hat{\sigma}_{\hat{\tau}_n,n}v(\hat{\tau}_n) \leq \hat{\sigma}_{\tau_U,n}v(\theta_U)$.
    For this purpose let $\tau_1 \leq \tau_2$.
    Then 
    \begin{align*}
        n \big[\hat{\sigma}_{\tau_2, n}^2 v(\theta_2) &- \hat{\sigma}_{\tau_1, n}^2 v(\theta_1) \big]= u(\mb{x})^{\top} \left[v(\theta_2)K_{\theta_2}^{-1}(\mb{x}) - v(\theta_1)K_{\theta_1}^{-1}(\mb{x}) \right]u(\mb{x}) .
    \end{align*}
    Thus it suffices to show that $v(\theta_2)K_{\theta_2}^{-1}(\mb{x}) - v(\theta_1)K_{\theta_1}^{-1}(\mb{x}) $ is positive semi-definite.
    We have 
    \begin{align*}
        \frac{K_{\theta_2}(\mb{x})}{v(\theta_2)} = \sum_{i = 1}^\infty (\tau_2 + \lambda_i)^{-s_0} e_i(\mb{x}) e_i(\mb{x})^{\top} \preceq \sum_{i = 1}^\infty (\tau_1 + \lambda_i)^{-s_0} e_i(\mb{x}) e_i(\mb{x})^{\top} = \frac{K_{\theta_1}(\mb{x})}{v(\theta_1)},
    \end{align*}
    where for two matrices $A,B$ we write $A \preceq B$ if $B - A$ is positive semi-definite.
    This yields the claim as $A \succeq B \iff A^{-1} \preceq B^{-1}$ for positive definite matrices $A,B$.
\end{proof}

\section{Infinite Product Measures and Kakutani's Theorem}\label{sec:measure_kakutani}

In this section, we essentially repeat the content of Section 5 in \citet{kakutani_1948} in order to state Kakutani's theorem and then provide some additional results pertaining to the setting of the present paper.

Let $\{(\Omega_i, \mathcal{F}_i, \mathbb{P}_i)\}$ be a sequence of measure spaces.
Define $\Omega^\infty$ to be the set of sequences
\begin{align*}
    \Omega^\infty = \left\{ \{\omega_i\}_{i = 1}^\infty ~ \middle \vert ~ \omega_i \in  \Omega_i \right\}
\end{align*}
A subset $R$ of $\Omega^{\infty}$ is called \textit{rectangular} if it is of the form 
\begin{align*}
    R = \bigg( \prod_{i = 1}^n B_i \bigg) \times \bigg( \prod_{i = n+1}^\infty \Omega_i \bigg)
\end{align*}
with $B_i \in \mathcal{F}_i$, where the products $\prod$ and $\times$ are the Cartesian products.
An \textit{elementary} set is a subset of $\Omega^\infty$ that is the finite, disjoint union of rectangular sets. 
The family of elementary sets is a field \citep[p.\@~217]{kakutani_1948}.
Let $\mathcal{F}^\infty$ be the smallest $\sigma$-algebra containing all elementary sets.
To define the infinite product measure $\mathbb{P} \coloneqq \bigotimes_{i = 1}^\infty \mathbb{P}_i$, for a rectangular set $R$ and an elementary set $E = \bigcup_{i = 1}^n R_i$ we set
\begin{align*}
    \tilde{\mathbb{P}}(R) = \prod_{i = 1}^{n} \mathbb{P}_i(B_{i}) \quad \text{ and } \quad \tilde{\mathbb{P}}(E) = \sum_{i = 1}^n \tilde{\mathbb{P}}(R_i).
\end{align*}
The measure $\mathbb{P}$ is then the extension \cite[p.\@~217]{kakutani_1948} of $\tilde{\mathbb{P}}$ to $\mathcal{F}^\infty$.
Clearly, $\Omega^\infty$ is a rectangular set and thus if the $\mathbb{P}_i$ are probability measures, then so is $\mathbb{P}$.

To state Kakutani's theorem, consider the two infinite direct product measure spaces $(\Omega^\infty, \mathcal{F}^\infty, \mathbb{P}_1)$ and $(\Omega^\infty, \mathcal{F}^\infty, \mathbb{P}_2)$ constructed from the sequences of measures spaces $\{(\Omega_i, \mathcal{F}_i, \mathbb{P}_{1,i})\}$ and $\{(\Omega_i, \mathcal{F}_i, \mathbb{P}_{2,i})\}$.
Recall that the Hellinger distance is defined as
\begin{align*}
    \rho(\mathbb{P}_{1,i}, \mathbb{P}_{2,i}) = \int_{\Omega_i} \sqrt{\frac{\mathrm{d} \P_{1,i}}{ \mathrm{d} \lambda}} \sqrt{\frac{\mathrm{d} \P_{2,i}}{\mathrm{d} \lambda}} \mathrm{d}\lambda.
\end{align*}

\begin{theorem}[Kakutani's theorem from~\citealp{kakutani_1948}]
    Assume that the measures $\P_{1,i}$ and $\P_{2,i}$ are equivalent for all $i$.
    Then the infinite product measures $\mathbb{P}_1$ and $\mathbb{P}_2$ are either equivalent or orthogonal.
    They are orthogonal if and only if
    \begin{align*}
        \prod_{i = 1}^\infty \rho(\mathbb{P}_{1,i}, \mathbb{P}_{2,i}) = 0 \quad \text{ or equivalently} \quad \sum_{i = 1}^\infty - \log\left[\rho(\mathbb{P}_{1,i}, \mathbb{P}_{2,i})\right] < \infty.
    \end{align*}
    Moreover, 
    \begin{align*}
        \rho(\mathbb{P}_1, \mathbb{P}_2) = \prod_{i = 1}^\infty \rho(\mathbb{P}_{1,i}, \mathbb{P}_{2,i}).
    \end{align*}
\end{theorem}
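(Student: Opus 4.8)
The plan is to give the standard martingale proof of the dichotomy, working throughout on the reference space $(\Omega^\infty, \mathcal{F}^\infty, \mathbb{P}_2)$. Since $\mathbb{P}_{1,i}$ and $\mathbb{P}_{2,i}$ are equivalent, put $f_i = \mathrm{d}\mathbb{P}_{1,i}/\mathrm{d}\mathbb{P}_{2,i}$, which is strictly positive $\mathbb{P}_{2,i}$-almost everywhere. Let $\mathcal{F}_n \subset \mathcal{F}^\infty$ be the sub-$\sigma$-algebra generated by the first $n$ coordinates and set $X_n(\omega) = \prod_{i=1}^n f_i(\omega_i)$. By independence of the coordinates under each product measure, $X_n$ is exactly the Radon--Nikodym derivative of $\mathbb{P}_1|_{\mathcal{F}_n}$ with respect to $\mathbb{P}_2|_{\mathcal{F}_n}$, so $(X_n,\mathcal{F}_n)$ is a nonnegative martingale under $\mathbb{P}_2$ with $\mathbb{E}_{\mathbb{P}_2}[X_n]=1$, and Doob's convergence theorem gives $X_n \to X_\infty$ $\mathbb{P}_2$-almost surely for some $X_\infty \ge 0$.

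The next step is the square-root computation that couples everything to the Hellinger affinities. With $\rho_i = \rho(\mathbb{P}_{1,i},\mathbb{P}_{2,i}) = \mathbb{E}_{\mathbb{P}_{2,i}}[\sqrt{f_i}] \in (0,1]$ (the upper bound being Cauchy--Schwarz) and $P_n = \prod_{i=1}^n \rho_i$, independence yields $\mathbb{E}_{\mathbb{P}_2}[\sqrt{X_n}] = P_n$ and, for $m<n$,
\[
  \mathbb{E}_{\mathbb{P}_2}\!\bigl[(\sqrt{X_n}-\sqrt{X_m})^2\bigr] = 2\bigl(1 - P_n/P_m\bigr).
\]
The sequence $\{P_n\}$ is non-increasing, hence converges to $P_\infty = \prod_{i=1}^\infty \rho_i \ge 0$, and since each $\rho_i \in (0,1]$ one has $P_\infty>0$ exactly when $\sum_i(-\log\rho_i) < \infty$ (equivalently $\prod_i\rho_i>0$; note that $\prod_i\rho_i = 0$ corresponds to $\sum_i(-\log\rho_i)=\infty$).

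Then I would split into the two cases. If $P_\infty>0$, the displayed identity shows $\{\sqrt{X_n}\}$ is Cauchy in $L^2(\mathbb{P}_2)$, so $\sqrt{X_n}\to\sqrt{X_\infty}$ in $L^2$ and $X_n\to X_\infty$ in $L^1(\mathbb{P}_2)$; thus $\mathbb{E}_{\mathbb{P}_2}[X_\infty]=1$ and, as $\bigcup_n\mathcal{F}_n$ generates $\mathcal{F}^\infty$, $X_\infty = \mathrm{d}\mathbb{P}_1/\mathrm{d}\mathbb{P}_2$, i.e.\ $\mathbb{P}_1\ll\mathbb{P}_2$. Moreover $\mathbb{E}_{\mathbb{P}_2}[\sqrt{X_\infty}] = P_\infty>0$, so $\mathbb{P}_2(X_\infty>0)>0$; but up to a $\mathbb{P}_2$-null set $\{X_\infty>0\}$ coincides with the tail event that $\sum_i\log f_i(\omega_i)$ converges, so Kolmogorov's zero--one law forces $X_\infty>0$ $\mathbb{P}_2$-a.s., which upgrades absolute continuity to equivalence. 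If instead $P_\infty=0$, Fatou gives $\mathbb{E}_{\mathbb{P}_2}[\sqrt{X_\infty}]\le\liminf_n P_n = 0$, so $X_\infty=0$ $\mathbb{P}_2$-a.s.; running the symmetric martingale $X_n' = 1/X_n = \mathrm{d}(\mathbb{P}_2|_{\mathcal{F}_n})/\mathrm{d}(\mathbb{P}_1|_{\mathcal{F}_n})$ under $\mathbb{P}_1$ shows $X_n'$ converges $\mathbb{P}_1$-a.s.\ to a finite limit, hence $\liminf_n X_n>0$ $\mathbb{P}_1$-a.s. Therefore $A=\{\lim_n X_n = 0\}\in\mathcal{F}^\infty$ has $\mathbb{P}_2(A)=1$ and $\mathbb{P}_1(A)=0$, so the measures are orthogonal. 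For the affinity formula, $\rho(\mathbb{P}_1|_{\mathcal{F}_n},\mathbb{P}_2|_{\mathcal{F}_n}) = P_n$ by Fubini on finite products, and passing to a coarser $\sigma$-algebra can only increase $\rho$ (the data-processing inequality coming from concavity of the square root), so $\rho(\mathbb{P}_1,\mathbb{P}_2)\le P_\infty$; in the equivalent case $\rho(\mathbb{P}_1,\mathbb{P}_2) = \mathbb{E}_{\mathbb{P}_2}[\sqrt{X_\infty}] = \lim_n P_n = P_\infty$ by the $L^1$-convergence of $\sqrt{X_n}$, and in the orthogonal case both sides vanish, so $\rho(\mathbb{P}_1,\mathbb{P}_2) = \prod_{i=1}^\infty\rho_i$ in all cases.

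The cleanest ingredient is the $L^2$-Cauchy estimate; the delicate points are (i) checking that the $\mathbb{P}_2$-a.s.\ martingale limit $X_\infty$ genuinely serves as the Radon--Nikodym density on all of $\mathcal{F}^\infty$, which requires $\bigcup_n\mathcal{F}_n$ to be a generating field together with the uniform-integrability argument, and (ii) exhibiting, in the singular case, one concrete $\mathcal{F}^\infty$-measurable set separating the two measures — exactly where the symmetric martingale $1/X_n$ under $\mathbb{P}_1$ and the zero--one law do the work. I expect (ii) to be the main obstacle.
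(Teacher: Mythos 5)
The paper does not actually prove this statement: it simply reproduces Kakutani's theorem with a citation to Kakutani (1948), so there is no ``paper proof'' to compare against. Your proof is correct and complete, and it is the standard modern martingale reformulation of Kakutani's original $L^2$ argument. Your handling of the two subtle points is right: in the $P_\infty>0$ case you correctly pass from Cauchyness of $\{\sqrt{X_n}\}$ in $L^2(\mathbb{P}_2)$ to $L^1$ convergence of $X_n$ via the factorization $X_n - X_\infty = (\sqrt{X_n}-\sqrt{X_\infty})(\sqrt{X_n}+\sqrt{X_\infty})$ together with the uniform $L^2$ bound $\lVert \sqrt{X_n}\rVert_2 = 1$ (and $\lVert\sqrt{X_\infty}\rVert_2 \le 1$ by Fatou), and your use of the Kolmogorov zero--one law to upgrade $\mathbb{P}_1\ll\mathbb{P}_2$ to mutual absolute continuity is exactly the right device, since $\{X_\infty>0\}$ is, up to a $\mathbb{P}_2$-null set, the tail event that $\sum_i \log f_i(\omega_i)$ converges. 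In the $P_\infty=0$ case, running the reciprocal martingale $X_n' = 1/X_n$ under $\mathbb{P}_1$ to produce a separating set is the standard trick and you execute it correctly; the point is that Doob's theorem for the nonnegative $\mathbb{P}_1$-martingale $X_n'$ gives a finite a.s.\ limit, so $\liminf_n X_n>0$ $\mathbb{P}_1$-a.s.\ while $X_n\to 0$ $\mathbb{P}_2$-a.s. One cosmetic remark worth making in the write-up: as stated in the paper, the displayed ``equivalently $\sum_{i}-\log\rho_i < \infty$'' clause is a typo --- orthogonality is equivalent to the divergence of that series ($\prod_i\rho_i=0 \iff \sum_i(-\log\rho_i)=\infty$), which you implicitly correct but do not flag.
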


Next, we consider the special case where $\Omega_i = \mathbb{R}$, and $\mathcal{F}_i = \mathcal{B}(\mathbb{R})$.
Let $l_2$ denote the set of square-summable sequences and $\lVert \cdot \rVert_{\hat{0}}$ the corresponding norm.
Then $l_2 \subset \mathbb{R}^\infty$.
By definition of $\mathcal{F}^{\infty}$, $a \mapsto \lVert a \rVert_{\hat{0}}^2$ is the countable sum of positive, measurable functions and thus measurable on \smash{$(\mathbb{R}^\infty, \mathcal{F}^\infty)$}, see Lemmas 1.10 and 1.13 in \citet{kallenberg_2002}.
Thus \smash{$l_2 = (\lVert \cdot \rVert_{\hat{0}})^{-1}([0, \infty))$} is measurable.

\begin{proposition}\label{prop:sigma_algebra_coincide}
    Let $\mathcal{B}(l_2)$ denote the $\sigma$-algebra generated by the open sets with respect to the norm topology.
    Then $\mathcal{B}(l_2) = \mathcal{F}^\infty\vert_{l_2} \coloneqq \{ A \cap l_2  ~ \vert ~ A \in \mathcal{F}^\infty\}$.
\end{proposition}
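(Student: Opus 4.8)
The plan is to prove the two inclusions $\mathcal{F}^\infty\vert_{l_2} \subseteq \mathcal{B}(l_2)$ and $\mathcal{B}(l_2) \subseteq \mathcal{F}^\infty\vert_{l_2}$ separately, using that $l_2$ is a separable metric space and that $\mathcal{F}^\infty$ is, as follows immediately from the definition of rectangular and elementary sets, the smallest $\sigma$-algebra on $\mathbb{R}^\infty$ making every coordinate map $x \mapsto x_i$ measurable; equivalently, $\mathcal{F}^\infty = \sigma(\mathcal{C})$ where $\mathcal{C}$ is the collection of preimages of Borel sets under the coordinate maps.

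For the inclusion $\mathcal{F}^\infty\vert_{l_2} \subseteq \mathcal{B}(l_2)$, I would first note that each coordinate map $x \mapsto x_i$ is $1$-Lipschitz on $l_2$ with respect to $\lVert \cdot \rVert_{\hat{0}}$, hence continuous and therefore $\mathcal{B}(l_2)$-measurable. Since the trace of a generated $\sigma$-algebra is generated (within $l_2$) by the traces of a generating family, $\mathcal{F}^\infty\vert_{l_2}$ is generated by the sets $(x \mapsto x_i)^{-1}(B) \cap l_2$ with $B$ Borel, each of which lies in $\mathcal{B}(l_2)$ by continuity of the restricted coordinate maps; hence $\mathcal{F}^\infty\vert_{l_2} \subseteq \mathcal{B}(l_2)$.

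For the reverse inclusion, I would invoke separability: the countable set $D$ of finitely supported sequences with rational entries is dense in $l_2$, so every open subset of $l_2$ is a countable union of balls $B(a,r) = \{ x \in l_2 : \lVert x - a \rVert_{\hat{0}} < r \}$ with $a \in D$ and rational $r > 0$. Consequently $\mathcal{B}(l_2)$ is generated by this countable family of balls, and it suffices to show each $B(a,r)$ lies in $\mathcal{F}^\infty\vert_{l_2}$. For fixed $a \in D$ the function $x \mapsto \lVert x - a \rVert_{\hat{0}}^2 = \sum_{i=1}^\infty (x_i - a_i)^2$ is a countable sum of nonnegative, $\mathcal{F}^\infty$-measurable functions on $\mathbb{R}^\infty$, hence $[0,\infty]$-valued and $\mathcal{F}^\infty$-measurable, by exactly the argument recorded just above the statement for the case $a = 0$. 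Thus $\{ x \in \mathbb{R}^\infty : \lVert x - a \rVert_{\hat{0}}^2 < r^2 \} \in \mathcal{F}^\infty$, and intersecting with $l_2$ gives $B(a,r) \in \mathcal{F}^\infty\vert_{l_2}$, as needed.

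The only points requiring a little care are the identification of $\mathcal{F}^\infty$ with the $\sigma$-algebra generated by the coordinate maps, and the standard fact that in a separable metric space the Borel $\sigma$-algebra is generated by the countably many balls with centres in a fixed countable dense set and rational radii; the reduction from arbitrary open sets to this countable family is what makes the second inclusion go through, and it is the only nontrivial ingredient. I expect the remainder of the argument to be entirely routine bookkeeping.
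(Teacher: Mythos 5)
Your proof is correct and takes essentially the same route as the paper: for $\mathcal{B}(l_2)\subseteq\mathcal{F}^\infty\vert_{l_2}$ both arguments exploit $\mathcal{F}^\infty$-measurability of $x\mapsto\lVert x-a\rVert_{\hat{0}}$ (as a countable sum of measurable functions) together with separability, and for the reverse inclusion your observation that the coordinate maps are $1$-Lipschitz on $l_2$ is exactly the paper's remark that the trace of a generating rectangular set is open in $l_2$.
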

\begin{proof}
    To show $\mathcal{B}(l_2) \subset \mathcal{F}^\infty\vert_{l_2}$, reasoning as above, we note that for $b \in l_2$, the function $g(a) = \lVert a-b \rVert_{\hat{0}}$ is measurable and $B_{\delta}(b) = g^{-1}([0, \delta))$, where $B_{\delta}(b)$ is the ball of radius $\delta$ around $b \in l_2$. 
    The claim now follows from separability of $l_2$.
    For the converse, note that the sets $l_2 \cap ((a_i, b_i) \times \prod_{j \neq i} \Omega_j )$ generate $\mathcal{F}^\infty \vert_{l_2}$ and are open in $l_2$.
\end{proof}
Next, let 
\begin{align*}
    u = \sigma \sqrt{v(\theta)}\sum_{i = 1}^\infty (\tau + \lambda_i)^{-s/2} \xi_i e_i,
\end{align*}
where $\{\xi_i\}$ are i.i.d. random variables with finite second moment, and let $\P_{\sigma, \theta}$ be as in Section~\ref{sec:equivalent_measures}.

\begin{proposition}\label{prop:correct_measures}
    For any $B \in \mathcal{B}(L_2(\mathcal{M}))$ we have
    \begin{align*}
        \P (u \in B) = \P_{\sigma, \theta}(\iota(B)).
    \end{align*}
\end{proposition}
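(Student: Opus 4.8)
\emph{Proof proposal.} The plan is to show that the random sequence $\iota(u) = (\langle u, e_i\rangle_0)_i$ has law exactly $\P_{\sigma,\theta}$ on $l_2$, and then to transport this identity back through $\iota$. First I would record the measure‑theoretic preliminaries. By Parseval, $\iota$ is an isometric isomorphism of $L_2(\mathcal{M})$ onto $\hat{H}^0_L = l_2$, hence a homeomorphism, hence a Borel isomorphism; in particular $\iota(B) \in \mathcal{B}(l_2)$ for every $B \in \mathcal{B}(L_2(\mathcal{M}))$, and $u \in B \iff \iota(u) \in \iota(B)$. By Proposition~\ref{prop:sigma_algebra_coincide}, $\mathcal{B}(l_2) = \mathcal{F}^\infty|_{l_2}$, and since $l_2 \in \mathcal{F}^\infty$ this yields $\iota(B) \in \mathcal{F}^\infty$. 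Finally $\P_{\sigma,\theta}(l_2) = 1$ by Proposition~\ref{prop:smoothness_class}, so $\P_{\sigma,\theta}(\iota(B))$ is well defined and equals $\P_{\sigma,\theta}(A)$ for any $A \in \mathcal{F}^\infty$ with $A \cap l_2 = \iota(B)$.

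Next I would compute the coordinates of $\iota(u)$. The series defining $u$ converges in $L_2(\mathcal{M})$ almost surely, and each functional $f \mapsto \langle f, e_j\rangle_0$ is continuous, so orthonormality of $\{e_i\}$ gives, almost surely and simultaneously for all $j$,
\[
 \iota(u)_j = \langle u, e_j\rangle_0 = \sigma\sqrt{\smash[b]{v(\theta)}}\,(\tau+\lambda_j)^{-s/2}\xi_j = \sigma\lambda_j(\theta)\,\xi_j .
\]
Define $\Xi : \Omega \to \mathbb{R}^\infty$ by $\Xi(\omega)_j = \sigma\lambda_j(\theta)\xi_j(\omega)$. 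Then $\Xi = \iota\circ u$ on a set of full probability, and $\Xi(\omega) \in l_2$ there as well.

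The crux is identifying the law of $\Xi$ on $(\mathbb{R}^\infty, \mathcal{F}^\infty)$. Since $\{\xi_i\}$ are i.i.d., the variables $\sigma\lambda_i(\theta)\xi_i$ are independent with $\sigma\lambda_i(\theta)\xi_i \sim \P_{\sigma\lambda_i(\theta)}$. Hence on a rectangular set $R = \big(\prod_{i=1}^n B_i\big)\times\prod_{i>n}\mathbb{R}$ one has $\P(\Xi \in R) = \prod_{i=1}^n \P_{\sigma\lambda_i(\theta)}(B_i) = \P_{\sigma,\theta}(R)$. The rectangular sets form a $\pi$-system generating $\mathcal{F}^\infty$, so the two probability measures $\P\circ\Xi^{-1}$ and $\P_{\sigma,\theta}$ agree on $\mathcal{F}^\infty$ by the $\pi$–$\lambda$ theorem. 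This is simply the standard fact that the law of an independent sequence is the product of its marginals.

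To conclude, fix $B \in \mathcal{B}(L_2(\mathcal{M}))$ and choose $A \in \mathcal{F}^\infty$ with $A\cap l_2 = \iota(B)$. Working modulo the null set on which $\iota(u) = \Xi \in l_2$,
\[
 \P(u \in B) = \P(\iota(u) \in \iota(B)) = \P(\Xi \in A) = \P_{\sigma,\theta}(A) = \P_{\sigma,\theta}(\iota(B)),
\]
the last step using $\P_{\sigma,\theta}(l_2) = 1$. The only real obstacle is the bookkeeping: keeping the three $\sigma$-algebras $\mathcal{B}(L_2(\mathcal{M}))$, $\mathcal{B}(l_2)$ and $\mathcal{F}^\infty$ aligned and checking that every set in sight descends to $l_2$; the probabilistic content is routine.
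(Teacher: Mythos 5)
Your proposal is correct and follows essentially the same route as the paper's proof: identify the coordinates $\iota(u)_j = \sigma\lambda_j(\theta)\xi_j$, verify that $\P\circ\iota(u)^{-1}$ agrees with $\P_{\sigma,\theta}$ on rectangular sets via independence, and extend to all of $\mathcal{F}^\infty$ (the paper appeals to uniqueness of the Carathéodory extension where you invoke the $\pi$--$\lambda$ theorem, but these are the same argument). Your version is somewhat more explicit about the measure-theoretic bookkeeping—in particular, distinguishing $\iota\circ u$ from the formula-defined map $\Xi$ and noting they agree off a null set—which is a minor refinement rather than a different approach.
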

\begin{proof}
    Because $\iota^{-1}$ is an isomorphism, $\iota(B)$ is measurable.
    By definition, $\P(u \in B) = \P(\hat{u} \in \iota(B))$.
    Let $R = \prod_{i = 1}^n B_i \times \prod_{i = n+1}^\infty \mathbb{R}$ be a rectangular set.
    Then 
    \begin{align*}
        \P(\hat{u} \in R) = \P \left(\bigcap_{i = 1}^n \big \{\sigma \sqrt{v(\theta)}(\tau + \lambda_i)^{s/2} \xi_i \in B_i \big\} \right) = \prod_{i = 1}^n \P_{\sigma\lambda_i(\theta)}(B_i) = \P_{\sigma, \theta}(R). 
    \end{align*}
    From this the claim follows for elementary sets.
    By definition they generate $\mathcal{F}^\infty$ and so by the uniqueness of the extension
    \begin{align*}
        \P(\hat{u} \in A) = \P_{\sigma, \theta}(A)
    \end{align*}
    for any measurable set $A$, in particular for $A = \iota(B)$ where $B \in \mathcal{B}(L_2(\mathcal{M}))$.
\end{proof}

\end{document}